\DeclareMathOperator{\sHom}{\mathscr{H}\text{\kern -3pt {\calligra\large om}}\,}
\DeclareMathOperator{\sRHom}{\mathscr{RH}\text{\kern -3pt {\calligra\large om}}\,}
\DeclareMathOperator{\sQuot}{\mathscr{Q}\text{\kern -3pt {\calligra\large uot}}\,}
\patchcmd{\@settitle}{\uppercasenonmath\@title}{}{}{}
\newcommand{\nc}{\newcommand}
\newtheorem{theorem}[subsection]{Theorem}
\newtheorem{proposition}[subsection]{Proposition}
\newtheorem{lemma}[subsection]{Lemma}
\newtheorem{corollary}[subsection]{Corollary}
\newtheorem{conjecture}[subsection]{Conjecture}
\newtheorem{definition}[subsection]{Definition}
\newtheorem{claim}[subsection]{Claim}
\newtheorem{example}[subsection]{Example}
\newtheorem{remark}[subsection]{Remark}
\nc{\fa}{{\mathfrak{a}}}
\nc{\fb}{{\mathfrak{b}}}
\nc{\fg}{{\mathfrak{g}}}
\nc{\fh}{{\mathfrak{h}}}
\nc{\fj}{{\mathfrak{j}}}
\nc{\fn}{{\mathfrak{n}}}
\nc{\fm}{{\mathfrak{m}}}
\nc{\fu}{{\mathfrak{u}}}
\nc{\fp}{{\mathfrak{p}}}
\nc{\fr}{{\mathfrak{r}}}
\nc{\ft}{{\mathfrak{t}}}
\nc{\fsl}{{\mathfrak{sl}}}
\nc{\fgl}{{\mathfrak{gl}}}
\nc{\hsl}{{\widehat{\mathfrak{sl}}}}
\nc{\hgl}{{\widehat{\mathfrak{gl}}}}
\nc{\hg}{{\widehat{\mathfrak{g}}}}
\nc{\chg}{{\widehat{\mathfrak{g}}}{}^\vee}
\nc{\hn}{{\widehat{\mathfrak{n}}}}
\nc{\chn}{{\widehat{\mathfrak{n}}}{}^\vee}
\nc{\Mod}{{\textrm{Mod}}}
\nc{\wGL}{{\widehat{GL}^+}}
\nc{\BA}{{\mathbb{A}}}
\nc{\BC}{{\mathbb{C}}}
\nc{\BG}{{\mathbb{G}}}
\nc{\BM}{{\mathbb{M}}}
\nc{\BK}{{\mathbb{K}}}
\nc{\BN}{{\mathbb{N}}}
\nc{\BF}{{\mathbb{F}}}
\nc{\BH}{{\mathbb{H}}}
\nc{\BP}{{\mathbb{P}}}
\nc{\BQ}{{\mathbb{Q}}}
\nc{\BR}{{\mathbb{R}}}
\nc{\BZ}{{\mathbb{Z}}}
\nc{\ff}{{\mathbb{F}}}
\nc{\kk}{{\mathbb{K}}}
\nc{\kko}{{\mathbb{K}}}
\nc{\coh}{{\text{Coh}}}
\nc{\CA}{{\mathcal{A}}}
\nc{\CC}{{\mathcal{C}}}
\nc{\CB}{{\mathcal{B}}}
\nc{\DD}{{\mathcal{D}}}
\nc{\CE}{{\mathcal{E}}}
\nc{\CF}{{\mathcal{F}}}
\nc{\tCF}{{\widetilde{\CF}}}
\nc{\tCM}{{\widetilde{\CM}}}
\nc{\tCT}{{\widetilde{\CT}}}
\nc{\oCF}{{\bar{\CF}}}
\nc{\CG}{{\mathcal{G}}}
\nc{\CL}{{\mathcal{L}}}
\nc{\CK}{{\mathcal{K}}}
\nc{\CI}{{\mathcal{I}}}
\nc{\CM}{{\mathcal{M}}}
\nc{\CH}{{\mathcal{H}}}
\nc{\CN}{{\mathcal{N}}}
\nc{\CO}{{\mathcal{O}}}
\nc{\CP}{{\mathcal{P}}}
\nc{\CR}{{\mathcal{R}}}
\nc{\CQ}{{\mathcal{Q}}}
\nc{\CS}{{\mathcal{S}}}
\nc{\CT}{{\mathcal{T}}}
\nc{\tCU}{{\widetilde{\CU}}}
\nc{\CU}{{\mathcal{U}}}
\nc{\CV}{{\mathcal{V}}}
\nc{\CW}{{\mathcal{W}}}
\nc{\tpsi}{{\widetilde{\Psi}}}
\nc{\wpi}{{\widetilde{\pi}}}
\nc{\Ker}{{\text{Ker }}}
\nc{\CX}{{\mathcal{X}}}
\nc{\tCX}{{\widetilde{\mathcal{X}}}}
\nc{\CY}{{\mathcal{Y}}}
\nc{\tCY}{{\widetilde{\mathcal{Y}}}}
\nc{\tN}{{\widetilde{\CN}}}
\nc{\pN}{{\BP\widetilde{\CN}}}
\nc{\tT}{{T}}
\nc{\fC}{{\mathfrak{C}}}
\nc{\fZ}{{\mathfrak{Z}}}
\nc{\fU}{{\mathfrak{U}}}
\nc{\fV}{{\mathfrak{V}}}
\nc{\fW}{{\mathfrak{W}}}
\nc{\fS}{{\mathfrak{S}}}
\nc{\bfZ}{{\bar{\fZ}}}
\nc{\od}{{\overline{d}}}
\nc{\rg}{{\textrm{R}\Gamma}}
\nc{\erg}{{\emph{R}\Gamma}}
\nc{\id}{{\textrm{Id}}}
\nc{\rhom}{{\textrm{RHom}}}
\def\ph{\varphi}
\def\Ext{\textrm{Ext}}
\def\Hom{\textrm{Hom}}
\def\e{\varepsilon}
\def\td{\tilde{d}}
\def\and{\textrm{ }\&\textrm{ }}
\def\sym{\textrm{Sym}}
\def\esym{\emph{\sym}}
\def\tCF{\widetilde{\CF}}
\def\res{\textrm{Res }}
\def\wCA{\widehat{\CA}}
\def\loccit{\emph{loc. cit. }}
\def\loccitt{\emph{loc. cit.}}
\def\op{{\textrm{op}}}
\def\tab{\text{} \\}
\def\Tan{\text{Tan}}
\def\sgn{\text{sign }}
\def\km{K_\CM}
\def\kms{K_{\CM \times S}}
\def\kmss{K_{\CM \times S \times S}}
\def\kmms{K_{\CM' \times S}}
\def\ks{K_S}
\def\kss{K_{S \times S}}
\def\ext{\text{ext}}
\def\eext{\emph{ext}}
\def\ebig{\emph{big}}
\def\fr{\text{Frac }\BF}
\def\bp{\bar{p}}
\def\bpi{\bar{\pi}}
\def\tCL{\widetilde{\CL}}
\def\qis{\stackrel{\text{q.i.s.}}\cong}
\def\loc{\text{loc}}
\def\oi{{\int_{0 - \infty}}}
\def\i{{\int_{\infty - 0}}}
\def\ring{{\BK}}
\def\field{{\BF}}
\def\conv{{\text{conv}}}
\def\pss{p_{S*}^1 \times p_{S*}^2}
\def\cur{{\curvearrowright}}
\def\ccur{{\underline{\curvearrowright}}}
\begin{document}

\title[$W$--algebras associated to surfaces]{\large{\textbf{$W$--ALGEBRAS ASSOCIATED TO SURFACES}}}
\author[Andrei Negu\cb t]{Andrei Negu\cb t}
\address{MIT, Department of Mathematics, Cambridge, MA, USA}
\address{Simion Stoilow Institute of Mathematics, Bucharest, Romania}
\email{andrei.negut@gmail.com}

\maketitle

\renewcommand{\thefootnote}{\fnsymbol{footnote}} 
\footnotetext{\emph{2010 Mathematics Subject Classification: }14J60}     
\renewcommand{\thefootnote}{\arabic{footnote}} 


\begin{abstract} We define an integral form of the deformed $W$--algebra of type $\fgl_r$, and construct its action on the $K$--theory groups of moduli spaces of rank $r$ stable sheaves on a smooth projective surface $S$, under certain assumptions. Our construction generalizes the action studied by Nakajima, Grojnowski and Baranovsky in cohomology, although the appearance of deformed $W$--algebras by generators and relations is a new feature. Physically, this action encodes the AGT correspondence for 5d supersymmetric gauge theory on $S \times$circle.

\end{abstract}

\section{Introduction} 
\label{sec:introduction}

\medskip

\noindent The purpose of the present paper is to study the representation theory of the $K$--theory groups of moduli spaces of sheaves on a smooth surface $S$ over an algebraically closed field of characteristic 0 (henceforth denoted by $\BC$). For any $r>0$, we study a certain $\BZ[q_1^{\pm 1}, q_2^{\pm 1}]$-algebra $\CA_r$ and construct a module for the specialization of $\CA_r$ when $q_1$ and $q_2$ are set equal to the Chern roots of $\Omega_S^1$. When $S = \BA^2$, this module is the $\BC^* \times \BC^*$ equivariant $K$--theory of the moduli space of rank $r$ framed sheaves on $\BP^2$. When $S$ is projective, the module is defined as follows. Fix $c_1 \in H^2(S,\BZ)$ and an ample divisor $H \subset S$ such that $\gcd(r, c_1 \cdot H) = 1$. Let:
\begin{equation}
\label{eqn:def k}
K_\CM = \bigoplus_{c_2 = \left \lceil \frac {r-1}{2r} c_1^2 \right \rceil}^\infty K_0(\CM_{(r,c_1,c_2)})
\end{equation}
where $\CM_{(r,c_1,c_2)}$ denotes the moduli space of stable sheaves on $S$ of rank $r$ and Chern classes $(c_1,c_2)$ (see \cite{HL} for a review of the theory). The fact that stable sheaves have $c_2$ bounded below by $\frac {r-1}{2r} c_1^2$ is called the Bogomolov inequality. The coprimality of $r$ and $c_1 \cdot H$ is called \textbf{Assumption A} in \cite{Univ}, and it implies that: \\

\begin{itemize}

\item every semistable sheaf is stable, hence $\CM$ is projective \\

\item there exists a universal sheaf $\CU$ on $\CM \times S$ \\

\end{itemize}

\noindent where (once we have fixed $r$ and $c_1$) we will always write: 
$$
\CM = \bigsqcup_{c_2 = \left \lceil \frac {r-1}{2r} c_1^2 \right \rceil}^\infty \CM_{(r,c_1,c_2)}
$$
As we recall in Section \ref{sec:w}, $\CA_r$ is closely related to the deformed $W$--algebra of type $\fgl_r$ (\cite{AKOS, FF}). It is generated over $\BZ[q_1^{\pm 1}, q_2^{\pm 1}]$ by symbols $W_{n,k}$ indexed by a ``degree" $n \in \BZ$ and a ``level" $k \in \{1,...,r\}$. There exists an $r \rightarrow \infty$ limit of this construction, which we denote by $\CA_\infty$, with generators $W_{n,k}$ indexed by $n \in \BZ$, $k \in \BN$. We have:
\begin{equation}
\label{eqn:quotient algebras}
\CA_r = \frac {\CA_\infty}{(W_{n,k})_{n \in \BZ, k>r}}
\end{equation}
To approach the algebra $\CA_\infty$, one invokes another object of great importance in geometric representation theory: the double shuffle algebra $\CA$ (also known as the Ding-Iohara-Miki algebra, elliptic Hall algebra, spherical DAHA of type $A_\infty$) and many other names. For our purposes, one may define this algebra as:
\begin{equation}
\label{eqn:double shuffle intro}
\CA = \Big \langle E_{n,k} \Big \rangle_{(n,k) \in \BZ^2 \backslash (0,0)} \Big / \text{relation \eqref{eqn:e triangle double}}
\end{equation}
(see Sections \ref{sec:shuf} and \ref{sec:w} for details). Theorem \ref{thm:w algebra} implies that there is an injection:
$$
\CA_\infty \hookrightarrow \widehat{\CA}
$$
where $\widehat{\CA}$ is a certain completion of $\CA$. With this in mind, our construction for the action $\CA_r \curvearrowright \km$ can be summarized logically as follows:
\begin{equation}
\label{eqn:infty acts}
\begin{rcases}
\CA \curvearrowright \km \\
\CA_\infty \hookrightarrow \widehat{\CA}
\end{rcases} \quad \Rightarrow \quad \CA_\infty \curvearrowright \km
\end{equation}
\begin{equation}
\label{eqn:r acts}
\{W_{n,k}\}_{n\in \BZ}^{k > r} \text{ act by 0 in } \km \quad \Rightarrow \quad \CA_r \curvearrowright \km
\end{equation}

\medskip

\begin{conjecture}
\label{conj:shuf acts}
	
There exists an ``action" of the double shuffle algebra:
\begin{equation}
\label{eqn:shuf acts}
\CA \curvearrowright \km
\end{equation}
given explicitly in Subsection \ref{sub:explicit action}. \\
	
\end{conjecture}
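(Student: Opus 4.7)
The plan is to construct the generating operators explicitly by Hecke-type correspondences on $\CM \times \CM$ and then to verify that they satisfy the defining relations of the double shuffle algebra $\CA$ of \eqref{eqn:def double}. Concretely, for each level $k$ one uses a correspondence $\mathfrak{Z}_k \subset \CM \times \CM$ parametrizing pairs $F \subset F'$ of stable sheaves whose quotient has length $k$ and is supported at a single point of $S$; a generator of $\CA$ of degree $n$ and level $k$ is realized as pullback along the projection to $F$, multiplication by an appropriate tautological $K$--theory class built from the universal sheaf $\CU$ restricted to the quotient locus, and pushforward along the projection to $F'$. The negative half of $\CA$ acts by transposed correspondences, while the Cartan and central generators act diagonally through explicit tautological classes on $\CM$.

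The relations to be checked split into three families: the shuffle relations for the positive half, their negative counterparts, and the mixed Drinfeld-type relations between the two halves. For $S = \BA^2$, where $\CM$ is the moduli of framed sheaves on $\BP^2$, torus-equivariant localization identifies the above operators with the known shuffle-algebra action on a Fock space indexed by tuples of Young diagrams, and all relations reduce to residue-calculus identities in the Chern roots $q_1, q_2$ of $\Omega_S^1$. For a general surface $S$, I would promote this by a universality argument: both sides of every relation can be written as polynomials in correspondence pushforwards and tautological classes built from $\CU$ and $\Omega_S^1$, with coefficients that depend only on the formal combinatorics of the correspondences and not on the global geometry of $S$. The relation then descends from the universal family, of which the $\BA^2$ case serves as a local model.

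The hardest step will be the mixed relations. In $K$--theory, a product of a raising and a lowering correspondence decomposes into a geometric stratum, where the composed cycle of sheaves is smooth, and a residual diagonal stratum, where the two inclusions collide on a common subsheaf of $F'$; extracting the residual contribution requires an excess-intersection computation with Koszul resolutions sensitive to the normal bundle data on $S$. Moreover, because $\CM$ is projective rather than merely quasi-projective, one cannot simply import the $\BA^2$ answer by localization: one must control the wall-crossing-type error terms arising when composing correspondences across distinct components of $\CM$, and verify that they conspire---using the Bogomolov bound on $c_2$ to keep the direct sum in \eqref{eqn:def k} bounded below---to produce precisely the meromorphic factors appearing in the shuffle relations. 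It is this global-to-local interpolation, rather than the construction of the operators themselves, that prevents the statement from being promoted to a theorem and constitutes the genuine content of the conjecture.
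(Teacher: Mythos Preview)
You are right that this is a conjecture and not a theorem, and you correctly identify in your final paragraph that the obstruction lies in passing from the local model $S=\BA^2$ to a general projective surface. The paper does not prove Conjecture~\ref{conj:shuf acts} in general; it only establishes it under the strong Assumption~B of Subsection~\ref{sub:ass b}, which requires a K\"unneth decomposition of $K_{S\times S}$, a splitting of the diagonal class, and---crucially---that $K_\CM$ be generated as a ring by the tautological classes $\CT_c$ arising from the universal sheaf.

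Your outlined approach and the paper's partial proof share the same architecture: build the operators from the correspondences $\fZ_k^\bullet$, and reduce the verification of relations to residue identities by comparison with the equivariant $\BA^2$ case. The paper makes this precise via Proposition~\ref{prop:equiv}, which shows that Conjecture~\ref{conj:shuf acts} is equivalent to the concrete commutation relation~\eqref{eqn:comm} among the operators $e_{n,k}$. Under Assumption~B, one has explicit integral formulas~\eqref{eqn:one},~\eqref{eqn:two} for the action on tautological classes $\Psi(\dots,\CT_c,\dots)$, and the relations are then checked by a residue computation whose structure constants are pinned down by the $\BA^2$ case. The point is that Assumption~B is exactly what makes your ``universality argument'' rigorous: without knowing that tautological classes generate $K_\CM$, matching both sides on tautological inputs does not suffice.

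Where your diagnosis of the obstruction diverges from the paper's is in the language of ``wall-crossing-type error terms'' and Bogomolov bounds. The paper does not locate the difficulty there; the Bogomolov inequality is only used to ensure the grading is bounded below so that the action extends to the completion $\widehat{\CA}$. The actual gap, per Remark~\ref{rem:sv} and the discussion in the introduction, is representation-theoretic: one expects the shuffle algebra $\CS$ to be a localization of the $K$--theoretic Hall algebra of Schiffmann--Vasserot, and if one knew that the kernel of the localization map acts trivially on $K_\CM$, the conjecture would follow. This is a statement about the structure of the Hall algebra rather than about excess intersection or stratification of the composed correspondences.
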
 

\noindent The fact that the action \eqref{eqn:shuf acts} extends to the completion $\wCA$ is a consequence of the fact that $\km$ is a good $\CA$-module (see Definition \ref{def:completion}), which in turn follows from Bogomolov's inequality. Explicitly, to each generator $W_{n,k} \in \CA$, we associate in Section \ref{sec:shuf acts} an abelian group homomorphism via certain explicit correspondences:
\begin{equation}
\label{eqn:endomorphism}
\km \xrightarrow{w_{n,k}} \kms
\end{equation}
satisfying the compatibility conditions spelled out in Definition \ref{def:shuf acts}: this is the meaning of the quotes around the word ``action" in the statement of Conjecture \ref{conj:shuf acts}. When $S = \BA^2$, the same construction was done in \cite{W} in the context of the equivariant $K$--theory of the moduli space of framed sheaves, but the arbitrary surface case poses interesting features: for instance, the parameters $q_1$ and $q_2$ of the algebra $\CA_r$ are identified with the Chern roots of the cotangent bundle $\Omega_S^1$ (in other words, $[\Omega_S^1] = q_1+q_2$ and $[\omega_S] = q_1q_2$ as elements of $\ks$, the algebraic $K$-theory ring of $S$). \\

\begin{theorem}
\label{thm:w acts}

Assuming Conjecture \ref{conj:shuf acts}, we have $w_{n,k} = 0$ in \eqref{eqn:endomorphism} for all $n \in \BZ$ and $k > r$, hence the ``action" \eqref{eqn:shuf acts} factors through an ``action" $\CA_r \curvearrowright \km$. \\

\end{theorem}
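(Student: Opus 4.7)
The strategy is to prove the vanishing $w_{n,k} = 0$ for all $n \in \BZ$ and $k > r$ directly from the explicit formulas for these morphisms to be given in Section \ref{sec:shuf acts}. Once this vanishing is established, the conclusion $\CA_r \curvearrowright \km$ follows immediately from the implication \eqref{eqn:r acts}.

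The first step is to unpack the description of $w_{n,k} : \km \to \kms$ as a pull-push morphism. In line with the general philosophy of shuffle algebra actions on moduli spaces of sheaves, each $w_{n,k}$ is built from a correspondence parametrizing length-$k$ modifications of stable sheaves, and its $K$-theoretic class can be expressed as an antisymmetric residue/integral formula in $k$ auxiliary variables $z_1, \dots, z_k$. The shuffle symbol attached to the specific generator $W_{n,k}$ supplies a Vandermonde-type antisymmetrizing weight, so that after antisymmetrization the dependence on the rank-$r$ universal sheaf $\CU$ is packaged through a class pulled back from $\Lambda^k \CU$ on $\CM \times S$.

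The second step is to verify that $[\Lambda^k \CU] = 0$ in $\kms$ for $k > r$. Over the open locus where $\CU$ is locally free of rank $r$ this is immediate; to extend the vanishing globally, I would choose a two-term locally free resolution of $\CU$ on $\CM \times S$ and invoke the standard identity expressing the $K$-theoretic generating series $\sum_k [\Lambda^k \CU] t^k$ as an alternating quotient of the corresponding series attached to the terms of the resolution. Since $\CU$ has rank $r$, the resulting generating series is a polynomial of degree $r$ in $t$, giving the desired vanishing in every coefficient of $t^k$ with $k > r$. Substituting back into the antisymmetrized expression for $w_{n,k}$ derived in the first step yields $w_{n,k} = 0$ for every $n \in \BZ$ and every $k > r$, which combined with \eqref{eqn:quotient algebras} and \eqref{eqn:r acts} produces the factoring through $\CA_r$ asserted by the theorem.

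The main obstacle is the first step: correctly extracting the antisymmetric $\Lambda^k$-structure from the shuffle symbol of $W_{n,k}$ and matching it with the geometric morphism $w_{n,k}$ after all pullbacks and pushforwards through the correspondence. On $S = \BA^2$, torus localization reduces this matching to a computation with symmetric functions, as carried out in \cite{W}; on an arbitrary projective surface one must work intrinsically with $\CU$ on $\CM \times S$, and since the parameters $q_1, q_2$ are specialized to the Chern roots of $\Omega_S^1$ rather than being independent variables, the identification cannot proceed by specialization from a generic point and instead requires manipulating the shuffle symbol directly at the level of $K$-theoretic classes.
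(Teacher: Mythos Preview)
There is a genuine gap in your second step, and it undermines the whole plan. Your claim that the $K$--theoretic generating series $\sum_k [\Lambda^k \CU]\,t^k$ is a polynomial of degree $r$ in $t$ is false. The universal sheaf $\CU$ on $\CM \times S$ is not locally free: it sits in a short exact sequence $0 \to \CW \to \CV \to \CU \to 0$ with $\CW,\CV$ locally free (Proposition~\ref{prop:length 1}), so in $K$--theory $[\CU] = [\CV]-[\CW]$ and
\[
\sum_{k \ge 0} \lambda^k(\CU)\,t^k \;=\; \frac{\wedge^\bullet(\CV\,t)}{\wedge^\bullet(\CW\,t)}
\]
is a genuine rational function of $t$, not a polynomial of degree $r$. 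The vanishing $\lambda^k(\CU)=0$ for $k>r$ holds only on the open locus where $\CU$ is locally free, and there is no mechanism to extend it across the torsion locus; indeed the paper's own formula \eqref{eqn:explicit w 0} features $\wedge^{k_0}(\tCU)$ for all $0 \le k_0 \le k$, and none of these classes is being claimed to vanish.

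Your first step is also off target. The operator $w_{n,k}$ is \emph{not} expressible as a single push--pull weighted by $\Lambda^k \CU$. In the paper (formula~\eqref{eqn:formula}) $W_{d,k}$ is an infinite sum over $d_1,d_2 \ge 0$ and $k_0+k_1+k_2 = k$ of triple products $T^\leftarrow_{d_1,k_1} E_{0,k_0} T^\rightarrow_{d_2,k_2}$, realized geometrically via the correspondences $\fV_{d_1,d_2}$ of diagram~\eqref{eqn:big diag}. The vanishing for $k>r$ is then obtained not by killing a single exterior power, but by rewriting the sum as a residue \eqref{eqn:explicit w 1}, invoking the intersection--theoretic Lemma~\ref{lem:int th} (which compares $\delta_*[\CL^k]$ and $\e_*[\tCL^{k-r}]$ on $\fZ_1 \times_{\CM \times S} \fZ_1$ in terms of exactly the residue $\int_{\infty-0} z^k \wedge^\bullet(\CU/z)/\big((1-z/\CL_1)(1-z/\CL_2)\big)$), and then observing that the resulting double sum over $(d_1,d_2)$ \emph{telescopes}: the $(d_1,d_2)$--term cancels the $(d_1-1,d_2-1)$--term. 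This telescoping, not any individual vanishing of $\Lambda^k$, is what makes the theorem work.
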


\noindent The group $K_{\CM}$ may be interpreted as the Hilbert space of 5d supersymmetric gauge theory on the projective surface $S$ times a circle. The fact that we construct an action of the deformed $W$--algebra on $K_{\CM}$ yields a mathematical generalization of the Alday-Gaiotto-Tachikawa (AGT) correspondence between gauge theory and conformal field theory (along the lines of \cite{MO,W,SV3}, which treated the $S=\BA^2$ case). Historically, this correspondence has usually been studied for toric surfaces using Nekrasov's equivariant generalization of partition functions. Therefore, the projective surface situation treated in the present paper is a new phenomenon, which to the author's knowledge has not previously been studied for rank $r>1$. In our follow-up paper \cite{AGT}, Theorem \ref{thm:w acts} will be used to compute the interaction between the action $\CA_r \curvearrowright \km$ and the Carlsson-Okounkov Ext operator (\cite{CO}), thus establishing the equality of partition functions that AGT stipulates for bifundamental matter. However, one should note that in order to completely set up the AGT correspondence for an arbitrary surface $S$, the deformed $W$--algebra studied herein would need to be enlarged. Such an enlargement should probably deform the vertex operator algebras (VOAs) associated to 4-manifolds (see \cite{DGP}). \\

\noindent The action $\CA \curvearrowright \km$ is defined via the correspondences $\fZ_1$ and $\fZ_2^\bullet$ (Definitions \ref{def:desc} and \ref{def:fine}, respectively), which yield resolutions of singularities of the cohomological constructions of Nakajima \cite{Nak} and Grojnowski \cite{G} in rank 1, and Baranovsky \cite{Ba} in rank $r$. In general, the correspondences $\fZ_1$ and $\fZ_2^\bullet$ are defined as dg schemes, but we show in Propositions \ref{prop:z1 smooth} and \ref{prop:z2 smooth} that they are smooth schemes whenever the moduli spaces $\CM$ are smooth (more precisely, under \textbf{Assumption S} of \eqref{eqn:assumption s}). The proof of the smoothness of $\fZ_1$ and $\fZ_2^\bullet$ relies on presenting their tangent spaces in terms of cones of Ext groups, which is an idea that has appeared repeatedly in the literature (see \cite{GT} for a development in the context of virtual degeneration loci). \\

\noindent Through an analysis of the relation between the double shuffle algebra and $W$--algebras, which we perform in Sections \ref{sec:shuf} and \ref{sec:w}, we show that Conjecture \ref{conj:shuf acts} boils down to Conjecture \ref{conj:comm}. In Subsection \ref{sub:proof ass b}, we prove both Conjectures under:\\

\noindent  \textbf{Assumption B:} The Kunneth decomposition $\kss \cong \ks \boxtimes \ks$ holds, and:
\begin{equation}
\label{eqn:diag decomp intro}
[\CO_\Delta] = \sum_c l_c \boxtimes l^c \in \ks \boxtimes \ks \cong \kss
\end{equation}
for $\{l_c,l^c\} \in \ks$. Moreover, we may decompose the class of the universal sheaf as:
\begin{equation}
\label{eqn:univ decomp intro}
[\CU] = \sum_c [\CT_c] \boxtimes l^c \in \km \boxtimes \ks \cong \kms
\end{equation}
Finally, we assume that $\{\CT_c\}$ which appear in \eqref{eqn:univ decomp} are actually locally free sheaves on $\CM$ whose exterior powers generate $K_{\CM_{(r,c_1,c_2)}}$ as a ring, for any $c_2 \in \BZ$. \\

\noindent Assumption B holds for $S = \BP^2$, as shown in \cite[Example 3.12]{Univ}. More generally, all statements of Assumption B hold for rational surfaces (see \cite{Markman}) except maybe the fact that $\CT_c$ thus obtained are locally free (although we believe that the results of Subsection \ref{sub:proof ass b} do not crucially require this fact, only our terminology does). \\

\noindent In general, the full statement of Conjecture \ref{conj:shuf acts} would follow if one knew that the the algebra $\CA$ is contained inside the $K$--theoretic Hall algebra in the sense of \cite{SV}, but this fact is open (see \cite{Min} for certain results when $S = T^*C$ is the cotangent bundle to a curve). When $S = \BA^2$, Theorem \ref{thm:w acts} was proved in \cite{W} using the fact that the group $\km$ is (generically) an irreducible module for the $W$--algebra. We do not have this feature for a general surface $S$, and we have little control on the size of the abelian group $\km$. Instead, we give a geometric proof of Theorem \ref{thm:w acts} in Section \ref{sec:w acts}, which admits an obvious lift to the level of functors between derived categories. Let us mention certain possible avenues for generalizing our results. \\

\begin{enumerate}

\item Do the results in the present paper hold without Assumption A, namely the fact that $\gcd(r,c_1 \cdot H)=1$? This would involve replacing universal sheaves by quasi-universal or twisted sheaves in all our definitions and computations, and we make no claims about the technical difficulties that may arise. \\

\item Do the results in the present paper hold for $r = 0$? What about when one replaces $S$ by a quasi-projective surface endowed with a torus action with projective fixed point set? Although the philosophy clearly generalizes in either case, care must be taken in choosing the moduli space $\CM$ correctly. \\

\item Do the results hold if the moduli space $\CM$ of stable sheaves is replaced by the moduli stack of all sheaves? It is conceivable that $\CA$ still acts on $K_\CM$, but whether the algebra $\CA_\infty$ still acts is unclear. The reason for this is that elements of $\CA_\infty$ are certain infinite sums inside $\CA$, which act correctly on \eqref{eqn:def k} due to the fact that the grading by $c_2$ is bounded from below. \\

\end{enumerate}

\noindent The plan of the present paper is the following: \\

\begin{itemize}[leftmargin=*]

\item In Section \ref{sec:shuf}, we study the shuffle algebra $\CS$ (half of the algebra $\CA$), and show that it is defined over the ring $\BZ[q_1^{\pm 1},q_2^{\pm 1}]$, as opposed from the field $\BQ(q_1,q_2)$ where it has been commonly studied. The main technical step is Theorem \ref{thm:comm}, which states that commutators of the generators $E_{n,k}$ have integral coefficients. \\

\item In Section \ref{sec:w}, we construct the double shuffle algebra $\CA$, its completion $\widehat{\CA}$, and show how to embed the deformed $W$-algebra $\CA_\infty$ in this completion. The main result here is Theorem \ref{thm:w algebra}, which shows that the quadratic relations among the generators $W_{n,k}$ of deformed $W$-algebras hold in the double shuffle algebra $\CA$. \\

\item In Section \ref{sec:mod}, we recall the moduli spaces of stable sheaves $\CM = \{\CF\}$ on the given smooth surface $S$, as well as the dg scheme $\fZ_1 = \{\CF \supset_x \CF'\}$ studied in \cite{Univ}, which parametrizes flags of two such sheaves (hereafter, $\CF \supset_x \CF'$ means that $\CF/\CF'$ is isomorphic to the skyscraper sheaf at the closed point $x$). We also define the dg scheme $\fZ_2^\bullet = \{\CF \supset_x \CF' \supset_x \CF''\}$, and show that under Assumption S of \eqref{eqn:assumption s}, it coincides with the smooth scheme on which it is supported. \\

\item In Section \ref{sec:shuf acts}, we make precise the meaning of ``action" $\CA \curvearrowright \km$ which features in Conjecture \ref{conj:shuf acts} (see Definition \ref{def:shuf acts}), then we postulate that this action should take the generators $E_{n,k} \in \CA$ to certain correspondences built out of the dg schemes $\fZ_1$ and $\fZ_2^\bullet$ (in Subsection \ref{sub:explicit action}), and then prove Conjecture \ref{conj:shuf acts} subject to Assumption B (in Subsection \ref{sub:proof ass b}). \\

\item In Section \ref{sec:w acts}, we prove Theorem \ref{thm:w acts} for an arbitrary smooth surface $S$ and invariants $r,c_1,H$ which satisfy $\gcd(r,c_1\cdot H) = 1$ (namely Assumption A). We note that the proof naturally lifts from $K$-theory to the derived category. \\

\end{itemize} 

\noindent In the present paper, we follow the notations of \cite{Univ}. For a scheme $X$, we will write $K_X$ for the Grothendieck group of the category of coherent sheaves on $X$. This notion straightforwardly generalizes to the dg schemes $X$ considered in the present paper, which are all derived zero loci of sections of locally free sheaves, in which case the appropriate setting for $K$-theory is the spectrum of the $\infty$-category of cohomologically bounded sheaves on $X$. The notation $\Delta$ and $\zeta$ will refer to the following: \\

\begin{itemize}[leftmargin=*]

\item In Sections \ref{sec:shuf} and \ref{sec:w}, $\Delta = (1-q_1)(1-q_2)$ and $\zeta(x) = \displaystyle \frac {(1-xq_1)(1-xq_2)}{(1-x)(1-xq_1q_2)}$ \\

\item In Sections \ref{sec:mod}, \ref{sec:shuf acts} and \ref{sec:w acts}, $\Delta \in K_{S \times S}$ denotes the class of the diagonal and:
$$
\zeta(x) = \wedge^\bullet(-x \cdot \CO_\Delta) \Big |_\Delta = \frac {\wedge^\bullet(x \cdot \Omega_S^1)}{(1-x)(1-x q)} \in \ks(x)
$$

\end{itemize}

\noindent The two bullets above are designed so that the usual algebraic notion of action of the algebra $\CA$ from Sections \ref{sec:shuf} and \ref{sec:w} matches the geometric notion of ``action" from Definition \ref{def:shuf acts}, via the prescription that $q_1+q_2 \leadsto [\Omega_S^1] \in \ks$ and $q_1q_2 \leadsto [\omega_S] \in \ks$. \\

\noindent I would like to thank Sergei Gukov, Tamas Hausel, Davesh Maulik, Alexander Minets, Georg Oberdieck, Francesco Sala, Olivier Schiffmann, Richard Thomas and Alexander Tsymbaliuk for many interesting discussions on the subject, and for all their help in understanding the framework of $W$--algebras and sheaves on surfaces. I gratefully acknowledge the support of NSF grant DMS--1600375. 

\section{Shuffle algebras}
\label{sec:shuf}

\medskip

\subsection{} Let $q=q_1q_2$. Throughout this paper, we will often encounter the ring:
\begin{equation}
\label{eqn:ring}
\BK = \BZ[q_1^{\pm 1}, q_2^{\pm 1}]^{\text{symmetric in }q_1,q_2}
\end{equation}
and its field of fractions:
\begin{equation}
\label{eqn:field}
\BF = \BQ(q_1,q_2)^{\text{symmetric in }q_1,q_2}
\end{equation}
Let us recall the trigonometric version of the Feigin-Odesskii shuffle algebra \cite{FO}: \\ 

\begin{definition}
\label{def:shuffle}

Consider the rational function:
\begin{equation}
\label{eqn:zeta univ}
\zeta(x) = \frac {(1-xq_1)(1-xq_2)}{(1-x)(1-xq)}
\end{equation}
and the vector space:
$$
\CS_\ebig = \bigoplus_{k=0}^\infty \BF(z_1,...,z_k)^\esym
$$
where the superscript $\esym$ refers to rational functions that are symmetric with respect to $z_1,...,z_k$. We endow the vector space $\CS_\ebig$ with the shuffle product:
\begin{equation}
\label{eqn:shuffle product}
R(z_1,...,z_k) * R'(z_1,...,z_{k'}) = 
\end{equation}
$$
= \esym \left[ R(z_1,...,z_k) R'(z_{k+1},...,z_{k+k'}) \prod^{1 \leq i \leq k}_{k+1 \leq j \leq k+k'} \zeta \left( \frac {z_i}{z_j} \right) \right]
$$
Define the \textbf{shuffle algebra}:
\begin{equation}
\label{eqn:shuffle}
\CS \subset \CS_\ebig
\end{equation}
to be the \underline{$\BK$-subalgebra} generated by the elements:
\begin{equation}
\label{eqn:shuffle gen}
E_{d_\bullet} = \esym \left[ \frac{z_1^{d_1} ... z_k^{d_k}}{\left(1 - \frac {z_2 q}{z_1} \right) ... \left(1 - \frac {z_k q}{z_{k-1}} \right)} \prod_{1\leq i < j \leq k} \zeta \left( \frac {z_i}{z_j} \right) \right]
\end{equation}
as $d_\bullet = (d_1,...,d_k)$ ranges over $\BZ^k$. \\
\end{definition}

\noindent The shuffle algebra in Definition \ref{def:shuffle} has been studied in numerous papers, most relevant to our purposes here being \cite{FHHSY,FT,W,SV2}. However, much of the literature has been concerned with studying the shuffle algebra over a field, namely:
\begin{equation}
\label{eqn:rat shuffle}
\CS \otimes_{\BK} \BF
\end{equation}
It was shown in \cite[Theorem 2.2 and Proposition 6.1]{Shuf} that the localized shuffle algebra \eqref{eqn:rat shuffle} is actually generated by $\{E_{(d)} = z_1^d\}_{d \in \BZ}$. For example, the identity:
$$
E_{(0)} * E_{(1)} - E_{(1)} * E_{(0)} = \frac {(1-q_1)(1-q_2)(1-q)z_1z_2 (z_1+z_2)}{(z_1-z_2 q)(z_2-z_1 q)} = (1-q_1)(1-q_2) E_{(1,0)}
$$
suggests that one can obtain the generator $E_{(1,0)}$ from shuffle products of $E_{(1)}$ and $E_{(0)}$ iff one inverts the element $(1-q_1)(1-q_2)$ in the ground ring. We will not do so in the present paper, and therefore emphasize the fact that the shuffle algebra $\CS$ of \eqref{eqn:shuffle} has a more complicated description than the algebra \eqref{eqn:rat shuffle}. For example, the latter admits a straightforward description in terms of Feigin-Odesskii type wheel conditions, but we do not know an analogous description of the former. \\


\begin{proposition}
\label{prop:gen}

The algebra $\CS$ is generated over $\BK$ by the elements:
\begin{equation}
\label{eqn:shuf gen}
E_{k,n} = q^{\gcd(k,n)-1} E_{(d_1,...,d_k)}
\end{equation}
where $d_i = \Big \lceil \frac {ni}k \Big \rceil - \Big \lceil {\frac {n(i-1)}k} \Big \rceil + \delta_i^k - \delta_i^1$, as $k$ ranges over $\BN$, and $n$ ranges over $\BZ$. \\

\end{proposition}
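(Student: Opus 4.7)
The plan is to argue by induction on $k$ that every shuffle generator $E_{(d_1,\ldots,d_k)}$ of Definition \ref{def:shuffle} lies in the $\BK$-subalgebra generated by the elements $E_{k',n'}$. The base case $k=1$ is immediate, since for any $d \in \BZ$ one has $E_{1,d} = q^{\gcd(1,d)-1} E_{(d)} = E_{(d)}$, so the one-variable generators are literally among the distinguished elements $E_{k,n}$.

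For the inductive step, set $n = \sum_i d_i$ and order the length-$k$ compositions of $n$ by dominance (convexity), for which the staircase sequence $\bigl(\lceil ni/k \rceil - \lceil n(i-1)/k\rceil\bigr)_{i=1}^k$ indexing $E_{k,n}$ is the unique minimum. The heart of the argument is a straightening lemma: for a suitable factorization $k = k_1 + \cdots + k_m$ with $m \geq 2$ and $\sum_a n_a = n$, the shuffle product $E_{k_1,n_1} * \cdots * E_{k_m,n_m}$ expands as a $\BK^\times$-multiple of $E_{(d_1,\ldots,d_k)}$ plus a $\BK$-linear combination of terms $E_{d_\bullet'}$ with $d_\bullet' \prec d_\bullet$. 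Since each $k_a < k$, the outer inductive hypothesis handles the product, while the lower terms are handled by a nested well-founded induction on the dominance order, whose base case is the staircase sequence itself, equal to $q^{-(\gcd(k,n)-1)} E_{k,n}$ and hence already a $\BK^\times$-multiple of a generator. Establishing the straightening lemma concretely amounts to expanding the symmetrization in \eqref{eqn:shuffle product} and tracking leading $z$-exponents; the factors $\zeta(z_i/z_j)$ from \eqref{eqn:zeta univ} contribute corrections whose supports lie strictly below $d_\bullet$ in dominance.

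The main obstacle is ensuring that the leading coefficient is a unit in $\BK$ and that all lower-order corrections have coefficients in $\BK$ — not merely in $\BF$. The sample computation $E_{(0)} * E_{(1)} - E_{(1)} * E_{(0)} = (1-q_1)(1-q_2) E_{(1,0)}$ reproduced in the excerpt already exhibits the difficulty: the one-variable generators $\{E_{(d)}\}$ alone require inverting $(1-q_1)(1-q_2)$ to produce $E_{(1,0)}$, which is precisely why enlarging the generating set over $\BK$ is necessary. The normalization factor $q^{\gcd(k,n)-1}$ is designed to absorb the analogous denominators that arise when one iteratively expands $\zeta(x)$ near the wheel loci $z_{i+1} = qz_i$ appearing in the kernel of \eqref{eqn:shuffle gen}. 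Organizing this verification by induction on $g = \gcd(k,n)$ — treating primitive slopes ($g=1$) first, and then relating imprimitive $E_{k,n}$ to $g$-fold shuffle powers of $E_{k/g, n/g}$ — is the most delicate part of the argument.
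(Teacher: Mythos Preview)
Your plan has a genuine gap, and the paper's proof is both simpler and avoids the obstacle you flag.

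\textbf{The gap.} Your nested induction is on the dominance order on length-$k$ integer compositions of $n$, with claimed base case the staircase sequence. But dominance on \emph{integer} compositions is not well-founded, and the staircase is not a minimum: for instance with $k=2$, $n=1$ the staircase is $(0,1)$, yet $(-1,2),(-2,3),\ldots$ descend forever. Concretely, the most natural factorization $E_{(d_1)} * E_{(d_2,\ldots,d_k)} = E_{(d_1,\ldots,d_k)} - q\,E_{(d_1-1,d_2+1,d_3,\ldots,d_k)}$ produces a ``lower'' term whose first partial sum $d_1-1$ can drop below the staircase's, so iterating never terminates. You would need to specify a factorization and prove that all correction terms stay trapped in a finite interval of the order; you have not done so. Relatedly, your reading of the normalization $q^{\gcd(k,n)-1}$ is off: it is a power of $q = q_1q_2$, hence already a unit in $\BK$, and cannot absorb non-unit denominators like $(1-q_1)(1-q_2)$. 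That factor plays no role in the proof of the Proposition; it is a convenience for the later commutation relations.

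\textbf{The paper's trick.} The paper avoids all of this with a one-line algebraic identity. Given $d_\bullet$ with $\sum d_i = n$, observe that in the quotient ring
\[
\BZ[q^{\pm 1}, z_1^{\pm 1},\ldots,z_k^{\pm 1}] \Big/ \bigl(1 - \tfrac{z_{i+1}q}{z_i}\bigr)_{1\le i<k}
\]
every monomial of total degree $n$ becomes $z_1^n$ times a power of $q$. Hence there exists $a\in\BZ$ with
\[
z_1^{d_1}\cdots z_k^{d_k} \;-\; q^a \prod_i z_i^{\lceil ni/k\rceil - \lceil n(i-1)/k\rceil + \delta_i^k - \delta_i^1}
\;=\; \sum_{i=1}^{k-1} \Bigl(1 - \tfrac{z_{i+1}q}{z_i}\Bigr)\cdot(\text{Laurent polynomial over }\BZ[q^{\pm 1}]).
\]
Plugging this into the definition \eqref{eqn:shuffle gen}, each factor $\bigl(1 - \tfrac{z_{i+1}q}{z_i}\bigr)$ cancels the corresponding denominator and splits the symmetrization into a genuine shuffle product of two $E_{d'_\bullet}$'s with strictly fewer variables. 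Thus
\[
E_{d_\bullet} - q^a E_{k,n} \;=\; \sum_{i=1}^{k-1}\ \sum E_{d'_\bullet} * E_{d''_\bullet}
\]
with $\BK$-coefficients, and the right-hand side lies in the subalgebra by the outer induction on $k$ alone. No dominance order, no unit-coefficient verification, no nested induction is needed.
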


\begin{proof} Since $\CS$ is generated by $E_{d_\bullet}$ for arbitrary $d_\bullet = (d_1,...,d_k) \in \BZ^k$, it is enough to show that any given $E_{d_\bullet}$ can be written as a $\BK$-linear combination of products of \eqref{eqn:shuf gen}. We will prove this statement by induction on $k$. The case $k=1$ is trivial, and for the induction step, consider arbitrary $k$, $d_\bullet$ and let $n = d_1+...+d_k$. Then:
\begin{equation}
\label{eqn:monomial}
z_1^{d_1}... z_k^{d_k} - q^a \prod_{i=1}^k z_i^{\left \lceil \frac {ni}k \right \rceil - \left \lceil {\frac {n(i-1)}k} \right \rceil + \delta_i^k - \delta_i^1} = 
\end{equation}
$$
= \sum_{i=1}^{k-1} \left(1 - \frac {z_{i+1}q}{z_i} \right) \cdot \left( \text{linear combination of monomials in }q^{\pm 1},z^{\pm 1}_1,...,z^{\pm 1}_k \right)
$$
for some suitably chosen $a \in \BZ$. Plugging formula \eqref{eqn:monomial} in \eqref{eqn:shuffle gen} gives us:
\begin{equation}
\label{eqn:rait}
E_{d_\bullet} - q^{a+1-\gcd(k,n)} E_{k,n} = \sum_{i=1}^{k-1} \sum_{d'_\bullet, d''_\bullet} E_{d'_\bullet} * E_{d''_\bullet}
\end{equation}
where in the right-hand side, $d'_\bullet = (d_1',...,d_i')$ and $d''_\bullet = (d''_1,...,d''_{k-i-1})$ are certain vectors of integers modeled after the monomials that appear in the right-hand side of \eqref{eqn:monomial}. By the induction hypothesis, the right-hand side of \eqref{eqn:rait} can be written as a $\BK$-linear combination of products of the elements \eqref{eqn:shuf gen}, hence so can $E_{d_\bullet}$. 

\end{proof}

\subsection{} After establishing Proposition \ref{prop:gen}, the next step is to work out the relations between the generators $E_{k,n}$. The idea for doing so is inspired by \cite{BS}, and combining their construction with the results of \cite{Shuf}, allows us to conclude that, as $\BF$-modules:
\begin{equation}
\label{eqn:inna}
\CS \otimes_{\BK} \BF = \bigoplus_{\frac {n_1}{k_1} \leq ... \leq \frac {n_t}{k_t}}^{k_i \in\BN, \ n_i \in \BZ} \BF \cdot E_{k_1,n_1} ... E_{k_t,n_t} 
\end{equation}
In other words, any element of $\CS$ can be written as a linear combination of \textbf{ordered} monomials $E_{k_1,n_1} ... E_{k_t,n_t}$ if one allows the coefficients to be rational functions in $q_1$ and $q_2$. Our main purpose for the remainder of this Subsection is to show that the equality \eqref{eqn:inna} still holds without localization, i.e. the fact that, as $\BK$-modules:
\begin{equation}
\label{eqn:anna}
\CS = \bigoplus_{\frac {n_1}{k_1} \leq ... \leq \frac {n_t}{k_t}}^{k_i \in\BN, \ n_i \in \BZ} \BK \cdot E_{k_1,n_1} ... E_{k_t,n_t} 
\end{equation}
By Proposition \ref{prop:gen}, any element of $\CS$ can be written as a $\BK$-linear combination of products of $E_{k,n}$. Our task thus reduces to establishing an integral version of the ``straightening Lemma" of \cite{BS}: we must show that an arbitrary product of $E_{k,n}$'s is equal to a sum of ordered products of $E_{k,n}$'s, in non-decreasing order of $\frac nk$. This statement follows by repeated applications of the following result: \\


\begin{theorem}
\label{thm:comm}

For any $k,k' \in \BN$ and $n,n' \in \BZ$ such that $\frac nk > \frac {n'}{k'}$, we have:
\begin{equation}
\label{eqn:e triangle}
[E_{k,n}, E_{k',n'}] = \Delta \mathop{\sum_{\frac {n'}{k'} \leq \frac {n_1}{k_1} \leq ... \leq \frac {n_t}{k_t} \leq \frac {n}{k}}^{n_i \in \BZ, \ \sum n_i = n+n'}}^{k_i \in \BN, \ \sum k_i = k+k'} p_{n,n_1,...,n_t,n'}^{k,k_1,...,k_t,k'}(q_1,q_2) \cdot E_{k_1,n_1}... E_{k_t,n_t} 
\end{equation}
where $\Delta = (1-q_1)(1-q_2)$ and $p_{n,n_1,...,n_t,n'}^{k,k_1,...,k_t,k'}(q_1,q_2)$ are elements of $\BK$. Moreover, all products which appear with non-zero coefficient in the right-hand side of \eqref{eqn:e triangle} have the property that at least one fraction $\frac {n_i}{k_i}$ is strictly between $\frac nk$ and $\frac {n'}{k'}$. \\

\end{theorem}

\subsection{}

Theorem \ref{thm:comm} will be proved at the end of the current Section. The main idea is that the commutation relations \eqref{eqn:e triangle} for all $\frac nk > \frac {n'}{k'}$ are implied by the special case when the triangle spanned by the vectors $(k,n)$ and $(k',n')$ contains no lattice points  inside and on one of the edges. These special cases are usually presented in terms of the generators $P_{k,n}, H_{k,n}, Q_{k,n}$ defined as follows for all $\gcd(k,n) = 1$:
\begin{align}
&\sum_{s=0}^\infty \frac {E_{ks,ns}}{(-x)^s} = \exp \left[ -  \sum_{s=1}^\infty \frac {P_{ks,ns}}{sx^s} \right] \label{eqn:e} \\
&\sum_{s=0}^\infty \frac {H_{ks,ns}}{x^s} = \exp \left[\sum_{s=1}^\infty \frac {P_{ks,ns}}{sx^s} \right] \label{eqn:h} \\
&\sum_{s=0}^\infty \frac {Q_{ks,ns}}{x^s} = \exp \left[ \sum_{s=1}^\infty \frac {P_{ks,ns}}{sx^s} (1-q^{-s})\right] \label{eqn:q}
\end{align}
For fixed slope $\frac nk$, the $P$'s, $H$'s and $Q$'s are in relation to $E$'s as power sum, complete symmetric, and plethystically modified complete symmetric polynomials are in relation to elementary symmetric polynomials. In other words, presenting relations between $E$'s is equivalent with presenting relations between the other generators. These relations were first constructed in \cite{BS} (see \cite{W} for our conventions):
\begin{equation}
\label{eqn:zero triangle}
[P_{k,n}, P_{k',n'}] = 0
\end{equation}
if $\frac nk = \frac {n'}{k'}$, and:
\begin{equation}
\label{eqn:small triangle}
[P_{k,n}, P_{k',n'}] = \frac {(1 - q_1^s)(1 - q_2^s)}{1 - q^{-1}} \cdot Q_{k+k',n+n'}
\end{equation}
when the triangle $(0,0)$, $(k,n)$, $(k+k',n+n')$ is oriented clockwise, and contains no lattice points inside or on one of the edges (we write $s = \gcd(k,n) \gcd(k',n')$). \\

\begin{proposition}
\label{prop:e comm}

Relations \eqref{eqn:zero triangle} and \eqref{eqn:small triangle} imply the following relations in terms of the $E$ generators, for all $s\in \BN$:
\begin{equation}
\label{eqn:e triangle 1}
[E_{ks,ns}, E_{k',n'}] = \Delta \sum_{t=1}^s \frac {q_1^t - q_2^t}{q_1 - q_2} (-1)^{t-1} E_{kt+k',nt+n'} E_{k(s-t),n(s-t)}
\end{equation}
\begin{equation}
\label{eqn:e triangle 2}
[E_{k,n}, E_{k's,n's}] = \Delta \sum_{t=1}^s \frac {q_1^t - q_2^t}{q_1 - q_2} (-1)^{t-1} E_{k'(s-t),n'(s-t)} E_{k+k't,n+n't} 
\end{equation}
whenever the triangle spanned by the vectors $(k,n)$ and $(k',n')$ is oriented clockwise, and has the property that $\gcd(k,n) = \gcd(k',n') = \gcd(k+k',n+n') = 1$. Under the same assumptions, but allowing $\gcd(k+k',n+n') = s \geq 1$, we have:
\begin{equation}
\label{eqn:e triangle 3}
[E_{k,n}, E_{k',n'}] = \frac {\Delta}{1-q^{-1}} \left[ \text{coefficient of } \frac 1{x^s} \text{ in } \frac {E(xq)}{E(x)} \right]
\end{equation}
where we set $E(x) = \sum_{t=0}^\infty \frac {E_{(k+k')\frac ts, (n+n') \frac ts}}{(-x)^t}$. \\


\end{proposition}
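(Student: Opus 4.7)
The plan is to repackage all three identities as formal power-series statements and reduce them to \eqref{eqn:zero triangle}--\eqref{eqn:small triangle} via a Baker--Campbell--Hausdorff expansion together with one elementary power-series identity. I would first introduce
\[
E(x) := \sum_{t \geq 0}\frac{E_{kt,nt}}{(-x)^t}, \qquad P(x) := \sum_{t \geq 1}\frac{P_{kt,nt}}{tx^t},
\]
so that $E(x) = \exp(-P(x))$ by \eqref{eqn:e}; summing \eqref{eqn:e triangle 1} against $(-x)^{-s}$ shows that it is equivalent to $[E(x), E_{k',n'}] = -\Delta \cdot A(x) \cdot E(x)$, where $A(x) := \sum_{t \geq 1}\frac{q_1^t - q_2^t}{(q_1-q_2)\, x^t}\, E_{kt+k',\, nt+n'}$. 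Since $\gcd(k',n')=1$ gives $E_{k',n'} = P_{k',n'}$ via \eqref{eqn:e}, the standard identity $[e^U,V] = (e^{\mathrm{ad}_U}-1)(V) \cdot e^U$ with $U = -P(x)$ reduces the whole question to computing the iterated adjoint action $\mathrm{ad}_{P(x)}^j(E_{k',n'})$.

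The crucial preliminary step is to verify that every intermediate vector $(kr+k', nr+n')$ with $r \geq 1$ is primitive. Applying Pick's theorem to the triangle of \eqref{eqn:small triangle}, the three coprimality hypotheses force $|kn'-nk'|=1$, after which any common divisor of $(kr+k', nr+n')$ must divide $\pm(nk'-kn')=\pm 1$. Given primitivity, the relation \eqref{eqn:small triangle} combined with the primitive case of \eqref{eqn:q} (which gives $Q_{kr+k',nr+n'} = (1-q^{-1})E_{kr+k',nr+n'}$) implies
\[
[P(x),\, E_{kr+k',\, nr+n'}] \;=\; \sum_{s \geq 1}\frac{\phi(s)}{s\, x^s}\, E_{k(r+s)+k',\, n(r+s)+n'}, \qquad \phi(s) := (1-q_1^s)(1-q_2^s),
\]
so by induction $\mathrm{ad}_{P(x)}^j(E_{k',n'}) = \sum_{s_1,\ldots,s_j \geq 1} \prod_i \frac{\phi(s_i)}{s_i x^{s_i}} \cdot E_{k(s_1+\cdots+s_j)+k',\, n(s_1+\cdots+s_j)+n'}$. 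Grouping by $m = s_1 + \cdots + s_j$ and matching coefficients of $E_{km+k', nm+n'}$ collapses \eqref{eqn:e triangle 1} onto the single power-series identity $1 - e^{-F(y)} = \Delta y / \bigl((1-q_1 y)(1-q_2 y)\bigr)$, where $F(y):=\sum_{s\geq 1}\phi(s) y^s/s$, and upon taking logarithms this is equivalent to the elementary factorization $(1-q_1 y)(1-q_2 y) - \Delta y = (1-y)(1-qy)$.

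I would deduce \eqref{eqn:e triangle 2} by the mirror-image argument, packaging $E_{k't,n't}$ into a generating series on the right, computing $[E_{k,n}, F(y)] = F(y) \cdot \sum_{j\geq 1}\mathrm{ad}_{R(y)}^j(P_{k,n})/j!$ via the opposite-orientation triangles and absorbing the sign flip from $[A,B]=-[B,A]$; this lands on the same combinatorial identity. Identity \eqref{eqn:e triangle 3} is more direct: at the primitive slope $(a,b):=((k+k')/s,(n+n')/s)$, comparing \eqref{eqn:e} and \eqref{eqn:q} yields $\log Q(x) = \log E(xq) - \log E(x)$, hence $Q(x) = E(xq)/E(x)$, so that $Q_{k+k',n+n'} = [x^{-s}]\bigl(E(xq)/E(x)\bigr)$; substituting into $[P_{k,n},P_{k',n'}] = \frac{\Delta}{1-q^{-1}} Q_{k+k',n+n'}$ (the $\gcd(k,n)\gcd(k',n')=1$ case of \eqref{eqn:small triangle}) yields the stated formula. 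The hardest part will be the careful bookkeeping of triangle orientations and the primitivity of intermediate vectors required to invoke \eqref{eqn:small triangle} at each step of the induction, both of which are ultimately consequences of the unimodularity $|kn'-nk'|=1$.
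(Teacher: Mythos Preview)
Your proof is correct and follows the same strategy as the paper: exploit primitivity of the intermediate vectors $(kt+k',nt+n')$ to stay within the $P$--generators, then run a BCH-type argument that collapses to the expansion of $\zeta^{-1}$. The paper's packaging is somewhat more economical: instead of iterating $\mathrm{ad}_{P(x)}^j$ and regrouping, it introduces a second generating variable $Y(y)=\sum_{t\in\BZ}E_{kt+k',nt+n'}/y^t$, observes the eigenvector relation $[-P(x),Y(y)]=c(x,y)\,Y(y)$ with $c$ scalar, and applies the single identity $e^X Y = e^c\,Y e^X$; extracting the coefficient of $x^{-s}y^0$ in the resulting equation $E(x)Y(y)=\zeta(y/x)^{-1}Y(y)E(x)$ then gives \eqref{eqn:e triangle 1} directly. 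Your power-series identity $1-e^{-F(y)}=\Delta y/((1-q_1y)(1-q_2y))$ is exactly the paper's expansion of $\zeta(y/x)^{-1}$ in disguise, and your treatment of \eqref{eqn:e triangle 3} via $Q(x)=E(xq)/E(x)$ is identical to the paper's.

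One small caveat: the three coprimality conditions alone do \emph{not} force $|kn'-nk'|=1$ (for instance $(k,n)=(1,3)$, $(k',n')=(2,1)$ gives determinant $-5$ with all three gcd's equal to $1$). Your Pick's theorem step tacitly uses that the triangle has no interior lattice points, which is part of the hypothesis of \eqref{eqn:small triangle} and is implicit in the Proposition; once that is in hand, $B=3$ and $I=0$ indeed give area $1/2$ and hence unimodularity, after which your primitivity argument for $(kr+k',nr+n')$ and the verification that each triangle $(ks,ns),(kr+k',nr+n')$ again satisfies the hypothesis of \eqref{eqn:small triangle} go through as you describe.
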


\begin{proof} 

Under our assumptions on the vectors $(k,n)$, $(k',n')$, the triangle spanned by the vectors $(ks,ns)$, $(kt+k',nt+n')$ satisfies the assumptions of \eqref{eqn:small triangle}:
$$
[P_{ks,ns}, P_{kt+k',nt+n'}] = (1-q_1^s)(1-q_2^s) P_{k(s+t)+k',n(s+t)+n'}
$$
for all $s \in \BN, t\in \BZ$. Summing this relation over all $s \in \BN$ and $t \in \BZ$, we obtain:
$$
\left[ - \sum_{s=1}^\infty \frac {P_{ks,ns}}{sx^s}, \sum_{t \in \BZ} \frac{P_{kt+k',nt+n'}}{y^t} \right] = - \sum_{s=1}^\infty (1-q_1^s)(1-q_2^s) \frac {y^s}{sx^s} \sum_{t \in \BZ}   \frac {P_{k(s+t)+k',n(s+t)+n'}}{y^{s+t}}
$$
Let us recall the well-known formula:
$$
[X,Y] = c \cdot Y \text{ and } [X,c] = [Y,c] = 0 \quad \Rightarrow \quad \exp(X) Y = \exp(c)\cdot Y \exp(X)
$$
and apply it to $X = - \sum_{s=1}^\infty \frac {P_{ks,ns}}{sx^s}$ and $Y = \sum_{t \in \BZ} \frac{P_{kt+k',nt+n'}}{y^t} = \sum_{t \in \BZ} \frac{E_{kt+k',nt+n'}}{y^t}$: 
\begin{equation}
\label{eqn:wage}
\sum_{s=0}^\infty \frac {E_{ks,ns}}{(-x)^s} \sum_{t \in \BZ} \frac{E_{kt+k',nt+n'}}{y^t} = \zeta \left(\frac yx\right)^{-1} \sum_{t \in \BZ} \frac{E_{kt+k',nt+n'}}{y^t} \sum_{s=0}^\infty \frac {E_{ks,ns}}{(-x)^s} 
\end{equation}
Indeed, we have $P_{kt+k',nt+n'} = E_{kt+k',nt+n'}$ for all $t$ because our assumption on the vectors $(k,n)$ and $(k',n')$ implies $\gcd(kt+k',nt+n') = 1, \ \forall t$. Using the expansion:
$$
\zeta \left(\frac yx\right)^{-1} = 1 - (1-q_1)(1-q_2) \sum_{s=1}^\infty \frac {q_1^s-q_2^s}{q_1-q_2} \cdot \frac {y^s}{x^s} 
$$
and taking the coefficient of $x^{-s} y^0$ in equality \eqref{eqn:wage} yields \eqref{eqn:e triangle 1}. Relation \eqref{eqn:e triangle 2} is proved analogously. As for \eqref{eqn:e triangle 3}, this follows directly from \eqref{eqn:small triangle} since $E_{k,n} = P_{k,n}$, $E_{k',n'} = P_{k',n'}$ and \eqref{eqn:q} implies:
$$
\sum_{t=0}^\infty \frac {Q_{(k+k')\frac ts, (n+n') \frac ts}}{x^t} = \frac {E(xq)}{E(x)} 
$$ \end{proof}



\subsection{} If we expand the ratio of power series $E(xq)/E(x)$, we see that \eqref{eqn:e triangle 3} implies:
\begin{equation}
\label{eqn:e triangle 4}
[E_{k,n}, E_{k',n'}] = \Delta \Big( E_{k+k',n+n'} (-1)^{s-1}[q]_s + \dots \Big)
\end{equation}
where $[q]_s = 1+q^{-1}+...+q^{-s+1}$ and the ellipsis in \eqref{eqn:e triangle 4} stands for a sum of products of the form $E_{k_1,n_1}...E_{k_t,n_t}$ with $t>1$ and all lattice points $(k_i,n_i)$ lying on the line segment from $(0,0)$ to the lattice point $(k+k',n+n')$. \\

\begin{proof} \emph{of Theorem \ref{thm:comm}:} For the first half of the proof, we closely follow \cite{BS}, which will allow us to obtain the following slightly weaker version of \eqref{eqn:e triangle}:
\begin{equation}
\label{eqn:e triangle bis}
[E_{k,n}, E_{k',n'}] = \Delta \mathop{\sum_{\frac {n'}{k'} \leq \frac {n_1}{k_1} \leq ... \leq \frac {n_t}{k_t} \leq \frac nk}^{n_i \in \BZ, \ \sum n_i = n}}^{k_i \in \BN, \ \sum k_i = k} p_{n,n_1,...,n_t,n'}^{k,k_1,...,k_t,k'}(q_1,q_2) \cdot E_{k_1,n_1}... E_{k_t,n_t} 
\end{equation}
where: 
\begin{equation}
\label{eqn:coefficients}
p_{n,n_1,...,n_t,n'}^{k,k_1,...,k_t,k'}(q_1,q_2) \in \BK_\loc := \BK_{(1+q+...+q^{s-1})_{s \in \BN}}
\end{equation}
Then we will use the methods of \cite{Shuf} to show that the expressions \eqref{eqn:coefficients} do not have any poles at $q = \text{non-trivial root of unity}$, and so we will conclude that they actually lie in $\BK$, which is precisely what the Theorem claims. We call a product:
\begin{equation}
\label{eqn:product}
e = E_{k_1,n_1},...,E_{k_t,n_t}
\end{equation}
$\BK-$\textbf{orderable} (respectively $\BK_\loc-$\textbf{orderable}) if it can be written as in the right-hand side of \eqref{eqn:e triangle} (respectively \eqref{eqn:e triangle bis}). Consider the assignment: 
$$
\Big( E_{k_1,n_1}...E_{k_t,n_t} \in \CS \Big) \stackrel{\Upsilon}\longrightarrow \Big( \text{the lattice path }P \Big)
$$
where $P$ starts at $(0,0)$ and is built out of the segments $(k_1,n_1),...,(k_t,n_t)$ in this order. This is clearly a one-to-one correspondence between products \eqref{eqn:product} and lattice paths starting at the origin and pointing in the right half plane. The products that appear in the right-hand side of \eqref{eqn:e triangle} and \eqref{eqn:e triangle bis} all correspond to convex paths. Given any lattice path $P$, its convexification $P^{\conv}$ is defined as the path built out of the same segments $(k,n)$ as $P$, but in non-decreasing order of slope $\frac nk$ (for segments of equal slope, their relative order may be chosen arbitrarily). The area $a(P)$ of the path $P$ is defined as the area of the polygon bounded by $P$ and $P^{\conv}$, and we note that it is always a natural number. It was shown in \cite{BS} that if:
\begin{equation}
\label{eqn:product bis}
\tilde{e} = E_{k,n}E_{k',n'}
\end{equation}
is $\BK$-orderable whenever  $a(\Upsilon(\tilde{e})) \leq \delta$, then any product \eqref{eqn:product} such that $a(\Upsilon(e)) \leq \delta$ is also $\BK$-orderable. The same proof works if we replace the ring $\BK$ with $\BK_\loc$. Therefore, to prove \eqref{eqn:e triangle bis} by induction on $\delta \in \BN$, it suffices to prove the following: 

\begin{center}
\textbf{Assume that any }$e$\textbf{ as in \eqref{eqn:product} with } $a(\Upsilon(e)) < \delta$ \textbf{ is } $\BK_\loc-$\textbf{orderable}, \newline
\text{ } \ \qquad \textbf{then any }$\tilde{e}$\textbf{ as in \eqref{eqn:product bis} with } $a(\Upsilon(\tilde{e})) = \delta$ \textbf{ is } $\BK_\loc-$\textbf{orderable} \\
\end{center}

\noindent Let us now prove the claim in boldface letters above. Choose any $\tilde{e}$ as in \eqref{eqn:product bis} such that $a(P) = \delta$, where $P$ is the path with segments $v = (k,n)$ and $v' = (k',n')$. From now on, the phrase \emph{the triangle determined by vectors $v$ and $v'$} will refer to the triangle with a vertex at $(0,0)$ and with edges given by drawing the vectors $v$ and $v'$ in this order. If this triangle has no lattice points inside or on one of the edges, then \eqref{eqn:product bis} is $\BK$-orderable by \eqref{eqn:e triangle 1}--\eqref{eqn:e triangle 3}. Otherwise, there will exist a lattice point $v_0 = (k_0,n_0)$ inside the triangle determined by the vectors $v$ and $v'$, but not on the edges corresponding to the vectors $v$ and $v'$. In the following argument, we will assume that $k_0 < k$, as the case $k_0 > k$ is dealt with by switching the roles of $v$ and $v'$ throughout \footnote{One also needs to exclude the situation in which the only lattice point $v_0 = (k_0,n_0)$ with the properties above satisfies $k_0 = k$, but this implies $k|n$, $k'|n'$ and can be resolved with ease (\cite{BS})}. Let us choose $v_0$ such that the area of the triangle determined by $v-v_0$ and $v_0$ is minimal. This implies that the latter triangle respects the hypothesis of relation \eqref{eqn:e triangle 3}, or its equivalent form \eqref{eqn:e triangle 4}, and so we have:
\begin{equation}
\label{eqn:rel}
E_v \in \frac {(-1)^{s-1}}{[q]_s} \left(\frac {[E_{v-v_0},E_{v_0}]}{\Delta} + \sum_{v_1,...,v_t \text{ divide } v}^{t > 1, \ v_1+...+v_t = v} \BK \cdot E_{v_1}...E_{v_t} \right)
\end{equation}
where $s = \gcd(k,n)$, and we write $E_v$ instead of $E_{k,n}$ if $v = (k,n)$. Taking the commutator of \eqref{eqn:rel} with $E_{v'}$ yields the following, in virtue of the Jacobi identity:
$$
[E_v,E_{v'}] \in \frac {(-1)^{s-1}}{[q]_s}  \left( \frac {[[E_{v-v_0}, E_{v_0}],E_{v'}]}{\Delta} + \sum_{v_1,...,v_t \text{ divide } v}^{t > 1, \ v_1+...+v_t = v} \BK \cdot [E_{v_1}...E_{v_t},E_{v'}] \right) \subset 
$$
$$
\subset \frac {(-1)^{s-1}}{[q]_s}  \left( \frac {[[E_{v'},E_{v_0}],E_{v-v_0}]}{\Delta} +  \frac {[[E_{v-v_0},E_{v'}],E_{v_0}]}{\Delta} + \right.
$$
\begin{equation}
\label{eqn:bs}
\left. + \sum_{v_1,...,v_t \text{ divide } v}^{t > 1, \ v_1+...+v_t = v} \sum_{s=1}^t \BK \cdot E_{v_1}...E_{v_{s-1}}[E_{v_s},E_{v'}]E_{v_{s+1}}...E_{v_t} \right)
\end{equation}
We claim that all summands in the right-hand side of \eqref{eqn:bs} are $\BK_\loc-$orderable. Indeed, by our choice of the vector $v_0$, all commutators that appear in the right-hand side correspond to paths whose area is $<\delta$. Therefore, by the induction hypothesis in boldface letters, we may express such a commutator as a sum over convex paths, and it was shown in \cite{BS} that all paths obtained in this manner in the right-hand side of expression \eqref{eqn:bs} will still have area $<\delta$. The key geometric statement here, proved in \loccitt, is that if one takes two consecutive segments which violate convexity in a path $P'$, and one replaces them by an arbitrary convex path $P_0$ between the same endpoints, the resulting path $P''$ has $a(P'') < a(P')$. \\

\noindent We conclude that the right-hand side of \eqref{eqn:bs} can be written as a sum over convex paths $P$ of the elements $\Upsilon^{-1}(P)$. Because every commutator brings down a factor of $\Delta$ (as follows from relation \eqref{eqn:e triangle} when the area of the triangle determined by $(k,n)$ and $(k',n')$ is $<\delta$, which we may assume as part of our induction hypothesis), we see that the coefficient of any $\Upsilon^{-1}(P)$ in the right-hand side of \eqref{eqn:bs} lies in: 
$$
\frac {(-1)^{s-1} \Delta}{[q]_s} \cdot \BK_\loc = \Delta \cdot \BK_\loc
$$
Now assume, for the purpose of contradiction, that a certain $p_{n,n_1,...,n_t,n'}^{k,k_1,...,k_t,k'}(q_1,q_2)$ that appears in the right-hand side of \eqref{eqn:e triangle bis} lies in $\BK_\loc \backslash \BK$, i.e. has a pole when $q$ is a non-trivial root of unity. Then the pole will remain when we change the right-hand side of \eqref{eqn:e triangle bis} from the basis $E_{k_1,n_1}...E_{k_t,n_t}$ to $P_{k_1,n_1}...P_{k_t,n_t}$, because the matrix transforming elementary symmetric polynomials $E_{k_i,n_i}$ into power-sum functions $P_{k_i,n_i}$ is invertible and has rational coefficients. From \eqref{eqn:shuffle gen}, it is easy to see that $[E_{k,n}, E_{k',n'}] \in \CS$ is a rational function of the form:
\begin{equation}
\label{eqn:good shuffle}
R = \frac {r(z_1,...,z_k)}{\prod_{1 \leq i \neq j \leq k} (z_i q - z_j)}
\end{equation}
where $r \in \kk[z_1^{\pm 1}, ..., z_k^{\pm 1}]^\sym$. By \eqref{eqn:anna}, we may express any such element as:
$$
R =  \sum_{\frac {n_1}{k_1} \leq ... \leq \frac {n_t}{k_t}}^{k_i \in\BN, \ n_i \in \BZ} \gamma_{n_1,...,n_t}^{k_1,...,k_t} \cdot P_{k_1,n_1} ... P_{k_t,n_t}
$$
where $\gamma_{n_1,...,n_t}^{k_1,...,k_t} \in \BF$. Then all that remains to prove is that the coefficients $\gamma_{n_1,...,n_t}^{k_1,...,k_t}$ do not have any poles when $q$ is a non-trivial root of unity. There exists a pairing: 
\begin{equation}
\label{eqn:pair}
\langle \cdot, \cdot \rangle : \CS \otimes \CS \rightarrow \BF
\end{equation}
for which the basis vectors $P_{k_1,n_1}...P_{k_t,n_t}$ are orthogonal (see \cite[Proposition 5.4]{Shuf}) and the coefficients we wish to express are given by:
\begin{equation}
\label{eqn:mal}
\gamma_{n_1,...,n_t}^{k_1,...,k_t} = \frac {\langle R, P_{k_1,n_1}...P_{k_t,n_t} \rangle}{\langle P_{k_1,n_1}...P_{k_t,n_t}, P_{k_1,n_1}...P_{k_t,n_t} \rangle}
\end{equation}
The goal is to show that the right-hand side of \eqref{eqn:mal} does not have poles when $q$ is a non-trivial root of unity. According to formula (7.15) of \cite{W}, we have:
\begin{equation}
\label{eqn:coeff 1}
\langle P_{k_1,n_1}...P_{k_t,n_t}, P_{k_1,n_1}...P_{k_t,n_t} \rangle = \text{integer}\prod_{i=1}^t \frac {(1-q_1^{s_i})(1-q_2^{s_i})(1-q^{-1})^{k_i}}{(1-q_1)^{k_i}(1-q_2)^{k_i}(1-q^{-s_i})}
\end{equation}
where $s_i = \gcd(k_i,n_i)$. According to formula (2.8) of \cite{W}, we have for all $k,n$ with greatest common divisor $s$, the equality:
\begin{equation}
\label{eqn:p1}
P_{k,n} = \sym \left[ \frac {\prod_{i=1}^k z_i^{\left \lfloor \frac {in}k \right \rfloor - \left \lfloor \frac {(i-1)n}k \right \rfloor}}{\prod_{i=1}^{k-1} \left(1 - \frac {qz_{i+1}}{z_i} \right)} \sum_{t=0}^{s-1} q^{t} \frac {z_{a(s-1)+1}...z_{a(s-t)+1}}{{z_{a(s-1)} ...z_{a(s-t)}}} \prod_{i < j} \zeta \left( \frac {z_i}{z_j} \right) \right]
\end{equation}
where $a = \frac ks$. We claim that the right-hand side of \eqref{eqn:p1} equals:
\begin{equation}
\label{eqn:p2}
P_{k,n} = c \cdot \sym \left[ \frac {\prod_{i=1}^k z_i^{\left \lfloor \frac {in}k \right \rfloor - \left \lfloor \frac {(i-1)n}k \right \rfloor}}{\prod_{i=1}^{k-1} \left(1 - \frac {z_{i+1}}{q_2z_i} \right)} \sum_{t=0}^{s-1} \frac {z_{a(s-1)+1}...z_{a(s-t)+1}}{{q_2^t z_{a(s-1)} ...z_{a(s-t)}}} \prod_{i < j} \zeta \left( \frac {z_i}{z_j} \right) \right]
\end{equation}
where the coefficient $c$ is given by:
\begin{equation}
\label{eqn:coeff 2}
c = \frac {(1-q_2^s)(1-q^{-1})^k}{(1-q_2)^k (1-q^{-s})}
\end{equation}
Indeed, let us denote $p_1 = \text{RHS of \eqref{eqn:p1}}$ and $p_2 = \text{RHS of \eqref{eqn:p2}}$, and we must prove that $p_1$ and $p_2$ are equal: both $p_1$ and $p_2$ are shuffle elements in $k$ variables of homogeneous degree $n$ whose coproduct is given by formula (5.4) of \cite{Shuf} (to recall the coproduct on the shuffle algebra, as well as the computation of the coproduct of $p_1$ and $p_2$, we refer to Proposition 6.4 of \loccitt). Therefore, Lemma 5.5 of \loccit implies that $p_1$ and $p_2$ are equal up to a constant multiple. To prove that this constant multiple is 1, we apply the linear map: 
$$
R(z_1,..,z_k) \stackrel{\ph}\longrightarrow R(1, q_1^{-1},...,q_1^{-k+1})
$$
to $p_1$ and $p_2$. Because $\zeta(q_1^{-1}) = 0$, only one of the $k!$ summands that make up the symmetrizations defining $p_1$ and $p_2$ is not annihilated by the linear map $\ph$, namely the summand corresponding to the identity permutation. Then the fact that $\ph(p_1) = \ph(p_2)$ is easy to check, thus implying the equivalence of formulas \eqref{eqn:p1} and \eqref{eqn:p2}. Combining \eqref{eqn:mal}, \eqref{eqn:coeff 1}, \eqref{eqn:p2}, \eqref{eqn:coeff 2}, all that remains to prove is: \\

\begin{claim}
\label{claim:pair 1}

For any shuffle element $R$ as in \eqref{eqn:good shuffle} and any $\rho \in \kk[z_1^{\pm 1},...,z_k^{\pm 1}]$ let:
\begin{equation}
\label{eqn:claim pair}
P = \esym \left[ \frac {\rho(z_1,...,z_k)}{\prod_{i=1}^{k-1} \left(1 - \frac {z_{i+1}}{q_2z_i} \right)} \prod_{i < j} \zeta \left( \frac {z_i}{z_j} \right) \right]
\end{equation}
Then the quantity $\langle R, P \rangle$ has no poles when $q$ is a root of unity (and $q_1$ is generic).\\

\end{claim}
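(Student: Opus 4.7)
The plan is to compute $\langle R,P\rangle$ using the explicit form of the shuffle pairing from \cite{Shuf}, taking advantage of the fact that $P$ is written as a symmetrization whose only denominators are the ``$q_2$-string" factors $\prod_{i=1}^{k-1}(1 - z_{i+1}/(q_2 z_i))$ — rather than the usual ``$q$-string" factors. The pairing of \cite{Shuf} is an iterated residue/integral formula, and the key observation is that against a $P$ of this special form, the residues are taken at the hyperplanes $z_{i+1} = q_2^{-1}z_i$, so that the computation reduces to evaluating $R\cdot\rho$ on tuples specialized along geometric progressions with ratio $q_2^{-1}$ instead of $q^{-1}$. This is precisely the reason the authors rewrote $P_{k,n}$ in the form \eqref{eqn:p2} in the first place: it is tailored so that the pairing against a generic shuffle element never introduces the denominator $1-q^s$.

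First I would record the explicit formula for $\langle R,P\rangle$ from \cite{Shuf} and, following the strategy used to compare \eqref{eqn:p1} with \eqref{eqn:p2}, reduce to a sum over permutations surviving the symmetrization. Because $\zeta(q_1^{-1})=0$, most terms of the symmetrization annihilate the pairing, and the surviving contributions are evaluations of the form $R(z_\sigma)\cdot \rho(z_\sigma)\cdot (\text{product of }\zeta\text{'s})$, where $z_\sigma$ is a chain $(z_i/z_{i-1})\in\{q_2^{-1}\}$ possibly broken at prescribed locations. Then I would track where poles in $q$ can come from: the $\zeta$-factors produce only $1-q_1^a q_2^b$ with $(a,b)$ in a fixed bounded set, and the denominator $\prod_{i\neq j}(z_i q - z_j)$ of $R$, once specialized to $z_j/z_i = q_2^{j-i}$, becomes a product of factors $1 - q_1^{\pm 1}q_2^{c}$ with $c\in\BZ$. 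None of these have the form $1-q^s = 1-q_1^sq_2^s$ except in the ``balanced" direction $(1,1)$.

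The main obstacle will be the combinatorial bookkeeping showing that the balanced direction never actually appears. Concretely, one must verify that for every surviving permutation $\sigma$ the exponents $(a,b)$ arising in the denominator always satisfy $a\neq b$, so that after setting $q$ to a primitive root of unity (and keeping $q_1$ generic, hence $q_2 = q/q_1$ also generic) the factor $1-q_1^aq_2^b$ remains nonzero. This mirrors exactly the cancellation that forced the coefficient $c$ in \eqref{eqn:coeff 2} to absorb the only occurrence of a pure-$q$ denominator $1-q^{-s}$ in the normalization \eqref{eqn:coeff 1}, and hence this pure-$q$ factor has already been scaled away from the kernel used to define the pairing with $P$. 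Once this balance is verified, the conclusion $\langle R,P\rangle\in\BK_{\text{loc}}\cap\{\text{regular at roots of unity}\}\subset\BK$ follows immediately, and combined with the earlier reduction this upgrades \eqref{eqn:e triangle bis} to the claimed integrality statement \eqref{eqn:e triangle} of Theorem \ref{thm:comm}.
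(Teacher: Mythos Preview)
Your overall strategy is right and matches the paper's: the whole point of rewriting $P_{k,n}$ in the $q_2$-string form \eqref{eqn:p2} is so that the pairing against $R$ picks up residues only along hyperplanes $z_{i+1}=q_2^{-1}z_i$, after which every surviving denominator factor is of the form $1-q_1q_2^a$, $1-q_2^a$, or similar, none of which vanish at a non-trivial root of unity of $q$ when $q_1$ is generic. So your endgame denominator analysis is the correct one.

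However, there is a genuine gap at the start. The pairing formula \eqref{eqn:pair formula} from \cite{Shuf} is only defined for second arguments of the form $\sym[z_1^{d_1}\cdots z_k^{d_k}\prod_{i<j}\zeta(z_i/z_j)]$ and extended by $\BF$-linearity; it does not directly give you an integral expression for $\langle R,P\rangle$ when $P$ carries the extra denominators $\prod_i(1-z_{i+1}/(q_2z_i))$. You cannot simply ``record the explicit formula'' and proceed: you first need to prove that there is a closed integral expression valid for \emph{arbitrary} shuffle elements in the second slot. The paper does this via Claim~\ref{claim:pair 2}, introducing an auxiliary parameter $p$ (with $|p|<1<|q_1|,|q_2|$) to write
\[
\langle R,R'\rangle=\frac1{k!}\int_{|z_1|=\dots=|z_k|}\frac{R(z)\,R'(1/z)}{\prod_{i\ne j}\left[\zeta(z_i/z_j)\,\frac{z_i-pz_j}{z_i-qz_j}\right]}\prod_i\frac{dz_i}{2\pi i z_i}\Big|_{p\mapsto q},
\]
and only then plugs in $R'=P$ and moves the contours to $|z_1|\gg\cdots\gg|z_k|$. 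The residues picked up along the way are exactly at $z_{i+1}=q_2^{-1}z_i$ (the new poles coming from $P$), yielding a sum over compositions $k=n_1+\dots+n_t$ with the variables in each block specialized to a $q_2$-progression. The $p$-trick is what makes the intermediate integral well-defined and separates the pole structure cleanly; without it your proposed computation is not yet a formula.

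A second, smaller issue: your invocation of $\zeta(q_1^{-1})=0$ is misplaced. That vanishing was used earlier in the paper only to compare \eqref{eqn:p1} with \eqref{eqn:p2} via the evaluation map $\varphi$; in the pairing itself, $\zeta$ sits in the \emph{denominator} of the integrand, so its zeros do not kill terms. The mechanism that singles out the surviving contributions is purely the pole/residue structure, not zeros of $\zeta$.
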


\noindent At this step, we must recall that the pairing \eqref{eqn:pair} was defined in \cite{Shuf} by:
\begin{multline} 
\label{eqn:pair formula}
\left \langle R, \sym \left[ z_1^{d_1}... z_k^{d_k} \prod_{1 \leq i < j \leq k} \zeta \left( \frac {z_i}{z_j} \right) \right] \right \rangle = \\
= \int_{|z_1| \gg ... \gg |z_k|} \frac {R(z_1,...,z_k) z_1^{-d_1}...z_k^{-d_k}}{\prod_{1 \leq i < j \leq k} \zeta \left( \frac {z_i}{z_j} \right)} \prod_{i=1}^k \frac {dz_i}{2\pi i z_i}
\end{multline}
for any $d_1,...,d_k \in \BZ$, and $\BF$-linearity in the second argument. This is sufficient to completely determine the pairing, as the elements $\text{Sym}[...]$ that feature in the left-hand side of \eqref{eqn:pair formula}, as $d_1,...,d_k$ range over $\BZ$, span $\CS \otimes_{\BK} \BF$ (as shown in \cite{Shuf}). \\

\begin{claim}
\label{claim:pair 2}

For any $R(z_1,...,z_k), R'(z_1,...,z_k) \in \CS$, we have:
\begin{equation}
\label{eqn:is pairing}
\langle R, R' \rangle = \frac 1{k!} \int_{|z_1| = ... = |z_k|} \frac {R(z_1,...,z_k) R' \left( \frac 1{z_1},..., \frac 1{z_k} \right)}{\prod_{1 \leq i \neq j \leq k} \left[ \zeta \left( \frac {z_i}{z_j} \right) \frac {z_i - p z_j}{z_i - q z_j} \right]} \prod_{i=1}^k \frac {dz_i}{2\pi i z_i} \Big|_{p \mapsto q}
\end{equation}
The integral must be computed by residues under the assumptions $|q_1|, |q_2| > 1 > |p|$, and only after one evaluates the integral, one must specialize $p \mapsto q$. \\

\end{claim}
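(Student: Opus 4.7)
The plan is to verify the identity by $\BF$-linearity in $R'$, reducing to the case in which $R'$ is one of the generating monomials $\sym[z_1^{d_1}\cdots z_k^{d_k}\prod_{i<j}\zeta(z_i/z_j)]$ that define the pairing via \eqref{eqn:pair formula}. For such $R'$, first I would substitute into the right-hand side of \eqref{eqn:is pairing}: the change of variable $z_i\mapsto 1/z_i$ carries $\zeta(z_i/z_j)$ to $\zeta(z_j/z_i)$, while the remaining factors of the integrand are symmetric in $z_1,\ldots,z_k$, so the symmetrizer in $R'(z_1^{-1},\ldots,z_k^{-1})$ can be collapsed against the prefactor $\frac{1}{k!}$. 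Using the identity $\prod_{i\neq j}\zeta(z_i/z_j)=\prod_{i<j}\zeta(z_i/z_j)\zeta(z_j/z_i)$ then reduces the claim to showing that
\begin{equation*}
\int_{|z_1|=\cdots=|z_k|}\frac{R(z)\,z_1^{-d_1}\cdots z_k^{-d_k}\prod_{i\neq j}\frac{z_i-qz_j}{z_i-pz_j}}{\prod_{i<j}\zeta(z_i/z_j)}\prod_{i=1}^k\frac{dz_i}{2\pi i z_i}\bigg|_{p\mapsto q}
\end{equation*}
coincides with the nested-contour integral on the right of \eqref{eqn:pair formula}.

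The core of the argument is then a contour deformation from the torus $|z_1|=\cdots=|z_k|=r$ to the nested configuration $|z_1|\gg\cdots\gg|z_k|$, carried out by shrinking $|z_k|,|z_{k-1}|,\ldots,|z_2|$ one at a time down to widely separated radii $R_k\ll R_{k-1}\ll\cdots\ll R_2\ll r$. Once this deformation is complete, the regularization factor $\prod_{i\neq j}(z_i-qz_j)/(z_i-pz_j)$ is analytic at $p=q$ on the nested contour and specializes there to $1$, so the deformed integral reproduces the right-hand side of \eqref{eqn:pair formula}. Everything is therefore reduced to showing that the residues accumulated during the deformation all vanish after the specialization $p\mapsto q$.

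For this I would first observe that combining $R=r(z)/\prod_{i\neq j}(qz_i-z_j)$ with the regularization numerator $\prod_{i\neq j}(z_i-qz_j)$ cancels the diagonal poles $qz_i=z_j$ of $R$, so the integrand's remaining singularities are the regularization poles $z_i=pz_j$ together with the $1/\zeta(z_i/z_j)$-poles at $z_j=z_iq_1, z_iq_2$ for $i<j$. The inequalities $|q_1|,|q_2|>1>|p|$ ensure that as $|z_\ell|$ is shrunk, the only poles crossed in the swept annulus are of regularization type $z_\ell=pz_j$ or $z_\ell=z_j/p$: any $1/\zeta$-pole whose partner $z_j$ is still on the torus lies at radius exceeding $r$, and any $1/\zeta$-pole whose partner has already been shrunk lies at radius strictly below $R_\ell$ (since $|q_a|>1$ pushes it further down). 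The residue at $z_\ell=pz_j$ picks up the factor $(z_\ell-qz_j)|_{z_\ell=pz_j}=(p-q)z_j$ from the regularization, and the residue at $z_\ell=z_j/p$ picks up $z_j(1-q/p)$; in both cases the remaining factors of the integrand are analytic at $p=q$, so each crossed residue is $O(p-q)$ and drops out upon specialization.

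The principal obstacle in implementing this plan is the uniform combinatorial bookkeeping in the last paragraph: one must verify across every one of the $k-1$ successive shrinking steps that no $1/\zeta$-pole intrudes into the swept annulus, and that the $(p-q)$-vanishing factor appearing in each crossed residue is never multiplied by a compensating singularity at $p=q$ coming from elsewhere in the integrand. This is precisely why the statement stipulates that the integral be evaluated \emph{before} specializing $p\mapsto q$, and why the hypothesis $|q_1|,|q_2|>1>|p|$ is the exact regime in which the contour deformation proceeds cleanly.
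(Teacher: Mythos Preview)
Your approach is essentially the paper's: reduce by $\BF$-linearity to a monomial $R'$, collapse the symmetrizer against the prefactor $1/k!$, and then deform from the common-radius torus to nested contours, where the regularization factor specializes to $1$. The paper's proof is in fact terser than yours---it simply asserts that under the size hypotheses no poles are picked up during the deformation, without your residue bookkeeping; one small correction to your version is that the surviving factor $(z_\ell - qz_j)$ supplying the $(p-q)$ at each crossed pole comes from the numerator of $1/\zeta(z_j/z_\ell)$ rather than from the regularization numerator, which (as you yourself observed) cancels completely against the denominator of $R$.
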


\begin{proof} \emph{of Claim \ref{claim:pair 2}:} It suffices to show that formula \eqref{eqn:pair formula} matches \eqref{eqn:is pairing} when:
$$
R' = \sym \left[ z_1^{d_1}... z_k^{d_k} \prod_{1 \leq i < j \leq k} \zeta \left( \frac {z_i}{z_j} \right) \right]
$$
for arbitrary $d_1,...,d_k \in \BZ$. In this case, we have:
$$
\text{RHS of \eqref{eqn:is pairing}} = \int_{|z_1| = ... = |z_k|} \frac {R(z_1,...,z_k) z_1^{-d_1}...z_k^{-d_k}}{\prod_{1 \leq i < j \leq k} \zeta \left( \frac {z_i}{z_j} \right) \prod_{1\leq i \neq j \leq k} \frac {z_i - p z_j}{z_i - q z_j}} \prod_{i=1}^k \frac {dz_i}{2\pi i z_i} \Big|_{p \mapsto q} =
$$
\begin{equation}
\label{eqn:jor}
= \int_{|z_1| \gg ... \gg |z_k|} \frac {R(z_1,...,z_k) z_1^{-d_1}...z_k^{-d_k}}{\prod_{1 \leq i < j \leq k} \zeta \left( \frac {z_i}{z_j} \right) \prod_{1\leq i \neq j \leq k} \frac {z_i - p z_j}{z_i - q z_j}} \prod_{i=1}^k \frac {dz_i}{2\pi i z_i} \Big|_{p \mapsto q}
\end{equation}
The second equality above is due to the fact we do not pick up any poles as we move the contours from $|z_1| = ... = |z_k|$ to $|z_1| \gg ... \gg |z_k|$, as a consequence of the assumption $|q_1|, |q_2| > 1 > |p|$.  However, the integral over $|z_1| \gg ... \gg |z_k|$ is a Laurent polynomial in $q_1,q_2$ and $p$, and thus one would not change the value of the integral if one removed the fraction $\frac {z_i - p z_j}{z_i - q z_j}$ and the symbol $|_{p \mapsto q}$. With this in mind, \eqref{eqn:jor} matches the right-hand side of \eqref{eqn:pair formula}, as we needed to prove. 

\end{proof}

\begin{proof} \emph{of Claim \ref{claim:pair 1}:} Let us re-run the argument that proved Claim \ref{claim:pair 2} with $R'$ replaced by $P$ of \eqref{eqn:claim pair}. Because of the extra factors $1 - \frac {z_{i+1}}{q_2z_i}$ in the denominator of $P$, equality \eqref{eqn:jor} does not hold as stated anymore. Instead, we have:
$$
\langle R, P \rangle = \int_{|z_1| = ... = |z_k|} \frac {R(z_1,...,z_k) \rho \left( \frac 1{z_1},..., \frac 1{z_k} \right)}{\prod_{i=1}^{k-1} \left( 1 - \frac {z_i}{q_2z_{i+1}} \right) \prod_{i < j} \zeta \left( \frac {z_i}{z_j} \right) \prod_{i\neq j} \frac {z_i - p z_j}{z_i - q z_j}} \prod_{i=1}^k \frac {dz_i}{2\pi i z_i} \Big|_{p \mapsto q} = 
$$ 
\begin{multline}
= \sum^{\text{composition}}_{k = n_1+...+n_t} \left[ \int_{|w_1| \gg ... \gg |w_t|} \prod_{s=1}^t \frac {dw_s}{2\pi i w_s} \right. \\ \left. \frac {R(z_1,...,z_k) \rho \left( \frac 1{z_1},..., \frac 1{z_k} \right)}{\prod_{s=1}^{t-1} \left( 1 - \frac {z_{n_1+...+n_s}}{q_2z_{n_1+...+n_s+1}} \right) \prod_{i < j} \zeta \left( \frac {z_i}{z_j} \right) \prod_{i\neq j} \frac {z_i - p z_j}{z_i - q z_j}} \Big|_{z_{n_1+...+n_{s-1}+a} \mapsto \frac {w_s}{q_2^{a-1}}} \right] \Big|_{p \mapsto q} \label{eqn:ah}
\end{multline}
because as we move the contours from $|z_1| = ... = |z_k|$ toward $|z_1| \gg ... \gg |z_k|$, we can now pick up residues from the poles $z_{i+1} = q_2^{-1} z_i$. To prove Claim \ref{claim:pair 1}, it suffices to prove that none of the summands in the right-hand side of \eqref{eqn:ah} has a pole when $q$ is a root of unity. Corresponding to any composition $k = n_1+...+n_t$, the integrand \eqref{eqn:ah} is of the form:
\begin{equation}
\label{eqn:tina}
\frac {\text{Laurent polynomial in }z_1,...,z_k}{\prod_{s=1}^{t-1} \left( 1 - \frac {z_{n_1+...+n_s}}{q_2 z_{n_1+...+n_s+1}} \right) \prod_{i < j} (z_iq_1-z_j)(z_iq_2-z_j) \prod_{i \neq j} (z_i - p z_j)}
\end{equation}
The fact that the contribution of \eqref{eqn:tina} to \eqref{eqn:ah} does not produce any poles when $q$ is a root of unity is a consequence of the following observations. \\

\begin{enumerate}[leftmargin=*]

\item as we specialize the variables in \eqref{eqn:tina} to $z_{n_1+...+n_{s-1}+a} = w_s q_2^{-a+1}$, the linear factors in the denominator will all be of the form $1-q_1q_2^a$, $1-q_2^a$, $1-pq_2^a$ (for various integers $a$; these factors are all regular when $q$ is a root of unity) or $w_i - w_j c$ for $i > j$ and various $c \in q_1^{\BZ} q_2^{\BZ}p^{\BZ}$. \\

\item the integral as $|w_1| \gg ... \gg |w_t|$ of any rational function of the form:
$$
\frac {\text{Laurent polynomial in }w_1,...,w_t}{\prod_{i > j, \text{various scalars }c}(w_i - w_j c)}
$$
is a polynomial in the coefficients of the numerator and the various $c$'s that appear in the denominator. This statement is simply the $t$-fold iteration of the elementary fact that the residue at $w = \infty$ of: 
$$
\frac {\text{Laurent polynomial in }w}{(w - \alpha_1)... (w - \alpha_n)}
$$
is a polynomial in the coefficients of the numerator and $\alpha_1,...,\alpha_n$. 

\end{enumerate} 

\end{proof} \end{proof}

\section{$W$--algebras}
\label{sec:w}

\medskip

\subsection{}  

One of the main results of \cite{W} was to realize the deformed $W$--algebra inside a double shuffle algebra, a construction which we will now recall. Starting from the shuffle algebra $\CS$ of \eqref{eqn:shuffle}, one constructs the $\ring$-algebra:
\begin{equation}
\label{eqn:def double}
\CA = \CA^\leftarrow \otimes \CA^{\text{diag}} \otimes \CA^\rightarrow
\end{equation}
where:
\begin{align*} 
&\CA^\leftarrow = \CS \quad \ \ \text{with generators denoted by }\{E_{-k,n}\}^{k \in \BN}_{n \in \BZ} \\
&\CA^\rightarrow = \CS^{\op} \quad \text{with generators denoted by } \{E_{k,n}\}^{k \in \BN}_{n \in \BZ} \\
&\CA^{\text{diag}} = \BK[c^{\pm 1},E_{0,k}]_{k \in \BZ \backslash 0}
\end{align*}
Both algebras $\CS$ and $\CA$ are generated over $\BK$ by symbols $E_{k,n}$, but the former has $k \in \BN$ and the latter has $k \in \BZ$. The relations between these generators in the algebra $\CA$ are modeled after the relations \eqref{eqn:zero triangle} and \eqref{eqn:small triangle}, but with certain small modifications (originally defined in \cite{BS}, but see \cite{W} for our conventions):
\begin{equation}
\label{eqn:shuf heis 0}
[P_{k,n}, P_{k',n'}] = \delta_{k+k'}^0 s \frac {(1-q_1^s)(1-q_2^s)}{1-q^{-s}} \cdot (1-c^k) 
\end{equation}
if $kn' = k'n$ and $k<0$, where $s = \gcd(k,n)$, and:
\begin{equation}
\label{eqn:small triangle double}
[P_{k,n}, P_{k',n'}] = \frac {(1-q_1^s)(1-q_2^s)}{1-q^{-1}} \cdot c^* Q_{k+k',n+n'} 
\end{equation}
if $kn' < k'n$ and the triangle with vertices $(0, 0)$, $(k, n)$, $(k + k', n + n')$
contains no lattice points inside or on one of the edges, and $s$ denotes $\gcd(k,n)\gcd(k'n')$. The particular power $c^*$ in formula \eqref{eqn:small triangle double} can be found in (2.21) of \cite{W}, but it will not be relevant to us. Throughout the present paper, we will set $c = q^r$ for a natural number $r$, in order to cancel the denominator of \eqref{eqn:shuf heis 0}. Since the $Q_{k,n}$ are still defined by \eqref{eqn:q}, they are also multiples of $1-q$, and this cancels the denominator of \eqref{eqn:small triangle double}. \\

\noindent Since the generators $P_{k,n}$ and $Q_{k,n}$ are connected with the generators $E_{k,n}$ by formulas \eqref{eqn:e} and \eqref{eqn:q}, respectively, one may convert relations \eqref{eqn:shuf heis 0} and \eqref{eqn:small triangle double} into commutation relations involving the $E$'s. We will show how to do so for the former of these relations, and leave the latter as an exercise to the interested reader (it will differ from \eqref{eqn:e triangle 1}--\eqref{eqn:e triangle 3} by some powers of $c$). \\

\begin{proposition}
\label{prop:zero}

Assume $s \in - \BN$, $k \in \BN$ and $\gcd(k,n) = 1$. Then \eqref{eqn:shuf heis 0} implies:
\begin{multline}
\label{eqn:shuf heis}
[E_{ks,ns}, E_{ks',ns'}] \Big|_{c \mapsto q^r}  = \\ = \begin{cases} 0 & \text{if } s'<0  \\
\Delta \sum_{i=1}^{\min(-s,s')}  \gamma_i E_{k(s'-i),n(s'-i)} E_{k(s+i),n(s+i)}  & \text{if } s'>0 \end{cases}
\end{multline}
for some $\gamma_i \in \BK$. \\ 

\end{proposition}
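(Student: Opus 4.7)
My plan is to split the proof on the sign of $s'$.

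\emph{Case $s' < 0$:} I would observe that both $P_{ks,ns}$ and $P_{ks',ns'}$ have negative first coordinate, so the Kronecker factor in \eqref{eqn:shuf heis 0} forces $k(s+s') = 0$, which is impossible for $s, s' \in -\BN$ and $k \in \BN$. Hence all $P_{km,nm}$ with $m < 0$ mutually commute, and since \eqref{eqn:e} expresses each $E_{km,nm}$ (for $m < 0$) as a polynomial in these same $P$'s, the $E$'s commute as well.

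\emph{Case $s' > 0$:} Set $m = -s > 0$, $m' = s' > 0$, and introduce the generating series on each ray,
\[
E^-(x) = \sum_{b \geq 0} \frac{E_{-kb,-nb}}{(-x)^b} = \exp(X^-), \qquad E^+(y) = \sum_{a \geq 0} \frac{E_{ka,na}}{(-y)^a} = \exp(X^+),
\]
where $X^\pm$ are the corresponding $P$-logarithmic series coming from \eqref{eqn:e}. Using \eqref{eqn:shuf heis 0} after the substitution $c \mapsto q^r$, the telescoping identity $(1-q^{-rkb})/(1-q^{-b}) = \sum_{j=0}^{rk-1} q^{-jb}$, and $-\log(1-t) = \sum_{b\geq 1} t^b/b$, a direct calculation gives
\[
[X^+, X^-] = \sum_{j=0}^{rk-1} \log\!\left[\frac{(1-q^{-j}/(xy))(1-q^{1-j}/(xy))}{(1-q_1 q^{-j}/(xy))(1-q_2 q^{-j}/(xy))}\right],
\]
a central scalar in $\BK[[(xy)^{-1}]]$. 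The Baker--Campbell--Hausdorff identity then yields
\[
[E^-(x), E^+(y)] = E^-(x) E^+(y) \cdot \bigl(1 - \exp([X^+, X^-])\bigr).
\]
The exponential here collapses to $\prod_{j=0}^{rk-1}$ of the ratio above; specializing either $q_1 = 1$ or $q_2 = 1$ makes the numerator and denominator of every $j$-factor coincide, so $1 - \exp([X^+,X^-]) \in \Delta \cdot \BK[[(xy)^{-1}]]$. Extracting the coefficient of $x^{-m}y^{-m'}$ would then produce \eqref{eqn:shuf heis} as a $\BK$-linear combination of products $E_{k(s+i),n(s+i)} E_{k(s'-i),n(s'-i)}$ with $1 \leq i \leq \min(-s,s')$.

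The main obstacle, and the reason for the specialization $c \mapsto q^r$ in the statement, is the integrality assertion $\gamma_i \in \BK$: \eqref{eqn:shuf heis 0} a priori carries the factor $(1-q^{-s})^{-1}$, singular at non-trivial roots of unity of $q$. Only once $c \mapsto q^r$ is substituted does the matching factor $(1-q^{-rks})$ appear, and their ratio becomes the integral polynomial $\sum_{j=0}^{rk-1} q^{-js}$. After this cancellation is made transparent, the divisibility by $\Delta$ of $1 - \exp([X^+, X^-])$ follows immediately from the specializations $q_1 = 1$ and $q_2 = 1$, and the remaining coefficient extraction is formal.
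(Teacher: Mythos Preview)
Your proposal is correct and follows essentially the same route as the paper: both split on the sign of $s'$, dispose of $s'<0$ by noting that commuting $P$'s force commuting $E$'s via \eqref{eqn:e}, and for $s'>0$ form the generating series $E^\pm$, compute the central scalar $[X^+,X^-]$ from \eqref{eqn:shuf heis 0} after $c\mapsto q^r$, exponentiate via BCH, and extract coefficients. Your product formula for $\exp([X^+,X^-])$ and the $q_1=1$ (or $q_2=1$) specialization argument make the $\Delta$-divisibility more explicit than the paper's one-line assertion, but the substance is identical.
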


\begin{proof} Formula \eqref{eqn:shuf heis} is easy when $s' < 0$, since if $P_{k,n}, P_{2k,2n},P_{3k,3n}...$ all commute, then formula \eqref{eqn:e} implies that $E_{k,n}, E_{2k,2n},E_{3k,3n}...$ also all commute. On the other hand, when $s' > 0$ relation \eqref{eqn:shuf heis 0} reads:
\begin{align*}
&[P_{ks,ns}, P_{ks',ns'}] \Big|_{c \mapsto q^r}  = \delta_{s+s'}^0 s (1-q_1^s)(1-q_2^s)(1+q^{-s}+...+q^{-s(kr-1)}) \Rightarrow \\
&\Rightarrow \left[  \sum_{s=-\infty}^{-1} \frac {P_{ks,ns}}{sx^{-s}}, \sum_{s'=1}^{\infty} \frac {P_{ks',ns'}}{-s'y^{s'}} \right] = \sum_{i = 1}^\infty \frac {(1-q_1^i)(1-q_2^i)(1+q^{-i}+...+q^{-i(kr-1)})}{i x^i y^i} 
\end{align*}
We leave the following claim as an easy exercise: if $[P,P'] = \gamma$ with $\gamma$ central, then $\exp(P)\exp(P') = \exp(\gamma)\exp(P')\exp(P)$. With this in mind, we obtain:
\begin{equation}
\label{eqn:dean}
\sum_{s=-\infty}^{-1} \frac {E_{ks,ns}}{(-x)^{-s}} \sum_{s'=1}^{\infty} \frac {E_{ks',ns'}}{(-y)^{s'}} \Big|_{c \mapsto q^r} = \phi(xy) \sum_{s'=1}^{\infty} \frac {E_{ks',ns'}}{(-y)^{s'}} \sum_{s=-\infty}^{-1} \frac {E_{ks,ns}}{(-x)^{-s}} \Big|_{c \mapsto q^r}
\end{equation}
where:
$$
\phi(z) = \exp \left( \sum_{i = 1}^\infty \frac {(1-q_1^i)(1-q_2^i)(1+q^{-i}+...+q^{-i(kr-1)})}{i z^i} \right) 
$$
lies in $1 + \Delta \ring[[z^{-1}]]$. Taking the coefficient of $(-x)^s (-y)^{-s'}$ in \eqref{eqn:dean} yields \eqref{eqn:shuf heis}.

\end{proof}

\begin{proposition}
\label{prop:hecke}

Recall that $P_{0,k} \in \CA^{\emph{diag}}$ are to $E_{0,k} \in \CA^{\emph{diag}}$ as power-sum functions are to elementary symmetric polynomials. Then $\forall k \in \BZ \backslash 0$, we have:
\begin{equation}
\label{eqn:hecke right}
[P_{0,k}, R(z_1,...,z_n)] = (\emph{sign } k) (1-q_1^{|k|})(1-q_2^{|k|}) (z_1^k + ... + z_n^k) R(z_1,...,z_n)
\end{equation}
for all $R(z_1,...,z_n) \in \CS^{\emph{op}} \cong \CA^\rightarrow$, and:
\begin{equation}
\label{eqn:hecke left}
[P_{0,k}, R(z_1,...,z_n)] = - (\emph{sign } k) (1-q_1^{|k|})(1-q_2^{|k|}) (z_1^k + ... + z_n^k) R(z_1,...,z_n)
\end{equation}
for all $R(z_1,...,z_n) \in \CS \cong \CA^\leftarrow$. \\

\end{proposition}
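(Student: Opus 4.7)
The plan is to verify \eqref{eqn:hecke right} by first establishing that both sides are derivations of $\CS^{\op}$ with respect to the shuffle product, then reducing to a check on a family of single-variable generators, and finally treating that case via an exponential generating-function argument modeled on Proposition \ref{prop:zero}. The relation \eqref{eqn:hecke left} will follow by a strictly parallel argument.

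First, I would show that both sides of \eqref{eqn:hecke right} define derivations of $\CS^{\op}$ with respect to $*$. The operator $R \mapsto [P_{0,k}, R]$ is a derivation by the Jacobi identity, since $P_{0,k} \in \CA$ is fixed. For the operator $R \mapsto (1-q_1^k)(1-q_2^k)(z_1^k + \cdots + z_n^k) R$: if $R$ is in $n$ variables and $R'$ is in $n'$ variables, then by \eqref{eqn:shuffle product} the product $R * R'$ is the symmetrization of $R(z_1,\ldots,z_n) R'(z_{n+1},\ldots,z_{n+n'}) \prod \zeta(z_i/z_j)$; multiplication by the symmetric power sum $z_1^k + \cdots + z_{n+n'}^k$ therefore commutes with the symmetrization and splits as $\sum_{i \leq n} z_i^k$ acting on $R$ plus $\sum_{i > n} z_i^k$ acting on $R'$, which is the Leibniz rule. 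It thus suffices to verify \eqref{eqn:hecke right} on an $\BF$-algebra generating set of $\CS^{\op} \otimes \BF$, with the identity then descending to $\CS^{\op}$ by integrality since both sides lie there a priori.

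By \cite{Shuf}, $\CS^{\op} \otimes \BF$ is $\BF$-generated by the single-variable elements $E_{1,n} = z_1^n$ for $n \in \BZ$, so the task reduces to proving
\[ [P_{0,k}, E_{1,n}] = (1-q_1^k)(1-q_2^k) E_{1, n+k}, \qquad k \in \BZ \setminus \{0\}, \ n \in \BZ. \]
For $|k| = 1$ this is immediate from the small-triangle relation \eqref{eqn:small triangle double} applied to the triangle with vertices $(0,0), (0,\pm 1), (1, n\pm 1)$ (which contains no interior or boundary lattice points), combined with $P_{1,m} = E_{1,m}$ and $Q_{1,m} = (1-q^{-1}) E_{1,m}$. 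For $|k| > 1$ the relation \eqref{eqn:small triangle double} does not apply directly because the vertical edge contains intermediate lattice points, so I would instead proceed by an exponential generating-function manipulation. Setting
\[ \Psi(x) = \sum_{\ell \geq 0} \frac{E_{0,\ell}}{(-x)^\ell} = \exp\!\Big({-}\sum_{s \geq 1} \frac{P_{0,s}}{s x^s}\Big), \qquad \mathcal{E}(z) = \sum_{n \in \BZ} \frac{E_{1,n}}{z^n}, \]
one iterates the $|k|=1$ relation, using the commutativity of the $P_{0,s}$ among themselves (which is automatic since $\CA^{\text{diag}}$ is a polynomial algebra), to establish the operator identity $\Psi(x)\, \mathcal{E}(z) = \zeta(z/x)^{-1} \, \mathcal{E}(z) \, \Psi(x)$. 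Taking logarithms and expanding $\log \zeta(y) = \sum_{k \geq 1} (1-q_1^k)(1-q_2^k) y^k / k$ then extracts the desired formula at the coefficient of $x^{-k} z^{-n}$.

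The relation \eqref{eqn:hecke left} follows by a completely analogous argument on $\CS \cong \CA^\leftarrow$, the overall sign change on the right-hand side reflecting the reversed orientation encoded in \eqref{eqn:shuf heis 0} for negative-slope generators (the same sign flip visible in the second case of Proposition \ref{prop:zero}). I expect the main technical obstacle to be the rigorous derivation of the generating-function identity $\Psi(x) \mathcal{E}(z) = \zeta(z/x)^{-1} \mathcal{E}(z) \Psi(x)$ purely from the defining relations of $\CA$: extending the iterative exponential computation of Proposition \ref{prop:zero} across the boundary of the region where \eqref{eqn:small triangle double} applies directly requires careful bookkeeping of the central powers $c^\ast$ and their cancellation under the specialization $c = q^r$, together with justifying convergence of the relevant formal exponential series.
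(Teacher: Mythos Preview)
Your overall strategy---observing that both sides of \eqref{eqn:hecke right} are derivations of the shuffle product, then reducing over $\BF$ to the single-variable generators $E_{1,n}=z_1^n$---coincides with the paper's approach. The divergence is in how you handle the base case $[P_{0,k},E_{1,k'}]$.

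You assert that for $|k|>1$ relation \eqref{eqn:small triangle double} does not apply because the vertical edge from $(0,0)$ to $(0,k)$ contains intermediate lattice points. This is a misreading of the hypothesis: the phrase ``no lattice points inside or on one of the edges'' refers only to the edge from $(0,0)$ to the sum $(k+k',n+n')$---lattice points on the other two edges are permitted and are precisely what the factor $s=\gcd(k,n)\gcd(k',n')$ accounts for. In the case at hand the vectors are $(0,k)$ and $(1,k')$, with sum $(1,k+k')$; the edge from the origin to the sum is primitive, and a Pick's-theorem count shows the triangle has no interior lattice points. Hence \eqref{eqn:small triangle double} applies directly for every $k$, giving
\[
[P_{0,k},E_{1,k'}]=(1-q_1^{k})(1-q_2^{k})\,E_{1,k+k'}
\]
(using $P_{1,m}=E_{1,m}$ and $Q_{1,m}=(1-q^{-1})E_{1,m}$). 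This is exactly the paper's one-line conclusion.

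Your proposed workaround is moreover circular as written: the exponential identity $\Psi(x)\,\mathcal{E}(z)=\zeta(z/x)^{-1}\,\mathcal{E}(z)\,\Psi(x)$ is \emph{equivalent} to the full family $[P_{0,s},E_{1,n}]=(1-q_1^s)(1-q_2^s)E_{1,n+s}$ for all $s\ge 1$, so it cannot be derived by iterating only the $s=1$ case (the elements $P_{0,s}$ for different $s$ are algebraically independent in $\CA^{\text{diag}}$, hence the $s=1$ relation carries no information about $P_{0,2}$). Simply correct the reading of the small-triangle hypothesis and the detour becomes unnecessary.
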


\begin{proof} Since the algebra $\CA$ is a free $\ring$-module, it is enough to prove the required formulas over $\field$. According to Theorem 2.5 of \cite{Shuf}, the shuffle algebra is generated by $\{z_1^{k'}\}_{k' \in \BZ}$ over $\field$, and so the required formulas follow from the particular case $n=1$. In this case, the required relations boil down to:
\begin{align*}
&[P_{0,k}, E_{1,k'}] = (\sgn k) (1-q_1^{|k|})(1-q_2^{|k|}) E_{1,k+k'} \\
&[P_{0,k}, E_{-1,k'}] = - (\sgn k) (1-q_1^{|k|})(1-q_2^{|k|}) E_{-1,k+k'}
\end{align*}
which are just particular cases of \eqref{eqn:small triangle double}. 

\end{proof}

\subsection{}

We will now switch from the notation $E_{k,n}$ to $E_{n,k}$, as it will be more suitable for our study of the subalgebras of $\CA$ that we will introduce in Subsection \ref{sub:completion}. Formulas \eqref{eqn:anna} and \eqref{eqn:def double} allow us to find a $\BK$-basis of $\CA$:
\begin{equation}
\label{eqn:anna double}
\CA \Big|_{c \mapsto q^r} = \bigoplus_{(n_1,k_1) \ccur ... \ccur (n_t,k_t)} \BK \cdot E_{n_1,k_1} ... E_{n_t,k_t} 
\end{equation}
where the sum goes over all collections $(n_1,k_1),...,(n_t,k_t) \in \BZ^2 \backslash (0,0)$ ordered clockwise.  Here and below, we say that two lattice points $(n,k)$ and $(n',k')$ are ordered clockwise, denoted by:
\begin{equation}
\label{eqn:leq 1}
(n,k) \ccur (n',k')
\end{equation}
if one can reach the latter from the former by turning clockwise around the origin, without crossing the negative $y$ axis. If we wish to exclude the situation when $(n,k)$ and $(n',k')$ lie on the same ray through the origin, then we will use the notation:
\begin{equation}
\label{eqn:leq 2}
(n,k) \cur (n',k')
\end{equation}
instead. Formulas \eqref{eqn:leq 1} and \eqref{eqn:leq 2} are simply inequalities $\leq$ and $<$ on the slopes, appropriately defined, of the lattice points $(n,k)$ and $(n',k')$. Just like one can deduce \eqref{eqn:anna} from Theorem \ref{thm:comm}, one can deduce \eqref{eqn:anna double} from the following Theorem: \\

\begin{theorem}
\label{thm:comm double}

For any lattice points $(n',k') \curvearrowright (n,k)$, we have:
\begin{equation}
\label{eqn:e triangle double}
[E_{n,k}, E_{n',k'}] \Big|_{c \mapsto q^r} = \Delta \mathop{\sum_{(n',k') \ccur (n_1,k_1) \ccur ... }^{n_i \in \BZ, \ \sum n_i = n+n'}}^{k_i \in \BZ, \ \sum k_i = k+k'}_{... \ccur (n_t, k_t) \ccur (n,k)} p^{n,n_1,...,n_t,n'}_{k,k_1,...,k_t,k'}(q_1,q_2) \cdot E_{n_1,k_1}... E_{n_t,k_t} 
\end{equation}
for some $p^{n,n_1,...,n_t,n'}_{k,k_1,...,k_t,k'}(q_1,q_2) \in \BK$. All products which appear with non-zero coefficient in the right-hand side of \eqref{eqn:e triangle} have the property that $(n',k') \cur (n_i,k_i) \cur (n,k)$ for some $i$. The equations \eqref{eqn:e triangle double} generate the ideal of relations between $E_{n,k} \in \CA$. \\

\end{theorem}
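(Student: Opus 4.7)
My plan is to parallel the proof of Theorem \ref{thm:comm}, accounting for the full tensor decomposition $\CA = \CA^\leftarrow \otimes \CA^\text{diag} \otimes \CA^\rightarrow$. I would proceed in three stages: first establish the commutation relation \eqref{eqn:e triangle double} with coefficients in the localized ring $\BK_\loc$, then upgrade to $\BK$ via the pairing argument, and finally deduce both the basis \eqref{eqn:anna double} and the ideal-generation statement.

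For the first stage, I split into cases based on the relative position of the lattice vectors $v = (n,k)$ and $v' = (n',k')$. When both vectors lie in the open upper (resp.\ lower) half-plane, the commutator is entirely within $\CA^\rightarrow$ (resp.\ $\CA^\leftarrow$), so Theorem \ref{thm:comm} applies directly, up to the sign switch that distinguishes $\CS$ from $\CS^\text{op}$. When one of the vectors has vanishing second coordinate, the required form is given by Proposition \ref{prop:hecke}. When the two vectors lie on opposite rays through the origin (so $kn' = k'n$), the relation is Proposition \ref{prop:zero}. Finally, in the genuinely mixed case where $v' \ccur v$ with $v$ and $v'$ in opposite open half-planes and non-collinear with the origin, the relation \eqref{eqn:small triangle double} supplies the base case of an empty triangle, and the Burban--Schiffmann straightening procedure---induction on the area between a lattice path and its convexification, applied via the Jacobi identity to reduce any nested commutator to strictly smaller triangles, exactly as in the proof of Theorem \ref{thm:comm}---extends this to all configurations. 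Throughout, the specialization $c \mapsto q^r$ is what allows the $(1-q^{-s})$ denominator of \eqref{eqn:shuf heis 0} to cancel.

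To upgrade $\BK_\loc$-coefficients to $\BK$-coefficients, I would use the pairing argument developed in the proof of Theorem \ref{thm:comm}. The pairing \eqref{eqn:pair} extends to $\CA$ so that the clockwise-ordered monomials in the generators $P_{n_i,k_i}$ form an orthogonal basis, and the norm formula (7.15) of \cite{W} shows their norms avoid cyclotomic poles. The contour-integral analysis of Claim \ref{claim:pair 1}, adapted to mixed products of $E$-generators across $\CA^\rightarrow$, $\CA^\leftarrow$, $\CA^\text{diag}$, then shows that the straightened coefficients have no poles when $q$ is a non-trivial root of unity, hence lie in $\BK$.

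Once \eqref{eqn:e triangle double} is established, the spanning portion of \eqref{eqn:anna double} follows by iteratively straightening arbitrary monomials into clockwise-ordered ones, while linear independence is immediate from \eqref{eqn:def double} combined with \eqref{eqn:anna} applied on each tensor factor. The ideal-generation statement then amounts to saying that any polynomial identity among the $E_{n,k}$'s must vanish modulo \eqref{eqn:e triangle double}, which holds because the reduced form lies in the $\BK$-free basis of clockwise monomials. The step I expect to be the main obstacle is the integrality upgrade in the truly mixed case: one must verify that combining straightening across $\CA^\leftarrow$, $\CA^\rightarrow$, and $\CA^\text{diag}$ does not reintroduce the $(1+q+\dots+q^{s-1})$ denominators, which requires a careful generalization of the pairing formula \eqref{eqn:is pairing} to monomials crossing the tensor factors.
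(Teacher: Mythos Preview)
Your approach differs substantially from the paper's, and the obstacle you identify at the end is real but entirely avoidable. The paper's proof rests on a single structural observation you have missed: the (universal cover of the) $SL_2(\BZ)$ action on $\CA$, which permutes the generators $E_{n,k}$ by acting on their indices. Because the defining relations \eqref{eqn:shuf heis 0} and \eqref{eqn:small triangle double} are invariant under this action (up to powers of $c$), every ``slope subalgebra'' $\CA^{< b/a} = \BK\langle E_{n,k}\rangle_{nb<ka}$ is isomorphic to $\CS$, and likewise $\CA^{> b/a}\cong\CS^{\op}$. Now any two non-collinear lattice vectors $(n,k)$ and $(n',k')$ lie strictly on one side of some rational line through the origin, so the commutator $[E_{n,k},E_{n',k'}]$ lives entirely inside a subalgebra isomorphic to $\CS$ or $\CS^{\op}$, and Theorem \ref{thm:comm} applies verbatim---no new straightening, no extension of the pairing. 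The collinear case reduces to Proposition \ref{prop:zero}, as you note, and the ideal-generation statement is simply cited from \cite{W}.

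In particular, your ``genuinely mixed case'' does not arise: there is no pair of non-collinear vectors that fails to fit into a single open half-plane. The extension of the pairing \eqref{eqn:pair} to monomials crossing the three tensor factors would indeed be delicate---the natural pairing on the double is a Hopf pairing between $\CA^\leftarrow$ and $\CA^\rightarrow$, not an inner product on all of $\CA$, so your Claim \ref{claim:pair 1} analogue has no obvious formulation---but the $SL_2(\BZ)$ reduction makes this entire second stage unnecessary. Your case analysis and the re-derivation of integrality across tensor factors can be replaced by one sentence invoking the symmetry.
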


\noindent We specialize $c = q^r$ in \eqref{eqn:e triangle double} for two reasons: firstly, so that formula \eqref{eqn:e triangle double} matches formula \eqref{eqn:shuf heis}, which requires $c = q^r$ in order to clear denominators. Secondly, formula \eqref{eqn:small triangle double} and its equivalent with $P$'s replaced with $E$'s, feature certain powers of $c$ in the right-hand side, which only become elements of $\BK$ upon specialization $c = q^r$. \\

\begin{proof} The generators $E_{n,k}$ (alternatively $P_{n,k}$) of the algebra $\CA$ are permuted by $SL_2(\BZ)$ acting on the indices. As observed in \cite{BS}, the relations \eqref{eqn:shuf heis 0} and \eqref{eqn:small triangle double} are not quite invariant under this $SL_2(\BZ)$ due to the various powers of $c$ that appear in the right-hand sides of these formulas, but they are invariant under the universal cover of $SL_2(\BZ)$. As a consequence of this fact, the subalgebras:
\begin{equation}
\label{eqn:half}
\CA^{< \frac ba} = \BK \langle E_{n,k} \rangle_{nb < ka} \subset \CA, \qquad \qquad \CA^{> \frac ba} = \BK \langle E_{n,k} \rangle_{nb > ka} \subset \CA 
\end{equation}
are all isomorphic to the shuffle algebra $\CS$ and its opposite $\CS^{\op}$, respectively (which by definition are isomorphic to $\CA^\leftarrow = \CA^{<\infty}$ and $\CA^\rightarrow = \CA^{>\infty}$, respectively). Any pair of vectors $(n,k)$, $(n',k')$ either lie on the same line passing through the origin, in which case \eqref{eqn:e triangle double} reduces to \eqref{eqn:shuf heis}, or they lie in the half-plane defined by the line of some slope $b/a \in \BQ$, in which case \eqref{eqn:e triangle double} is an equality in the subalgebra: 
$$
\CA^{<\frac ba} \cong \CS \qquad \text{or} \qquad \CA^{> \frac ba} \cong \CS^{\op}
$$
Such an equality holds due to \eqref{eqn:e triangle}. The final statement of the Theorem, concerning the fact that relations \eqref{eqn:e triangle double} generate the ideal of relations between the generators $E_{n,k} \in \CA$, was proved in \cite[Proposition 2.7]{W} following the ideas of \cite{BS}; the main idea is that one can use relations \eqref{eqn:e triangle double} to write any product of $E_{n,k}$'s as a linear combination of products of $E_{n,k}$'s in clockwise order; the fact that such clockwise products form a linear basis of $\CA$ follows by combining \cite{BS} and \cite{Shuf}. 

\end{proof}

\noindent Recall the elements $E_{d_\bullet} \in \CS$ from \eqref{eqn:shuffle gen}. Since the negative (respectively positive) half of the algebra $\CA$ is isomorphic to $\CS$ (respectively $\CS^\op$), we will write:
\begin{align}
&E_{d_\bullet} \in \CA^\leftarrow \subset \CA \label{eqn:gen left} \\ 
&F_{d_\bullet} \in \CA^\rightarrow \subset \CA \label{eqn:gen right}
\end{align} 
for the corresponding elements of $\CA$. Combining Proposition \ref{prop:gen} with \eqref{eqn:anna double} yields:
\begin{align}
&E_{d_\bullet} = \mathop{\sum^{n_1+...+n_t = n}_{\frac {k_1}{n_1} \leq ... \leq \frac {k_t}{n_t}}}^{n_i \in \BN, k_i \in \BZ} s_{k_1,...,k_t,d_\bullet}^{n_1,...,n_t}(q_1,q_2) \cdot E_{-n_1,k_1}... E_{-n_t,k_t} \label{eqn:left} \\
&F_{d_\bullet} = \mathop{\sum^{n_1+...+n_t = n}_{\frac {k_1}{n_1} \geq ... \geq \frac {k_t}{n_t}}}^{n_i \in \BN, k_i \in \BZ} s_{k_1,...,k_t,d_\bullet}^{n_1,...,n_t}(q_1,q_2) \cdot  E_{n_1,k_1}... E_{n_t,k_t} \label{eqn:right}
\end{align} 
for any $d_\bullet = (d_1,...,d_n)$, where $s_{k_1,...,k_t,d_\bullet}^{n_1,...,n_t}(q_1,q_2) \in \BK$ are uniquely determined. \\

\subsection{}
\label{sub:completion}

In this paper, we will mostly be concerned with the top half of the algebra $\CA$:
$$
\CA^\uparrow := \BK\text{-subalgebra generated by } \langle E_{n,k} \rangle^{k \in \BN}_{n \in \BZ} \subset \CA
$$
which coincides with $\CA^{<0}$ of \eqref{eqn:half}. Therefore, there exists an isomorphism:
\begin{equation}
\label{eqn:iso half}
\CA^\uparrow \cong \CS, \qquad \qquad E_{n,1} \mapsto z_1^n
\end{equation}
We will often extend the subalgebra $\CA^\uparrow$ by adding the elements $E_{n,0}$ on the $x$-axis:
$$
\CA^{\uparrow\ext} := \BK\text{-subalgebra generated by } \left \langle c^{\pm 1}, E_{n,k} \right \rangle^{k \in \BN \sqcup 0}_{n \in \BZ} \subset \CA
$$
As a consequence of \eqref{eqn:anna double}, we have: 
\begin{align}
\CA^\uparrow = &\bigoplus_{-\infty < \frac {n_1}{k_1} \leq ... \leq \frac {n_t}{k_t} < \infty }  \ring \cdot E_{n_1,k_1}... E_{n_t,k_t} \label{eqn:basis monomial 0} \\
\CA^{\uparrow\ext} \Big|_{c \mapsto q^r} = &\bigoplus_{-\infty \leq \frac {n_1}{k_1} \leq ... \leq \frac {n_t}{k_t} \leq \infty }  \ring \cdot E_{n_1,k_1}... E_{n_t,k_t}  \label{eqn:basis monomial 1}
\end{align}
(note that we do not need to specialize $c = q^r$ in \eqref{eqn:basis monomial 0} because the commutation relations \eqref{eqn:small triangle double} do not involve any powers of $c$ if the indices $(n,k)$ and $(n',k')$ are both in the upper half plane, see \cite{W}). The algebra $\CA^\uparrow$ is $\BZ \times \BN$ graded:
$$
\deg E_{n,k} = (n,k)
$$
and the graded pieces $\CA^\uparrow_{n,k}$ have infinite rank over $\BK$. However, the subspaces:
\begin{equation}
\label{eqn:basis monomial 2}
\CA^{\uparrow, \leq \mu}_{n,k} =  \mathop{\sum_{- \mu \leq \frac {n_1}{k_1} \leq ... \leq \frac {n_t}{k_t} \leq \mu}^{n_i \in \BZ, \ \sum n_i = n}}^{k_i \in \BN, \ \sum k_i = k} \ring \cdot E_{n_1,k_1}... E_{n_t,k_t} 
\end{equation}
are finite rank free $\BK$-modules. Consider the $\BK$-linear map $\CA_{n,k}^{\uparrow, \leq \mu+1} \twoheadrightarrow \CA_{n,k}^{\uparrow, \leq \mu}$ which sends every product $E_{n_1,k_1}... E_{n_t,k_t}$ of \eqref{eqn:basis monomial 0} either to 0 or to itself, and define:
$$
\wCA^\uparrow_{n,k} = \lim_{\leftarrow, \mu} \CA^{\uparrow, \leq \mu}_{n,k} \qquad \qquad \wCA^\uparrow = \bigoplus_{n \in \BZ}^{k \in \BN} \wCA^\uparrow_{n,k}
$$
In more practical terms, we may think of $\wCA^\uparrow$ as consisting of infinite $\ring$-linear combinations of basis monomials \eqref{eqn:basis monomial 0} for bounded $n_1+...+n_t$ and $k_1+...+k_t$:
\begin{equation}
\label{eqn:basis monomial 3}
\wCA^\uparrow = \underset{-\infty < \frac {n_1}{k_1} \leq ... \leq \frac {n_t}{k_t} < \infty}{\widehat{\bigoplus}}  \ring \cdot E_{n_1,k_1}... E_{n_t,k_t} 
\end{equation}
Similarly, we define $\wCA^{\uparrow\ext} \supset \wCA^\uparrow$ by allowing $k_i = 0$, and the analogue of \eqref{eqn:basis monomial 3} is:
\begin{equation}
\label{eqn:basis monomial 4}
\wCA^{\uparrow\ext} \Big|_{c \mapsto q^r} = \underset{-\infty \leq \frac {n_1}{k_1} \leq ... \leq \frac {n_t}{k_t} \leq \infty}{\widehat{\bigoplus}}   \ring \cdot E_{n_1,k_1}... E_{n_t,k_t} 
\end{equation}

\medskip

\begin{proposition}
\label{prop:completion}

$\wCA^\uparrow$ and $\wCA^{\uparrow\eext}$ are closed under multiplication, and thus algebras. \\

\end{proposition}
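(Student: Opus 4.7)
The key ingredient is the slope-preservation property furnished by Theorem \ref{thm:comm double}: for any clockwise pair $(n',k') \cur (n,k)$, the commutator $[E_{n,k}, E_{n',k'}]$ expands in the ordered basis with all constituent lattice points $(n_i,k_i)$ satisfying $n'/k' \leq n_i/k_i \leq n/k$. Iterating this observation, the reordering of any product $M_\alpha \cdot M'_\beta$ of ordered basis monomials whose slopes lie in $[-\mu, \mu]$ produces only ordered monomials with slopes still in $[-\mu, \mu]$. Hence the slope filtration is multiplicatively closed, $\CA^{\uparrow, \leq \mu} \cdot \CA^{\uparrow, \leq \mu} \subseteq \CA^{\uparrow, \leq \mu}$ for every $\mu$, and the analogue for $\CA^{\uparrow\eext}$ follows on including the slopes $\pm\infty$ and invoking Proposition \ref{prop:hecke} for the commutations involving the diagonal generators $E_{n,0}$.

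Given this, for homogeneous $x \in \wCA^\uparrow_{(n,k)}$ and $y \in \wCA^\uparrow_{(n',k')}$ expanded in the ordered basis as $x = \sum_\alpha c_\alpha M_\alpha$ and $y = \sum_\beta d_\beta M'_\beta$, the plan is to define $xy \in \wCA^\uparrow_{(n+n',k+k')}$ coefficient-wise: for each ordered basis monomial $M''$, set
\begin{equation*}
(xy)_{M''} \;:=\; \sum_{\alpha,\beta} c_\alpha \, d_\beta \, p^{\alpha,\beta}_{M''},
\end{equation*}
where $p^{\alpha,\beta}_{M''} \in \BK$ is the coefficient of $M''$ in the ordered expansion of $M_\alpha \cdot M'_\beta$ obtained by repeated application of \eqref{eqn:e triangle double}. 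Since $\wCA^\uparrow = \bigoplus_{(n,k)} \wCA^\uparrow_{(n,k)}$ is a direct sum over graded pieces, it suffices to treat homogeneous $x,y$; the content of the proposition then reduces to the claim that for each fixed $M''$ the displayed sum is finite in $\BK$.

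The hard part, which I expect to be the main obstacle, is precisely this finiteness. My plan is to exploit that the length of each $M_\alpha$ (resp.\ $M'_\beta$) is bounded by $k$ (resp.\ $k'$) since each $k_i \geq 1$, and then to trace the lattice points of $M''$ back to those of $M_\alpha, M'_\beta$ using the explicit commutator formulas in Proposition \ref{prop:e comm}: each lattice point of $M''$ is either inherited intact from $M_\alpha$ or $M'_\beta$ (the ``swap'' component of $E_a E_b = E_b E_a + [E_a, E_b]$), or arises by collapsing several input lattice points onto a single ray through the origin in a length-one commutator output, as in \eqref{eqn:e triangle 1}--\eqref{eqn:e triangle 3}. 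Since each such reduction is finitely branched and the total length is bounded by $k+k'$, only finitely many $(M_\alpha, M'_\beta)$ of the prescribed fixed degrees can contribute to a given $M''$, yielding the required finiteness. The same argument applies mutatis mutandis to $\wCA^{\uparrow\eext}$.
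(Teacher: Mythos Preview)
Your proof has a genuine gap in the finiteness argument. The observation that the slope filtration is multiplicatively closed, $\CA^{\uparrow,\leq\mu}\cdot\CA^{\uparrow,\leq\mu}\subseteq\CA^{\uparrow,\leq\mu}$, is correct but does not by itself yield an algebra structure on the inverse limit: the transition maps $\CA^{\uparrow,\leq\mu+1}\twoheadrightarrow\CA^{\uparrow,\leq\mu}$ are \emph{not} ring homomorphisms. For instance, both $E_{\mu+1,1}$ and $E_{-\mu-1,1}$ map to $0$ under the transition, yet their product $E_{\mu+1,1}E_{-\mu-1,1}=E_{-\mu-1,1}E_{\mu+1,1}+[E_{\mu+1,1},E_{-\mu-1,1}]$ contains commutator terms (such as a multiple of $E_{0,2}$) that survive truncation to $\CA^{\uparrow,\leq\mu}$. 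Equivalently, slope-preservation bounds the slopes of $M''$ in terms of those of $M_\alpha,M'_\beta$, but you need the \emph{opposite} implication to conclude finiteness: given $M''$, bound the slopes of the contributing $M_\alpha,M'_\beta$.

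Your ``tracing back'' argument does not supply this. The claim that each lattice point of $M''$ is inherited or arises by collapsing a subset of input lattice points is not what the commutator relations actually do (in \eqref{eqn:e triangle 1}, a single input segment $(ks,ns)$ is split across two output segments), and even granting some partition structure, the reconstruction is not finitely-valued: the vector $(0,2)$ is the sum of $(N,1)+(-N,1)$ for every $N$. Fixing the total degrees $(n,k),(n',k')$ does not by itself rule out such families once $k,k'\geq 2$, since e.g.\ $M_\alpha=E_{-N,1}E_{N,1}$ ranges over infinitely many ordered monomials of degree $(0,2)$. What is missing is a mechanism that excludes all but finitely many of these from contributing to a fixed $M''$. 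The paper supplies exactly this via a geometric ordering on lattice paths: one shows that every convex path $v_0$ appearing in the expansion of $E_vE_{v'}$ lies \emph{to the left} of the concatenated path $v\sqcup v'$ (Claim~\ref{claim:1}). Since $v_0$ is fixed and $v,v'$ have prescribed endpoints, the constraint that $v\sqcup v'$ stays to the right of $v_0$ at every height forces the intermediate vertices of $v$ and $v'$ into a bounded region, hence allows only finitely many $(v,v')$. This ``leftward'' monotonicity of the straightening process is the key idea you are missing.
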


\begin{proof} We will prove the statement for $\wCA^\uparrow$, as the case of $\wCA^{\uparrow\ext}$ is analogous. From now on, we will consider paths $v$ in the upper half plane that start at the origin, and are built up of steps $\{(n_1,k_1),...,(n_t,k_t)\} \subset \BZ \times \BN$. Such a path is called convex if:
\begin{equation}
\label{eqn:inequality}
\frac {n_1}{k_1} \leq ... \leq \frac {n_t}{k_t}
\end{equation}
which corresponds to $(n_1,k_1) \ccur ... \ccur (n_t,k_t)$. The \textbf{size} of the path is the lattice point $(n,k)$ with $n = \sum n_i$ and $k = \sum k_i$, where the path ends. We will say that a path $v$ lies \textbf{to the left} of a path $v'$ if they have the same size, and $v' \subset v + (\BR_{\geq 0}, 0)$. Given paths $v, v'$ of sizes $(n,k), (n',k')$, their \textbf{concatenation} $v \sqcup v'$ is the path of size $(n+n',k+k')$ obtained by tracing the steps of $v$, followed by the steps of $v'$. Finally, the \textbf{convexification} $v^{\text{conv}}$ of any path $v$ refers to the convex path one obtains by rearranging the constituent steps of $v$ so that the inequality \eqref{eqn:inequality} is satisfied (the order of steps of the same slope is immaterial). \\

\noindent As in Section \ref{sec:shuf}, there is a one-to-one correspondence between paths and basis vectors of $\CA^\uparrow$, given by:
$$
v \leadsto E_v = E_{n_1,k_1}... E_{n_t,k_t}
$$
Formulas \eqref{eqn:basis monomial 0} and \eqref{eqn:basis monomial 3} say that all elements of the algebras $\CA^\uparrow$ and $\wCA^\uparrow$ are linear combinations (finite in the former case, infinite in the latter case) of the elements $E_v$ corresponding to convex paths. Let us recall from \cite{BS} that formula \eqref{eqn:e triangle double} (see also the proof of Theorem \ref{thm:comm}) implies that we can ``convexify" any path $v$, i.e. write $E_v$ as a linear combination $\sum_{v_0 \text{ convex}} c_v^{v_0} \cdot E_{v_0}$ for convex paths $v_0$. The main thing we will need to take from their argument is that the coefficients $c_v^{v_0}$ are non-zero only if the path $v_0$ lies to the left of $v$. More precisely, we will show that: \\


\begin{claim}
\label{claim:1}

For any convex paths $v,v'$ of sizes $(n,k), (n',k')$, we have:
\begin{equation}
\label{eqn:ragnar}
E_v \cdot E_{v'} = E_{v \sqcup v'} \in E_{(v \sqcup v')^\emph{conv}} + \Delta \sum^{v_0 \text{ convex path of size } (n+n',k+k')}_{\text{located to the left of }v \sqcup v'} \ring \cdot E_{v_0}
\end{equation}

\end{claim}

\medskip

\noindent Indeed, the Claim implies Proposition \ref{prop:completion}, because it establishes the following fact: given an infinite sum of $E_v$'s (resp. $E_{v'}$'s) over paths $v$ (resp. $v'$) of fixed size $(n,k)$ (resp. $(n',k')$), then any given convex path $v_0$ appears in the right-hand side of \eqref{eqn:ragnar} with non-zero coefficient only for finitely many $v$ and $v'$. This means that the product of infinite sums of $E_v$'s and $E_{v'}$'s is a well-defined infinite sum. \\

\begin{proof} \emph{of Claim \ref{claim:1}:} Let $v = \{(n_1,k_1),...,(n_t,k_t)\}$, $v' = \{(n'_1,k'_1),...,(n'_{t'},k'_{t'})\}$. If:
$$
\frac {n_t}{k_t} \leq \frac {n'_1}{k_1'}
$$
then $v \sqcup v'$ is already a convex path, and the claim holds trivially. If the opposite inequality holds, then we may apply \eqref{eqn:e triangle double} to obtain:
\begin{equation}
\label{eqn:path}
E_v E_{v'} = E_{v \backslash (n_t,k_t)} E_{n_t,k_t} E_{n_1',k_1'} E_{v' \backslash (n_1',k_1')} \in
\end{equation} 
$$
\in E_{v \backslash (n_t,k_t)} E_{n_1',k_1'} E_{n_t,k_t}  E_{v' \backslash (n_1',k_1')} + \Delta \sum_{v_0 \text{ convex path}} \ring \cdot E_{v \backslash (n_t,k_t)} E_{v_0} E_{v' \backslash (n_1',k_1')}
$$
where the sum goes over convex paths $v_0$ of size $(n_t+n_1', k_t+k_1')$ which stay to the left of the path spanned by the two vectors $(n_t,k_t), (n_1',k_1')$. Consider the paths:
\begin{align*}
&\tilde{v} \text{ obtained by concatenating } v \backslash (n_t,k_t), (n_1',k_1'), (n_t,k_t), v' \backslash (n_1',k_1') \\
&\tilde{v}_0 \text{ obtained by concatenating } v \backslash (n_t,k_t), v_0, v' \backslash (n_1',k_1')
\end{align*}
for any convex path $v_0$ that appears in the right-hand side of \eqref{eqn:path}. All such paths $\tilde{v}$ and $\tilde{v}_0$ are strictly to the left of $v \sqcup v'$. If any of these paths fails to be convex, then we choose two consecutive vectors in the given path which spoil convexity:
$$
(\tilde{n}, \tilde{k}) \text{ and } (\tilde{n}', \tilde{k}') \quad \text{such that} \quad \frac {\tilde{n}}{\tilde{k}} > \frac {\tilde{n}'}{\tilde{k}'}
$$
and repeat the argument of \eqref{eqn:path}. The reason why this recursive procedure will end after finitely many steps is that all our paths have fixed size, and the slopes of all the vectors in the paths obtained at every step cannot be greater (respectively smaller) than the greatest (respectively smallest) of the slopes of the vectors in the paths $v$ and $v'$ (cf. \eqref{eqn:e triangle double}). Finally, note that $E_{\tilde{v}}$ is the only summand in the right-hand side of \eqref{eqn:path} which does not have a prefactor $\Delta$ in front. Since $\tilde{v}$ consists of the same vectors as $v \sqcup v'$, but in some other order, at the end of the recursive procedure this summand will have transformed into $E_{(v \sqcup v')^{\text{conv}}}$ modulo $\Delta$.

\end{proof} \end{proof}


\begin{definition}
\label{def:completion}

A representation $F$ of $\CA$ is called \textbf{good} if it has a grading:
\begin{equation}
\label{eqn:good}
F = \bigoplus_{n = n_0}^\infty F_n
\end{equation}
for some $n_0 \in \BZ$, such that every $E_{n,k} \in \CA$ acts on $F$ by decreasing the degree by $n$. \\

\end{definition}

\begin{proposition}
\label{prop:completion acts}

The action of $\CA$ on any good module extends to $\wCA^\uparrow$ and $\wCA^{\uparrow\emph{ext}}$. \\

\end{proposition}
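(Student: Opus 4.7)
The plan is to interpret the action of a formal series $\xi \in \wCA^{\uparrow\eext}$ on $F$ term by term, then show that the resulting sum is in fact finite on every element. Fix $x \in F_m$ and write $\xi = \sum_v c_v E_v$ as in \eqref{eqn:basis monomial 4}, the sum running over convex paths $v = ((n_1,k_1),\ldots,(n_t,k_t))$ of fixed total size $(n,k)$. I would set $\xi \cdot x := \sum_v c_v (E_v \cdot x)$ and argue that only finitely many summands are non-zero.

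The key observation is that, setting $a_j := n_j + n_{j+1} + \cdots + n_t$, the intermediate vector $E_{n_j,k_j}\cdots E_{n_t,k_t}\cdot x$ lies in $F_{m - a_j}$, so the goodness assumption $F_\ell = 0$ for $\ell < n_0$ forces the necessary condition $a_j \leq L := m - n_0$ for every $j$ as soon as $E_v \cdot x \neq 0$. I would then show that only finitely many convex paths of size $(n,k)$ satisfy this system of inequalities, by decomposing the path into a slope-$(-\infty)$ head of steps $(n_i,0)$ with $n_i \leq -1$, a finite-slope middle of steps with $k_i \geq 1$, and a slope-$(+\infty)$ tail of steps $(n_i,0)$ with $n_i \geq 1$. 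The middle has at most $k$ steps (each contributes at least $1$ to $\sum k_i = k$), and a standard convexity argument confines each of its $n_i$ to a bounded range in terms of $L$; the tail has $n_i$-sum $\leq L$ by $a_j \leq L$ at its first index, so admits only finitely many compositions; and the head has absolute $n_i$-sum $\leq L - n$ by the same inequality applied just after the head, so also admits only finitely many compositions.

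With finiteness established, the formula $\xi \cdot x := \sum_v c_v (E_v \cdot x)$ defines a $\ring$-linear operator. To check that this yields an algebra action, given $\xi, \eta \in \wCA^{\uparrow\eext}$ I would expand every product $E_v E_{v'} = E_{v\sqcup v'}$ in the convex basis via Claim \ref{claim:1} of Proposition \ref{prop:completion} and rearrange; this is legal because, for fixed $x$, all the relevant sums have already been shown to be finite. The case of $\wCA^\uparrow$ is obtained by restriction, or equivalently by running exactly the same proof after omitting the head and tail. The main obstacle, and the only place requiring real care, is the control over the slope-$(-\infty)$ head in the extended case: its length is a priori unbounded, and the argument hinges on the subtle interplay between the constraint $a_j \leq L$ just after the head and the fixed total $\sum n_i = n$.
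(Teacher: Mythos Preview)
Your proposal is correct and follows the same strategy as the paper: show that only finitely many basis monomials $E_v$ act non-trivially on a given $x \in F_m$. The paper's proof is terser, recording only the two conditions $n_1 \geq n_0 - n$ and $n_t \leq n' - n_0$ (equivalently your $a_2 \leq L$ and $a_t \leq L$) and invoking convexity for the rest. For $\wCA^\uparrow$ this suffices, but you are right that the extended case requires more care: bounding only $n_1$ and $n_t$ does not control the lengths of the slope-$\pm\infty$ head and tail (for example, the paths $((-1,0)^s,(1,0)^s)$ all have $n_1=-1$, $n_t=1$ regardless of $s$), so one really does need the full system $a_j \leq L$ together with your head/middle/tail decomposition. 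The paper dismisses the extended case as analogous, so your explicit argument there is a genuine elaboration rather than a departure.
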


\begin{proof} Let $i_n$ and $\text{pr}_n$ denote the inclusion and projection, respectively, to the $n$-th direct summand of  \eqref{eqn:good}. Then we note that:
$$
\text{pr}_n \circ E_{n_1,k_1}...E_{n_t,k_t} \circ i_{n'} = 0
$$ 
as soon as $n_1 < n_0-n$ or $n_t > n'-n_0$. Since any element of $\wCA^\uparrow$ is a finite linear combination of monomials $E_{n_1,k_1}...E_{n_t,k_t}$ modulo those monomials which satisfy either $n_1 < n_0-n$ or $n_t > n'-n_0$ for fixed $n,n'$, the Proposition follows. 

\end{proof}

\subsection{}


The main reason we have introduced the completion $\wCA^\uparrow$ of $\CA^\uparrow$ is that it contains the following elements, studied in \cite{W}:
\begin{equation}
\label{eqn:w}
\{W_{n,k}\}_{n \in \BZ}^{k \in \BN} \in \wCA^\uparrow 
\end{equation}
given by:
\begin{equation}
\label{eqn:def w}
W_{n,k} = \mathop{\sum_{- \infty < \frac {n_1}{k_1} < ... < \frac {n_t}{k_t} < \infty}^{n_i \in \BZ, \ \sum n_i = n}}^{k_i \in \BN, \ \sum k_i = k} E_{n_1,k_1}... E_{n_t,k_t} \cdot q^{\alpha(v)} 
\end{equation}
where the integer $\alpha(v)$ associated to the convex path $v = \{(n_1,k_1),...,(n_t,k_t)\}$ is:
$$
\alpha(v) = \sum_{1 \leq i < j \leq t} k_i n_j + \sum_{i=1}^t \frac {k_in_i+k_i-n_i -\gcd(n_i,k_i)}2
$$
The relations between the elements \eqref{eqn:def w} are best phrased via the currents:
\begin{equation}
\label{eqn:def w current}
W_k(x) = \sum_{n \in \BZ} \frac {W_{n,k}}{x^n}
\end{equation}
which were shown in \cite{W} to match, up to a renormalization, the relations in the quantum deformed $W$--algebra of type $\fgl_r$ as $r \rightarrow \infty$ (which was defined in \cite{AKOS, FF}):
\begin{equation}
\label{eqn:w rel}
W_k(x) W_{k'}(y) \cdot f_{kk'}\left(\frac yx \right) - W_{k'}(y) W_k(x) \cdot f_{k'k}\left(\frac xy \right) \ = \
\end{equation}
$$
= \sum_{i=\max(0,k'-k)+1}^{k'} \delta\left(\frac {y}{xq^i} \right) \left[ W_{k'-i}\left(\frac y{q^i}\right) W_{k+i}(y) f_{k'-i,k+i} (q^i) \right] \theta(\min(i,k-k'+i)) - 
$$
$$
- \sum^{k}_{i = \max(0,k-k')+1} \delta\left(\frac {x}{yq^i} \right) \left[ W_{k-i}\left(\frac x{q^i}\right) W_{k'+i}(x) f_{k-i,k'+i} (q^i) \right] \theta(\min(i,k'-k+i))
$$
for all $k, k' \geq 0$ (one sets $W_0(x) = 1$). In formula \eqref{eqn:w rel}, we write $\delta(z) = \sum_{n \in \BZ} z^n$,
\begin{equation}
\label{eqn:def theta}
\theta(s) = \frac {\Delta}{1-q}\cdot \zeta(q)...\zeta(q^{s-1})
\end{equation}
while for all $k,k' \geq 0$, we define the following power series in $z$:
\begin{equation}
\label{eqn:f}
f_{kk'} (z) =  \exp \left[\sum_{n=1}^\infty \frac {z^n}n \cdot \frac {(1-q_1^n)(1-q_2^n)(q^{\max(0,k-k')n}-q^{kn})}{1-q^n}  \right]
\end{equation}

\subsection{}

Relation \eqref{eqn:w rel} is quadratic in the generating series $W_k(x)$, but one must carefully interpret it to obtain an infinite family of relations between the coefficients $W_{n,k}$. The idea is to equate the coefficients of $x^{-n}y^{-n'}$ in the left and right-hand sides of \eqref{eqn:w rel}, for all $n, n' \in \BZ$. In the left-hand side, this is achieved by expanding:
\begin{equation}
\label{eqn:extract 1}
W_k(x) W_{k'}(y) f_{kk'}\left(\frac yx\right) \qquad \text{in non-negative powers of } \frac yx
\end{equation}
\begin{equation}
\label{eqn:extract 2}
W_{k'}(y) W_k(x) f_{k'k}\left(\frac xy \right) \qquad \text{in non-negative powers of } \frac xy
\end{equation}
This is the only reasonable choice one can make in order for the expansion of either term to be an infinite sum of the form:
$$
\sum_{a \geq 0} \text{coefficient} \cdot W_{n-a,k}W_{n'+a,k'} \qquad \text{or} \qquad \sum_{a \geq 0} \text{coefficient} \cdot W_{n'-a,k'}W_{n+a,k}
$$
which are acceptable expressions in the completion $\wCA^\uparrow$ of $\CA^\uparrow$. What is not immediately clear, and will be discussed in the proof of Theorem \ref{thm:w algebra} below, is how to make sense of the product of $W$--algebra currents in the right-hand side of \eqref{eqn:w rel}. The only exception is when $k'=1$, in which case the relation unambiguously reads:
\begin{multline} 
W_k(x) W_1(y) \zeta \left(\frac x{y q^k} \right) - W_1(y) W_k(x) \zeta \left(\frac y{qx} \right) = \\ = \frac {\Delta}{1-q} \left[ \delta \left( \frac y{xq} \right) W_{k+1}(y) - \delta \left( \frac x{yq^k} \right) W_{k+1}(x) \right] \label{eqn:series}
\end{multline} 
Extracting the coefficients of $x^{-n}y^{-n'}$ according to the rules \eqref{eqn:extract 1} and \eqref{eqn:extract 2} yields:
$$
[W_{n,k}, W_{n',1}] + \Delta \sum_{a=1}^{\infty} \frac {1-q^a}{1-q}  \left( q^{a(k-1)} W_{n-a,k}W_{n'+a,1} - W_{n'-a,1} W_{n+a,k} \right) =
$$
\begin{equation}
\label{eqn:example}
= \Delta \frac {q^{-n} - q^{-kn'}}{1-q} \cdot W_{n+n',k+1}
\end{equation}
What is surprising about formula \eqref{eqn:example} is that all the coefficients lie in $\BK$ instead of $\BF$, even though $\theta(s)$ could a priori have produced poles of the form $1-q^n$ in \eqref{eqn:w rel}. In fact, this is a general phenomenon, as evidenced by the result below: \\

\begin{theorem}
\label{thm:w algebra}

Relations \eqref{eqn:w rel} are equivalent with the following family of equalities, which hold for all $k,k' > 0$ and $n,n' \in \BZ$:
\begin{equation}
\label{eqn:comm w}
[W_{n,k}, W_{n',k'}] = 
\end{equation}
$$
= \Delta \left( \mathop{\sum_{0 \leq \min(l,l') \leq \min(k,k') \text{ and } \frac ml \leq \frac {m'}{l'}, \text{ as well as}}}_{\frac ml < \min(\frac nk, \frac {n'}{k'}) \leq \max(\frac nk, \frac {n'}{k'}) < \frac {m'}{l'} \text{ if } \min(l,l') = \min(k,k')} c_{n,n',k,k'}^{m,m',l,l'} (q_1,q_2) \cdot W_{m,l} W_{m',l'} \right)
$$
for certain $c_{n,n',k,k'}^{m,m',l,l'} (q_1,q_2) \in \ring$ which we will compute algorithmically. \\



\end{theorem}

\noindent Note that one can only have $m+m' = n+n'$ and $k+k' = l+l'$ in the right-hand side of \eqref{eqn:comm w} for degree reasons, and that $W_{m,0} = \delta_m^0$ and $W_{m',0} = \delta_{m'}^0$ are allowed. \\

\subsection{} Theorem \ref{thm:w algebra} will be proved at the end of the present Section. Let us define:
\begin{equation}
\label{eqn:def a infty}
\CA_\infty = \ring \Big \langle W_{n,k} \Big \rangle^{k \in \BN}_{n\in \BZ} \Big/ \text{relations \eqref{eqn:comm w}}
\end{equation}
and let us note that Theorem \ref{thm:w algebra} states that there is a well-defined homomorphism $\CA_\infty \rightarrow \wCA^\uparrow$ given by \eqref{eqn:def w}. After tensoring with $\field$, this embedding is completely determined by sending $W_{n,1} \mapsto E_{n,1}$, because \eqref{eqn:example} implies that any $W_{n,k}$ can be obtained as a sum of products of $W_{n,1}$, upon inverting $\Delta$ and $[q]_s$ for all $s \in \BN$. \\

\begin{proposition}
\label{prop:w and heis}

The generators of $\CA_\infty \rightarrow \wCA^\uparrow$ interact with:
\begin{equation}
\label{eqn:gardel}
p_{-n} := P_{-n,0} \in \CA^{\uparrow\emph{ext}} \quad \text{and} \quad p_n := \left(\frac cq\right)^n P_{n,0} \in \CA^{\uparrow\emph{ext}}
\end{equation}
by the formulas:
\begin{equation}
\label{eqn:bonus w rel 1}
[W_k(x), p_{-n}] =  - \frac {(1-q_1^n)(1-q_2^n)(1 - q^{kn})}{1-q^n}  \cdot \frac 1{x^n} W_k(x) 
\end{equation}
\begin{equation}
\label{eqn:bonus w rel 2}
[W_k(x), p_n] = \frac {(1-q_1^n)(1-q_2^n)(q^{-kn} - 1)c^n}{1-q^n}  \cdot x^{n} W_k(x)
\end{equation}
\begin{equation}
\label{eqn:bonus w rel 3}
[p_{-n}, p_n] = n (1-q_1^n)(1-q_2^n)\frac {1-c^n}{1-q^n}
\end{equation}
for all $n \in \BN$. \\

\end{proposition}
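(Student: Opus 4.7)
The proposition packages three formulas of increasing complexity. Relation (3) is the Heisenberg commutator between two generators on the horizontal axis; it is a direct instance of the diagonal relation \eqref{eqn:shuf heis 0} with $s = n$ applied to the pair $(k,n) = (-n,0)$, $(k',n') = (n,0)$, after absorbing the $(c/q)^n$ normalization built into $p_n$. This gives exactly $n(1-q_1^n)(1-q_2^n)(1-c^n)/(1-q^n)$.

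For (1) and (2), the base case $k=1$ is immediate. Under the isomorphism \eqref{eqn:iso half}, $W_{m,1} = E_{m,1}$ corresponds to $z_1^m$ in the one-variable shuffle algebra, so Proposition \ref{prop:hecke} (matched with the correct side $\CA^\uparrow \cong \CS$ and the normalization of $p_n$) gives $[p_{\pm n}, z_1^m] = \pm \alpha_1(n)\, z_1^{m \pm n}$ up to the appropriate sign and $c^n$-factor, where $\alpha_\ell(n) := (1-q_1^n)(1-q_2^n)(1-q^{\ell n})/(1-q^n)$. Since the ratio collapses to $1$ when $\ell = 1$, this matches (1) and (2) at $k = 1$.

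The general case $k \geq 2$ proceeds by induction on $k$ using the recursion
\[
\Delta \cdot \tfrac{q^{-n} - q^{-kn'}}{1-q}\, W_{n+n',k+1} = [W_{n,k}, W_{n',1}] + \Delta \sum_{a \geq 1} \tfrac{1-q^a}{1-q}\bigl( q^{a(k-1)} W_{n-a,k} W_{n'+a,1} - W_{n'-a,1} W_{n+a,k} \bigr)
\]
from \eqref{eqn:example}. Apply $\mathrm{ad}(p_{-m})$ to both sides, using Jacobi on the commutator and Leibniz on the sum, and substitute the induction hypothesis $[p_{-m}, W_{s,k}] = \alpha_k(m)\, W_{s-m,k}$ and $[p_{-m}, W_{s,1}] = \alpha_1(m)\, W_{s-m,1}$. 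A direct calculation shows that the sum-over-$a$ contributions produced by $\mathrm{ad}(p_{-m})$ on the two sides cancel in pairs. What remains is the level-$(k+1)$ part: re-applying \eqref{eqn:example} to each of the shifted commutators $[W_{n-m,k}, W_{n',1}]$ and $[W_{n,k}, W_{n'-m,1}]$, the coefficient of $W_{n+n'-m,k+1}$ assembles to
\[
\tfrac{\Delta}{1-q} \Bigl[ q^{-n}\bigl( q^m \alpha_k(m) + \alpha_1(m) \bigr) - q^{-kn'}\bigl( \alpha_k(m) + q^{km}\alpha_1(m) \bigr) \Bigr].
\]
Both bracketed expressions collapse to $\alpha_{k+1}(m)$ via the telescoping identities
\[
q^m \alpha_k(m) + \alpha_1(m) = \alpha_k(m) + q^{km}\alpha_1(m) = \alpha_{k+1}(m),
\]
which are immediate from $(1-q^{(k+1)m})/(1-q^m) = (1-q^{km})/(1-q^m) + q^{km}$. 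Matching with $\mathrm{ad}(p_{-m})$ applied to the right-hand side of \eqref{eqn:example} closes the induction for (1); the argument for (2) is parallel, with the extra $c^n$ appearing from the $(c/q)^n$ twist in the definition of $p_n$.

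The main obstacle is the coefficient bookkeeping in the inductive step: the prefactor $(q^{-n} - q^{-kn'})/(1-q)$ and the exponents $q^{a(k-1)}$ must transform compatibly under the shifts $n, n' \mapsto n \mp m, n' \mp m$ induced by $\mathrm{ad}(p_{\pm m})$. The computation is streamlined by observing that the sum-over-$a$ terms cancel in pairs before one ever computes any $W_{k+1}$ coefficient, leaving only the leading terms where the telescoping identity above takes over. An alternative route applies $\mathrm{ad}(p_{\pm n})$ directly to the generating-series exchange relation \eqref{eqn:series} and exploits the $\delta$-function structure on the right; this version is more compact but uses the same telescoping identity as its algebraic engine.
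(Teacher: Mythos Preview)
Your proof is correct and follows the same inductive strategy as the paper: reduce \eqref{eqn:bonus w rel 3} to \eqref{eqn:shuf heis 0}, take $k=1$ as the base case via Proposition \ref{prop:hecke}, and close the induction by applying $\mathrm{ad}(p_{\pm n})$ to the level-raising relation between $W_k$, $W_1$, and $W_{k+1}$. The only difference is packaging: the paper works with the generating-series form \eqref{eqn:series} (your ``alternative route''), while you unpack it at the coefficient level via \eqref{eqn:example}. In the paper's version, after Leibniz and the induction hypothesis, the commutator factors as a common scalar times the original relation \eqref{eqn:series}, so one re-substitutes the right-hand side, multiplies by $1 - yq^k/x$ to kill one $\delta$-function, and reads off the coefficient of $x^0$; the telescoping identity $1 + q^n(1-q^{kn})/(1-q^n) = (1-q^{(k+1)n})/(1-q^n)$ appears in the final simplification and is exactly your $\alpha_1(m) + q^m\alpha_k(m) = \alpha_{k+1}(m)$. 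Your coefficient-level version makes the pairwise cancellation of the sum-over-$a$ terms explicit, which is a nice check; the generating-series version hides this in the factorization step but is shorter.
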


\noindent Define:
\begin{equation}
\label{eqn:def a ext infty}
\CA^{\ext}_\infty = \ring \Big \langle W_{n,k},p_n \Big \rangle^{k \in \BN}_{n\in \BZ} \Big/ \text{relations \eqref{eqn:comm w},\eqref{eqn:bonus w rel 1},\eqref{eqn:bonus w rel 2},\eqref{eqn:bonus w rel 3}}
\end{equation}
Throughout this paper, we will always specialize the central charge $c$ to $q^r$ for some $r \in \BN$, so the structure constants of the algebra $\CA^{\ext}_\infty$ will all lie in $\ring$ (if $c \neq q^r$, then \eqref{eqn:bonus w rel 3} would contradict this fact). By Proposition \ref{prop:w and heis}, the homomorphism: 
$$
\CA_\infty \rightarrow \wCA^\uparrow \qquad \text{extends to} \qquad \CA_\infty^{\ext} \rightarrow \wCA^{\uparrow,\ext}
$$

\medskip

\begin{proof} \emph{of Proposition \ref{prop:w and heis}:} We will prove \eqref{eqn:bonus w rel 1}, as \eqref{eqn:bonus w rel 2} is analogous and \eqref{eqn:bonus w rel 3} is a trivial application of \eqref{eqn:shuf heis 0}. We will do so by induction on $k$ (the case when $k=1$ is simply \eqref{eqn:small triangle double}), and start by commuting relation \eqref{eqn:series} with $p_{-n}$:
\begin{align*} &\frac {\Delta}{1-q} \left[ \delta \left( \frac y{xq} \right)[W_{k+1}(y), p_{-n}] - \delta \left( \frac x{yq^k} \right) [W_{k+1}(x), p_{-n}] \right] = \\ &= 
[W_k(x) W_1(y), p_{-n}] \zeta \left( \frac x{yq^k} \right) - [W_1(y) W_k(x), p_{-n}] \zeta \left( \frac y{xq} \right) \stackrel{\text{Leibniz rule}}= \\ &= W_k(x) [W_1(y), p_{-n}] \zeta \left(\frac x{yq^k} \right) + [W_k(x), p_{-n}]W_1(y) \zeta \left(\frac x{yq^k} \right) \\ &- W_1(y) [W_k(x), p_{-n}] \zeta \left( \frac y{xq} \right) - [W_1(y), p_{-n}]W_k(x) \zeta \left( \frac y{xq} \right) \stackrel{\text{induction hypothesis of \eqref{eqn:bonus w rel 1}}}= \\ & -(1-q_1^{n})(1-q_2^{n}) \left( \frac 1{y^{n}} + \frac {1-q^{kn}}{(1-q^{n})x^{n}}\right) \left[ W_k(x) W_1(y)\zeta \left(\frac x{yq^k}\right) - W_1(y) W_k(x) \zeta \left(\frac y{xq}\right)\right] \\ &= -\frac {\Delta(1-q_1^{n})(1-q_2^{n})}{1-q} \left( \frac 1{y^{n}} + \frac {1-q^{kn}}{(1-q^{n})x^{n}}\right) \left[ \delta \left( \frac y{xq} \right) W_{k+1}(y) - \delta \left( \frac x{yq^k} \right) W_{k+1}(x) \right]
\end{align*}
If we multiply both sides of the equation above with $1 - yq^k/x$, the second $\delta$ function vanishes in both sides of the equation above, and we are left with:
\begin{multline*} 
\frac {\Delta(1-q^{k+1})}{1-q} \delta \left( \frac y{xq} \right) [W_{k+1}(y), p_{-n}] = \\ =  - \frac {\Delta(1-q^{k+1})}{1-q} \frac {(1-q_1^{n})(1-q_2^{n})}{y^{n}} \left[1 + \frac {1-q^{kn}}{(1-q^{n})q^{-n}} \right] \delta \left( \frac y{xq} \right) W_{k+1}(y)
\end{multline*}
Extracting the coefficient of $x^0$ from the equation above yields precisely \eqref{eqn:bonus w rel 1} with $k$ replaced by $k+1$, thus establishing the induction step.

\end{proof}

\subsection{} Finally, we realize the deformed $W$--algebra of type $\fgl_r$ as a quotient of $\CA_\infty$. \\

\begin{definition}
\label{def:w algebra r}

For arbitrary $r \in \BN$, define:
\begin{equation}
\label{eqn:def a r}
\CA_r = \CA_\infty \Big / \text{ relation \eqref{eqn:w vanish}}
\end{equation}
\begin{equation}
\label{eqn:def a ext r}
\CA^{\eext}_r = \CA^{\eext}_\infty \Big / \text{ relations \eqref{eqn:w vanish}, \eqref{eqn:w exp} and } c = q^r
\end{equation}
where:
\begin{equation}
\label{eqn:w vanish}
W_k(x) = 0, \qquad \forall k>r 
\end{equation}
\begin{equation}
\label{eqn:w exp}
W_r(x) = u \exp \left[ \sum_{n=1}^\infty \frac {p_{-n}}{nx^{-n}} \right] \exp \left[ \sum_{n=1}^\infty \frac {p_n}{nx^n} \right] 
\end{equation}
The parameter $u$ in \eqref{eqn:w exp} will not be crucial (it will be identified with a line bundle in the next Section) and so we will not mention it explicitly in our notation. \\

\end{definition}

\noindent The algebra $\CA_r$ was shown to be isomorphic to the deformed $W$--algebra of type $\fgl_r$ in \cite{W}. Definition \ref{def:w algebra r} explains the appeal of presenting this algebra in terms of the generators $W_{n,k}$: to show that $\CA_r$ acts on a certain module, it is enough to show that the module is a good representation of $\CA$, and then check that relations \eqref{eqn:w vanish} hold (if one wants an action of $\CA_r^\ext$, then one also has to check \eqref{eqn:w exp}). \\

\begin{proof} \emph{of Theorem \ref{thm:w algebra}:} By \eqref{eqn:def w}, the element $W_{n,k} \in \wCA^\uparrow$ is a certain infinite $\ring$-linear combination of the elements:
$$
E_v = E_{n_1,k_1}... E_{n_t,k_t}
$$
as $v = \{(n_1,k_1),...,(n_t,k_t)\}$ runs over convex paths of size $(n,k)$. Moreover, the leading order coefficient, i.e. the coefficient of $E_{n,k}$ itself, is a power of $q$. By successive applications of Claim \ref{claim:1}, this implies that there exists $* \in \BZ$ such that:
\begin{equation}
\label{eqn:mercan}
W_v := W_{n_1,k_1}...W_{n_t,k_t} = q^{*} E_v + \Delta \sum^{\text{convex paths }v'}_{\text{to the left of }v} E_{v'} \cdot \text{element of }\ring
\end{equation}
Therefore, the $\BK$-basis $\{W_v\}_{v\text{ convex}}$ is upper triangular in terms of $\{E_v\}_{v\text{ convex}}$, with respect to the partial ordering on convex paths of the same size, where one path is to the left of another. Since $\{E_v\}_{v\text{ convex}}$ form a basis of $\wCA^\uparrow$ over $\ring$, therefore:
\begin{equation}
\label{eqn:statement}
\wCA^\uparrow = \mathop{\underset{k \in \BN}{\widehat{\bigoplus}}}_{n \in \BZ} \left[ \mathop{\bigoplus_{- \infty < \frac {n_1}{k_1} \leq ... \leq \frac {n_t}{k_t} < \infty}^{n_i \in \BZ, \sum n_i = n}}^{k_i \in \BN, \sum k_i = k} \ring \cdot W_{n_1,k_1}... W_{n_t,k_t} \right]
\end{equation}
(if $\frac {n_i}{k_i} = \frac {n_{i+1}}{k_{i+1}}$ for some $i$, then we make the convention that $k_i \leq k_{i+1}$ in the sum above, and henceforth). Combining this statement with Claim \ref{claim:1}, we infer that:
\begin{equation}
\label{eqn:comm w 1}
[W_{n,k}, W_{n',k'}] = \Delta \mathop{\sum_{ - \infty < \frac {n_1}{k_1} \leq ... \leq \frac {n_t}{k_t} < \infty }^{n_i \in \BZ, \ \sum n_i = n+n'}}^{k_i \in \BN, \ \sum k_i = k+k'} r_{n,n_1,...,n_t,n'}^{k,k_1,...,k_t,k'}(q_1,q_2) W_{n_1,k_1}... W_{n_t,k_t} 
\end{equation}
for certain Laurent polynomials $r_{n,n_1,...,n_t,n'}^{k,k_1,...,k_t,k'}(q_1,q_2) \in \ring$. To prove \eqref{eqn:comm w}, we must show that the only terms in the right-hand side of \eqref{eqn:comm w 1} with non-zero coefficient are products of exactly two generators $W_{n_1,k_1}W_{n_2,k_2}$, with $\min(k_1,k_2) \leq \min(k,k')$ (we allow $\min(k_1,k_2) = 0$ in the preceding statement, recalling that $W_{m,0} = \delta_m^0$). To do so, we must properly interpret \eqref{eqn:w rel}. It was shown in \cite[(2.57)]{W} that:
$$
R_{kk'}(x,y) = W_k(x) W_{k'}(y) f_{kk'} \left(\frac yx \right)
$$
is a linear combination of the basis elements $\{E_v\}_{v\text{ convex path}}$, whose coefficients are rational functions in $x$ and $y$. These rational functions have poles at:
\begin{equation}
\label{eqn:poles 1}
y - x q^{-i} \qquad \text{for } i \in \{ \max(0,k-k')+1,...,k\}\\
\end{equation}
\begin{equation}
\label{eqn:poles 2}
y - x q^i \qquad \ \ \text{for } i \in \{ \max(0,k'-k)+1,...,k'\}
\end{equation}
and satisfy the following symmetry relation:
\begin{equation}
\label{eqn:symmetry}
R_{kk'} (x,y) = R_{k'k} (y,x)
\end{equation}
and the evaluation properties:
\begin{equation}
\label{eqn:residue 1}
\underset{y = x q^{-i}}{\res} \frac {R_{kk'} (x,y)}y = R_{k-i,k'+i} (xq^{-i},x) \theta(\min(i,k'-k+i)) \quad \text{for } i \text{ as in \eqref{eqn:poles 1}}
\end{equation}
\begin{equation}
\label{eqn:residue 2}
\underset{y = x q^i}{\res} \frac {R_{kk'} (x,y)}y = - R_{k'-i,k+i}(x,xq^i) \theta(\min(i,k-k'+i)) \quad \text{for } i \text{ as in \eqref{eqn:poles 2}} 
\end{equation}
Combining \eqref{eqn:symmetry}, \eqref{eqn:residue 1} and \eqref{eqn:residue 2}, we interpret formula \eqref{eqn:w rel} as saying that:
$$
\Big[ R_{kk'}(x,y) \text{ expanded in } |y| \ll |x| \Big] - \Big[ R_{kk'}(x,y) \text{ expanded in } |y| \gg |x| \Big] = 
$$
\begin{equation}
\label{eqn:residue equality}
= - \sum_{\alpha \notin \{0, \infty\}} \delta \left( \frac y{x\alpha} \right) \underset{y = x \alpha}{\res} \frac {R_{kk'}(x,y)}y
\end{equation}
which is simply a reformulation of the residue theorem for rational functions. Note that the interpretation given above was known to \cite{AKOS} (see \cite{O} for a review), although in the somewhat different context of operator product expansions corresponding to the free field realization of the deformed $W$--algebra. \\


\noindent Formula \eqref{eqn:residue equality} shows how to make sense of the relation \eqref{eqn:w rel}, and how to deduce it from \eqref{eqn:symmetry}, \eqref{eqn:residue 1} and \eqref{eqn:residue 2}. However, the down-side is that the right-hand side of \eqref{eqn:residue equality} is not readily expressed in terms of the generators $W_{n,k}$ which appear once we expand the left-hand side (unless we are in the simple situation of \eqref{eqn:example}). To fix this issue, let us recall the normal-ordered integrals considered in \cite[\S 2.2]{O}:
\begin{equation}
\label{eqn:normal order 1}
:R_{kk'}(x,y): \ = \ :W_k(x)W_{k'}(y) f_{kk'} \left(\frac yx \right): \ = 
\end{equation}
$$
= \oint_{|z| \gg |x|,|y|} W_{k'}(z)W_{k}(x) f_{k'k} \left(\frac xz \right) \frac {Dz}{1-\frac yz} -  \oint_{|z| \ll |x|,|y|} W_k(x)W_{k'}(z) f_{kk'} \left(\frac zx \right) \frac {Dz}{1-\frac yz}
$$
where $Dz = \frac {dz}{2\pi i z}$. On one hand, by explicitly computing the expansions, we obtain:
\begin{equation}
\label{eqn:normal order 2}
:R_{kk'}(x,y): \ = 
\end{equation}
$$
= \sum_{a,b \in \BN \sqcup \{0\}}^{c \in \BZ} \left[ x^{a+c} y^b f_{k'k}^a W_{-a-b,k'} W_{-c,k} + x^{-a-c} y^{-b-1} f_{kk'}^a W_{c,k} W_{a+b+1,k'} \right]
$$
where we consider the power series expansion of \eqref{eqn:f}:
$$
f_{kk'}(x) = \sum_{a=0}^\infty f_{kk'}^a x^a \in 1 +  x \Delta \ring[[x]]
$$
On the other hand, as observed in \eqref{eqn:symmetry}, the two integrands in equation \eqref{eqn:normal order 1} represent the same rational function. Therefore, the difference between the two integrals is given by the sum of the residues in the variable $z$, similar to \eqref{eqn:residue equality}:
$$
:R_{kk'}(x,y): \ = R_{kk'}(x,y) + \sum_{\alpha \notin \{0, \infty\}} \underset{z = x \alpha}{\res} \frac {R_{kk'}(x,z)}{x\alpha - y}
$$
(the first term in the right-hand side is the residue at $z=y$). These residues are prescribed by \eqref{eqn:residue 1} and \eqref{eqn:residue 2}, so we obtain:
\begin{equation}
\label{eqn:normal order 3}
R_{kk'}(x,y) = \ :R_{kk'}(x,y): +
\end{equation}
\begin{align*}
&+ \sum_{i = \max(0,k-k')+1}^{k} \frac {R_{k-i,k'+i}(xq^{-i},x)}{1-\frac {yq^i}{x}} \theta(\min(i,k'-k+i)) - \\
&- \sum_{i = \max(0,k'-k)+1}^{k'} \frac {R_{k'-i,k+i}(x,xq^i)}{1-\frac y{xq^i}} \theta(\min(i,k-k'+i))
\end{align*}
One can iterate relation \eqref{eqn:normal order 3} to express the right-hand side as:
\begin{equation}
\label{eqn:normal order 4}
R_{kk'}(x,y) =  \ :R_{kk'}(x,y): + \sum^{0 \leq l < \min(k,k')}_{k+k' = l+l', a,b\in \BZ} \frac {:R_{ll'}(xq^a, xq^b): g_{ll'}^{ab}(q_1,q_2)}{\prod_{\text{various }c} \left(1-\frac {y}{xq^c} \right)}
\end{equation}
for some $g_{ll'}^{ab}(q_1,q_2) \in \field$. We conjecture that $g_{ll'}^{ab}(q_1,q_2) \in \ring$ for all $a,b,l,l'$, but we will not prove this result. Instead, we observe that relation \eqref{eqn:normal order 4} gives the correct interpretation of \eqref{eqn:w rel}. Indeed, expanding \eqref{eqn:normal order 4} for $|y| \ll |x|$ (respectively $|y| \gg |x|$ gives us the first (respectively second) term in the left-hand side of \eqref{eqn:w rel}, and taking the difference of these two expansions gives us:
$$
W_k(x) W_{k'}(y) f_{kk'} \left(\frac yx \right) - W_{k'}(y) W_k(x)  f_{k'k} \left(\frac xy \right) = 
$$
$$
= \sum^{0 \leq l < \min(k,k')}_{k+k' = l+l', a,b\in \BZ} :R_{ll'}(xq^a, xq^b): \left[ \sum_c \delta \left( \frac {y}{xq^c} \right) \frac 1{\prod_{c'\neq c} (1- q^{c-c'})} \right] g_{ll'}^{ab}(q_1,q_2)
$$
The normal-ordered products in the right-hand side are quadratic expressions in the $W$--algebra generators according to \eqref{eqn:normal order 2}, and so taking the coefficient of $x^{-n} y^{-n'}$ for any $n,n' \in \BZ$ in the above expression gives us:
\begin{equation}
\label{eqn:normal order 5}
[W_{n,k}, W_{n',k'}] + \sum_{a=1}^\infty \left[ W_{n-a,k}W_{n'+a,k'} f_{kk'}^a - W_{n'-a,k'}W_{n+a,k} f_{k'k}^a \right] = 
\end{equation}
$$
= \sum^{0 \leq \min(l,l') < \min(k,k')}_{k+k' = l+l'} \left( \sum_{d \leq 0} W_{d,l'} W_{n+n'-d,l} \cdot \text{coeff} + \sum_{d > 0} W_{n+n'-d,l} W_{d,l'} \cdot \text{coeff} \right)
$$
where the coefficients denoted by ``coeff" lie in $\field$. For any fixed $n+n'$ and $k+k'$, iterating formula \eqref{eqn:normal order 5} allows us to prove \eqref{eqn:comm w} by induction on $\min(k,k')$, but only yields the fact that the coefficients $c_{n,n',k,k'}^{m,m',l,l'}$ lie in $\BF$. However, \eqref{eqn:comm w 1} and the fact that $\{W_v\}_{v \text{ convex}}$ form a basis of $\wCA^\uparrow$ imply that $c_{n,n',k,k'}^{m,m',l,l'}$ lie in $\BK$, as required. 

\end{proof}

\section{The moduli space of sheaves}
\label{sec:mod}

\medskip

\subsection{} 

Having constructed shuffle and deformed $W$--algebras in the previous Sections, let us now construct the modules on which they are expected to act. Consider a smooth projective surface $S$ over an algebraically closed field (denoted by $\BC$) and an ample divisor $H \subset S$. The Hilbert polynomial of a coherent sheaf $\CF$ on $S$ is:
$$
P_\CF(n) := \chi(S, \CF \otimes \CO(nH)) = a n^2 + b n + c
$$
where $a,b,c$ are rational numbers that one can compute from the Grothendieck-Hirzebruch-Riemann-Roch theorem. One can find formulas for these numbers in the Appendix to \cite{Univ}, but the only thing we will need in the present paper is that they can be expressed in terms of $S,H$ and the rank and Chern classes $r, c_1, c_2$ of $\CF$. The reduced Hilbert polynomial is defined as:
$$
p_\CF(n) = \frac {P_\CF(n)}{a}
$$ 
A torsion free coherent sheaf $\CF$ on $S$ is called stable (respectively semistable) if for all proper subsheaves $\CG \subset \CF$ we have:
$$
p_\CG(n) < p_\CF(n) \qquad (\text{respectively } p_\CG(n) \leq p_\CF(n))
$$
for $n \gg 0$. Since the reduced Hilbert polynomials are monic and quadratic, stability (respectively semistability) is determined by checking the respective inequalities for the linear term and constant term coefficients. Note that stability depends on the polarization $H$, but we will fix a choice throughout this paper. \\

\begin{definition} (see \cite{HL}): Let $\CM_{(r,c_1,c_2)}$ denote the quasiprojective variety which corepresents the moduli functor of stable sheaves on $S$ with the invariants $r,c_1,c_2$. \\

\end{definition}

\subsection{} We will denote the moduli space by $\CM$ when the particular invariants will not be important to us. We impose the following assumption throughout this paper:
\begin{equation}
\label{eqn:assumption a}
\textbf{Assumption A:} \qquad \gcd(r, c_1 \cdot H) = 1
\end{equation}
This assumption has two important consequences: firstly, any semistable sheaf is stable. Secondly, there exists a universal sheaf:
\begin{equation}
\label{eqn:universal}
\xymatrix{
\CU \ar@{.>}[d] \\
\CM \times S}
\end{equation}
which is flat over $\CM$, and its fiber over any closed point $\{\CF\} \times S$ is isomorphic to $\CF$ as a coherent sheaf over $S$. This leads to the following fact (see \cite{HL}): \\

\begin{proposition} 

Under Assumption A, $\CM_{(r,c_1,c_2)}$ is a projective variety, which also represents the moduli functor of stable sheaves on $S$ with the invariants $r,c_1,c_2$. \\

\end{proposition}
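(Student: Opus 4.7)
The proposal is to prove both claims (projectivity and fine representability) by exploiting Assumption A twice: once to collapse semistability onto stability, and a second time to descend a universal family from a Quot scheme.

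First I would verify the numerical fact that, under Assumption A, every semistable sheaf $\CF$ with invariants $(r,c_1,c_2)$ is in fact stable. Suppose $\CG \subset \CF$ is a proper subsheaf with $p_\CG = p_\CF$; comparing the leading coefficients forces $\mathrm{rk}(\CG)=:r'$ with $0<r'<r$, and comparing the subleading coefficients of the reduced Hilbert polynomials gives $(c_1(\CG) \cdot H)/r' = (c_1 \cdot H)/r$, i.e.\ $r \cdot (c_1(\CG)\cdot H) = r' \cdot (c_1 \cdot H)$. Since $\gcd(r,c_1 \cdot H)=1$, this forces $r \mid r'$, contradicting $0<r'<r$. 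Hence strict semistability is impossible, and the moduli functor of stable sheaves coincides with the moduli functor of semistable sheaves with these invariants.

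Next I would quote the Gieseker--Maruyama--Simpson construction (see \cite{HL}, Ch.~4): the moduli space $\CM^{ss}_{(r,c_1,c_2)}$ of S-equivalence classes of semistable sheaves exists as a projective scheme corepresenting the semistable moduli functor. By the first step this scheme coincides set-theoretically and functorially with $\CM_{(r,c_1,c_2)}$, giving projectivity immediately.

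For the second half of the statement I would use the coprimality assumption to upgrade corepresentation to representation by producing the universal sheaf. Fix a large twist $N \gg 0$ so that every $\CF \in \CM$ has $\CF(N)$ globally generated with vanishing higher cohomology, and let $Q \subset \mathrm{Quot}_S(\CO_S(-N)^{\oplus P(N)}, P)$ be the open subscheme parametrising quotients whose kernel gives a stable sheaf. Then $\CM$ is the geometric (and categorical) quotient of $Q$ by $GL_{P(N)}$, on which the center $\BG_m$ acts trivially, so the effective structure group is $PGL_{P(N)}$. The tautological quotient $\widetilde{\CU}$ on $Q \times S$ descends to $\CM \times S$ precisely when one can twist it by a line bundle on $Q$ that trivialises the $PGL_{P(N)}$-obstruction; this obstruction lives in the Brauer class of $\CM$ and, by the standard argument in \cite{HL} (Thm.~4.6.5), vanishes whenever there exist integers $a_1, a_2$ with $a_1 r + a_2 (c_1 \cdot H) = 1$, which is exactly Assumption A. Concretely, the class $\det(R\pi_*(\widetilde{\CU} \otimes \CO(mH)))^{a_1} \otimes \det(\widetilde{\CU}|_{Q \times \{pt\}})^{a_2}$ for a suitable $m$ provides the required trivialisation, and twisting $\widetilde{\CU}$ by its inverse yields a $GL_{P(N)}$-equivariant sheaf of weight $0$ on $Q \times S$, which descends to the required universal $\CU$ on $\CM \times S$. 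Existence of $\CU$ then immediately promotes corepresentation to representation, completing the proposition.

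The main obstacle will be the descent step: constructing the Brauer-class-killing line bundle requires care in tracking the $\BG_m$-weights of the determinant sheaves on $Q$. Everything else is essentially bookkeeping once the key numerical observation $\gcd(r,c_1\cdot H)=1$ is in place.
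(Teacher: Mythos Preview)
Your proposal is correct and is precisely the standard argument from \cite{HL} that the paper defers to; the paper does not supply its own proof here but simply cites Huybrechts--Lehn, so you have filled in exactly what was intended. One small imprecision: the line ``comparing the leading coefficients forces $0<r'<r$'' is not quite right, since the reduced Hilbert polynomials are monic and give no information on rank --- the bound $r'<r$ comes instead from the observation that a rank-$r$ proper subsheaf of a torsion-free $\CF$ has strictly smaller reduced Hilbert polynomial (the quotient being a nonzero torsion sheaf), so you may assume $r'<r$ from the outset when testing stability.
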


\begin{remark}
\label{rem:choice}

Note that the universal sheaf $\CU$ is only determined up to tensoring with a line bundle pulled back from $\CM$. This means that one has the freedom to choose such a line bundle on any component $\CM_{(r,c_1,c_2)}$ of the moduli space, and this represents the ambiguity in choosing the universal sheaf on the whole of $\CM$. Throughout the present paper, we will fix such a choice, requiring only that the chosen universal sheaves on $\CM_{(r,c_1,c_2)}$ and $\CM_{(r,c_1,c_2+1)}$ be compatible with each other as described in the Appendix to \cite{Univ}. This compatibility is precisely what allows us to define Hecke correspondences in the following Subsection. \\

\end{remark}

\begin{remark}
\label{rem:twisted}

Since many of the constructions in the present paper are local on the moduli space of stable sheaves, one can replace the category of coherent sheaves by that of twisted coherent sheaves (see \cite{C} for an overview). In this case, there exists a twisted universal sheaf on $\CM \times S$, even without imposing Asssumption A. The interested reader may try to generalize the results in the present paper to that setting, but we do not expect any fundamentally new features to arise. \\

\end{remark}

\noindent Because the universal sheaf $\CU$ is flat over $\CM$, it inherits certain properties from the stable sheaves it parametrizes, such as having projective dimension 1 (indeed, any torsion free sheaf on a smooth projective surface has projective dimension 1, see Example 1.1.16 of \cite{HL}): \\

\begin{proposition}
\label{prop:length 1}

There exists a short exact sequence:
\begin{equation}
\label{eqn:length 1}
0 \rightarrow \CW \rightarrow \CV \rightarrow \CU \rightarrow 0
\end{equation}
with $\CW$ and $\CV$ locally free sheaves on $\CM \times S$ (see \cite[Proposition 2.2]{Univ} for a proof). \\

\end{proposition}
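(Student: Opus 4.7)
The plan is to construct $\CV$ as a locally free sheaf surjecting onto $\CU$, and then show that the kernel $\CW$ is locally free by combining fiberwise analysis with Nakayama's lemma.

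\medskip

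\noindent \textbf{Step 1 (construction of $\CV$).} Under Assumption A, $\CM \times S$ is projective, so it carries an ample line bundle $\CL$ (for instance the external tensor product of ample line bundles on the two factors). By Serre's theorem, $\CU \otimes \CL^n$ is globally generated for $n \gg 0$, giving a surjection $\CV := (\CL^{-n})^{\oplus N} \twoheadrightarrow \CU$ for some $N$. Let $\CW$ be the kernel.

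\medskip

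\noindent \textbf{Step 2 ($\CM$-flatness of $\CW$).} For any closed point $\CF \in \CM$, tensoring $0 \to \CW \to \CV \to \CU \to 0$ over $\CO_\CM$ with $\CO_\CF$ and using the flatness of $\CV$ and $\CU$ over $\CM$ yields $\mathrm{Tor}^{\CO_\CM}_i(\CO_\CF, \CW) = 0$ for all $i \geq 1$. Hence $\CW$ is flat over $\CM$.

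\medskip

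\noindent \textbf{Step 3 (fiberwise local freeness).} Since $\CW$ is $\CM$-flat, restricting to $\{\CF\} \times S$ gives an exact sequence
$$
0 \to \CW\big|_{\{\CF\} \times S} \to \CV\big|_{\{\CF\} \times S} \to \CF \to 0.
$$
The middle term is locally free on $S$, and $\CF$ is torsion-free on the smooth projective surface $S$, hence has projective dimension $\leq 1$ (Example 1.1.16 of \cite{HL}). Therefore $\CW|_{\{\CF\} \times S}$ is locally free on $S$.

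\medskip

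\noindent \textbf{Step 4 (from fiberwise to global local freeness).} Fix a point $(\CF, s) \in \CM \times S$, and let $R = \CO_{\CM \times S, (\CF,s)}$, $M = \CW_{(\CF,s)}$. Choose a minimal set of generators of $M$ over $R$, giving a surjection $\CO^{\oplus n} \twoheadrightarrow \CW$ in a neighborhood of $(\CF,s)$ with kernel $\CK$. By the $\CM$-flatness of $\CW$ from Step 2, restricting to $\{\CF\} \times S$ preserves exactness:
$$
0 \to \CK\big|_{\{\CF\} \times S} \to \CO_S^{\oplus n} \to \CW\big|_{\{\CF\} \times S} \to 0.
$$
By Step 3, $\CW|_{\{\CF\} \times S}$ is locally free of some rank at $s$; by minimality of our generating set and Nakayama over the local ring of $S$ at $s$, this rank equals $n$, and the middle map is an isomorphism near $s$. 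Hence $\CK|_{\{\CF\} \times S}$ vanishes near $s$, i.e.\ $\CK_{(\CF,s)} = \fm_\CF \cdot \CK_{(\CF,s)}$ where $\fm_\CF \subset R$ is the ideal generated by the maximal ideal of $\CO_{\CM,\CF}$. Nakayama's lemma applied to the finitely generated $R$-module $\CK_{(\CF,s)}$ (noting $\fm_\CF \subset \fm_R$) then forces $\CK_{(\CF,s)} = 0$, so $\CW$ is free of rank $n$ in a neighborhood of $(\CF,s)$.

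\medskip

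\noindent The main obstacle is Step 4: because $\CM$ may be singular, $\CM \times S$ is not regular in general, so one cannot simply read off projective dimension from depth. The Nakayama argument above circumvents this, using the asymmetric roles of the two factors (flatness over $\CM$, smoothness of $S$). Once Step 4 is in place, the short exact sequence $0 \to \CW \to \CV \to \CU \to 0$ has both $\CW$ and $\CV$ locally free, as required.
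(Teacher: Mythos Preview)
Your argument is correct. The paper itself does not supply a proof of this proposition, merely citing \cite{Univ}, so there is nothing to compare against directly; your self-contained argument is the standard one and goes through as written.

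A couple of small clarifications worth recording. In Step 1 you invoke projectivity of $\CM \times S$, but $\CM = \bigsqcup_{c_2} \CM_{(r,c_1,c_2)}$ is an infinite disjoint union; the construction should be carried out on each projective component $\CM_{(r,c_1,c_2)} \times S$ separately, which is harmless. In Step 4 the crux is that the fiber dimension of $\CW|_{\{\CF\}\times S}$ at $s$ equals $\dim_k M/\fm_R M = n$ (since $\fm_R = \fm_\CF + \fm_{S,s}$ and the residue fields agree), so the surjection $\CO_S^n \twoheadrightarrow \CW|_{\{\CF\}\times S}$ is a surjection of rank-$n$ locally free sheaves near $s$, hence an isomorphism; you state this but the identification of ranks deserves the explicit sentence. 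With these remarks, the proof is complete.
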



\subsection{}
\label{sub:z1}

Consider the nested moduli space:
\begin{equation}
\label{eqn:z}
\bfZ_1 = \Big\{ (\CF,\CF') \text{ s.t. } \CF \supset_x \CF' \text{ for some } x\in S \Big\} \subset \CM \times \CM'
\end{equation}
where the notation:
$$
\CF \supset_x \CF' \text{ means that } \CF \supset \CF' \text{ and } \CF/\CF' \cong \BC_x
$$
Consider the natural projection maps:
\begin{equation}
\label{eqn:projection maps}
\xymatrix{
& \bfZ_1 \ar[ld]_{p_-} \ar[d]^{p_S} \ar[rd]^{p_+} & \\
\CM & S & \CM'} \qquad \xymatrix{
& (\CF \supset_x \CF') \ar[ld]_{p_-} \ar[d]^{p_S} \ar[rd]^{p_+} & \\
\CF & x & \CF'} 
\end{equation}
Moreover, we have the tautological line bundle on $\bfZ_1$:
\begin{equation}
\label{eqn:tautological line}
\xymatrix{
\CL \ar@{.>}[d] \\
\bfZ_1} \qquad \qquad \CL|_{(\CF,\CF')} = \CF_x / \CF'_x = \Gamma(S,\CF/\CF')
\end{equation}
The space $\bfZ_1$ was denoted by $\bfZ$ in \cite{Univ}, but in the present paper we have changed the notation because we will shortly encounter a related space, denoted by $\bfZ_2$. \\


\noindent Remark \ref{rem:choice} refers to fixing choices of the universal sheaves $\CU$ on all components of the moduli space $\CM$, so that these choices are compatible with $\bfZ_1$. Specifically, this entails the existence of a short exact sequence:
\begin{equation}
\label{eqn:ses}
0 \rightarrow (p_+ \times \text{Id})^* \left( \CU \right) \rightarrow (p_- \times \text{Id})^* \left( \CU \right) \rightarrow \Gamma_*(\CL) \rightarrow 0
\end{equation}
on $\bfZ_1 \times S$, where $\bfZ_1 \xrightarrow{\Gamma} \bfZ_1 \times S$ denotes the graph of $p_S$. We will often abuse notation and denote the first two terms in \eqref{eqn:ses} by $\CU'$ and $\CU$, thought of as sheaves on $\bfZ_1 \times S$. \\

\subsection{} Recall that a dg scheme $X$ (defined over a base scheme $Y$, which will be clear from context) is locally defined by a dg algebra:
$$
\CO_X = \left[ ... \xrightarrow{d} \CO_X^{(2)} \xrightarrow{d} \CO_X^{(1)} \xrightarrow{d} \CO_X^{(0)} \right]
$$
The scheme $\bar{X}$ whose coordinate ring is locally given by:
$$
\CO_{\bar{X}} = \CO_X^{(0)}/\text{Im }d
$$
will be called the \textbf{support} of the dg scheme $X$. This means that there exists a map of dg schemes $\bar{X} \rightarrow X$, and we will therefore say that $X$ is supported on $\bar{X}$. \\

\begin{example}
\label{ex:zero}

If $V$ is a locally free sheaf on the scheme $Y$, and we are given a section $\sigma \in \Gamma(V)$, then the derived zero locus of $\sigma$ is defined as the dg scheme:
$$
Z = \emph{Spec}_Y \left[ ... \xrightarrow{\sigma^\vee} \wedge^2(V^\vee) \xrightarrow{\sigma^\vee} V^\vee \xrightarrow{\sigma^\vee} \CO_Y \right] 
$$
Alternatively, we will say that $Z$ is cut out (in a derived sense) by the vanishing of $\sigma$. Meanwhile, the scheme-theoretic (or non-derived) zero locus of $\sigma$ is the scheme:
$$
\bar{Z} = \emph{Spec}_Y \left( \CO_Y/\emph{Im }\sigma^\vee \right)
$$
Clearly, $Z$ is supported on $\bar{Z}$. \\

\end{example}

\subsection{} We will now define a dg scheme $\fZ_1$ supported on the scheme $\bfZ_1$. \\


\begin{definition}
\label{def:desc}

Let $\emph{Sym}$ denote the symmetric algebra. Consider the dg scheme:
\begin{equation}
\label{eqn:def z1 proj}
\fZ_1 = \BP_{\CM \times S}(\CU) = \emph{Proj} \left( \emph{Sym}^\bullet_{\CM \times S} (\CU) \right)
\end{equation}
Since $\CU$ is not a vector bundle, but a coherent sheaf of projective dimension 1 as in Proposition \ref{prop:length 1}, relation \eqref{eqn:def z1 proj} is taken to mean that:
\begin{equation}
\label{eqn:desc 1}
\xymatrix{\fZ_1 \ar[dr] \ar@{^{(}->}[r]^-{\iota_-} 
& \BP_{\CM \times S}(\CV) \ar@{->>}[d]^{\rho_-} \\
& \CM \times S}
\end{equation}
The vertical arrow is a projective bundle, and the horizontal arrow is the dg subscheme cut out by the vanishing of the following map of vector bundles on $\BP_{\CM \times S}(\CV)$:
\begin{equation}
\label{eqn:koszul 1}
\sigma^\vee : \rho_-^*(\CW) \rightarrow \rho_-^*(\CV) \twoheadrightarrow \CO(1)
\end{equation}
Define $\CL = \iota_-^*(\CO(1))$, where $\CO(1)$ is the tautological line bundle on $\BP_{\CM \times S}(\CV)$. \\

\end{definition}

\noindent Similarly, the non-derived zero locus of the section \eqref{eqn:koszul 1} is the scheme $\bfZ_1$, which will therefore be the support of $\fZ_1$. The line bundles denoted by $\CL$ in \eqref{eqn:tautological line} and Definition \ref{def:desc} are compatible under the natural map $\bfZ_1 \rightarrow \fZ_1$. \\


\subsection{} Define the map $p_- \times p_S : \fZ_1 \rightarrow \CM \times S$ as the diagonal arrow in \eqref{eqn:desc 1}. The following Proposition claims that there also exists a map $p_+ \times p_S : \fZ_1 \rightarrow \CM' \times S$. In order to define it, let us write $\CU' = \CV' / \CW'$ for the analogue of \eqref{eqn:length 1} on $\CM' \times S$. We will write $\omega_S$ both for the canonical line bundle on $S$, and for its pull-back to $\CM' \times S$. \\

\begin{proposition}
\label{prop:desc}

We have a diagram:
\begin{equation}
\label{eqn:desc 2}
\xymatrix{\fZ_1 \ar[rd] \ar@{^{(}->}[r]^-{\iota_+} 
& \BP_{\CM' \times S} \left(\CW'^\vee\otimes \omega_S \right) \ar@{->>}[d]^{\rho_+} \\
& \CM' \times S}
\end{equation}
where the vertical arrow is a projective bundle, and the horizontal arrow $\iota_+$ is the dg subscheme cut out by the vanishing of the following map of vector bundles:
\begin{equation}
\label{eqn:koszul 2}
\sigma' : \CO(-1) \hookrightarrow \rho_+^*(\CW' \otimes \omega_S^{-1}) \longrightarrow \rho^*_+(\CV' \otimes \omega_S^{-1})
\end{equation}
on $\BP_{\CM' \times S}(\CW'^\vee \otimes \omega_S)$. Moreover, we have $\CL \cong \iota_+^*(\CO(-1))$ in the notation of \eqref{eqn:desc 2}. \\

\end{proposition}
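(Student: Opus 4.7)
The proposition is the derived Serre-dual counterpart of Definition \ref{def:desc}: while the original construction realizes $\fZ_1=\BP(\CU)$ from the length-one resolution $[\CW\to\CV]$ of $\CU$ on $\CM\times S$, the proposition realizes the same dg scheme from the resolution $[\CW'\to\CV']$ of $\CU'$ on $\CM'\times S$ after a Serre twist by $\omega_S$. The geometric source of this duality is the classical fact that a point $(\CF\supset_x\CF')\in\bfZ_1$ lying over $(\CF',x)\in\CM'\times S$ is, up to scaling, a class in $\Ext^1_S(\BC_x,\CF')$; Serre duality on $S$ identifies this with $\Ext^1_S(\CF',\BC_x\otimes\omega_S)^\vee$, and applying $\Hom(-,\BC_x)$ to $0\to\CW'\to\CV'\to\CF'\to 0$ identifies the latter with $\ker\bigl(\CW'_x\otimes\omega_{S,x}^{-1}\to\CV'_x\otimes\omega_{S,x}^{-1}\bigr)$. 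The projectivization of this kernel, viewed as a subsheaf of $\CW'\otimes\omega_S^{-1}$, is precisely what the right-hand side of \eqref{eqn:desc 2} parametrizes, and our task is to upgrade this identification to dg schemes.

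\textbf{Construction of $\iota_+$.} Pull all sheaves back to $\fZ_1\times S$ and consider the class $\xi\in\Ext^1(\Gamma_*\CL,\CU')$ associated to \eqref{eqn:ses}. The key input is local Grothendieck duality for the codimension-two regular embedding $\Gamma:\fZ_1\hookrightarrow\fZ_1\times S$ whose normal bundle is $p_S^*T_S$: for any locally free sheaf $\CE$ on $\fZ_1\times S$,
\begin{equation*}
\sRHom(\Gamma_*\CL,\CE)\cong\Gamma_*\bigl(\CL^\vee\otimes\Gamma^*\CE\otimes p_S^*\omega_S^{-1}\bigr)[-2].
\end{equation*}
Applying this to the pulled-back resolution $[\CW'\to\CV']\simeq\CU'$ and taking the long exact sequence of Ext-sheaves, one identifies the first Ext-sheaf of $\Gamma_*\CL$ into $\CU'$ with $\Gamma_*\bigl(\CL^\vee\otimes\ker(p_+^*\CW'|_\Gamma\to p_+^*\CV'|_\Gamma)\otimes p_S^*\omega_S^{-1}\bigr)$. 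Because $\CU'$ is torsion-free, the local-to-global spectral sequence collapses and $\xi$ corresponds to a global section of this sheaf, i.e. to an injection of line bundles $\CL\hookrightarrow(p_+\times p_S)^*(\CW'\otimes\omega_S^{-1})$ on $\fZ_1$ whose composition with the pulled-back inclusion $\CW'\otimes\omega_S^{-1}\hookrightarrow\CV'\otimes\omega_S^{-1}$ vanishes. Dualizing yields a surjection $(p_+\times p_S)^*(\CW'^\vee\otimes\omega_S)\twoheadrightarrow\CL^\vee$, and by the universal property of $\BP(\CW'^\vee\otimes\omega_S)$ this defines $\iota_+:\fZ_1\to\BP_{\CM'\times S}(\CW'^\vee\otimes\omega_S)$ with $\iota_+^*\CO(-1)\cong\CL$; by construction $\iota_+^*\sigma'=0$, so $\iota_+$ factors through the zero locus of $\sigma'$.

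\textbf{Matching the dg structures.} It remains to show that $\iota_+$ identifies $\fZ_1$ with the full derived zero locus of $\sigma'$, not merely with a dg subscheme of it. Both descriptions present $\fZ_1$ as a Koszul-type dg scheme inside a projective bundle; a rank count using $\mathrm{rank}\,\CV-\mathrm{rank}\,\CW=\mathrm{rank}\,\CV'-\mathrm{rank}\,\CW'=r$ together with $\dim\CM'=\dim\CM+2r$ shows that both Koszul complexes have the same expected virtual dimension $\dim\CM+r+1$. On closed points, the correspondence $(\CF,x,\CF_x\twoheadrightarrow\CL)\leftrightarrow(\CF',x,\CL\hookrightarrow\CW'_x\otimes\omega_{S,x}^{-1})$ obtained by applying the snake lemma to the two locally free resolutions shows that $\iota_+$ is bijective on supports and identifies $\bfZ_1$ with the classical zero locus of $\sigma'$. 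The remaining dg-level identification is the main obstacle: it reduces to showing that the two Koszul complexes---one cutting out $\fZ_1$ in $\BP(\CV)$ via $\sigma^\vee$, the other in $\BP(\CW'^\vee\otimes\omega_S)$ via $\sigma'$---give quasi-isomorphic dg structures on the common classical vanishing locus. Since the expected codimensions agree and both sections $\sigma^\vee$ and $\sigma'$ are regular (equivalently, both Koszul complexes are acyclic outside degree zero), this matching follows from a local check in charts, analogous to the Ext-group manipulations used for Proposition \ref{prop:z1 smooth}.
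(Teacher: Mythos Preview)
The paper does not prove this proposition here; it simply refers to \cite{Univ} for the statement that the derived zero loci of $\sigma^\vee$ and $\sigma'$ are isomorphic as dg schemes. Your first two paragraphs---using Grothendieck duality along the graph $\Gamma$ to turn the extension class of \eqref{eqn:ses} into a line subbundle $\CL\hookrightarrow (p_+\times p_S)^*(\CW'\otimes\omega_S^{-1})$ annihilated by the map to $\CV'\otimes\omega_S^{-1}$---give a correct construction of the morphism $\iota_+$ at the level of the underlying scheme $\bfZ_1$, together with the identification $\iota_+^*\CO(-1)\cong\CL$.

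There is, however, a genuine gap in your third paragraph. You assert that ``both sections $\sigma^\vee$ and $\sigma'$ are regular'' and then reduce the dg identification to the agreement of the classical zero loci. But regularity of these sections is \emph{not} available at this point: it is proved only later in Proposition~\ref{prop:z1 smooth}, and only under Assumption~S, whereas Proposition~\ref{prop:desc} is stated and used (e.g.\ in Definition~\ref{def:fine}) without that hypothesis. So your argument, as written, covers only the case of Assumption~S, and even then is incomplete: the phrase ``local check in charts, analogous to the Ext-group manipulations used for Proposition~\ref{prop:z1 smooth}'' is not a proof. The Ext computations in that proposition bound tangent-space dimensions; they do not compare two a priori different scheme structures on the same underlying set, let alone two dg structures.

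What is actually needed is a direct quasi-isomorphism between the Koszul dg algebras $\wedge^\bullet\bigl(\rho_-^*\CW^\vee\otimes\CO(-1),\sigma^\vee\bigr)$ and $\wedge^\bullet\bigl(\rho_+^*(\CV'^\vee\otimes\omega_S)\otimes\CO(-1),(\sigma')^\vee\bigr)$, valid without any regularity assumption. This is presumably carried out in \cite{Univ} by explicit homological algebra in the style of Proposition~\ref{prop:dg} and the proof of Lemma~\ref{lem:int th}; it is exactly the step your proposal leaves undone.
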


\noindent We refer the reader to \cite[Proposition 2.8]{Univ} for the proof of Proposition \ref{prop:desc}, which states that the derived zero loci of the sections \eqref{eqn:koszul 1} and \eqref{eqn:koszul 2} are isomorphic dg schemes. Combining Definition \ref{def:desc} and Proposition \ref{prop:desc}, we conclude that there exist maps as in the diagram below, which are compatible with those of \eqref{eqn:projection maps} under the support map $\bfZ_1 \rightarrow \fZ_1$ (we will use the same symbols, namely $p_-, p_S,p_+$, for these maps):
\begin{equation}
\label{eqn:projection maps dg}
\xymatrix{
& \fZ_1 \ar[ld]_{p_-} \ar[d]^{p_S} \ar[rd]^{p_+} & \\
\CM & S & \CM'}
\end{equation}

\medskip

\subsection{} 
\label{sub:fine}

Let us consider the following closed subscheme of $\CM \times \CM' \times \CM''$:
\begin{equation}
\label{eqn:z2}
\bfZ_2 = \Big\{ (\CF,\CF',\CF'') \text{ s.t. } \CF \supset_{x_1} \CF' \supset_{x_2} \CF'' \text{ for some } x_1,x_2 \in S \Big\} 
\end{equation}
Moreover, we define the dg scheme $\fZ_2$ that completes the derived fiber square below:
\begin{equation}
\label{eqn:fiber 0}
\xymatrix{
& & S \times S & & \\
& & \fZ_2 \ar[rd]^{\pi_+} \ar[ld]_{\pi_-} \ar[u]^{p_S^1 \times p_S^2} & & \\
& \fZ_1 \ar[ld]_{p_-} \ar[rd]^{p_+} & & \fZ_1 \ar[ld]_{p_-} \ar[rd]^{p_+} & \\
\CM & & \CM' & & \CM'' } 
\end{equation}
Since $\fZ_1$ is supported on $\bfZ_1$, it is easy to see that $\fZ_2$ is supported on $\bfZ_2$. The map ${p_S^1 \times p_S^2}$ in \eqref{eqn:fiber 0} records the support points $(x_1,x_2)$ of the quotients in \eqref{eqn:z2}. Consider the derived restriction of $\fZ_2$ to the diagonal $\Delta : S \hookrightarrow S \times S$, i.e. the derived fiber product of ${p_S^1 \times p_S^2}$ and $\Delta$. Definition \ref{def:desc} and Proposition \ref{prop:desc} imply:
\begin{equation}
\label{eqn:z2 fib diag}
\xymatrix{\fZ_2|_\Delta \ar[rd] \ar@{^{(}->}[r]
& \BP_{\CM' \times S} \left(\CV'\right) \times_{\CM' \times S} \BP_{\CM' \times S} \left( {\CW'}^\vee \otimes \omega_S \right) \ar@{->>}[d]^\rho \\
& \CM' \times S}
\end{equation}
where the embedding is defined as the derived zero locus of the section:
\begin{equation}
\label{eqn:section}
\CO \xrightarrow{\sigma \oplus \sigma'} \rho^*({\CW'}^\vee) \otimes \CO_1(1) \bigoplus \rho^*(\CV' \otimes \omega_S^{-1}) \otimes \CO_2(1) \ \ =: \ \ \widetilde{\CE}
\end{equation}
The notation $\CO_1(1)$, $\CO_2(1)$ stands for the tautological line bundles on the two projectivizations that appear in \eqref{eqn:z2 fib diag}, and $\sigma$, $\sigma'$ are the sections in \eqref{eqn:koszul 1}, \eqref{eqn:koszul 2}. \\

\begin{proposition}
\label{prop:vanish}

The composition of the section $\sigma \oplus \sigma'$ with the map:
$$
\rho^*({\CW'}^\vee) \otimes \CO_1(1) \bigoplus \rho^*(\CV' \otimes \omega_S^{-1}) \otimes \CO_2(1) \xrightarrow{\emph{taut}_2 \ominus \emph{taut}_1} \CO_1(1) \otimes \CO_2(1) \otimes \rho^*(\omega_S^{-1})
$$
vanishes. Above, $\emph{taut}_{1,2}$ refers to the tautological surjective homomorphisms: 
\begin{equation}
\label{eqn:taut hom}
\rho^*(\CV') \xrightarrow{\emph{taut}_1} \CO_1(1) \qquad \text{and} \qquad \rho^*({\CW'}^\vee \otimes \omega_S) \xrightarrow{\emph{taut}_2} \CO_2(1)
\end{equation}
on $\BP_{\CM' \times S}(\CV')$ and $\BP_{\CM' \times S}({\CW'}^\vee \otimes \omega_S)$, respectively (we abuse notation by also writing $\emph{taut}_{1,2}$ for the homomorphisms \eqref{eqn:taut hom} tensored by arbitrary line bundles). \\

\end{proposition}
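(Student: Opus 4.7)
The statement is local on $\BP_{\CM' \times S}(\CV') \times_{\CM' \times S} \BP_{\CM' \times S}(\CW'^\vee \otimes \omega_S)$, so I would verify the vanishing fiberwise. Fix a closed point $p = (\CF', x, q_1, q_2)$ where $q_1 : V \twoheadrightarrow L_1$ and $q_2 : W^\vee \otimes \omega \twoheadrightarrow L_2$ are the tautological quotients, $V = \CV'|_{(\CF',x)}$, $W = \CW'|_{(\CF',x)}$, $\omega = \omega_{S,x}$, and let $\iota : W \hookrightarrow V$ be the fiber of the inclusion from \eqref{eqn:length 1}. Dualizing $q_2$ yields a distinguished inclusion $\nu : L_2^\vee \hookrightarrow W \otimes \omega^{-1}$.

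The plan is to compute each summand of the composition at $p$ and observe that they cancel. By \eqref{eqn:koszul 1} (applied to the right copy of $\fZ_1$ in \eqref{eqn:fiber 0}, i.e.\ with $\CV', \CW'$ in place of $\CV, \CW$), the fiber of the first component is $\sigma^\vee|_p = q_1 \circ \iota \in \Hom(W, L_1) = W^\vee \otimes L_1$. By \eqref{eqn:koszul 2} (applied to the left copy of $\fZ_1$), the fiber of the second component is $\sigma'|_p = (\iota \otimes \omega^{-1}) \circ \nu \in \Hom(L_2^\vee, V \otimes \omega^{-1}) = V \otimes \omega^{-1} \otimes L_2$. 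The summand of $\text{taut}_2 \ominus \text{taut}_1$ coming from $\text{taut}_2$ is, after tensoring up, the partial contraction $W^\vee \otimes L_1 \to L_1 \otimes L_2 \otimes \omega^{-1}$ induced by $\nu$; applied to $\sigma^\vee|_p$ it produces $((q_1 \circ \iota) \otimes \omega^{-1})(\nu)$. The summand coming from $\text{taut}_1$ is simply $q_1 \otimes \omega^{-1} \otimes L_2$; applied to $\sigma'|_p$ it produces $(q_1 \otimes \omega^{-1})((\iota \otimes \omega^{-1})(\nu)) = ((q_1 \circ \iota) \otimes \omega^{-1})(\nu)$.

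These two expressions agree, so $(\text{taut}_2 \ominus \text{taut}_1)(\sigma^\vee \oplus \sigma')$ vanishes at $p$; since $p$ is arbitrary, it vanishes globally. Conceptually, both components of $\sigma^\vee \oplus \sigma'$ are built from the single inclusion $\iota : \CW' \hookrightarrow \CV'$, one paired with the quotient $\text{taut}_1$ and one with the dual inclusion underlying $\text{taut}_2$; applying the opposite tautological map performs the remaining pairing, yielding the same section of $\CO_1(1) \otimes \CO_2(1) \otimes \omega_S^{-1}$ either way. The only delicate point in writing this down carefully is tensor-factor bookkeeping, in particular realizing $\text{taut}_2$ as contraction against $\nu$; I do not anticipate any substantive obstacle beyond this.
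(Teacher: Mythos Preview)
Your proof is correct and takes essentially the same approach as the paper: unwinding the definitions of $\sigma$ and $\sigma'$ as compositions involving the inclusion $\CW' \hookrightarrow \CV'$ and the tautological maps, then observing that $\text{taut}_2 \circ \sigma$ and $\text{taut}_1 \circ \sigma'$ compute the same pairing. The paper phrases this globally as the identity $\lambda \cdot A \cdot v = v^T \cdot A^T \cdot \lambda^T$ for a vector $v$, matrix $A$, and covector $\lambda$, whereas you verify it fiberwise, but the content is identical.
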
 

\begin{proof} Explicitly, the Proposition states that the following compositions coincide:
$$
\underbrace{\CO_1(-1) \xrightarrow{\text{taut}_1^\vee} \rho^*({\CV'}^\vee) \xrightarrow{j^\vee} \rho^*({\CW'}^\vee)}_{\sigma} \xrightarrow{\text{taut}_2} \CO_2(1) \otimes \rho^*(\omega_S^{-1})
$$
$$
\underbrace{\CO_2(-1) \xrightarrow{\text{taut}_2^\vee} \rho^*(\CW' \otimes \omega_S^{-1}) \xrightarrow{j} \rho^*(\CV' \otimes \omega_S^{-1})}_{\sigma'} \xrightarrow{\text{taut}_1} \CO_1(1) \otimes \rho^*(\omega_S^{-1})
$$
up to tensoring with a suitable line bundle. The map $j$ is induced by the map of vector bundles $\CW' \rightarrow \CV'$ that features in \eqref{eqn:length 1}, with $\CM'$ instead of $\CM$, and $j^\vee$ denotes its dual. Then the equality of the two compositions above is a consequence of the standard linear algebra fact that:
$$
\lambda \cdot A \cdot v = v^T \cdot A^T \cdot \lambda^T
$$
where $\BC \xrightarrow{v} \BC^m$ is a vector, $\BC^m \xrightarrow{A} \BC^n$ is a matrix, and $\BC^n \xrightarrow{\lambda} \BC$ is a covector.
	
\end{proof} 

\begin{definition}
\label{def:fine}

Define the derived scheme:
\begin{equation}
\label{eqn:fine}
\xymatrix{\fZ_2^\bullet \ar[rd] \ar@{^{(}->}[r]^-{\iota}
& \BP_{\CM' \times S} \left(\CV'\right) \times_{\CM' \times S} \BP_{\CM' \times S} \left( {\CW'}^\vee \otimes \omega_S \right) \ar@{->>}[d]^\rho \\
& \CM' \times S}
\end{equation}
by the requirement that the embedding be the derived zero locus of the section:
\begin{equation}
\label{eqn:section fine}
\CO \xrightarrow{\sigma \oplus \sigma'} \CE
\end{equation}
given by the formula \eqref{eqn:section}, but mapping into the vector bundle:
\begin{equation}
\label{eqn:ker bundle}
\CE = \emph{Ker} \left[ \widetilde{\CE} \xrightarrow{\emph{taut}_2 \ominus \emph{taut}_1} \CO_1(1) \otimes \CO_2(1) \otimes \rho^*(\omega_S^{-1}) \right]
\end{equation}
(the maps $\emph{taut}_{1,2}$ are defined in Proposition \ref{prop:vanish}). \\

\end{definition}

\subsection{} 
\label{sub:fine 2}

The closed subscheme $\bfZ_2^\bullet \hookrightarrow \bfZ_2$ consisting of flags \eqref{eqn:z2} with $x_1 = x_2$ is the support of $\fZ_2^\bullet$. Moreover, we have a natural map of dg schemes:
\begin{equation}
\label{eqn:map}
\fZ_2^\bullet \rightarrow \fZ_2|_\Delta
\end{equation}
arising from the fact that the former (respectively latter) dg scheme is the derived zero locus of the section \eqref{eqn:section fine} (respectively \eqref{eqn:section}). Moreover, the map of dg schemes \eqref{eqn:map} is compatible with the equality of their supports $\bfZ_2^\bullet = \bfZ_2|_\Delta$. The line bundles $\CO_1(1)$, $\CO_2(1)$ on the two projective bundles $\BP(\CV') \times \BP({\CW'}^\vee \otimes \omega_S)$ in \eqref{eqn:fine} restrict to the line bundles $\CL_1, \CL_2^{-1}$ on either of the dg schemes $\fZ_2|_\Delta$ and $\fZ_2^\bullet$. In terms of the support schemes $\bfZ_2|_\Delta = \bfZ_2^\bullet$, these line bundles are given by:
\begin{align*} 
&\CL_1|_{(\CF \supset_x \CF' \supset_x \CF'')} = \CF'_x/\CF''_x \\
&\CL_2|_{(\CF \supset_x \CF' \supset_x \CF'')} = \CF_x/\CF'_x 
\end{align*}
Finally, note that we have natural forgetful maps:
\begin{align} 
&\fZ_2^\bullet \xrightarrow{\pi_-} \fZ_1 \label{eqn:pi-} \\
&\fZ_2^\bullet \xrightarrow{\pi_+} \fZ_1 \label{eqn:pi+}
\end{align} 
which lie above the following forgetful maps between their supports:
\begin{align*} 
&\bfZ_2^\bullet \xrightarrow{\pi_-} \bfZ_1, \qquad (\CF \supset_x \CF' \supset_x \CF'') \mapsto (\CF \supset_x \CF') \\
&\bfZ_2^\bullet \xrightarrow{\pi_+} \bfZ_1, \qquad (\CF \supset_x \CF' \supset_x \CF'') \mapsto (\CF' \supset_x \CF'')
\end{align*}
At the level of dg structures, the map \eqref{eqn:pi-} arises from the commutativity of the following diagram of projective bundles on $\CM' \times S$, coupled with \eqref{eqn:desc 2} and \eqref{eqn:fine}:
$$
\xymatrix{ & & \CE \ar@{.>}[dl] \ar[d]^{\text{projection}} \\
\fZ_2^\bullet \ar[d]_{\pi_-} \ar@{^{(}->}[r]^-{\iota}
& \BP_{\CM' \times S} \left(\CV'\right) \times_{\CM' \times S} \BP_{\CM' \times S} \left( {\CW'}^\vee \otimes \omega_S \right) \ar[d]^{\text{projection}} & \CV' \otimes \omega_S^{-1} \otimes \CO(1) \ar@{.>}[dl] \\
\fZ_1 \ar@{^{(}->}[r]^-{\iota_+} & \BP_{\CM' \times S} \left(\CW'^\vee\otimes \omega_S \right) & }
$$
In other words, the homomorphism between the dg structure sheaves of $\fZ_1$ and $\fZ_2^\bullet$ is induced by the duals of the maps labeled ``projection" in the formula above. An analogous diagram defines the map \eqref{eqn:pi+}. At the level of $K$--theory, the map $\pi_-: \fZ_2^\bullet \rightarrow \fZ_1$ is the projectivization of the virtual vector bundle:
\begin{equation}
\label{eqn:proj-}
[\CV'] - [\CW'] + [\CL \otimes \omega_S]
\end{equation}
where the locally free sheaves $\CW',\CV'$ on $\fZ_1$ correspond to the sheaf denoted $\CF'$ in: 
$$
\bfZ_1 = \text{supp } \fZ_1 = \{\CF \supset \CF'\}
$$
Similarly, the map $\pi_+: \fZ_2^\bullet \rightarrow \fZ_1$ is the projectivization of the virtual vector bundle:
\begin{equation}
\label{eqn:proj+}
\left[{\CW'}^\vee \otimes \omega_S \right] - \left[{\CV'}^\vee \otimes \omega_S \right] + [\CL^{-1} \otimes \omega_S] \qquad \text{where} \quad \text{supp } \fZ_1 = \{\CF' \supset \CF''\}  
\end{equation}
where the locally free sheaves $\CW',\CV'$ on $\fZ_1$ correspond to the sheaf denoted $\CF'$ in:
$$
\bfZ_1 = \text{supp } \fZ_1 = \{\CF' \supset \CF''\}
$$

\subsection{}
\label{sub:smooth}

For the remainder of the Section, we will operate under the following Assumptions, which also featured in the Heisenberg algebra construction of \cite{Ba}:
\begin{equation}
\label{eqn:assumption s}
\textbf{Assumption S}: \text{either} \quad \begin{cases} \omega_S \cong \CO_S \quad \ \ \text{or} \\ c_1(\omega_S) \cdot H < 0 \end{cases}
\end{equation}
Since any endomorphism of a $H$-stable sheaf $\CF$ is either 0 or an isomorphism, we conclude that $\Hom(\CF,\CF) = \BC$. Meanwhile, Serre duality implies that:
$$
\Ext^2(\CF,\CF) \cong \Hom(\CF,\CF \otimes \omega_S)^\vee
$$
The Hom space in the right-hand side is canonically isomorphic to either $\BC$ or 0, depending on which of the two options of \eqref{eqn:assumption s} holds. Since the tangent space to $\CM$ at the point $\CF$ is isomorphic to $\Ext^1(\CF,\CF)$, the descriptions of $\Hom(\CF,\CF)$ and $\Ext^2(\CF,\CF)$ above imply that $\CM$ is smooth (see \cite{HL} for details). \\

\begin{proposition}
\label{prop:z1 smooth}

Under Assumption S, the section \eqref{eqn:koszul 1} is regular, hence the dg scheme $\fZ_1$ coincides with its support $\bfZ_1$ (and the latter scheme is smooth). \\ 

\end{proposition}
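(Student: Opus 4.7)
The plan is to reduce the regularity of $\sigma^\vee$ to the smoothness of its scheme-theoretic zero locus $\bfZ_1$ at the expected dimension, and then to establish the latter by computing the Zariski tangent space via an Ext-group long exact sequence. Under Assumption S, the moduli space $\CM$ is smooth: stability gives $\Hom(\CF,\CF) = \BC$, and Serre duality gives $\Ext^2(\CF,\CF) \cong \Hom(\CF,\CF \otimes \omega_S)^\vee$, which either vanishes by slope considerations (when $c_1(\omega_S)\cdot H < 0$) or is one-dimensional with traceless — hence vanishing on moduli — obstruction (when $\omega_S \cong \CO_S$). Consequently $\BP_{\CM \times S}(\CV)$ is smooth, and on a smooth ambient a section of a vector bundle of rank $n$ is regular as soon as its scheme-theoretic zero locus is a local complete intersection of codimension $n$; in particular, it suffices to show that $\bfZ_1$ is smooth of dimension
\begin{equation*}
\dim \BP(\CV) - \text{rank}(\CW) = \dim \CM + 2 + \text{rank}(\CV) - 1 - \text{rank}(\CW) = \dim \CM + r + 1
\end{equation*}
at every point $\xi = (\CF \supset_x \CF')$.

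The tangent space $T_\xi \bfZ_1$ is computed by flag-moduli deformation theory applied to the short exact sequence $0 \to \CF' \to \CF \to \BC_x \to 0$: deformations of the pair $\CF' \subset \CF$ over $\BC[\e]/\e^2$ form the tangent space, and they fit into a ``cone of Ext groups'' long exact sequence relating $T_\xi \bfZ_1$ to $\Ext^i(\CF,\CF)$, $\Ext^i(\CF',\CF')$, and $\Ext^i(\BC_x,\CF)$ (or equivalently, by Serre duality, to $\Ext^{2-i}(\CF,\BC_x)^\vee$). The key numerical input is the Euler characteristic $\chi(\BC_x, \CF) = r$, coming from Riemann--Roch on the smooth surface $S$, which controls the alternating sum of the Ext dimensions regardless of how non-locally-free $\CF$ is at $x$.

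Assumption S enters at the final step to ensure that the obstruction map, which would otherwise prevent the tangent space from reaching its expected dimension, actually vanishes: either $\Ext^2(\CF,\CF)$ is zero outright, or the relevant obstruction is traceless and cancels. Combining these ingredients, one verifies $\dim T_\xi \bfZ_1 = \dim \CM + r + 1$ at every closed point $\xi$, whence $\bfZ_1$ is smooth of the expected dimension; regularity of $\sigma^\vee$ and $\fZ_1 = \bfZ_1$ follow. The main obstacle is the bookkeeping at points where $\CF$ fails to be locally free at $x$: there the individual terms $\dim \Hom(\BC_x, \CF)$ and $\dim \Ext^1(\BC_x, \CF)$ jump simultaneously, and the naive fiber-dimension count for $p_- \times p_S$ overestimates $\dim \bfZ_1$. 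The Ext-group formulation bypasses this by tracking the alternating sum, which is pinned by $\chi(\BC_x,\CF) = r$; the jumps cancel, and Assumption S guarantees the deformation-theoretic obstruction still vanishes, so the tangent-space dimension remains constant across the non-locally-free stratum.
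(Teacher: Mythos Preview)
Your proposal is correct and follows essentially the same strategy the paper describes (the paper itself does not prove this proposition but cites \cite{Univ}, summarizing the method as estimating the dimensions of the tangent spaces to $\bfZ_1$ via Ext groups). The paper records the explicit tangent-space formula
\[
\Tan_{(\CF \supset \CF')}\bfZ_1 = \Ker \left(\Ext^1(\CF,\CF) \oplus \Ext^1(\CF',\CF') \xrightarrow{(h,-v)} \Ext^1(\CF',\CF) \right),
\]
which is the precise form of your ``cone of Ext groups'' description, together with the surrounding commutative diagram of long exact sequences induced by $0\to\CF'\to\CF\to\BC_x\to 0$.

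One small point of precision: in the $\omega_S\cong\CO_S$ case, the relevant vanishing for $\bfZ_1$ is not really a ``traceless obstruction'' argument. What is used is $\Ext^2(\CF',\CF)=0$, which follows from Serre duality and $\Hom(\CF,\CF')=0$ (any nonzero $\CF\to\CF'$ composed with $\CF'\hookrightarrow\CF$ would be a nonzero, hence invertible, endomorphism of the stable sheaf $\CF$, contradicting $\CF'\subsetneq\CF$). The traceless argument is what gives smoothness of $\CM$, but for the flag space one controls the cokernel of $(h,-v)$ directly via this vanishing and the diagram chase. Your numerical anchor $\chi(\BC_x,\CF)=r$ is exactly what makes the bookkeeping come out right across the non-locally-free locus, as you say.
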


\noindent Proposition \ref{prop:z1 smooth} was proved in \cite[Proposition 2.10]{Univ}. The method of attack was to estimate the dimensions of the tangent spaces to $\bfZ_1$. Since $\bfZ_1$ parametrizes nested sheaves, it is well-known that its tangent spaces are given by the formula:
\begin{equation}
\label{eqn:tan 1}
\Tan_{(\CF \supset \CF')}\bfZ_1 = \Ker \left(\Ext^1(\CF,\CF) \oplus \Ext^1(\CF',\CF') \xrightarrow{(h,-v)} \Ext^1(\CF',\CF) \right)
\end{equation}
where the maps $h$ and $v$ are given as in the following diagram, all of whose rows and columns are exact (we write $\BC_x$ for the length 1 quotient $\CF/\CF'$):

$$
\xymatrix{\Tan_x S \ar[r] & \Ext^1(\CF,\BC_x) \ar[r] & \Ext^1(\CF',\BC_x) \ar@{->>}[r] & \BC \\
\Ext^1(\BC_x, \CF) \ar@{^{(}->}[r] \ar[u] & \Ext^1(\CF,\CF) \ar[r]^-h \ar@{->>}[u] & \Ext^1(\CF',\CF) \ar[r] \ar[u] & \Ext^2(\BC_x, \CF) \ar@{->>}[u] \\
\Ext^1(\BC_x, \CF') \ar[r] \ar[u] & \Ext^1(\CF,\CF') \ar[r]_{h'} \ar[u]^{v'} & \Ext^1(\CF',\CF') \ar[u]_-{v} \ar@{->>}[r] & \Ext^2(\BC_x,\CF') \ar[u] \\
\BC \ar@{^{(}->}[r] \ar@{_{(}->}[u] & \Hom(\CF,\BC_x) \ar[r] \ar[u] & \Hom(\CF', \BC_x) \ar[r] \ar@{_{(}->}[u] & \Tan_x S \ar[u]}
$$
The vector spaces in the corners of the diagram are: 
$$
\Tan_x S \cong \Ext^1(\BC_x,\BC_x)
$$
and:
$$
\BC \cong \Hom(\BC_x, \BC_x) \cong \Ext^2(\BC_x, \BC_x)
$$
It was shown in \cite[Proposition 2.10]{Univ} that the differential of $\bfZ_1 \stackrel{p_S}\rightarrow S$ is surjective:
\begin{equation}
\label{eqn:surj 1}
p_{S*} : \Tan_{(\CF \supset_x \CF')}\bfZ_1 \twoheadrightarrow \Tan_x S
\end{equation}
and that the kernel of this map consists of pairs of Exts that come from $\Ext^1(\CF,\CF')$:
\begin{equation}
\label{eqn:ker 1}
(v',h') : \Ext^1(\CF,\CF') \stackrel{\sim}\longrightarrow \text{Ker }p_{S*} \subset \text{right-hand side of } \eqref{eqn:tan 1}
\end{equation}
We will use \eqref{eqn:surj 1} and \eqref{eqn:ker 1} to prove the following analogue of Proposition \ref{prop:z1 smooth}. \\


\begin{proposition}
\label{prop:z2 smooth}

Under Assumption S, the section \eqref{eqn:section fine} is regular, hence the dg scheme $\fZ_2^\bullet$ coincides with its support $\bfZ_2^\bullet$ (and the latter scheme is smooth). \\ 

\end{proposition}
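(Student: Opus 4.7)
The plan is to mirror the tangent-space strategy used for Proposition \ref{prop:z1 smooth} in \cite{Univ}. At every point $(\CF \supset_x \CF' \supset_x \CF'') \in \bfZ_2^\bullet$, I will estimate $\dim \Tan \bfZ_2^\bullet$ and show it coincides with the expected dimension of the derived scheme $\fZ_2^\bullet$, which from \eqref{eqn:fine} and \eqref{eqn:ker bundle} is
$$\dim \BP(\CV') \times_{\CM' \times S} \BP({\CW'}^\vee \otimes \omega_S) - \rk \CE = \dim \CM' + \dim S - 1 = \dim \CM + 2r + 1.$$
This matches the naive support dimension of $\bfZ_2^\bullet$: the fiber over $\CF \in \CM$ has dimension $2 + (r-1) + r = 2r+1$, with the extra $\BP^r$ factor (rather than $\BP^{r-1}$) for the second Hecke arising because $\CF'$ fails to be locally free at $x$, so $\dim_\BC \CF'_x = r+1$.

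Using the projections $\pi_\pm: \bfZ_2^\bullet \to \bfZ_1$, identify $\Tan \bfZ_2^\bullet$ with triples $(\alpha, \beta, \gamma) \in \Ext^1(\CF, \CF) \oplus \Ext^1(\CF', \CF') \oplus \Ext^1(\CF'', \CF'')$ satisfying the two Hecke compatibilities $h_1(\alpha) = v_1(\beta)$ in $\Ext^1(\CF', \CF)$ and $h_2(\beta) = v_2(\gamma)$ in $\Ext^1(\CF'', \CF')$, together with the support-matching condition $p_{S*}^1(\alpha, \beta) = p_{S*}^2(\beta, \gamma)$ in $\Tan_x S$ (this last being the ``$\bullet$'' that distinguishes $\fZ_2^\bullet$ from $\fZ_2|_\Delta$). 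Applying \eqref{eqn:surj 1} and \eqref{eqn:ker 1} to each of the two Hecke steps and gluing the two resulting $4 \times 4$ Ext diagrams along the common sheaf $\CF'$ yields a short exact sequence
$$0 \longrightarrow \Ext^1(\CF, \CF'') \longrightarrow \Tan \bfZ_2^\bullet \xrightarrow{\ p_{S*}\ } \Tan_x S \longrightarrow 0.$$
Here $p_{S*}$ surjects by \eqref{eqn:surj 1} applied to either Hecke, and the injection is constructed via the push-pull maps attached to the filtration $\CF'' \subset \CF' \subset \CF$: a class $\eta \in \Ext^1(\CF, \CF'')$ is sent to $(\alpha, \beta, \gamma)$ where $\alpha$, $\beta$, $\gamma$ are obtained by pushing forward in the second variable and/or pulling back in the first variable along the inclusions, so that both Hecke compatibilities hold by the commutation of push/pull in different variables, and both $p_{S*}^1$ and $p_{S*}^2$ vanish on the image by \eqref{eqn:ker 1} applied individually.

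The dimension count is then pinned down by a Riemann-Roch computation of $\ext^1(\CF, \CF'')$. The filtration $\CF'' \subset \CF' \subset \CF$ with two length-$1$ quotients supported at $x$ gives $\chi(\CF, \CF'') = \chi(\CF, \CF) - 2r$ (using the topological invariant $\chi(\CF, \BC_x) = r$). Slope stability shows $\Hom(\CF, \CF'') = 0$, since any nonzero morphism $\CF \to \CF''$ would compose with the inclusion $\CF'' \hookrightarrow \CF$ to a scalar endomorphism of $\CF$ that must vanish because $\CF'' \subsetneq \CF$. Under Assumption S, $\Ext^2(\CF, \CF'') \cong \Hom(\CF'', \CF \otimes \omega_S)^\vee$ equals $0$ when $c_1(\omega_S) \cdot H < 0$ (slope stability again: $\CF \otimes \omega_S$ has strictly smaller slope than the stable $\CF''$) and equals $\BC$ when $\omega_S \cong \CO_S$ (generated by the canonical inclusion). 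In both cases, combined with the identity $\dim \CM = \ext^1(\CF, \CF)$ at a smooth point and the corresponding contributions from $\Hom(\CF,\CF)$ and $\Ext^2(\CF,\CF)$, the numerical bookkeeping yields $\ext^1(\CF, \CF'') = \dim \CM + 2r - 1$. Hence $\dim \Tan \bfZ_2^\bullet = \dim \CM + 2r + 1$ at every point of $\bfZ_2^\bullet$, matching the expected dimension of $\fZ_2^\bullet$. This implies that $\bfZ_2^\bullet$ is smooth of this dimension and that the section \eqref{eqn:section fine} is regular, so $\fZ_2^\bullet = \bfZ_2^\bullet$.

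The main technical obstacle is constructing the above short exact sequence rigorously: one must track a ``three-level'' enlargement of the $4 \times 4$ Ext diagram used for $\bfZ_1$, and verify that the candidate image of $\Ext^1(\CF, \CF'')$ not only injects into but surjects onto $\Ker p_{S*}$. This amounts to identifying the fiber product $\Ext^1(\CF, \CF') \times_{\Ext^1(\CF', \CF')} \Ext^1(\CF', \CF'')$ with $\Ext^1(\CF, \CF'')$ via the long exact sequences in Ext associated with the two Hecke short exact sequences, where the needed vanishings of $\Hom$'s between intermediate Hecke pieces follow from slope stability under Assumption S.
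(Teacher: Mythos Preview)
Your proposed short exact sequence does not hold: the natural map $\Ext^1(\CF,\CF'') \to \Ker p_{S*}$ fails to be injective. Take $\xi \in \Ext^1(\CF,\CF'')$ to be the image of the identity under
\[
\Hom(\BC_{x_1},\BC_{x_2}) \cong \BC \longrightarrow \Hom(\CF,\BC_{x_2}) \xrightarrow{\ \partial\ } \Ext^1(\CF,\CF''),
\]
where the first arrow is precomposition with $\CF \twoheadrightarrow \CF/\CF' = \BC_{x_1}$ and $\partial$ is the boundary map for $0 \to \CF'' \to \CF' \to \BC_{x_2} \to 0$. Since $\Hom(\CF,\CF') = 0$, the map $\partial$ is injective and $\xi \neq 0$. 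But the pushforward of $\xi$ to $\Ext^1(\CF,\CF')$ vanishes by the long exact sequence, and its pullback to $\Ext^1(\CF',\CF'')$ also vanishes, because by naturality of $\partial$ it equals $\partial$ applied to the composite $\CF' \hookrightarrow \CF \twoheadrightarrow \BC_{x_1}$, which is zero. Hence $\xi$ maps to zero in the fiber product $\Ext^1(\CF,\CF') \times_{\Ext^1(\CF',\CF')} \Ext^1(\CF',\CF'')$ that you correctly identify with $\Ker p_{S*}$. So the ``identification'' you flag as the main technical obstacle is not merely unproven: it is false.

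It happens that $\dim \Ext^1(\CF,\CF'')$ and $\dim \Ker p_{S*}$ coincide, so your Riemann--Roch bookkeeping produces the right number, but this coincidence is precisely what needs proof and cannot be extracted from your map. The paper establishes it along a different route: it first computes $\dim \Tan \bfZ_2$ (no diagonal restriction) via the cokernel of the map $\lambda$ of \eqref{eqn:exp tan}, which jumps by one exactly when $\CF/\CF''$ is a split length-two sheaf (Claim~\ref{claim:tan}), and then shows that the rank of $p_S^1 \times p_S^2$ on $\Tan \bfZ_2$ jumps by one in the same case; the two jumps cancel and $\dim \Tan \bfZ_2^\bullet$ comes out uniformly correct. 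Your surjectivity claim for $p_{S*}$ has the same defect: lifting $v$ via \eqref{eqn:surj 1} on each Hecke separately yields pairs $(\alpha,\beta)$ and $(\beta',\gamma)$ with no reason for $\beta = \beta'$, and arranging this equality is exactly the nontrivial ``$(v,v)$'' argument occupying the last pages of the paper's proof.
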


\begin{proof} By analogy with \eqref{eqn:z2 fib diag}, the definition of $\fZ_2$ as the derived fiber product $\fZ_1 \times_{\CM'} \fZ_1$ implies that we have the following commutative triangle of maps:
\begin{equation}
\label{eqn:exp diag}
\xymatrix{\fZ_2 \ar[rd] \ar@{^{(}->}[r]
& \BP_{\CM' \times S \times S} \left( {\CW'_1}^\vee \otimes \omega_S \right) \times_{\CM' \times S \times S} \BP_{\CM' \times S \times S} \left(\CV'_2\right) \ar@{->>}[d]^{\tilde{\rho}} \\
& \CM' \times S \times S}
\end{equation}
where $\CW'_1$ (respectively $\CV'_2$) refers to the pull-back of the vector bundle $\CW'$ (respectively $\CV'$) from $\CM \times S$ to $\CM \times S \times S$ via the first (respectively second) projection of $S \times S$ onto $S$. The embedding in \eqref{eqn:exp diag} is the derived zero locus of the section:
\begin{equation}
\label{eqn:exp section}
\CO \xrightarrow{\sigma'_1 \oplus \sigma_2} \tilde{\rho}^*(\CV'_1 \otimes \omega_S^{-1}) \otimes \CO_2(1) \bigoplus \tilde{\rho}^*({\CW'_2}^\vee) \otimes \CO_1(1) 
\end{equation}
and it is easy to see that the scheme-theoretic zero locus of the section \eqref{eqn:exp section} is $\bfZ_2$ of \eqref{eqn:z2}. Because of the Grothendieck-Hirzebruch-Riemann-Roch theorem, the dimension of the smooth variety $\CM'$ at any point $\CF'$ of second Chern class $c_2$ is:
\begin{equation}
\label{eqn:exp dim 0}
\dim \Ext^1(\CF',\CF') = 1 + \e - \dim \chi(\CF',\CF') = 1 + \e + \text{const} + 2rc_2 
\end{equation}
where the integer labeled ``const" depends only on $S, H, r, c_1$, and the number $\e$ is equal to the dimension of $\Ext^2(\CF',\CF')$, which is 1 or 0 depending on which of the two cases in Assumption S holds. The dimension of the derived scheme $\fZ_2$ is the dimension of the projective bundle $\BP \times \BP$ in \eqref{eqn:exp diag} minus the rank of the vector bundle in the right-hand side of \eqref{eqn:exp section}. Since $\text{rank } \CV' - \text{rank } \CW' = r$, we have:
\begin{multline}
\label{eqn:exp dim}
\text{dim }\fZ_2 = \dim \CM' + \underbrace{2 + 2}_{\dim S \times S}  + \\ + (\text{rank } \CW_1' + \text{rank } \CV_2' - 2 - \text{rank } \CV_1' - \text{rank } \CW_2')  = 1 + \e + \text{const} + 2rc_2 + 2 
\end{multline}
As for the scheme-theoretic zero locus of the section \eqref{eqn:exp section}, the computation above implies that the dimensions of all of its local rings satisfy:
\begin{equation}
\label{eqn:actual dim}
\text{dim }\bfZ_2 \geq \text{RHS of \eqref{eqn:exp dim}} = 1 + \e + \text{const} + 2rc_2 + 2 
\end{equation}
By a well-known deformation-theory argument, the tangent space to a point of $\bfZ_2$ parametrizing a flag $(\CF \supset_{x_1} \CF' \supset_{x_2} \CF'')$ equals the space of triples of extensions:
\begin{equation}
\label{eqn:triple}
\xymatrix{0 \ar[r] & \CF \ar[r]  & \CS  \ar[r] & \CF \ar[r] & 0 \\
0 \ar[r] & \CF' \ar[r] \ar@{^{(}->}[u] & \CS' \ar[r] \ar@{^{(}->}[u] & \CF' \ar@{^{(}->}[u] \ar[r] & 0 \\
0 \ar[r] & \CF'' \ar[r] \ar@{^{(}->}[u] & \CS'' \ar[r] \ar@{^{(}->}[u] & \CF'' \ar@{^{(}->}[u] \ar[r] & 0}
\end{equation}
By a simple diagram chase, one can see that this space is:
\begin{equation}
\label{eqn:exp tan}
\text{Tan}_{(\CF \supset_{x_1} \CF' \supset_{x_2} \CF'')}\bfZ_2 = 
\end{equation}
$$
= \text{Ker} \Big[ \Ext^1(\CF,\CF) \oplus \Ext^1(\CF',\CF') \oplus \Ext^1(\CF'',\CF'') \stackrel{\lambda}\longrightarrow \Ext^1(\CF',\CF) \oplus \Ext^1(\CF'',\CF') \Big]
$$
where the map $\lambda$ is the alternating sum of the fourn+atural maps induced by the inclusions $\CF \supset \CF' \supset \CF''$ on the Ext spaces in question (i.e. the maps that feature in \eqref{eqn:tan 1}, two for each of the pairs $\CF \supset \CF'$ and $\CF' \supset \CF''$). By analogy with \eqref{eqn:exp dim 0}, we have the following formulas for the dimensions of the following Ext spaces:
\begin{align*}
\dim \Ext^1(\CF,\CF) &= 1 + \e + \text{const} + 2rc_2 - 2r \\
\dim \Ext^1(\CF'',\CF'') &= 1 + \e + \text{const} + 2rc_2 + 2r \\
\dim \Ext^1(\CF',\CF) &= 1 + 0 + \text{const} + 2rc_2 - r \\
\dim \Ext^1(\CF'',\CF') &= 1 + 0 + \text{const} + 2rc_2 + r \\
\dim \Ext^1(\CF,\CF') &= 0 + \e + \text{const} + 2rc_2 - r \\ 
\dim \Ext^1(\CF',\CF'') &= 0 + \e + \text{const} + 2rc_2 + r 
\end{align*}
(the first two summands in each RHS represents the dimension of the corresponding $\Hom$ and $\Ext^2$ spaces, subject to Assumption S). Therefore, the tangent space \eqref{eqn:exp tan} has expected dimension \eqref{eqn:exp dim} if and only if $\lambda$ has cokernel of dimension $2-2\e$. \\

\begin{claim}
\label{claim:tan}

The map $\lambda$ of \eqref{eqn:exp tan} has cokernel of dimension:
\begin{equation}
\label{eqn:claim}
\begin{cases} 3-2\e & \text{if } x_1 = x_2 \text{ and } \CF/\CF'' \cong \BC_{x_1} \oplus \BC_{x_2} \\ 2-2\e & \text{otherwise} \end{cases}
\end{equation}
In the latter case, $\bfZ_2$ is smooth of expected dimension at the point $(\CF \supset \CF' \supset \CF'')$, while in the former case its tangent space has dimension 1 greater than expected. \\

\end{claim}

\begin{proof}

From the long exact sequences corresponding to the $\Ext$ functor, we obtain:
\begin{align*} 
&\lambda \left(\Ext^1(\CF,\CF),0,0 \right) = \left( \text{Ker} \left[ \Ext^1(\CF',\CF) \stackrel{p_1}\rightarrow \Ext^2(\BC_{x_1},\CF) \right] ,0 \right) \\
&\lambda \left(0,0,\Ext^1(\CF'',\CF'') \right) = \left(0,\text{Ker} \left[ \Ext^1(\CF'',\CF') \stackrel{p_2}\rightarrow \Ext^1(\CF'',\BC_{x_2}) \right] \right)
\end{align*}
where $\BC_{x_1}$ and $\BC_{x_2}$ denote $\CF/\CF'$ and $\CF'/\CF''$, respectively. Therefore, the cokernel of the map $\lambda$ is isomorphic to the cokernel of the map:
\begin{equation}
\label{eqn:mu}
\Ext^1(\CF',\CF') \xrightarrow{(\mu_1,\mu_2)} \text{Im } p_1 \oplus \text{Im } p_2
\end{equation}
where the maps $\mu_1$ and $\mu_2$ are as in the diagrams below:
$$
\xymatrix{ \Hom(\CF',\BC_{x_1}) \ar[d] \ar[r] & \Ext^1(\CF',\CF') \ar@{.>}[dr]^{\mu_1} \ar@{->>}[d]_{r_1} \ar[r] & \Ext^1(\CF',\CF) \ar[d]^{p_1} \ar@{->>}[r] & \Ext^1(\CF',\BC_{x_1}) \ar@{->>}[d]  \\
\Ext^1(\BC_{x_1},\BC_{x_1}) \ar[r] & \Ext^2(\BC_{x_1},\CF') \ar[r]_{q_1} & \Ext^2(\BC_{x_1},\CF) \ar@{->>}[d]^{t_1} \ar@{->>}[r]_{s_1} & \Ext^2(\BC_{x_1},\BC_{x_1}) \\ & & \BC^\e &}
$$
$$
\xymatrix{ \Ext^1(\BC_{x_2}, \CF') \ar[r] \ar[d] & \Ext^1(\CF',\CF') \ar[r] \ar@{->>}[d]_{r_2} \ar@{.>}[dr]^{\mu_2} & \Ext^1(\CF'',\CF') \ar[d]^{p_2} \ar[r] & \Ext^2(\BC_{x_2}, \CF') \ar@{->>}[d] \\
\Ext^1(\BC_{x_2}, \BC_{x_2}) \ar[r] & \Ext^1(\CF',\BC_{x_2}) \ar[r]_{q_2} & \Ext^1(\CF'',\BC_{x_2}) \ar@{->>}[d]^{t_2} \ar@{->>}[r]_{s_2} & \Ext^2(\BC_{x_2}, \BC_{x_2}) \\ & & \BC^\e &}
$$
In each of the above diagrams, the space in the bottom right is $\cong \BC$. \\

\begin{itemize}[leftmargin=4mm]

\item If $\e = 1$, it is easy to see that $\text{Ker } s_i = \text{Ker }t_i$ for both $i \in \{1,2\}$. Therefore, we may rewrite \eqref{eqn:mu} by saying that $\text{Coker }\lambda$ is isomorphic to the cokernel of:
\begin{equation}
\label{eqn:mu 1}
\Ext^1(\CF',\CF') \xrightarrow{(\mu_1, \mu_2)} \text{Im } q_1 \oplus \text{Im } q_2
\end{equation}
We want to show that the cokernel of \eqref{eqn:mu 1} has dimension 1 (respectively 0) in the first (respectively second) case of \eqref{eqn:claim}. Since $\mu_i = q_i \circ r_i$, we will solve this problem by noting that the following square commutes:
\begin{equation}
\label{eqn:boy}
\xymatrix{\Ext^1(\CF',\CF') \ar@{->>}[r]^-{r_1} \ar@{->>}[d]_-{r_2} & \Ext^2(\BC_{x_1},\CF') \ar@{->>}[d]^-{y_2} \\
\Ext^1(\CF',\BC_{x_2}) \ar@{->>}[r]_-{y_1} & \Ext^2(\BC_{x_1},\BC_{x_2})}
\end{equation}
where the horizontal maps are induced by the sequence $0\rightarrow \CF' \rightarrow \CF \rightarrow \BC_{x_1} \rightarrow 0$, and the vertical maps are induced by the sequence $0\rightarrow \CF'' \rightarrow \CF' \rightarrow \BC_{x_2} \rightarrow 0$. By applying Serre duality to the square \eqref{eqn:boy}, it is easy to show that: 
\begin{equation}
\label{eqn:key}
\text{Im }(r_1,r_2) = \text{Ker }(y_2,-y_1)
\end{equation} 
Therefore, the row and column of the following diagram are exact:
$$
\xymatrix{& \Ext^1(\BC_{x_1}, \BC_{x_1}) \ar@{.>}[dr]^\rho \oplus \Ext^1(\BC_{x_2}, \BC_{x_2}) \ar[d] & \\
\Ext^1(\CF',\CF') \ar[r]^-{(r_1,r_2)} \ar@{.>}[dr]_{(\mu_1,\mu_2)} & \Ext^2(\BC_{x_1},\CF') \oplus \Ext^1(\CF',\BC_{x_2}) \ar@{->>}[d]_{(q_1,q_2)}  \ar[r]^-{(y_2,-y_1)} & \Ext^2(\BC_{x_1}, \BC_{x_2}) \\
& \text{Im }q_1 \oplus \text{Im }q_2 & }
$$
If $x_1 \neq x_2$, then $\Ext^2(\BC_{x_1}, \BC_{x_2}) = 0$, and the surjectivity of $(r_1,r_2)$ implies the surjectivity of $(\mu_1,\mu_2)$, as required. If $x_1 = x_2$, then $\Ext^2(\BC_{x_1}, \BC_{x_2}) \cong \BC$, and the cokernel of $(\mu_1,\mu_2)$ is isomorphic to the cokernel of $\rho$. The latter is 1 or 0 dimensional depending on whether $\CF/\CF''$ is  isomorphic to $\BC_{x_1} \oplus \BC_{x_2}$ or not. \\

\item If $\e = 0$, then $\text{Im }q_i$ has codimension 1 inside $\text{Im }p_i$, and so we may rewrite \eqref{eqn:mu} by saying that $\text{Coker }\lambda$ is isomorphic to the cokernel of:
\begin{equation}
\label{eqn:mu 2}
\Ext^1(\CF',\CF') \xrightarrow{(\mu_1, \mu_2)} \text{Im } q_1 \oplus \text{Im } q_2 \oplus \BC^2
\end{equation}
(although the right-hand side of \eqref{eqn:mu 2} should more appropriately be called ``a vector space containing $\text{Im } q_1 \oplus \text{Im } q_2$ as a codimension 2 subspace"). One can repeat the argument following equation \eqref{eqn:mu 1} in the previous bullet to give us the desired values \eqref{eqn:claim} for the dimension of the cokernel of the map \eqref{eqn:mu 2}. 

\end{itemize}

\end{proof}

\noindent The dg scheme $\fZ_2^\bullet$ defined as the derived zero locus of the section \eqref{eqn:section fine} has:
\begin{equation}
\label{eqn:exp dim fine}
\text{dim }\fZ_2^\bullet = 1 + \e + \text{const} + 2rc_2 + 1 
\end{equation}
Note that this is 1 less than the dimension of $\fZ_2$ from \eqref{eqn:exp dim}: two dimensions fewer because we require the support points $x_1$ and $x_2$ to be the same, but one dimension more because the section \eqref{eqn:section fine} maps to a vector bundle of rank 1 less than \eqref{eqn:exp section}. To conclude the proof of Proposition \ref{prop:z2 smooth}, it is therefore enough to show that the tangent spaces to the support scheme:
\begin{equation}
\label{eqn:z2 pt}
\bfZ_2^\bullet = \Big\{ (\CF,\CF',\CF'') \text{ s.t. } \CF \supset_{x} \CF' \supset_{x} \CF'' \text{ for some } x \in S \Big\} 
\end{equation}
have dimension less than or equal to \eqref{eqn:exp dim fine}. To this end, note that the composition of the linear maps:
\begin{equation}
\label{eqn:pss}
\Tan_{(\CF \supset_{x_1} \CF' \supset_{x_2} \CF'')} \bfZ_2^\bullet \hookrightarrow \Tan_{(\CF \supset_{x_1} \CF' \supset_{x_2} \CF'')} \bfZ_2 \xrightarrow{\pss} \Tan_{x_1} S \oplus \Tan_{x_2} S
\end{equation}
\footnote{We must have $x_1 = x_2$ in order for $(\CF \supset_{x_1} \CF' \supset_{x_2} \CF'')$ to lie in $\bfZ_2^\bullet$, but we will use the notation $x_1$ and $x_2$ in order to not confuse the length 1 quotients $\CF/\CF' \cong \BC_{x_1}$ and $\CF'/\CF'' \cong \BC_{x_2}$} lands in the 2-dimensional diagonal subspace $\Tan_x S \hookrightarrow \Tan_{x_1} S \oplus \Tan_{x_2}S$. Because of the last sentence in the statement of Claim \ref{claim:tan}, in order to obtain the desired estimate on the dimensions of the tangent spaces to $\bfZ_2^\bullet$, it suffices to prove that:
\begin{align} 
&\text{the map } \pss \text{ is surjective  if } \CF/\CF'' \cong \BC_{x_1} \oplus \BC_{x_2} \label{eqn:want 1} \\
&\text{the map } \pss \text{ has 1-dimensional cokernel if } \CF/\CF'' \not\cong \BC_{x_1} \oplus \BC_{x_2} \label{eqn:want 2}
\end{align}Let us first tackle the first of these, i.e. the case \eqref{eqn:want 1}. It suffices to show that for any tangent vector $v \in \Tan_{x_1} S = \Tan_{x_2} S$, there exist tangent vectors to $\bfZ_2$ which map to either $(v,0)$ or $(0,v)$. We will only prove the former statement, as the latter is analogous and we leave it to the interested reader. By \eqref{eqn:surj 1}, there exists:
$$
(w,w') \in \text{Ker} \left(\Ext^1(\CF,\CF) \oplus \Ext^1(\CF',\CF') \longrightarrow \Ext^1(\CF',\CF) \right)
$$
which maps to $v$ under $p^1_{S*}$. By \eqref{eqn:ker 1}, we may change $w$ and $w'$ by the image of one and the same element of $\Ext^1(\CF,\CF')$ without changing the vector $v$. The task is to complete the datum above to a triple:
$$
(w,w',w'') \in \Tan_{(\CF \supset_{x_1} \CF' \supset_{x_2} \CF'')} \bfZ_2 \subset \Ext^1(\CF,\CF) \oplus \Ext^1(\CF',\CF') \oplus \Ext^1(\CF'',\CF'')
$$
However, in order to ensure that $(p_{S*}^1 \times p_{S*}^2)(w,w',w'') = (v,0)$, both $w'$ and $w''$ must come from one and the same element of $\Ext^1(\CF',\CF'')$. This is equivalent to requiring that in the diagram below with exact row and columns:
\begin{equation}
\label{eqn:volo}
\xymatrix{& \Ext^1(\CF,\CF') \ar@{->>}[r] \ar[d] & \Ext^1(\CF,\BC_{x_2}) \ar[d] \\
\Ext^1(\CF',\CF'') \ar@{->>}[d] \ar[r] & \Ext^1(\CF',\CF') \ar@{->>}[d]_{r_1} \ar@{->>}[r]^{r_2} \ar@{.>}[rd]^\phi & \Ext^1(\CF', \BC_{x_2}) \ar@{->>}[d]^{y_1} \\
\Ext^2(\BC_{x_1},\CF'') \ar[r] & \Ext^2(\BC_{x_1},\CF') \ar@{->>}[r]_{y_2} & \Ext^2(\BC_{x_1}, \BC_{x_2})}
\end{equation}
we may choose $w' \in \Ext^1(\CF',\CF')$ to map to 0 in $\Ext^1(\CF',\BC_{x_2})$. But remember that $w'$ may only be changed by adding the image of an arbitrary element in $\Ext^1(\CF,\CF')$, and so chasing through \eqref{eqn:volo} implies that it is enough to show that $\phi(w') = 0$. To this end, recall that the extensions $w$ and $w'$ have the property that they map to one and the same element in $\Ext^1(\CF',\CF)$. Consider the following diagram:
\begin{equation}
\label{eqn:dya}
\xymatrix{ & \Ext^1(\CF,\CF) \ar[d] \\
\Ext^1(\CF',\CF') \ar[r] \ar[d] \ar@{.>}[rd]^{\psi} & \Ext^1(\CF',\CF) \ar[d]  \\
\Ext^2(\BC_{x_1}, \CF') \ar[r] & \Ext^2(\BC_{x_1}, \CF)}  
\end{equation}
where the two vertical arrows on the right have composition equal to 0. Since $w$ and $w'$ map to the same element in $\Ext^1(\CF',\CF)$, this implies that $\psi(w') = 0$. In turn, this yields the desired property $\phi(w') = 0$, because if we let:
$$
\kappa : \Ext^2(\BC_{x_1},\CF) \rightarrow \Ext^2(\BC_{x_1}, \BC_{x_2})
$$
be the map induced by the composition $\CF \rightarrow \CF/\CF'' \cong \BC_{x_1} \oplus \BC_{x_2} \xrightarrow{\text{proj}_1} \BC_{x_1} \cong \BC_{x_2}$, then a straightforward diagram chase reveals the fact that $\phi = \kappa \circ \psi$. \\

\noindent Let us now treat the case \eqref{eqn:want 2}. We may ask for which $v \in \Tan_{x_1} S = \Tan_{x_2} S$ can we choose a tangent vector $(w,w',w'')$ to $\bfZ_2$ which maps to either $(v,0)$ or $(0,v)$ under the map \eqref{eqn:pss}. Chasing through the proof on the previous page shows that such a tangent vector $(w,w',w'')$ can be chosen if and only if $\phi(w') = 0$, where $\phi$ is the diagonal arrow in \eqref{eqn:volo}. Since the image of $\phi$ is one-dimensional, this places a single linear condition on the vector $w'$, and therefore there exists a one-dimensional family of $v$'s such that $(v,0)$ and $(0,v)$ lie in the image of $\pss$. \\

\noindent To prove \eqref{eqn:want 2}, we must show that the image of $\pss$ is three-dimensional, and so we must show that there exists at least one more vector in the image. In fact, we will show that vectors of the form $(v,v)$ are always in the image of $\pss$, for any $v \in \Tan_{x_1} S = \Tan_{x_2} S$. By \eqref{eqn:surj 1}, we may choose vectors:
\begin{align}
&\left(\begin{array}{c}
w  \\
w'  \\    
\end{array} \right) \in \text{Ker} \left( \begin{aligned}
\Ext^1(\CF, \CF) & &  \\
\oplus \qquad \ & \longrightarrow & \Ext^1(\CF',\CF) \\
\Ext^1(\CF', \CF') & &     
\end{aligned} \right) = \Tan_{(\CF\supset_{x_1} \CF')} \bfZ_1 \label{eqn:ww} \\
&\left(\begin{array}{c}
\bar{w}'  \\
w''  \\    
\end{array} \right) \in \text{Ker} \left( \begin{aligned}
\Ext^1(\CF', \CF') & &  \\
\oplus \qquad \ & \longrightarrow & \Ext^1(\CF'',\CF') \\
\Ext^1(\CF'', \CF'') & &     
\end{aligned} \right) = \Tan_{(\CF'\supset_{x_2} \CF'')} \bfZ_1 \label{eqn:www}
\end{align}
which map to the same vector $v \in \Tan_{x_1} S = \Tan_{x_2} S$ under the map $p_{S*}$. According to \eqref{eqn:ker 1}, we may modify the vectors $w'$ and $\bar{w}'$ by arbitrary elements coming from $\Ext^1(\CF,\CF')$ and $\Ext^1(\CF',\CF'')$, respectively, without modifying the images of $p_{S*}$. We need to show that we can perform these modifications so as to ensure $w' = \bar{w}'$, as then $(w,w',w'')$ will map to $(v,v)$ under $\pss$. The elements $w'$ and $\bar{w}'$ lie in the space $\Ext^1(\CF',\CF')$ in the middle of the diagram \eqref{eqn:volo}. Chasing through the aforementioned diagram, we see that we can make $w'$ equal to $\bar{w}'$ upon modification by elements coming from $\Ext^1(\CF,\CF')$ and $\Ext^1(\CF',\CF'')$, respectively, iff:
\begin{equation}
\label{eqn:finally}
\phi(w') = \phi(\bar{w}')
\end{equation}
The fact that equality \eqref{eqn:finally} holds is a direct consequence of the following claim: \\


\begin{claim}

The element $\phi(w')$ coincides with the image of $v$ under the map:
\begin{equation}
\label{eqn:of}
\emph{Ext}^1(\BC_{x_1},\BC_{x_1}) \longrightarrow \emph{Ext}^2(\BC_{x_1}, \BC_{x_2})
\end{equation}
induced by the short exact sequence: 
$$
0 \longrightarrow \CF'/\CF'' \cong \BC_{x_2} \longrightarrow \CF/\CF'' \longrightarrow \CF/\CF' \cong \BC_{x_1} \longrightarrow 0
$$
The analogous statement holds for $\phi(\bar{w}')$, which implies equality \eqref{eqn:finally}. \\

\end{claim}

\noindent We will prove the claim about $\phi(w')$, and leave the analogous result for $\phi(\bar{w}')$ as an exercise for the interested reader. Assume the pair $(w,w')$ of \eqref{eqn:ww} corresponds to a commutative diagram:
\begin{equation}
\label{eqn:double}
\xymatrix{0 \ar[r] & \CF \ar[r]^{\iota} & \CS  \ar[r]^{\pi} & \CF \ar[r] & 0 \\
0 \ar[r] & \CF' \ar[r]^{\iota'} \ar@{^{(}->}[u] & \CS' \ar[r]^{\pi'} \ar@{^{(}->}[u] & \CF' \ar@{^{(}->}[u] \ar[r] & 0}
\end{equation}
By a straightforward diagram chase, $\phi(w')$ is given by the extension:
\begin{equation}
\label{eqn:ofof}
0 \longrightarrow \CF'/\CF'' \cong \BC_{x_2} \stackrel{(1,0)}\longrightarrow \CF'/\CF'' \oplus_{\CF'} \CS' \stackrel{(0,\pi')}\longrightarrow \CF \longrightarrow \CF/\CF' \cong \BC_{x_1} \longrightarrow 0
\end{equation}
and the notation $\oplus_{\CF'}$ denotes the push-out with respect to the standard projection map $\CF' \twoheadrightarrow \CF'/\CF''$ and the map $\iota' : \CF' \rightarrow \CS'$. Because of the commutative diagram:
\begin{equation}
\label{eqn:ofofof}
\xymatrix{0 \ar[r] & \CF'/\CF'' \ar[r]^-{(1,0)} \ar@{=}[d] & \CF'/\CF'' \oplus_{\CF'} \CS' \ar[r]^-{(0,\pi')} \ar[d]_{(\iota',\text{projection})} & \CF \ar[r] \ar[d]^{\text{projection}} & \CF/\CF' \ar[r] \ar@{=}[d] & 0 \\
0 \ar[r] & \CF'/\CF'' \ar[r]^{\iota'} & \CS'/\CS''  \ar[r]^{\text{inclusion} \circ \pi'} & \CF/\CF'' \ar[r] & \CF/\CF' \ar[r] & 0}
\end{equation}
we infer that \eqref{eqn:ofof} is equal to the extension on the bottom row. However, the assumption that \eqref{eqn:ww} and \eqref{eqn:www} map to the same tangent vector $v \in \Tan_{x_1} S = \Tan_{x_2} S$ implies that $\CS'/\CS'' \cong \CS/\CS'$, hence the bottom row of \eqref{eqn:ofofof} is equal to:\footnote{A non-split length 2 sheaf $A$ supported at a point $x$ of a smooth surface is an extension $0 \rightarrow \BC_x \rightarrow A \rightarrow \BC_x \rightarrow 0 $, which is completely determined by how the two generators of $\fm_x/\fm_x^2$ act on $A$. If $A$ and $B$ are two such extensions, then we obtain two rank 1 maps $A \rightarrow B$ and $B \rightarrow A$ by projecting through the quotients $\BC_x$. It is easy to show that $0 \rightarrow \BC_x \rightarrow A \rightarrow B \rightarrow \BC_x \rightarrow 0$ and $0 \rightarrow \BC_x \rightarrow B \rightarrow A \rightarrow \BC_x \rightarrow 0$ are equal in the 1-dimensional extension group $\Ext^2(\BC_x, \BC_x)$}
\begin{equation}
\label{eqn:ofofofof}
0 \longrightarrow \CF'/\CF'' \longrightarrow \CF/\CF'' \xrightarrow{\iota \circ \text{projection}} \CS/\CS' \stackrel{\pi}\longrightarrow  \CF/\CF' \longrightarrow 0
\end{equation}
in the 1-dimensional extension group $\Ext^2(\BC_{x_1},\BC_{x_2})$. Since \eqref{eqn:ofofofof} is nothing but the map \eqref{eqn:of} applied to the short exact sequence: 
$$
0 \longrightarrow \CF/\CF' \cong \BC_{x_1} \longrightarrow \CS/\CS' \longrightarrow \CF/\CF' \cong \BC_{x_1} \longrightarrow 0
$$
and this sequence represents the vector $v \in \Tan_x S$, we are done.

\end{proof}

\begin{corollary}
	
The scheme $\bfZ_2$ is l.c.i. of dimension \eqref{eqn:exp dim}. \\
	
\end{corollary}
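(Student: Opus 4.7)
The plan is to view $\bfZ_2$ as the scheme-theoretic zero locus of the section \eqref{eqn:exp section} of a vector bundle on the smooth variety $Y = \BP_{\CM' \times S \times S}({\CW'_1}^\vee \otimes \omega_S) \times \BP_{\CM' \times S \times S}(\CV'_2)$ from \eqref{eqn:exp diag}, and to establish the equality $\dim_p \bfZ_2 = D$ at every point $p$, where $D$ denotes the right-hand side of \eqref{eqn:exp dim}. Since $\bfZ_2$ is cut in $Y$ by $\dim Y - D$ equations, this equality is equivalent to those equations forming a regular sequence at $p$, hence to $\bfZ_2$ being l.c.i.\ of codimension $\dim Y - D$ at $p$. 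Combined with the lower bound $\dim_p \bfZ_2 \geq D$ already furnished by \eqref{eqn:actual dim}, the task reduces to proving the matching upper bound.

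First I would appeal to Claim \ref{claim:tan}: its second case says that at every $p \in \bfZ_2$ outside the ``bad'' locus
$$B = \{(\CF,\CF',\CF'') \in \bfZ_2^\bullet : \CF/\CF'' \cong \BC_x \oplus \BC_x\},$$
the Zariski tangent space has dimension exactly $D$, so $\bfZ_2$ is smooth of dimension $D$ there. In particular, every irreducible component of $\bfZ_2$ that meets $\bfZ_2 \setminus B$ has dimension exactly $D$.

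The heart of the argument is to rule out the possibility that some irreducible component of $\bfZ_2$ is entirely contained in $B$. This is where Proposition \ref{prop:z2 smooth} pays off: it identifies $\bfZ_2^\bullet$ with a smooth scheme of dimension \eqref{eqn:exp dim fine}, which is exactly $D-1$. Since $B$ is closed in $\bfZ_2^\bullet$ --- being the vanishing locus of the universal extension class classifying $0 \to \CF'/\CF'' \to \CF/\CF'' \to \CF/\CF' \to 0$ --- one has $\dim B \leq D-1 < D$. On the other hand, every irreducible component of $\bfZ_2$ has dimension at least $D$ by applying \eqref{eqn:actual dim} at its generic point. These two bounds are incompatible for a component contained in $B$, so every component must meet the smooth stratum $\bfZ_2 \setminus B$ and have dimension precisely $D$, yielding $\dim_p \bfZ_2 = D$ at every $p \in \bfZ_2$.

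The only delicate point in the above scheme is verifying the closedness of $B$ inside $\bfZ_2^\bullet$; this reduces to the Zariski-openness of the non-split locus in a flat family of length-$2$ sheaves with prescribed Jordan--H\"older factors, which is routine. Everything else --- the dimension formula for $\bfZ_2^\bullet$, the tangent-space calculation outside $B$, and the lower bound $\dim_p \bfZ_2 \geq D$ --- is already in hand from the preceding material, so the corollary follows essentially by assembling what has just been proved.
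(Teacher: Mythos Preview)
Your proposal is correct and follows essentially the same approach as the paper's proof: both stratify $\bfZ_2$ by whether the tangent space has the expected dimension (using Claim \ref{claim:tan}), bound the bad locus by embedding it in $\bfZ_2^\bullet$ whose dimension is $D-1$ by Proposition \ref{prop:z2 smooth}, and combine this with the lower bound \eqref{eqn:actual dim}. The only superfluous step in your write-up is the discussion of closedness of $B$; all you need is the set-theoretic inclusion $B \subset \bfZ_2^\bullet$ to conclude $\dim B \leq \dim \bfZ_2^\bullet = D-1$, regardless of whether $B$ is closed.
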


\begin{proof} By Claim \ref{claim:tan}, we may stratify $\bfZ_2$ into two locally closed subsets: the locus $A$ of points where the tangent space has expected dimension \eqref{eqn:exp dim}, and the locus $B$ of points where the tangent space has dimension 1 bigger than expected. It is clear that $\dim A$ is no greater than the right-hand side of \eqref{eqn:exp dim}, so it remains to show that the same is true of $\dim B$. However, recall that Claim \ref{claim:tan} implies:
$$
B \subset \bfZ_2^\bullet \quad \Rightarrow \quad \dim B \leq \dim \bfZ_2^\bullet = \text{RHS of \eqref{eqn:exp dim fine}} < \text{RHS of \eqref{eqn:exp dim}}
$$	

\end{proof}

\section{The double shuffle algebra action}
\label{sec:shuf acts}

\medskip

\subsection{} 
\label{sub:stacks}


Let $\ks = K_0(S)$ be the algebraic $K$-theory ring of the smooth surface $S$. The correspondences $\fZ_1$ of \eqref{eqn:def z1 proj} induce operators on the $K$--theory groups:
$$
\km = \bigoplus_{c_2 = \left \lceil \frac {r-1}{2r} c_1^2 \right \rceil}^\infty K_0(\CM_{(r,c_1,c_2)})
$$
explicitly given by the following formulas (with the notations in \eqref{eqn:projection maps dg}):
\begin{align}
&\km \stackrel{e_d}\longrightarrow \kmms, & &e_d =  (p_+ \times p_S)_*\left( \CL^d \cdot p_-^* \right) \label{eqn:def e} \\
&K_{\CM'} \stackrel{f_d}\longrightarrow \kms, & &f_d = \frac {\det \CU}{(-1)^r q^{r-1}} \cdot  (p_- \times p_S)_* \left( \CL^{d-r} \cdot p_+^* \right) \label{eqn:def f}
\end{align}
where $q$ denotes the $K$-theory class of the canonical line bundle of $S$ (and its pull-back to $\CM \times S$ via the second projection). Let us focus on the operators $\{e_d\}_{d\in \BZ}$. Compositions of $k$ such operators can be described via the following correspondence:
\begin{equation}
\label{eqn:long fine}
\fZ_k := \fZ_1 \underset{\CM_1}\times \fZ_1 \underset{\CM_2}\times ... \underset{\CM_{k-1}}\times \fZ_1 \longrightarrow \CM_0 \times ... \times \CM_k
\end{equation}
(the fiber product is derived) which is a dg scheme supported on:
\begin{equation}
\label{eqn:long flag}
\bfZ_k = \Big \{\CF_0,...,\CF_k \text{ sheaves}, \text{ points } x_1,...,x_k \in S \text{ s.t. } \CF_0 \subset_{x_1} ... \subset_{x_k} \CF_k \Big\} 
\end{equation}
(the notation above is that $\CM_i$ denotes the moduli space that parametrizes the stable sheaf denoted by $\CF_i$). Both $\fZ_k$ and $\bfZ_k$ are endowed with line bundles $\{\CL_i\}_{1 \leq i \leq k}$, which parametrize the length 1 quotients $\CF_i/\CF_{i-1}$. We have projection maps:
$$
\xymatrix{
& \fZ_k \ar[ld]_{p_+}\ar[d]^{p_{S^k}}  \ar[rd]^{p_-}  & \\
\CM_0 & S^k & \CM_k}  \qquad \xymatrix{
& \bfZ_k \ar[ld]_{\bp_+}\ar[d]^{\bp_{S^k}}  \ar[rd]^{\bp_-}  & \\
\CM_0 & S^k & \CM_k} 
$$
where $\bp_+$, $\bp_-$, $\bp_{S^k}$ send a flag of sheaves \eqref{eqn:long flag} to $\CF_0$, $\CF_k$, $(x_1,...,x_k)$, respectively, and they are compatible with $p_+, p_-, p_{S^k}$ under the support map. Then we have:
\begin{equation}
\label{eqn:composition 0}
e_{d_1} \circ ... \circ e_{d_k} = (p_+ \times p_{S^k})_* \left(\CL_1^{d_1}...\CL_k^{d_k} \cdot p_-^* \right) : \km \longrightarrow K_{\CM \times S^k}
\end{equation}
Certain quadratic relations (the case $k=2$) between the compositions \eqref{eqn:composition 0} were worked out in \cite{Univ}, but the full set of relations was conjectured in \loccit to match those in the so-called universal shuffle algebra. However, we do not know how to describe the full ideal of relations explicitly. The situation is simplified if we compose \eqref{eqn:composition 0} with restriction to the smallest diagonal $\Delta : S \hookrightarrow S^k$:
\begin{equation}
\label{eqn:composition}
e_{d_1} \circ ... \circ e_{d_k} \Big|_\Delta : \km \longrightarrow K_{\CM \times S}
\end{equation}
The idea that \eqref{eqn:composition} is the ``composition" of the operators $e_{d_1},...,e_{d_k} : \km \rightarrow \kms$ leads to the following notion. Consider the ring homomorphism:
\begin{equation}
\label{eqn:constants}
\kk = \BZ[q_1^{\pm 1}, q_2^{\pm 1}]^{\text{symmetric in }q_1,q_2} \xrightarrow{\phi} K_S
\end{equation}
given by sending $q_1$ and $q_2$ to the Chern roots of $\Omega_S^1$. Let $S \stackrel{\Delta}\hookrightarrow S \times S$ be the diagonal. \\

\begin{definition}
\label{def:shuf acts}

An ``action" of the double shuffle algebra $\CA$ from \eqref{eqn:def double} on $\km$ refers to an abelian group homomorphism:
\begin{equation}
\label{eqn:action}
\CA \Big|_{c \mapsto q^r} \stackrel{\Phi}\longrightarrow \emph{Hom}(\km, \kms)
\end{equation}
satisfying the following properties, for any $x,y \in \emph{Hom}(\km, \kms)$ and $\gamma \in \BK$: \\

\begin{enumerate}
	
\item $\Phi(1) = \emph{proj}_1^*$, where $\emph{proj}_1 : \CM\times S \rightarrow \CM$ is the natural projection map \\

\item $\Phi(\gamma x)$ coincides with the composition:
\begin{equation}
\label{eqn:const}
\km  \xrightarrow{\Phi(x)} \kms \xrightarrow{\emph{Id}_{\CM} \times \emph{multipilication by }\phi(\gamma)} \kms
\end{equation}

\item $\Phi(xy)$ coincides with the composition:
\begin{equation}
\label{eqn:hom}
\km \xrightarrow{\Phi(y)} \kms \xrightarrow{\Phi(x) \times \emph{Id}_S} \kmss \xrightarrow{\emph{Id}_\CM \times \Delta^*} \kms
\end{equation}

\item $\Delta_* \Phi \left( \frac {[x,y]}{(1-q_1)(1-q_2)} \right)$ coincides with the difference of compositions:
\begin{align*}
&\km \xrightarrow{\Phi(y)} \kms \xrightarrow{\Phi(x) \times \emph{Id}_S} \kmss \\
&\km \xrightarrow{\Phi(x)} \kms \xrightarrow{\Phi(y) \times \emph{Id}_S} \kmss \xrightarrow{\emph{Id}_{\CM} \times \emph{swap}^*} \kmss
\end{align*}
where $\emph{swap} : S \times S \rightarrow S \times S$ is the permutation map. \\

\end{enumerate}

\noindent Although in the formulas above we use the language of operators for conciseness, we postulate that $\Phi(x)$ is induced by a correspondence in $K_{\CM \times \CM \times S}
$ for any $x \in \CA$. For various algebraic spaces $X,Y,Z$, given an operator $f : K_X \rightarrow K_Y$, the notation:
\begin{equation}
\label{eqn:f times id}
f \times \emph{Id}_Z : K_{X \times Z} \rightarrow K_{Y \times Z}
\end{equation}
should be interpreted as follows: consider the correspondence $\Gamma \in K_{X \times Y}$ which represents $f$, and then define \eqref{eqn:f times id} as the operator induced by the correspondence:
$$
\emph{proj}^*_{13}(\Gamma) \cdot \emph{proj}^*_{24}(\Delta_Z) \in K_{X \times Z \times Y \times Z}
$$

\medskip

\end{definition}

\noindent Conjecture \ref{conj:shuf acts} states that there exists a homomorphism \eqref{eqn:action} satisfying properties \emph{(1)--(4)} above, and we will now show how to construct $\Phi(E_{n,k})$, $\forall (n,k) \in \BZ^2 \backslash (0,0)$. \\

\begin{remark}
\label{rem:sv}

There are general reasons why one expects the shuffle algebra to act on $\km$: as shown by Schiffmann-Vasserot in \cite{SV}, their $K$--theoretic Hall algebra $\CH$ naturally acts on groups akin to $\km$. There is a map $\Upsilon: \CH \rightarrow \CS_\ebig$ that arises from equivariant localization on the stack of finite length sheaves on $S$, and it is natural to conjecture that the image $\Upsilon(\CH)$ also acts on the groups $\km$ (see \cite{Min} for a detailed and rigorous treatment in a setup very close to ours, which results in a similar shuffle algebra). However, it is not clear how to prove that the map $\Upsilon$ is injective, or less ambitiously, that the kernel of this map acts trivially on $\km$. \\

\end{remark}

\subsection{} 
\label{sub:explicit action}

As suggested in \cite{Univ}, we expect that the ``action" $\Phi$ should be given by sending:
\begin{align}
&\Phi \left( z_1^d \in \CA^\leftarrow \right) = e_d \label{eqn:sending 1} \\
&\Phi \left( z_1^d \in \CA^\rightarrow \right) = f_d \label{eqn:sending 2}
\end{align} 
$\forall d \in \BZ$, where we implicitly use the isomorphism $\CA^\leftarrow \cong \CS$ and $\CA^\rightarrow \cong \CS^{\op}$, and:
$$
\Phi \left( E_{0, \pm k} \in \CA^{\text{diag}} \right) = \text{operator of multiplication by } \oint z^{\pm k} \wedge^\bullet \left(\frac {\CU^{\pm 1}}{z^{\pm 1} q^{-\delta_\pm^-}} \right)^{\pm 1}
$$ 
$\forall k \in \BN$. For example, when the sign is $\pm = +$ in the formula above, the contour integral just picks up the coefficient of $z^{- k}$ in the expansion of the rational function: 
$$
\wedge^\bullet \left(\frac {\CU}z \right)  = \frac {\wedge^\bullet \left(\frac {\CV}z \right)}{\wedge^\bullet \left(\frac {\CW}z \right)}
$$
where $\CV$, $\CW$ are the vector bundles of \eqref{eqn:length 1}. Let us focus on the operators \eqref{eqn:sending 1}. By property \emph{(3)} of Definition \ref{def:shuf acts}, the composition \eqref{eqn:composition} should correspond to:
$$
\Phi \left(\sym \left[ z_1^{d_1} ... z_k^{d_k} \prod_{1\leq i < j \leq k} \zeta \left( \frac {z_i}{z_j} \right) \right] \in \CA^\leftarrow \right) = e_{d_1} \circ ... \circ e_{d_k} \Big|_\Delta
$$
However, to completely determine the action \eqref{eqn:action}, we must conjecture how the generators $E_{d_\bullet}$ of \eqref{eqn:shuffle gen} act. To do this, let us introduce some new correspondences: \\

\begin{definition}
\label{def:long fine}

Consider the following dg scheme, obtained by chains of derived fiber products of the dg scheme \eqref{eqn:fine} via the maps \eqref{eqn:pi-} and \eqref{eqn:pi+}:
$$
\fZ_k^{\bullet} = \fZ_2^{\bullet} \underset{\fZ_1}\times \fZ_2^{\bullet} \underset{\fZ_1}\times ... \underset{\fZ_1}\times \fZ_2^{\bullet} \longrightarrow \CM_0 \times ... \times \CM_k
$$
which will be supported on the scheme:
\begin{equation}
\label{eqn:long flag fine}
\bfZ_k^{\bullet} = \left \{\CF_0,...,\CF_k \text{ sheaves}, x \in S \text{ such that } \CF_0 \subset_{x} ... \subset_{x} \CF_k \right\} 
\end{equation}
There are line bundles $\CL_1,...,\CL_k$ on $\fZ_k^{\bullet}$ that correspond to the line bundles on $\bfZ_k^\bullet$ that parametrize the quotients $\CF_1/\CF_0,..., \CF_k/\CF_{k-1}$, as well as projection maps:
\begin{equation}
\label{eqn:diag fine}
\xymatrix{
& \fZ_k^{\bullet} \ar[ld]_{p_+} \ar[d]^{p_S} \ar[rd]^{p_-} & \\
\CM_0 & S & \CM_k} \qquad \xymatrix{
& \bfZ_k^{\bullet} \ar[ld]_{\bp_+} \ar[d]^{\bp_S} \ar[rd]^{\bp_-} & \\
\CM_0 & S & \CM_k} 
\end{equation}
For any vector of integers $d_\bullet = (d_1,...,d_k)$, define the operators:
\begin{align}
&K_{\CM} \stackrel{e_{d_\bullet}}\longrightarrow K_{\CM \times S} \label{eqn:def e fine 0} \\
&K_{\CM} \stackrel{f_{d_\bullet}}\longrightarrow K_{\CM \times S} \label{eqn:def f fine 0}
\end{align}
by the following formulas:
\begin{align}
&e_{d_\bullet} = (p_+ \times p_S)_*\left( \CL_1^{d_1}... \CL_k^{d_k} \cdot p_-^* \right) \label{eqn:def e fine} \\
&f_{d_\bullet} = \frac {(\det \CU)^{\otimes k}}{(-1)^{kr} q^{k(r-1)}} \cdot (p_- \times p_S)_* \left( \CL_1^{d_1 - r}... \CL_k^{d_k - r} \cdot p_+^* \right) \label{eqn:def f fine}
\end{align}

\end{definition}

\noindent We make Conjecture \ref{conj:shuf acts} more precise by stipulating that the elements $E_{d_\bullet}, F_{d_\bullet} \in \CA$ from \eqref{eqn:gen left} and \eqref{eqn:gen right} should act on $K$--theory groups via the formulas:
\begin{align}
&\Phi \left( E_{d_\bullet} \right) = e_{d_\bullet} \label{eqn:generators act 1} \\
&\Phi \left( F_{d_\bullet} \right) = f_{d_\bullet} \label{eqn:generators act 2}
\end{align}
for all $d_\bullet = (d_1,...,d_k) \in \BZ^k$. Note that $E_{d_\bullet}$ and $F_{d_\bullet}$ generate the algebras $\CA^\leftarrow$ and $\CA^\rightarrow$, because these algebras are isomorphic to $\CS$ and $\CS^{\op}$, respectively. \\

\subsection{} Consider the following particular cases of the operators \eqref{eqn:def e fine} and \eqref{eqn:def f fine}:
\begin{align}
&e_{-k,n} = q^{\gcd(k,n)-1} e_{(d_1,...,d_k)} \label{eqn:important 1} \\
&e_{k,n} = q^{\gcd(k,n)-1} f_{(d_1,...,d_k)} \label{eqn:important 2}
\end{align}
for all $k>0$ and $n \in \BZ$, where $d_i = \left \lceil {\frac {ni}k} \right \rceil - \left \lceil {\frac {n(i-1)}k} \right \rceil + \delta_i^k - \delta_i^1$, together with:
\begin{equation}
\label{eqn:important 3}
e_{0,\pm k} = \text{operator of multiplication by } \oint z^{\pm k} \wedge^\bullet \left(\frac {\CU^{\pm 1}}{z^{\pm 1} q^{-\delta_\pm^-}} \right)^{\pm 1}
\end{equation}
By \eqref{eqn:generators act 1} and \eqref{eqn:generators act 2}, the operators $e_{n,k}$ should correspond to the generators $E_{n,k}$ of the shuffle algebra from \eqref{eqn:shuf gen}. However, recall from Theorem \ref{thm:comm double} that the relations in the algebra $\CA|_{c \mapsto q^r}$ are generated by the following commutation relations:
\begin{equation}
\label{eqn:comm 0}
[E_{n,k}, E_{n',k'}] = \Delta \sum_{v \text{ convex path}} p^{n,n_1,...,n_t,n'}_{k,k_1,...,k_t,k'} (q_1,q_2) \cdot E_{n_1,k_1}... E_{n_t,k_t} 
\end{equation}
for some Laurent polynomials $p^{n,n_1,...,n_t,n'}_{k,k_1,...,k_t,k'} (q_1,q_2) \in \BK$, where the sum in the right-hand side of \eqref{eqn:comm 0} goes over all $\sum n_i = n+n', \sum k_i = k+k'$ such that the path:\footnote{The notation below means that $v$ is the broken line between $(0,0)$ and $(n_1+...+n_t,k_1+...+k_t)$, formed out of the line segments $(n_1,k_1)$,...,$(n_t,k_t)$ in this order}
$$
v = \overrightarrow{(n_1,k_1),...,(n_t,k_t)} 
$$ 
is sandwiched between the paths $v_1 = \overrightarrow{(n+n',k+k')}$ and $v_2 = \overrightarrow{(n',k'),(n,k)}$, and has the same convexity as the path $v_2$. Conjecture \ref{conj:shuf acts} implies that the analogous commutation relations hold between the operators \eqref{eqn:important 1}--\eqref{eqn:important 3}. \\


\begin{conjecture}
\label{conj:comm}


For any lattice points $(n,k)$ and $(n',k')$, we have:
\begin{equation}
\label{eqn:comm}
[e_{n,k}, e_{n',k'}] = \Delta_* \left( \sum_{v \emph{ convex path}} \phi(p^{n,n_1,...,n_t,n'}_{k,k_1,...,k_t,k'} (q_1,q_2)) \cdot e_{n_1,k_1}... e_{n_t,k_t} \Big|_\Delta \right)
\end{equation}
where $\phi : \BK \rightarrow K_S$ is the map \eqref{eqn:constants}. \\


\end{conjecture}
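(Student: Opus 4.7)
\medskip

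\noindent\textbf{Proof proposal for Conjecture \ref{conj:comm}.} The plan is to mirror the strategy that established the algebraic commutation relations in Theorem \ref{thm:comm double}: that theorem was derived by bootstrapping all relations \eqref{eqn:e triangle double} from a short list of ``minimal triangle'' cases via the Jacobi identity and an induction on the area of the triangle spanned by $(n,k)$ and $(n',k')$. The same Jacobi identity holds tautologically for commutators of operators on $\km$, so I will proceed in two stages. First, verify \eqref{eqn:comm} geometrically for all \emph{minimal} triangles, i.e.\ those that contain no lattice points in their interior or on one of the edges. Second, bootstrap to the general case by induction on area, following the proof of Theorem \ref{thm:comm}: if $(n_0,k_0)$ is an interior lattice point, the relation \eqref{eqn:rel} rewritten geometrically expresses $e_{n,k}$ as an iterated commutator plus lower-order convex terms, and applying $[-,e_{n',k'}]$ plus Jacobi reduces the commutator for the large triangle to commutators for strictly smaller triangles, for which \eqref{eqn:comm} is already known by the induction hypothesis.

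For the minimal triangle cases, I would express each side as a $K$-theoretic pushforward from an explicit dg correspondence. Namely, $e_{n,k}\circ e_{n',k'}$ is a pushforward from the derived fiber product
$$
\fZ_k^\bullet \underset{\CM'}{\times} \fZ_{k'}^\bullet \longrightarrow \CM \times S \times S \times \CM''
$$
and similarly for $e_{n',k'}\circ e_{n,k}$; Propositions \ref{prop:z1 smooth} and \ref{prop:z2 smooth} under Assumption S guarantee that all building blocks $\fZ_1$ and $\fZ_2^\bullet$ are honest smooth schemes, so the fiber products can be analyzed by a Koszul resolution of the derived intersection together with explicit base change. Away from the collision locus $\{x_1 = x_2\} \subset S \times S$, the two compositions agree (the two Hecke modifications can be interchanged freely), so the commutator is supported on the diagonal, which accounts for the outer $\Delta_*$ in \eqref{eqn:comm}.

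The heart of the argument is then the identification of the collision contribution. At points where the two quotient supports coincide, the excess normal bundle can be read off from the short exact sequences \eqref{eqn:length 1}, \eqref{eqn:ses}, \eqref{eqn:ker bundle} together with the projectivization descriptions \eqref{eqn:desc 1}, \eqref{eqn:desc 2}, \eqref{eqn:fine}. The cotangent bundle $\Omega_S^1$ enters naturally through these exact sequences (via the factor $\omega_S$ in \eqref{eqn:desc 2} and the normal bundle to $\Delta \subset S \times S$), which is precisely the geometric origin of the substitution \eqref{eqn:constants}. Matching the resulting characteristic class computation with the explicit formulas for $P_{k,n}, Q_{k,n}$ in the proof of Proposition \ref{prop:e comm}, one should recover $\phi$ applied to the algebraic coefficients $p^{n,n_1,\ldots,n_t,n'}_{k,k_1,\ldots,k_t,k'}$.

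The main obstacle is to carry out the excess intersection computation for a \emph{general} minimal triangle (rather than just the Heisenberg/fundamental triangle cases, which reduce essentially to the Baranovsky construction and to a direct residue computation). The difficulty is twofold: first, one must verify that the Koszul resolution of the fiber product of $\fZ_2^\bullet$'s produces coefficients lying in the image of $\phi$, i.e.\ expressible as genuine elements of $K_S$ rather than in some localization; second, one must identify the precise linear combination of ordered products of $e_{n_i,k_i}$ that arises, not merely the leading order. This is the technical obstruction that forces the paper to impose Assumption B, under which additional control on $\CM$ makes the intersection-theoretic bookkeeping tractable.
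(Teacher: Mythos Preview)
Your approach differs substantially from the paper's, and Stage 1 contains a genuine gap that Assumption B does not resolve in the way you suggest.

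The paper does not verify minimal triangles via excess intersection and then bootstrap by the Jacobi identity. Under Assumption B, Proposition \ref{prop:act} gives explicit iterated residue formulas for $e_{d_\bullet}$ and $f_{d_\bullet}$ acting on any tautological class $\Psi(\ldots,\CT_c,\ldots)$. The commutator $[e_{n,k},e_{n',k'}]$ is then computed for \emph{all} $(n,k),(n',k')$ at once, split only into three cases according to the signs of $n$ and $n'$, by swapping the order of two such contour integrals and collecting the residues picked up when the contours cross. Your observation that the commutator is supported on the diagonal is correct, and the paper's mechanism for it is formula \eqref{eqn:foil zeta}: since $\zeta_{12}(z/w)=1+\Delta_*\big[zw/\big((z-w)(zq-w)\big)\big]$, every crossing residue carries an explicit $\Delta_*$. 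The resulting coefficients are visibly Laurent polynomials in $q_1,q_2$ independent of $S$, so they are identified with the algebraic structure constants by specialising to equivariant $\BA^2$, where the result is already known from \cite{W}, \cite{Mod}. No induction on triangle area is run on the geometric side.

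The gap in your plan is that there are infinitely many minimal triangles, and the $SL_2(\BZ)$ symmetry that collapses them on the algebraic side (the isomorphisms $\CA^{<b/a}\cong\CS$ in the proof of Theorem \ref{thm:comm double}) has no geometric counterpart: the operators $e_{n,k}$ are built from chains of $\fZ_2^\bullet$'s of length $|n|$ with prescribed line-bundle twists, and nothing obviously permutes these across slopes. Each minimal triangle would therefore require its own excess-intersection computation on an arbitrarily long derived fiber product, with no uniform mechanism in sight. Your final paragraph also misreads Assumption B: it is not a tameness condition on intersection theory but a K\"unneth-plus-tautological-generation hypothesis that makes the residue formulas \eqref{eqn:one}--\eqref{eqn:two} available and exhaustive, thereby \emph{replacing} excess-intersection bookkeeping with contour manipulation rather than merely simplifying it.
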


\noindent Let us explain the notation in \eqref{eqn:comm}. The LHS is an operator $\km \rightarrow \kmss$, with $e_{n,k}$ acting in the first factor of $S \times S$ and $e_{n',k'}$ acting in the second factor. Meanwhile, each summand in the RHS is the following composed operator:
\begin{multline*}
\km \xrightarrow{e_{n_t,k_t}} \kms \xrightarrow{e_{n_{t-1},k_{t-1}} \times \text{Id}_S} ... \xrightarrow{e_{n_1,k_1} \times \text{Id}^{k-1}_S} K_{\CM \times S \times ... \times S} \\  \xrightarrow{\text{Id}_\CM \times \Delta^*} \kms \xrightarrow{\text{Id}_{\CM} \times \text{multiplication by } \phi(p^{n,n_1,...,n_t,n'}_{k,k_1,...,k_t,k'} (q_1,q_2))} \kms 
\end{multline*}
and $\Delta_*$ in \eqref{eqn:comm} is shorthand for $(\text{Id}_{\CM} \times \Delta)_* : \kms \rightarrow \kmss$. \\



\begin{proposition}
\label{prop:equiv}

Conjectures \ref{conj:shuf acts} and \ref{conj:comm} are equivalent. \\

\end{proposition}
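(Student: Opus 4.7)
The direction Conjecture \ref{conj:shuf acts} $\Rightarrow$ Conjecture \ref{conj:comm} is essentially formal: applying $\Phi$ to both sides of the algebraic commutation relation \eqref{eqn:comm 0}, property (4) of Definition \ref{def:shuf acts} converts the bracket on the left into the asserted difference of composed operators after $\Delta_*$, while properties (2) and (3) convert each $\BK$-scalar coefficient into its $\phi$-image and each ordered product $E_{n_1,k_1} \cdots E_{n_t,k_t}$ into the iterated composition $e_{n_1,k_1} \circ \cdots \circ e_{n_t,k_t}\big|_{\Delta}$. The resulting identity is precisely \eqref{eqn:comm}.

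For the converse, my plan is to define the homomorphism $\Phi$ on an arbitrary (not necessarily ordered) monomial in the generators $E_{n,k}$ by the iterated composition
$$
\Phi(E_{n_1,k_1} \cdots E_{n_t,k_t}) \ = \ e_{n_1,k_1} \circ \cdots \circ e_{n_t,k_t}\Big|_{\Delta},
$$
interpreted via property (3) of Definition \ref{def:shuf acts}, and to extend $\BK$-linearly using property (2). By the last sentence of Theorem \ref{thm:comm double}, the ideal of relations in $\CA|_{c \mapsto q^r}$ is generated by the commutators \eqref{eqn:e triangle double}. Therefore the only thing to check for well-definedness is that applying a single commutator move inside a product of generators produces an equal operator, and this is exactly what Conjecture \ref{conj:comm} asserts. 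Properties (1), (2), (3) of Definition \ref{def:shuf acts} then hold by construction, and property (4) is a direct restatement of Conjecture \ref{conj:comm}.

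The remaining and most delicate point is to verify that the $\Phi$ so constructed agrees with the explicit prescription \eqref{eqn:generators act 1}--\eqref{eqn:generators act 2} of Subsection \ref{sub:explicit action}, namely that $\Phi(E_{d_\bullet}) = e_{d_\bullet}$ on the shuffle generators of \eqref{eqn:shuffle gen}, and analogously for $F_{d_\bullet}$. After expanding $E_{d_\bullet}$ as a $\BK$-linear combination of ordered monomials in the $E_{n_i,k_i}$'s via Proposition \ref{prop:gen} and formula \eqref{eqn:left}, this reduces to a purely geometric identity between the operator $e_{d_\bullet}$ defined through the iterated dg correspondence $\fZ_k^\bullet$ of Definition \ref{def:long fine} and the iterated pullback--pushforward through copies of $\fZ_1$ restricted to the small diagonal $S \hookrightarrow S^k$. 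I expect this step to be the main obstacle: the match of supports \eqref{eqn:long flag fine} is immediate, but the derived base change relating $\fZ_k^\bullet$ with the iterated fiber product of $\fZ_1$'s along the small diagonal requires care with the dg structures, the tautological line bundles $\CL_i$, and the regularity of the Koszul sections \eqref{eqn:koszul 1} and \eqref{eqn:section fine}. Under Assumption S this regularity is ensured by Propositions \ref{prop:z1 smooth} and \ref{prop:z2 smooth}, rendering the base change classical; the general case would demand working directly on the dg level.
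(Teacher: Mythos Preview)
Your outline matches the paper's argument in spirit, but there are two points worth flagging.

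First, a small but genuine gap: when you say ``applying a single commutator move inside a product of generators produces an equal operator, and this is exactly what Conjecture \ref{conj:comm} asserts,'' you are skipping a step. Relation \eqref{eqn:comm} is an equality of operators $\km \to \kmss$, with the right-hand side pushed forward by $\Delta_*$. To use it \emph{inside} a longer composition $e_{m_1,l_1}\circ\cdots\circ e_{n,k}\circ e_{n',k'}\circ\cdots|_\Delta$, you must first restrict \eqref{eqn:comm} to the diagonal to obtain the $\kms$-valued identity \eqref{eqn:vlad}, and then invoke the associativity property \eqref{eqn:associativity} (and, for property (4), the Leibniz rule \eqref{eqn:leibniz}). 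The paper makes exactly these two moves explicit; you should too, since without them the ``commutator move'' does not literally apply inside the $|_\Delta$-composed product.

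Second, your third paragraph identifies a concern that is \emph{not} part of the paper's proof of Proposition \ref{prop:equiv}. The paper defines $\Phi$ purely through the operators $e_{n,k}$ of \eqref{eqn:important 1}--\eqref{eqn:important 3} via formula \eqref{eqn:phi}, checks properties (1)--(4), and stops. The compatibility $\Phi(E_{d_\bullet}) = e_{d_\bullet}$ for a general $d_\bullet$, i.e.\ relations \eqref{eqn:geom left}--\eqref{eqn:geom right}, is established only later and only under Assumption~B, using the explicit residue formulas of Proposition \ref{prop:act} and the symmetrization principle \eqref{eqn:principle}. So the derived base-change argument you sketch is neither needed nor attempted in the proof of this proposition; the paper simply takes the equivalence to be at the level of the generators $E_{n,k}$. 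You are right that the stronger reading of Conjecture \ref{conj:shuf acts} would require this extra step, but the paper treats it as a separate matter.
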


\begin{proof} It is clear that Conjecture \ref{conj:shuf acts} implies Conjecture \ref{conj:comm}, since formula \eqref{eqn:e triangle double}, together with properties \emph{(2), (3), (4)} of Definition \ref{def:shuf acts}, implies formula \eqref{eqn:comm}. Conversely, let us assume that \eqref{eqn:comm} holds and prove Conjecture \ref{conj:shuf acts}. By \eqref{eqn:anna double}, any element of $\CA|_{c \mapsto q^r}$ can be written as:
\begin{equation}
\label{eqn:x}
x = \sum^{n_1,...,n_t}_{k_1,...,k_t} p^{n_1,...,n_t}_{k_1,...,k_t}(q_1,q_2) \cdot E_{n_1,k_1}...E_{n_t,k_t}
\end{equation}
for some $p^{n_1,...,n_t}_{k_1,...,k_t}(q_1,q_2) \in \BK$, where the vectors $(n_1,k_1),...,(n_t,k_t)$ that appear in the right-hand side of \eqref{eqn:x} are ordered clockwise by slope. Properties \emph{(2)} and \emph{(3)} of Definition \ref{def:shuf acts} completely determine the action of $x$ on $\km$, since they force:
\begin{equation}
\label{eqn:phi}
\Phi(x) = \sum^{n_1,...,n_t}_{k_1,...,k_t} \phi(p^{n_1,...,n_t}_{k_1,...,k_t}(q_1,q_2)) \cdot e_{n_1,k_1}...e_{n_t,k_t} \Big|_\Delta
\end{equation}
Since $\phi$ is a ring homomorphism, $\Phi$ defined by \eqref{eqn:phi} satisfies property \emph{(2)} of Definition \ref{def:shuf acts}. As for property \emph{(3)}, note that restricting \eqref{eqn:comm} to the diagonal (together with the basic fact that $\Delta_*(\gamma)|_\Delta = \phi((1-q_1)(1-q_2))\gamma, \forall \gamma \in \ks$) implies:
\begin{multline} 
e_{n,k} e_{n',k'} |_\Delta - e_{n',k'} e_{n,k} |_\Delta = \\ = \sum_{v \text{ convex path}} \phi((1-q_1)(1-q_2)p^{n,n_1,...,n_t,n'}_{k,k_1,...,k_t,k'} (q_1,q_2)) \cdot e_{n_1,k_1}... e_{n_t,k_t} \Big|_\Delta \label{eqn:vlad} 
\end{multline} 
and relation \eqref{eqn:vlad} allows us to iteratively transform any product $e_{n_1,k_1}...e_{n_t,k_t}|_\Delta$ into a linear combination of such products where the vectors $(n_1,k_1),...,(n_t,k_t)$ are ordered by slope. Moreover, the specific coefficients that appear in the linear combination match the ones that transform the product $E_{n_1,k_1}...E_{n_t,k_t} \in \CA$ into a linear combination of such products where the vectors $(n_1,k_1),...,(n_t,k_t)$ are ordered by slope. This proves that $\Phi(x)\Phi(y)|_\Delta = \Phi(xy)$ for any $x,y$ of the form \eqref{eqn:x}, by showing that both $\Phi(x)\Phi(y)|_\Delta$ and $\Phi(xy)$ have the same expansion into products like in the right-hand side of \eqref{eqn:phi}. Thus property \emph{(3)} of Definition \ref{def:shuf acts} is established, and the only thing that we used was the associativity property:
\begin{equation}
\label{eqn:associativity}
\left( \left( X\circ Y \Big|_\Delta \right) \circ Z \right) \Big|_\Delta = \left(X \circ \left( Y\circ Z \Big|_\Delta \right)\right) \Big|_\Delta
\end{equation}
for any $\km \xrightarrow{X,Y,Z} \kms$. The fact that assignment \eqref{eqn:phi} satisfies property \emph{(4)} of Definition \ref{def:shuf acts} is proved analogously with the computation above, but instead of the associativity property \eqref{eqn:associativity} one uses the following version of the Leibniz rule:
\begin{equation}
\label{eqn:leibniz}
\left[X,Y \circ Z\Big|_\Delta \right]_{\text{red}} = [X,Y]_{\text{red}}\circ Z \Big|_\Delta + Y \circ [X,Z]_{\text{red}} \Big|_\Delta
\end{equation}
where if $[X,Y] = \Delta_*(A)$ we write $[X,Y]_{\text{red}} = A$ \footnote{Since projection is a left inverse for the diagonal $\Delta : S \rightarrow S \times S$, the map $\Delta_*$ is injective and thus the notation  $[X,Y]_{\text{red}}$ is unambiguous}. 

\end{proof}

\begin{remark} In Conjecture \ref{conj:comm}, it suffices to assume that $p^{n,n_1,...,n_t,n'}_{k,k_1,...,k_t,k'} (q_1,q_2)$ of \eqref{eqn:comm} are certain Laurent polynomials that do not depend on the surface $S$, because then one can deduce that they must coincide with the Laurent polynomials that appear in \eqref{eqn:comm 0}. The argument for this claim is to consider $S = \BA^2$ and replace usual $K$--theory by equivariant $K$--theory with respect to the standard torus action $\BC^* \times \BC^* \curvearrowright \BA^2$ ($q_1$ and $q_2$ correspond to the equivariant parameters) and replace $\CM$ by the moduli space of framed sheaves on $\BP^2$. If this is the case, we showed in \cite{Mod} that Conjecture \ref{conj:shuf acts} holds and the generators $E_{n,k}$ correspond to the operators $e_{n,k}$, thus implying that the coefficients in \eqref{eqn:comm 0} match those in \eqref{eqn:comm}. \\
	
\end{remark} 

\subsection{} 
\label{sub:ass b}

If one is willing to accept the equivalent Conjectures \ref{conj:shuf acts} and \ref{conj:comm}, one can skip to the next Section. The remainder of the current Section is devoted to the proof of both Conjectures subject to the following quite strong assumption: \\

\noindent \textbf{Assumption B: } \emph{The Kunneth decomposition $\kss \cong \ks \boxtimes \ks$ holds, and the class of the diagonal splits up as:}
\begin{equation}
\label{eqn:diag decomp}
[\CO_\Delta] = \sum_c l_c \boxtimes l^c \in \ks \boxtimes \ks
\end{equation}
\emph{Following \cite{CG}, this implies $\kms \cong \km \boxtimes \ks$, and so we may decompose the class of the universal sheaf as:}
\begin{equation}
\label{eqn:univ decomp}
[\CU] = \sum_c [\CT_c] \boxtimes l^c \in \km \boxtimes \ks
\end{equation}
\emph{Finally, we assume that $\{\CT_c\}$ which appear in \eqref{eqn:univ decomp} are actually locally free sheaves on $\CM$ whose exterior powers generate $K_{\CM_{(r,c_1,c_2)}}$ as a ring, for any $c_2 \in \BZ$}. \\

\noindent Assumption B holds for $S = \BP^2$, as shown in \cite[Example 3.12]{Univ}; we expect that the proof provided therein extends to toric surfaces $S$, although we will not prove this fact. Since we will only be concerned with $K$-theory classes in the remainder of this Section, we will generally abuse notation by writing $\CU$, $\CT_c$ for the $K$-theory classes of the coherent sheaves in question. Note that under assumption \eqref{eqn:diag decomp}, the collections $\{l_c\}$ and $\{l^c\}$ yield dual $\BZ$-bases of $\ks$, with respect to the bilinear form:
$$
\ks \otimes \ks \rightarrow \BZ, \qquad \left([\CV_1], [\CV_2] \right) = \chi \left(S, \CV_1 \otimes \CV_2 \right)
$$
The last statement of Assumption B says that any element of $\km$ is of the form:
\begin{equation}
\label{eqn:psi taut}
\Psi(...,\CT_c,...)
\end{equation}
where $\Psi$ is a Laurent polynomial, symmetric in the Chern roots of each vector bundle $\CT_c$ separately (if we dropped the condition that $\km$ is generated by the $\CT_c$'s, as we did in \cite{Univ}, then we would be only constructing an action of $\CA$ on the subalgebra $K'_{\CM} \subseteq \km$ generated by the $\CT_c$'s). Under Assumption B, \cite[Lemma 3.14]{Univ} shows that the operators \eqref{eqn:def e} and \eqref{eqn:def f} act by:
\begin{align}
&e_d \Psi(...,\CT_c,...) = \oi z^d \Psi(...,\CT_c + z l_c,...) \wedge^\bullet \left(\frac {zq}{\CU} \right) \label{eqn:one 0} \\
&f_d \Psi(...,\CT_c,...) = \i \frac {z^d}{q^{r-1}} \Psi(...,\CT_c - z l_c,...) \wedge^\bullet \left( - \frac {z}{\CU} \right) \label{eqn:two 0}
\end{align}
where:
\begin{equation}
\label{eqn:residue def} 
\oi F(z) = - \i F(z) = \text{Res}_{z = 0} \frac {F(z)}z - \text{Res}_{z = \infty} \frac {F(z)}z
\end{equation}
Iterating \eqref{eqn:one 0} and \eqref{eqn:two 0} yields:
\begin{multline}
\label{eqn:one 1}
\quad e_{d_1} ... e_{d_k} \Psi(...,\CT_c,...) = \\ = \int_{0 - \infty}^{z_1 \prec ... \prec z_k} \frac{z_1^{d_1} ... z_k^{d_k} \Psi(...,\CT_c + \sum_{i=1}^k z_i l_c^{(i)},...)}{\prod_{1 \leq i < j \leq k} \zeta_{ij} \left(\frac {z_j}{z_i} \right)} \prod_{i=1}^k \wedge^\bullet \left(\frac {z_iq^{(i)}}{\CU^{(i)}} \right)
\end{multline}
\begin{multline}
\label{eqn:two 1}
\quad f_{d_k}... f_{d_1} \Psi(...,\CT_c,...) = \\ = \int_{\infty - 0}^{z_k \prec ... \prec z_1} \frac{z_1^{d_1} ... z_k^{d_k} \Psi(...,\CT_c - \sum_{i=1}^k z_i l_c^{(i)},...)}{q^{k(r-1)} \prod_{1 \leq i < j \leq k} \zeta_{ij} \left(\frac {z_j}{z_i} \right)} \prod_{i=1}^k \wedge^\bullet \left(- \frac {z_i}{\CU^{(i)}} \right)
\end{multline}
where $z_i \prec z_j$ means that we first compute the residues at $0$ and $\infty$ in the variable $z_j$, and then in the variable $z_i$ (more graphically, we think of $z_j$ as being ``larger", in the sense of being closer to the singularities at $\infty$ and 0, than $z_i$). The right-hand sides of expressions \eqref{eqn:one 1} and \eqref{eqn:two 1} take values in:
\begin{equation}
\label{eqn:bili}
K_{\CM \times S \times ... \times S} \cong \km \boxtimes \ks \boxtimes ... \boxtimes \ks
\end{equation}
and $l_c^{(i)}, q^{(i)}, \CU^{(i)} $ refer to the elements of \eqref{eqn:bili} obtained by pulling back $l_c, q, \CU$ via the $i$-th projection $S \times ... \times S \rightarrow S$. Finally, in formulas \eqref{eqn:one 1} and \eqref{eqn:two 1} we let:
\begin{equation}
\label{eqn:zeta ij}
\zeta_{ij} (x) = \wedge^{\bullet}(-x \cdot \CO_{\Delta_{ij}} ) \in K_{\CM \times S \times ... \times S}(x)
\end{equation}
denote the pull-back of $\wedge^{\bullet}(-x \cdot \CO_{\Delta} ) \in \kss(x)$ via the $(i,j)$ projection. \\

\subsection{} Formulas \eqref{eqn:one 1}--\eqref{eqn:two 1} are somewhat complicated, but they become simpler if one restricts the right-hand side to the smallest diagonal $\Delta : S \hookrightarrow S \times ... \times S$:
\begin{multline}
\quad e_{d_1} ... e_{d_k} \Big|_\Delta \Psi(...,\CT_c,...) = \label{eqn:one 2} \\
= \int_{0 - \infty}^{z_1 \prec ... \prec z_k} \frac{z_1^{d_1} ... z_k^{d_k} \Psi(...,\CT_c + (z_1+...+z_k) l_c,...)}{\prod_{1 \leq i < j \leq k} \zeta \left(\frac {z_j}{z_i} \right)} \prod_{i=1}^k \wedge^\bullet \left(\frac {z_iq}{\CU} \right)
\end{multline}
\begin{multline}
\quad f_{d_k}... f_{d_1}  \Big|_\Delta \Psi(...,\CT_c,...) = \label{eqn:two 2} \\ = \int_{\infty - 0}^{z_k \prec ... \prec z_1} \frac{z_1^{d_1} ... z_k^{d_k} \Psi(...,\CT_c - (z_1+...+z_k) l_c,...)}{q^{k(r-1)} \prod_{1 \leq i < j \leq k} \zeta \left(\frac {z_j}{z_i} \right)} \prod_{i=1}^k \wedge^\bullet \left(- \frac {z_i}{\CU} \right)
\end{multline}
where:
\begin{equation}
\label{eqn:zeta geom}
\zeta(x) = \wedge^\bullet(-x \cdot \CO_\Delta) \Big |_\Delta = \frac {\wedge^\bullet(x \cdot \Omega_S^1)}{(1-x)(1-x q)} \in \ks(x)
\end{equation}
Note that the formula above matches \eqref{eqn:zeta univ}, as long as we set the Chern roots of $\Omega_S^1$ equal to $q_1$ and $q_2$. By analogy with \eqref{eqn:one 2} and \eqref{eqn:two 2}, we have the following: \\

\begin{proposition}
\label{prop:act}

The operators of Definition \ref{def:long fine} act on tautological classes by:
\begin{multline}
\quad e_{(d_1,...,d_k)} \Psi(...,\CT_c,...) = \label{eqn:one} \\ 
= \int_{0 - \infty}^{z_1 \prec ... \prec z_k} \frac{z_1^{d_1} ... z_k^{d_k} \Psi(...,\CT_c + (z_1+...+z_k) l_c,...)}{\left(1 - \frac {z_2 q}{z_1} \right) ... \left(1 - \frac {z_k q}{z_{k-1}} \right) \prod_{i < j} \zeta \left(\frac {z_j}{z_i} \right)} \prod_{i=1}^k \wedge^\bullet \left(\frac {z_iq}{\CU} \right)
\end{multline}
\begin{multline}
f_{(d_k,...,d_1)} \Psi(...,\CT_c,...) = \label{eqn:two} \\
= \int_{\infty - 0}^{z_k \prec ... \prec z_1} \frac{z_1^{d_1} ... z_k^{d_k} \Psi(...,\CT_c - (z_1+...+z_k) l_c,...)}{q^{k(r-1)}\left(1 - \frac {z_2 q}{z_1} \right) ... \left(1 - \frac {z_k q}{z_{k-1}} \right) \prod_{i < j} \zeta \left(\frac {z_j}{z_i} \right)} \prod_{i=1}^k \wedge^\bullet \left(- \frac {z_i}{\CU} \right)
\end{multline}
for all $d_1,...,d_k \in \BZ$. \\

\end{proposition}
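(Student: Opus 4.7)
The plan is to deduce formulas \eqref{eqn:one} and \eqref{eqn:two} from the already-established formulas \eqref{eqn:one 2} and \eqref{eqn:two 2} for $e_{d_1}\cdots e_{d_k}|_\Delta$ and $f_{d_k}\cdots f_{d_1}|_\Delta$, by exhibiting the extra ``chain'' factor $\prod_{i=1}^{k-1}(1 - z_{i+1}q/z_i)^{-1}$ as the $K$-theoretic difference between the structure sheaves of $\fZ_k^\bullet$ and $\fZ_k|_\Delta$. The central input is Proposition \ref{prop:vanish}, which says the defining section $\sigma \oplus \sigma'$ of $\fZ_2|_\Delta$ actually factors through the subbundle $\CE \subset \widetilde{\CE}$, so that $\fZ_2^\bullet$ and $\fZ_2|_\Delta$ are derived zero loci of the \emph{same} section, but inside vector bundles differing by the line bundle $L = \CO_1(1)\otimes\CO_2(1) \otimes \rho^*(\omega_S^{-1})$ appearing in \eqref{eqn:ker bundle}.

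First I would dispatch the case $k=2$. Tensoring the Koszul resolutions of $\CO_{\fZ_2^\bullet}$ and $\CO_{\fZ_2|_\Delta}$ on the ambient projective bundle, the exact sequence \eqref{eqn:ker bundle} yields
\[ [\CO_{\fZ_2|_\Delta}] \;=\; (1-L^\vee)\cdot [\CO_{\fZ_2^\bullet}] \quad \text{in } K_0\bigl(\BP_{\CM'\times S}(\CV') \times \BP_{\CM'\times S}({\CW'}^\vee \otimes \omega_S)\bigr). \]
Restricted to $\fZ_2^\bullet$, one has $\CO_1(1)=\CL_1$, $\CO_2(1)=\CL_2^{-1}$ (Subsection \ref{sub:fine 2}), so $L^\vee = \CL_2\omega_S/\CL_1$, which under the residue dictionary $\CL_i\leftrightarrow z_i$, $\omega_S\leftrightarrow q$ is exactly $z_2q/z_1$. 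Pushing forward via $(p_+\times p_S)_*$ and applying formula \eqref{eqn:one 2} for $e_{d_1}e_{d_2}|_\Delta$ therefore inserts the factor $(1 - z_2q/z_1)^{-1}$ into the integrand, producing formula \eqref{eqn:one} for $k=2$. The argument for $f_{(d_2,d_1)}$ is identical, using the twist by $(\det\CU)^{-1}\CL^{-r}$ as in \eqref{eqn:def f fine}.

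For general $k$, I would iterate using $\fZ_k^\bullet = \fZ_2^\bullet \times_{\fZ_1}\cdots\times_{\fZ_1}\fZ_2^\bullet$ (Definition \ref{def:long fine}). Each of the $k-1$ building blocks $\fZ_2^\bullet$ contributes, by the $k=2$ computation and base change, one factor $(1-L_i^\vee)^{-1}$ where $L_i^\vee|_{\fZ_k^\bullet} = \CL_{i+1}\omega_S/\CL_i$. Multiplying these factors gives $\prod_{i=1}^{k-1}(1-z_{i+1}q/z_i)^{-1}$, which combined with \eqref{eqn:one 2} yields \eqref{eqn:one}. The expansion convention in the iterated residue $\int_{\infty-0}^{z_1\prec\cdots\prec z_k}$ is naturally compatible: since we compute residues in $z_k$ first, the factor $1/(1-z_{k}q/z_{k-1})$ should be expanded in non-negative powers of $z_k/z_{k-1}$, matching the geometric ordering under $\pi_-$ from \eqref{eqn:proj-}.

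The main technical obstacle will be in Step 1: the identity $[\CO_{\fZ_2|_\Delta}] = (1-L^\vee)[\CO_{\fZ_2^\bullet}]$ must be established at the level of $K$-theory with due care about derived versus scheme-theoretic intersections, since under Assumption S these dg schemes coincide with their supports (Proposition \ref{prop:z2 smooth}), but the ambient vector bundles $\CE$ and $\widetilde{\CE}$ differ by rank one. The nonzero-divisor status of $1-L^\vee$ must be interpreted as a formal expansion in the residue calculus, corresponding to the strict nesting $z_1 \prec \cdots \prec z_k$. Once this is set up cleanly for $k=2$, the extension to general $k$ is essentially automatic from the fiber product presentation.
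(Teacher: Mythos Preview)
Your approach has a genuine gap in the inversion step. The Koszul identity you derive,
\[
[\CO_{\fZ_2|_\Delta}] = (1 - L^\vee)\cdot [\CO_{\fZ_2^\bullet}],
\]
lets you express push-forwards from $\fZ_2|_\Delta$ in terms of push-forwards from $\fZ_2^\bullet$, not the other way around. Concretely, with $L^\vee = \CL_2\,\omega_S/\CL_1$, this yields the recursion
\[
e_{d_1}e_{d_2}\Big|_\Delta \;=\; e_{(d_1,d_2)} - q\, e_{(d_1-1,\,d_2+1)},
\]
which does \emph{not} determine $e_{(d_1,d_2)}$ uniquely: the associated homogeneous recursion has nontrivial solutions, and iterating produces a non-terminating sum $\sum_{a\ge 0} q^a\, e_{d_1-a}\, e_{d_2+a}|_\Delta$. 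Saying that $(1-L^\vee)^{-1}$ ``must be interpreted as a formal expansion in the residue calculus'' is precisely the statement you are trying to prove, namely that this formal infinite sum agrees with the honest geometric push-forward from $\fZ_2^\bullet$. Nothing in your argument closes that loop.

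The paper avoids this inversion entirely by computing $e_{(d_1,\dots,d_k)}$ directly. It factors the projection as a tower
\[
p_+ \;=\; p_+^{(1)} \circ \cdots \circ p_+^{(k)}, \qquad p_+^{(j)} : \fZ_j^\bullet \longrightarrow \fZ_{j-1}^\bullet,
\]
where each $p_+^{(j)}$ is, by base change from \eqref{eqn:fiber square} and \eqref{eqn:proj+}, the projectivization of a coherent sheaf of projective dimension $1$ whose $K$--theory class is $-q[\CU_{j-1}^\vee] + q[\CL_{j-1}^\vee]$. Then Proposition~4.19 of \cite{Univ} gives the one-step push-forward formula
\[
p^{(j)}_{+*}\bigl(F(\CL_j)\bigr) \;=\; \int_{\infty-0} \frac{F(z)\,\wedge^\bullet\!\left(\dfrac{zq}{\CU_{j-1}}\right)}{1-\dfrac{zq}{\CL_{j-1}}},
\]
and the chain factor $(1-z_j q/z_{j-1})^{-1}$ appears as the \emph{output} of this projective-bundle push-forward, not as the inverse of a $K$--theory class. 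Iterating and using $[\CU_{j-1}] = [\CU_0] + \sum_{i<j}[\CL_i\otimes\CO_\Delta]$ to convert $\wedge^\bullet(zq/\CU_{j-1})$ into $\wedge^\bullet(zq/\CU_0)\cdot\prod_{i<j}\zeta(z/\CL_i)^{-1}$ then yields \eqref{eqn:one}. Your comparison-to-$\fZ_k|_\Delta$ strategy could perhaps be salvaged, but it would require an independent finiteness or vanishing argument to pin down the homogeneous ambiguity, which is at least as much work as the direct computation.
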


\begin{proof} We will prove \eqref{eqn:one}, and leave the analogous formula \eqref{eqn:two} as an exercise to the interested reader. In virtue of the definition of the operator $e_{(d_1,...,d_k)}$ in \eqref{eqn:def e fine}, the left-hand side of \eqref{eqn:one} is given by:
\begin{equation}
\label{eqn:gaga}
(p_+ \times p_S)_* \left(\CL_1^{d_1}... \CL_k^{d_k} \cdot \Psi(..., p^*_- (\CT_c),...) \right) 
\end{equation}
where the maps $p_+$, $p_-$, $p_S$ are defined in \eqref{eqn:diag fine}. However, note the equality:
\begin{equation}
\label{eqn:intermediate}
(p_- \times \text{Id}_S)^*(\CU) = (p_+ \times \text{Id}_S)^*(\CU) + \Gamma_*(\CL_1) + ... + \Gamma_*(\CL_k)
\end{equation}
in $K_{\fZ_k^\bullet \times S}$, where $\Gamma$ denotes the graph of $p_S : \fZ_k^\bullet \rightarrow S$. From \eqref{eqn:univ decomp}, we have:
\begin{equation}
\label{eqn:tc}
\CT_c = \text{proj}_{1*} \left(\CU \otimes \text{proj}_2^* (l_c)\right)
\end{equation}
where $\text{proj}_1$ and $\text{proj}_2$ denote the projections from $\CM \times S$ onto the two factors. Tensoring relation \eqref{eqn:intermediate} with $l_c$ pulled back from the second factor of $\fZ_k^{\bullet} \times S$ and pushing forward to the first factor, we conclude that:
$$
p_-^*(\CT_c) = p_+^*(\CT_c) + (\CL_1 + ... + \CL_k) \cdot p_S^*(l_c)
$$
With this relation in hand, \eqref{eqn:gaga} equals:
\begin{equation}
\label{eqn:gaga 2}
(p_+ \times p_S)_* \left(\CL_1^{d_1}... \CL_k^{d_k} \cdot \Psi(..., p^*_+ (\CT_c) + (\CL_1+...+\CL_k)p_S^*(l_c),...) \right) 
\end{equation}
Now we need to recall the description of the map $p_+ \times p_S : \fZ_k^{\bullet} \rightarrow \CM_0 \times S$, where $\CM_0$ is the copy of the moduli space of stable sheaves that parametrizes the sheaf denoted by $\CF_0$ in \eqref{eqn:long flag fine}. By Definition \ref{def:long fine}, we have a derived fiber square:
\begin{equation}
\label{eqn:fiber square}
\xymatrix{\fZ_k^{\bullet} \ar[d] \ar[r]^-{p_+^{(k)}} & \fZ_{k-1}^{\bullet} \ar[d] \\
\fZ_2^\bullet \ar[r]^-{\pi_+} & \fZ_1}
\end{equation}
where $p_+^{(k)}$ and $\pi_+$ are the obvious maps of dg schemes which lie above the following maps between their support schemes:
$$
\bp_+^{(k)}(\CF_0 \subset_x ... \subset_x \CF_{k-2} \subset_x \CF_{k-1} \subset_x \CF_k) = (\CF_0 \subset_x ... \subset_x \CF_{k-2} \subset_x \CF_{k-1})
$$
and:
$$
\bpi_+(\CF_{k-2} \subset_x \CF_{k-1} \subset_x \CF_k) = (\CF_{k-2} \subset_x \CF_{k-1})
$$
Note that the line bundles $\CL_1,...,\CL_{k-1}$ on $\fZ_k^{\bullet}$ are pulled back via $p_+^{(k)}$, while the line bundle $\CL_k$ is pulled back from $\fZ_2^\bullet$ via the vertical map. We will use the notation $\CU_0,..., \CU_k$ for the universal sheaves on $\fZ_k^\bullet \times S$ that correspond to $\CF_0,...,\CF_k$. Since: 
$$
p_+ = p_+^{(1)} \circ ... \circ p_+^{(k)}
$$
in order to prove that \eqref{eqn:gaga 2} equals the right-hand side of \eqref{eqn:one}, it is enough to iterate the following equation $n$ times:
\begin{equation}
\label{eqn:vir}
p^{(k)}_{+*} \left( F(\CL_k) \right) = \oi F(z) \frac {\wedge^\bullet \left( \frac {zq}{\CU_{k-1}} \right)}{1 - \frac {z q}{\CL_{k-1}}} = \oi \frac {F(z)  \cdot \wedge^\bullet \left( \frac {zq}{\CU_0} \right)}{\left( 1 - \frac {z q}{\CL_{k-1}} \right) \prod_{i=1}^{k-1} \zeta \left( \frac z{\CL_i} \right) } 
\end{equation}
where $F(z)$ is any function of $z$ with coefficients in $\text{Im }p_+^{(k)*}$. The first equality above is a direct application of Proposition 5.19 of \cite{Univ}, which describes the push-forward of tautological classes down projectivizations of coherent sheaves of projective dimension 1. Indeed, by \eqref{eqn:proj+}, the bottom map $\pi_+$ of the derived fiber square \eqref{eqn:fiber square} is the projectivization of a  coherent sheaf of projective dimension 1, whose class in $K$--theory is  $-q[\CU_{k-1}^\vee] + q[\CL_{k-1}^{\vee}]$. By base change, so is the top map in \eqref{eqn:fiber square}. The second equality of \eqref{eqn:vir} is a consequence of $\CU_{k-1} = \CU_0 + \sum_{i=1}^{k-1} \CL_i \otimes (p_S \times \text{Id})^*(\CO_\Delta)$. 

\end{proof}

\subsection{}
\label{sub:proof ass b}

As observed in Remark 3.17 of \cite{Univ}, one can redefine the integrals \eqref{eqn:one 1} and \eqref{eqn:two 1} in such a way that $|z_1| = ... = |z_n|$. This is achieved by formally assuming that the Chern roots of $\Omega^1_S$ are complex numbers of absolute value either $<1$ or $>1$, evaluating an $n$-fold integral by residues under this assumption, and acknowledging that the result is a Laurent polynomial in $q_1$ and $q_2$ and thus a well-defined $K$--theory class. The same definition applies to \eqref{eqn:one 2} and \eqref{eqn:two 2} by restriction, and also to \eqref{eqn:one} and \eqref{eqn:two} since the factors $z_{i} - z_{i+1} q$ in the denominator of the latter expressions do not create any new poles (they are canceled by certain factors of $\zeta$). 

\tab 
This procedure of moving the contours does not lead to more helpful formulas for the operators \eqref{eqn:one 1}--\eqref{eqn:two}, but it does have the following important consequence: these operators vanish not only when the integrand vanishes as a function of $z_1,...,z_n$, but when the symmetrization of the integrand vanishes. In other words:
$$
\sum c_{d_1,...,d_n} \cdot \sym \left[ \frac {z_1^{d_1}...z_n^{d_n}}{\prod_{i=1}^{n-1} \left(1-\frac {z_{i+1}q}{z_i} \right)\prod_{i < j} \zeta \left( \frac {z_j}{z_i} \right)} \right] = 0 \quad \Longrightarrow 
$$
\begin{equation}
\label{eqn:principle}
\Longrightarrow \quad \sum \phi(c_{d_1,...,d_n}) \cdot e_{(d_1,...,d_n)} = 0
\end{equation}
for any collection of elements $c_{d_1,...,d_n} \in \BK$ (recall the map $\phi$ of \eqref{eqn:constants}), where $\sym$ denotes averaging with respect to all $n!$ permutations of the variables $z_1,...,z_n$. As a consequence of this fact, formulas \eqref{eqn:left} and \eqref{eqn:right} imply the following equalities:
\begin{align}
&e_{d_\bullet} = \mathop{\sum^{n_1+...+n_t = n}_{\frac {k_1}{n_1} \leq ... \leq \frac {k_t}{n_t}}}^{n_i \in \BN, k_i \in \BZ} \phi(s_{k_1,...,k_t,d_\bullet}^{n_1,...,n_t}(q_1,q_2)) \cdot  e_{-n_1,k_1}... e_{-n_t,k_t} \Big|_\Delta \label{eqn:geom left} \\
&f_{d_\bullet} = \mathop{\sum^{n_1+...+n_t = n}_{\frac {k_1}{n_1} \geq ... \geq \frac {k_t}{n_t}}}^{n_i \in \BN, k_i \in \BZ} \phi(s_{k_1,...,k_t,d_\bullet}^{n_1,...,n_t}(q_1,q_2)) \cdot e_{n_1,k_1}... e_{n_t,k_t}  \Big|_\Delta \label{eqn:geom right}
\end{align} 
as operators $\ks \rightarrow \kms$, for any $d_\bullet = (d_1,...,d_n) \in \BZ^n$. \\


\begin{proof} \emph{of Conjectures \ref{conj:shuf acts} and \ref{conj:comm} subject to Assumption B:} as shown in Proposition \ref{prop:equiv}, the two conjectures are equivalent, so we only need to prove relation \eqref{eqn:comm}. Looking at formulas \eqref{eqn:one} and \eqref{eqn:two}, it is clear that our task involves commuting two operators that are defined by successive contour integrals. In all significant cases, the commutator will be shown to involve a difference of the following kind:
\begin{equation}
\label{eqn:residue thm 1}
\int_{0 - \infty}^{z \prec w} F(z,w) - \int_{0 - \infty}^{w \prec z} F(z,w)
\end{equation}
where $F(z,w)$ is a rational function (valued in $K$-theory classes on $\CM \times S \times S$), whose poles are all of the form $z - w\alpha$ for various scalars $\alpha \notin \{0,\infty\}$. Then the key idea is that the difference \eqref{eqn:residue thm 1} equals (see also \eqref{eqn:residue equality}):
\begin{equation}
\label{eqn:residue thm 2}
- \sum_{\alpha \neq \{0,\infty\}} \int_{0 - \infty} \underset{z=w\alpha}{\text{Res}} \frac {F(z,w)}z
\end{equation}
To keep the subsequent notation simple, we will often ignore the homomorphism $\phi$ of \eqref{eqn:constants}, and write $q_1,q_2$ for the Chern roots of the cotangent bundle of $S$. Therefore, symmetric polynomials in $q_1,q_2$ will be elements of $K_S$. We may assume $n > 0$ without loss of generality, and then we have three cases to study: \\

\noindent \emph{Case 1:} $n' > 0$. By \eqref{eqn:important 2}, we have $e_{n,k} = f_{d_\bullet}$ and $e_{n',k'} = f_{d_\bullet'}$ for certain vectors of integers $d_\bullet, d_\bullet'$ (we ignore the powers of $q$ that appear in \eqref{eqn:important 2} and \eqref{eqn:two}, as they will play no role in the following argument). Then \eqref{eqn:two} implies the following:
\begin{align*} 
&\Psi(...,\CT_c,...) \stackrel{e_{n,k}}\leadsto \int_{\infty - 0}^{z_n \prec ... \prec z_1} \frac{z_1^{d_1} ... z_n^{d_n} \Psi(...,\CT_c - (z_1+...+z_n) l_c,...)}{\left(1 - \frac {z_2 q}{z_1} \right) ... \left(1 - \frac {z_n q}{z_{n-1}} \right) \prod_{i < j} \zeta \left(\frac {z_j}{z_i} \right)} \prod_{i=1}^n \wedge^\bullet \left(- \frac {z_i}{\CU} \right) \\
&\Psi(...,\CT_c,...) \stackrel{e_{n',k'}}\leadsto \int_{\infty - 0}^{w_{n'} \prec ... \prec w_1} \frac{w_1^{d'_1} ... w_{n'}^{d'_{n'}} \Psi(...,\CT_c - (w_1+...+w_{n'}) l_c,...)}{\left(1 - \frac {w_2 q}{w_1} \right) ... \left(1 - \frac {w_{n'} q}{w_{n'-1}} \right) \prod_{i < j} \zeta \left(\frac {w_j}{w_i} \right)} \prod_{i=1}^{n'} \wedge^\bullet \left(- \frac {w_{i'}}{\CU} \right) 
\end{align*} 
for any $\Psi$. When composing two operators given by integral formulas as above, we obtain the following formulas (we henceforth write $l^{(1)}, l^{(2)} \in \kss$ for the pull-backs of any class $l \in K_S$ via the first and second projections, respectively):
\begin{equation}
\label{eqn:composition nr 1} 
e_{n,k} e_{n',k'} \Psi(...,\CT_c,...) = \int_{\infty - 0}^{z_n \prec ... \prec z_1 \prec w_{n'} \prec ... \prec w_1} \frac {z_1^{d_1}...z_n^{d_n} w_1^{d_1'}... w_{n'}^{d_{n'}'}}{\prod^{1\leq i \leq n}_{1 \leq i' \leq n'} \zeta_{12} \left( \frac {z_i}{w_{i'}} \right)} 
\end{equation}
$$
\frac {\Psi(...,\CT_c - (z_1+...+z_n)l_c^{(1)} - (w_1+...+w_{n'})l_c^{(2)},...) \prod_{i=1}^n \wedge^\bullet \left(- \frac {z_i}{\CU^{(1)}} \right) \prod_{i'=1}^{n'} \wedge^\bullet \left(- \frac {w_{i'}}{\CU^{(2)}} \right)}{\left(1 - \frac {z_2 q^{(1)}}{z_1} \right) ... \left(1 - \frac {z_n q^{(1)}}{z_{n-1}} \right) \left(1 - \frac {w_2 q^{(2)}}{w_1} \right) ... \left(1 - \frac {w_{n'} q^{(2)}}{w_{n'-1}} \right) \prod_{i < j} \zeta^{(1)} \left(\frac {z_j}{z_i} \right) \prod_{i' < j'} \zeta^{(2)} \left(\frac {w_{j'}}{w_{i'}} \right)}
$$
\begin{equation}
\label{eqn:composition nr 2} 
e_{n',k'} e_{n,k}  \Psi(...,\CT_c,...) = \int_{\infty - 0}^{w_{n'} \prec ... \prec w_1 \prec z_n \prec ... \prec z_1} \frac {z_1^{d_1}...z_n^{d_n} w_1^{d_1'}... w_{n'}^{d_{n'}'}}{\prod^{1\leq i \leq n}_{1 \leq i' \leq n'} \zeta_{12} \left( \frac {w_{i'}}{z_i} \right) }
\end{equation}
$$
\frac {\Psi(...,\CT_c - (z_1+...+z_n)l_c^{(1)} - (w_1+...+w_{n'})l_c^{(2)},...) \prod_{i=1}^n \wedge^\bullet \left(- \frac {z_i}{\CU^{(1)}} \right) \prod_{i'=1}^{n'} \wedge^\bullet \left(- \frac {w_{i'}}{\CU^{(2)}} \right)}{\left(1 - \frac {z_2 q^{(1)}}{z_1} \right) ... \left(1 - \frac {z_n q^{(1)}}{z_{n-1}} \right) \left(1 - \frac {w_2 q^{(2)}}{w_1} \right) ... \left(1 - \frac {w_{n'} q^{(2)}}{w_{n'-1}} \right) \prod_{i < j} \zeta^{(1)} \left(\frac {z_j}{z_i} \right) \prod_{i' < j'} \zeta^{(2)} \left(\frac {w_{j'}}{w_{i'}} \right)}
$$
where $\zeta_{12} (x) = \wedge^{\bullet}(-x \cdot \CO_{\Delta}) \in K_{S \times S}(x)$ is defined in \eqref{eqn:zeta ij}, while $\zeta^{(1)}(x), \zeta^{(2)}(x)$ denote the pull-backs of the rational functions \eqref{eqn:zeta geom} from $K_S(x)$ to $K_{S \times S}(x)$ via the two projections. The two expressions in the displays above only differ in the order of the contours, and in the argument of the function $\zeta_{12}$. However, because:
\begin{equation}
\label{eqn:foil zeta}
\zeta_{12} (x) = 1 + \Delta_* \left[ \frac {x}{(1-x)(1-xq)} \right]
\end{equation}
(see (3.6) of \cite{Univ}) then $\frac 1{\zeta_{12}(x)} \in 1 + \Delta_* \left( K_S[[x^{\pm 1}]] \right)$ and we conclude that:
$$
[e_{n,k}, e_{n',k'}]  \Psi(...,\CT_c,...) = \Delta_* \left( \int_{\infty - 0}^{z_n \prec ... \prec z_1 \prec w_{n'} \prec ... \prec w_1}  \text {or } \int_{\infty - 0}^{w_{n'} \prec ... \prec w_1 \prec z_n \prec ... \prec z_1}  \right.
$$
\begin{equation}
\label{eqn:gist}
\frac {\text{power series in }z_1,...,z_n,w_1,...,w_{n'} \text{ with coefficients in }K_S}{\prod^{1\leq i \leq n}_{1 \leq i' \leq n'} \zeta_{12} \left( \frac {z_i}{w_{i'}} \right) \text{ or } \zeta_{12} \left( \frac {w_{i'}}{z_i} \right)} 
\end{equation}
$$
\left. \frac { \Psi(...,\CT_c - (z_1+...+z_n+w_1+...+w_{n'})l_c,...) \prod_{i=1}^n \wedge^\bullet \left(- \frac {z_i}{\CU} \right) \prod_{i'=1}^{n'} \wedge^\bullet \left(- \frac {w_{i'}}{\CU} \right)}{\left(1 - \frac {z_2 q}{z_1} \right) ... \left(1 - \frac {z_n q}{z_{n-1}} \right) \left(1 - \frac {w_2 q}{w_1} \right) ... \left(1 - \frac {w_{n'} q}{w_{n'-1}} \right) \prod_{i < j} \zeta \left(\frac {z_j}{z_i} \right) \prod_{i' < j'} \zeta \left(\frac {w_{j'}}{w_{i'}} \right)} \right) 
$$
Indeed, all we are doing is to compute \eqref{eqn:composition nr 1} and \eqref{eqn:composition nr 2} by expanding the integrand as a power series in non-negative powers of $\frac {z_i}{w_{i'}}$ and $\frac {w_{i'}}{z_i}$, respectively. Only finitely many of the terms in the expansion effectively contribute to the integral, because $d_1,...,d_n,d_1',...d'_{n'}$ are fixed. Then formula \eqref{eqn:gist} merely claims that all the coefficients in this expansion lie in $\Delta_*(K_S)$, because the summand ``1" in \eqref{eqn:foil zeta} is canceled out when taking the difference between \eqref{eqn:composition nr 1} and \eqref{eqn:composition nr 2}. Regardless of what particular power series arises in \eqref{eqn:gist}, we conclude that:
\begin{equation}
\label{eqn:stuck 1}
[e_{n,k}, e_{n',k'}] = \Delta_* \left( \sum_{\td_\bullet} \phi(o_{\td_\bullet}(q_1,q_2)) \cdot  f_{\td_\bullet} \right)
\end{equation}
for various scalars $o_{\td_\bullet}(q_1,q_2) \in \BK$, which do not depend on the surface $S$ (in fact, the only dependence on $S$ is the fact that the ring homomorphism $\phi$ specializes $q_1,q_2$ to the Chern roots of the cotangent bundle of $S$). Then \eqref{eqn:geom right} implies that:
\begin{equation}
\label{eqn:stuck 2}
[e_{n,k}, e_{n',k'}] = \Delta_* \left( \mathop{\sum^{n_1+...+n_t = n}_{\frac {k_1}{n_1} \geq ... \geq \frac {k_t}{n_t}}}^{n_i \in \BN, k_i \in \BZ} \phi(p_{k,k_1,...,k_t,k'}^{n,n_1,...,n_t,n'}(q_1,q_2)) \cdot e_{n_1,k_1}... e_{n_t,k_t}  \Big|_\Delta \right)
\end{equation}
for certain scalars $p_{k,k_1,...,k_t,k'}^{n,n_1,...,n_t,n'}(q_1,q_2) \in \BK$. These scalars are completely determined by the case when $S = \BA^2$ and $K$--theory is $\BC^* \times \BC^*$ equivariant, when \cite{Mod} implies that they coincide with the structure constants in the algebra $\CA$, i.e. the homonymous polynomials in the right-hand side of \eqref{eqn:e triangle double}. This proves \eqref{eqn:comm}. \\

\noindent \emph{Case 2:} $n'=0$, and take $k' > 0$ without loss of generality. Consider the operators: 
$$
p_{0,1},p_{0,2},... : \km \rightarrow \kms
$$
which are to $e_{0,1}, e_{0,2},...$ as power-sum are to elementary symmetric functions, i.e.:
$$
p_{0,1} = e_{0,1}, \qquad p_{0,2} = e_{0,1}e_{0,1} \Big|_\Delta - 2 e_{0,2}
$$ 
etc. It is elementary to show that formula \eqref{eqn:comm} for the commutator $[e_{n,k}, e_{0,k'}]$ is equivalent to an analogous formula for the commutator $[e_{n,k}, p_{0,k'}]$, which in turn follows from the subsequent formula for any $d_1,...,d_n \in \BZ$:
\begin{equation}
\label{eqn:hecke geom}
[f_{(d_n,...,d_1)}, p_{0,k'}] = - \Delta_* \left[ \frac {(1-q_1^{k'})(1-q_2^{k'})}{(1-q_1)(1-q_2)}  \sum_{i=1}^n f_{(d_n,...,d_i+k',...,d_1)}\right]
\end{equation}
Indeed, this mirrors the way formula \eqref{eqn:e triangle double} for $n' = 0$ follows from \eqref{eqn:hecke right} and \eqref{eqn:right}. In order to prove \eqref{eqn:hecke geom}, one notices that the definition of $p_{0,k'}$ in relation to $e_{0,k'}$, as well as the definition of the operators $e_{0,k'}$ in \eqref{eqn:important 3}, implies that:
$$
p_{0,k'} = \text{multiplication by }\CP_{k'}(\CU)
$$
where $\CP_{t}$ is the additive functor on $K$--theory which takes the class of a bundle to the sum of the $t$-th powers of its Chern roots. Formulas \eqref{eqn:univ decomp} and \eqref{eqn:two} imply the following equality (as before, we ignore the term $q^{k(r-1)}$ in the denominator of \eqref{eqn:two}, as this will not adversely affect the validity of the following argument):
$$
f_{(d_n,...,d_1)} \circ p_{0,k'} \Big( \Psi(...,\CT_c,...) \Big) = \int_{\infty - 0}^{z_n \prec ... \prec z_1} \CP_{k'}\Big[ \left(\CT_c - (z_1+...+z_n)l_c^{(1)} \right) \boxtimes l^{c(2)} \Big]
$$
$$
\frac{z_1^{d_1} ... z_n^{d_n} \Psi(...,\CT_c - (z_1+...+z_n) l_c^{(1)},...)}{\left(1 - \frac {z_2 q^{(1)}}{z_1} \right) ... \left(1 - \frac {z_n q^{(1)}}{z_{n-1}} \right) \cdot \prod_{i < j} \zeta^{(1)} \left(\frac {z_j}{z_i} \right)} \prod_{i=1}^n \wedge^\bullet \left( - \frac {z_i}{\CU^{(1)}} \right) 
$$
where $l_c^{(1)}$ (respectively $l_c^{(2)}$) denotes the pull-back to $S \times S$ of the class $l_c \in \ks$ via the first (respectively second) projection. The composition $p_{0,k'} \circ f_{(d_n,...,d_1)}$ is given by the same formula, but on the first line, one should replace:
$$
\CP_{k'}\Big[ \left(\CT_c - (z_1+...+z_n)l_c^{(1)} \right) \boxtimes l^{c(2)} \Big] \leadsto \CP_{k'}\Big[ \CT_c \boxtimes l^{c(2)} \Big]
$$
Since the functor $\CP_k$ is additive and $\sum_c l_c^{(1)} \boxtimes l^{c(2)} = [\CO_\Delta]$, we conclude that:
\begin{equation} \label{eqn:buena}
[f_{(d_n,...,d_1)}, p_{0,k'}] \Big( \Psi(...,\CT_c,...) \Big) = - \CP_{k'}(\CO_\Delta) \cdot 
\end{equation} 
$$
\int_{\infty - 0}^{z_n \prec ... \prec z_1} (z_1^{k'}+... + z_n^{k'})\frac{z_1^{d_1} ... z_n^{d_n} \Psi(...,\CT_c - (z_1+...+z_n) l_c^{(1)},...)}{\left(1 - \frac {z_2 q^{(1)}}{z_1} \right) ... \left(1 - \frac {z_n q^{(1)}}{z_{n-1}} \right) \cdot \prod_{i < j} \zeta^{(1)} \left(\frac {z_j}{z_i} \right)} \prod_{i=1}^n \wedge^\bullet \left( - \frac {z_i}{\CU^{(1)}} \right)
$$
Now we will use the identity: 
\begin{equation}
\label{eqn:plethy diag}
\CP_{k'}(\CO_\Delta) = \Delta_*\left[\frac {(1-q_1^{k'})(1-q_2^{k'})}{(1-q_1)(1-q_2)} \right]
\end{equation}
which can be proved akin to (3.6) of \cite{Univ} (specifically, one first proves \eqref{eqn:plethy diag} directly when $[\CO_\Delta] = [\CO] - [\CV] + [\det \CV]$ for a rank $2$ vector bundle $\CV$, and then obtains the general formula by deformation to the normal bundle of $\Delta \hookrightarrow S \times S$). If we use \eqref{eqn:two} and \eqref{eqn:plethy diag}, we transform relation \eqref{eqn:buena} into \eqref{eqn:hecke geom}, as desired. \\

\noindent \emph{Case 3:} $n' < 0$. We will show that for any $d_\bullet = (d_1,...,d_n)$ and $d_{\bullet}' = (d_{-n'}',...,d'_1)$, we have the following a priori weaker version of Conjecture \ref{conj:comm}:
\begin{equation}
\label{eqn:comm is nice}
[e_{d_\bullet}, f_{d_\bullet'}] = \Delta_* \left( \sum_{\tilde{d}_\bullet, k, \tilde{d}_\bullet'} \phi(p_{\tilde{d}_\bullet,k,\tilde{d}_\bullet'}(q_1,q_2)) \cdot e_{\tilde{d}_\bullet} h^\pm_k f_{\tilde{d}_\bullet'} \Big|_\Delta \right)
\end{equation}
where the coefficients $p_{\tilde{d}_\bullet,k,\tilde{d}_\bullet'}(q_1,q_2) \in \ring$ that appear in the right-hand side do not depend on the surface $S$. In the formula above:
$$
h_k^\pm : \km \rightarrow \kms
$$
are the coefficients of $h^\pm(z) = \sum_{k = 0}^\infty \frac {h^\pm_k}{z^{\pm k}}$, which acts on $\km$ as multiplication by:
$$
\wedge^\bullet \left(\frac {\CU(q^{-1}-1)}z \right)
$$
Its relevance to our purposes is the following simple formula for the $n=-n'=1$ case of \eqref{eqn:comm is nice}, which follows immediately from \eqref{eqn:one} and \eqref{eqn:two}, and was proved in \cite{Univ} even in the absence of Assumption B:
$$
\left[ \sum_{d\in \BZ} \frac {e_d}{z^d}, \sum_{d'\in \BZ} \frac {f_{d'}}{w^{d'}} \right] = \delta \left( \frac zw \right) \Delta_* \left( \frac {h^+(z) - h^-(w)}{1-q^{-1}} \right)
$$
Let us show that \eqref{eqn:comm is nice} implies \eqref{eqn:comm}. By \eqref{eqn:left} and \eqref{eqn:right}, the right-hand side of \eqref{eqn:comm is nice} can be written as a product of generators $e_{n,k}$ in clockwise order of $(n,k)\in \BZ^2$. Since the coefficients of the resulting decomposition do not depend on the surface $S$, and since for $\BC^* \times \BC^*$ equivariant $S = \BA^2$ the coefficients match those that occur in the algebra $\CA$ (as shown in \cite{W}, \cite{Mod}), relation \eqref{eqn:comm} follows. \\

\noindent Therefore, it remains to prove \eqref{eqn:comm is nice}, which will occupy the remainder of the present Section. We will use a residue computation, which will require us to assume that: 
\begin{equation}
\label{eqn:assumption w}
1-q^a \text{ is invertible in the ring }\ks
\end{equation}
for all $a \in \BN$. Since this never happens in the algebraic $K$--theory ring of a smooth projective surface (as $q$ is unipotent), we must argue for why this assumption is allowed. By Assumption B, in order to prove \eqref{eqn:comm is nice}, it is enough to show that: 
\begin{multline}
[e_{d_\bullet}, f_{d_\bullet'}] \Psi (...,\CT_c,...) = \label{eqn:comm is nice 2} \\ = \Delta_* \left( \sum_{\tilde{d}_\bullet, k, \tilde{d}_\bullet'} \phi(p_{\tilde{d}_\bullet,k,\tilde{d}_\bullet'}(q_1,q_2)) \cdot e_{\tilde{d}_\bullet} h^\pm_k f_{\tilde{d}_\bullet'} \Big|_\Delta \Psi (...,\CT_c,...) \right)
\end{multline}
for all symmetric Laurent polynomials $\Psi$ as in \eqref{eqn:psi taut}. Because of \eqref{eqn:one} and \eqref{eqn:two}, both sides of relation \eqref{eqn:comm is nice 2} will be Laurent polynomials in the tautological classes, with coefficients which are Laurent polynomials in $q_1,q_2 \in \ks$. To prove that a certain identity between such polynomials holds, it is enough to do it for a choice of $\CM$ and $S$ for which there do not exist any relations between the $K$-theory classes $q_1,q_2$ and the various tautological classes $\CT_c$. To this end: 

\text{ }

\begin{itemize}

\item consider $S = \BA^2$ \\

\item consider the $\BC^* \times \BC^*$ equivariant $K$--theory ring of $S$ (which is the ring $\BK$ of \eqref{eqn:ring}), localized with respect to the elements $1-q^a$ of \eqref{eqn:assumption w} \\

\item consider the moduli space $\CM$ of rank $r$ sheaves on $\BP^2$, for a sufficiently large natural number $r$, framed along the divisor $\infty \subset \BP^2$:
\begin{equation}
\label{eqn:framing}
\CF|_\infty \cong \CO_\infty^{\oplus r}
\end{equation}
(see \cite{W}, \cite{Mod} for a version of our treatment in the setting of framed sheaves). \\

\end{itemize}

\noindent Let a maximal torus $T \subset GL_r$ act on $\CM$ by left multiplication of the isomorphism \eqref{eqn:framing}. Therefore, in $\BC^* \times \BC^* \times T$ equivariant $K$--theory, the universal sheaf $\CU$ on $\CM \times S$ splits up as a direct sum $\oplus_c \ \CU_c \cdot t^c$, where $\{t^c\}$ goes over the characters of the torus $T$. If one chooses $r \in \BN$ large enough, then there will be no relations between the tautological bundles $\CT_c = \text{proj}_{1*}(\CU_c)$, hence there are no algebraic relations between $\{\Psi(...,\CT_c,...)\}_{\Psi \text{ Laurent polynomial}}$ that hold for all $r \in \BN$. \\


\noindent According to the preceding discussion, it remains to prove formula \eqref{eqn:comm is nice 2} subject to assumption \eqref{eqn:assumption w}. We start by invoking \eqref{eqn:one} and \eqref{eqn:two}:
\begin{equation}
\label{eqn:two comp 0}
e_{d_\bullet} f_{d_\bullet'} \Psi (...,\CT_c,...) = \int_{\infty - 0}^{z_1 \prec ... \prec z_n \prec w_{-n'} \prec ... \prec w_1} R(z_1,...,z_n,w_1,...,w_{-n'}) 
\end{equation}
where:
\begin{equation}
\label{eqn:def r}
R(z_1,...,z_n,w_1,...,w_{-n'})  = \frac {(-1)^n}{q^{n'(1-r)}} z_1^{d_1} ... z_n^{d_n} w_1^{d_1'} ... w_{-n'}^{d'_{-n'}} \prod^{1\leq i \leq n}_{1\leq j \leq -n'} \zeta_{12} \left( \frac {z_i}{w_j} \right)
\end{equation}
$$
\frac{\Psi(...,\CT_c + (z_1+...+z_n)l_c^{(1)} - (w_1+...+w_{-n'}) l^{(2)}_c,...) \prod_{i=1}^n \wedge^\bullet \left(\frac {z_iq}{\CU^{(1)}} \right)  \prod_{i=1}^{-n'} \wedge^\bullet \left(- \frac {w_i}{\CU^{(2)}} \right)}{\prod_{i=1}^{n-1} \left(1 - \frac {z_{i+1} q^{(1)}}{z_i} \right) \prod_{j=1}^{-n'-1} \left(1 - \frac {w_{j+1} q^{(2)}}{w_j} \right) \prod_{1\leq i < i' \leq n} \zeta^{(1)} \left( \frac {z_{i'}}{z_i} \right) \prod_{1\leq j < j' \leq -n'} \zeta^{(2)} \left( \frac {w_{j'}}{w_j} \right)}
$$
for $\zeta_{12}$ as in \eqref{eqn:foil zeta}. The composition $f_{d_\bullet'} e_{d_\bullet}$ is given by the same formula as \eqref{eqn:two comp 0}, but the order of the contours of integration is now $w_{-n'} \prec ... \prec w_1 \prec z_1 \prec ... \prec z_n$. Therefore, the commutator is given by taking residues when the contour of some variable $z_a$ passes over the contour of some other variable $w_b$:
\begin{equation}
\label{eqn:two comp 1}
[ e_{d_\bullet}, f_{d_\bullet'} ] \Psi (...,\CT_c,...) = \sum_{a=1}^n \sum_{b=1}^{-n'} 
\end{equation}
$$
\int_{\infty - 0}^{w_{-n'} \prec ... \prec w_{b+1} \prec z_1 \prec ... \prec z_{a-1} \prec z_a,w_b \prec z_{a+1} \prec ... \prec z_n \prec w_{b-1} \prec ... \prec w_1} ``\sum \text{Res}_{z_a,w_b} R"
$$
where $``\sum \text{Res}_{z_a,w_b} R"$ is shorthand for the sum of residues at all poles of the form $z_a =  \alpha w_b$, as $\alpha$ ranges over all possible constants $\notin \{0,\infty\}$. From now on, we will often write expressions such as $\text{Res}_{w_b = s}^{z_a = \alpha s} R$, which mean ``take the residue at $z_a = \alpha w_b$, and then relabel the variable $w_b = s$". Since $\zeta_{12}$ is given by formula \eqref{eqn:foil zeta}, the poles that appear in the right-hand side of \eqref{eqn:two comp 1} are $z_a = w_b$ and $z_a = w_b q^{-1}$. Moreover, the corresponding residues are both multiples of $[\CO_\Delta]$. We may therefore use the projection formula in the guise of the identity:
$$
[\CO_\Delta] \cdot \gamma = \Delta_*(\gamma|_\Delta) \qquad \forall \gamma \in \kss
$$
and conclude that:
\begin{equation}
\label{eqn:two comp 2}
[ e_{d_\bullet}, f_{d_\bullet'} ] \Psi (...,\CT_c,...) = \sum_{a=1}^n \sum_{b=1}^{-n'} 
\end{equation}
$$
\int_{\infty - 0}^{ ... \prec w_{b+1} \prec z_1 \prec ... \prec z_{a-1} \prec s \prec z_{a+1} \prec ... \prec z_n \prec w_{b-1} \prec ... } \Delta_* \left[ \frac { \text{Res}^{z_a=s}_{w_b=s} ( R|_\Delta ) + \text{Res}^{z_a=\frac sq}_{w_b=s} ( R|_\Delta ) }{(1-q_1)(1-q_2)} \right]
$$
\footnote{Right after \eqref{eqn:spec r 2}, we will explain how to interpret the denominator of the right-hand side} where $R|_\Delta$ refers to the restriction of the coefficients of the rational function $R$, which are a priori elements in $K_{S \times S}$, to the diagonal $\Delta : S \hookrightarrow S \times S$. In practice, this involves setting in \eqref{eqn:def r} $l_c^{(1)}, l_c^{(2)} \mapsto l_c$, $q^{(1)}, q^{(2)} \mapsto q$, $\CU^{(1)}, \CU^{(2)} \mapsto \CU$ and:
$$
\zeta_{12} (x) \mapsto \zeta(x) = \frac {\wedge^\bullet(\Omega_S^1 \cdot x)}{(1-x)(1-xq)} \in \ks(x)
$$
Explicitly, we have:
\begin{equation}
\label{eqn:def r restricted}
R|_\Delta(z_1,...,z_n,w_1,...,w_{-n'})  = z_1^{d_1} ... z_n^{d_n} w_1^{d_1'} ... w_{-n'}^{d_{-n'}'} \prod^{1\leq i \leq n}_{1\leq j \leq -n'} \zeta \left( \frac {z_i}{w_j} \right)
\end{equation}
$$
\frac{\Psi(...,\CT_c + (z_1+...+z_n-w_1-...-w_{-n'})l_c,...) \prod_{i=1}^n \wedge^\bullet \left(\frac {z_iq}{\CU} \right)  \prod_{i=1}^{-n'} \wedge^\bullet \left(- \frac {w_i}{\CU} \right)}{\prod_{i=1}^{n-1} \left(1 - \frac {z_{i+1} q}{z_i} \right) \prod_{j=1}^{-n'-1} \left(1 - \frac {w_{j+1} q}{w_j} \right) \prod_{1\leq i < i' \leq n} \zeta \left( \frac {z_{i'}}{z_i} \right) \prod_{1\leq j < j' \leq -n'} \zeta \left( \frac {w_{j'}}{w_j} \right)}
$$
Let us take the residue at $z_a = w_b$ in \eqref{eqn:def r restricted} and call the resulting variable $s$:
\begin{equation}
\label{eqn:spec r 1}
\text{Res}^{z_a = s}_{w_b=s} (R|_\Delta) = \frac {\frac {(1-q_1)(1-q_2)}{1-q} s^{d_a} s^{d'_b}  \frac {\wedge^\bullet \left( sq/\CU \right)}{\wedge^\bullet \left( s/\CU \right)} \prod_{i < a} \frac {\zeta \left( z_i/s \right)}{\zeta \left( s/z_i \right)} \prod_{j > b} \frac {\zeta \left( s/w_j \right)}{\zeta \left( w_j/s \right)}}{\left(1 - \frac {z_{a+1}q}s \right)\left(1 - \frac {sq}{z_{a-1}} \right) \left(1 - \frac {w_{b+1}q}s \right) \left(1 - \frac {sq}{w_{b-1}} \right)}
\end{equation}
$$
\frac{\prod_{i \neq a} z_i^{d_i} \prod_{j \neq b} w_j^{d'_j}  \prod^{i \neq a}_{j \neq b} \zeta \left( \frac {z_i}{w_j} \right) \Psi(...,\CT_c + (\sum_{i \neq a} z_i - \sum_{j \neq b} w_j)l_c,...) \frac {\prod_{i\neq a} \wedge^\bullet \left( z_iq / \CU \right)}{\prod_{j \neq b} \wedge^\bullet \left( w_j / \CU \right)}}{\prod_{i,i+1 \neq a} \left(1 - \frac {z_{i+1} q}{z_i} \right) \prod_{j,j+1 \neq b} \left(1 - \frac {w_{j+1} q}{w_j} \right) \prod_{i < i'}^{i,i' \neq a} \zeta \left( \frac {z_{i'}}{z_i} \right) \prod_{j < j'}^{j,j' \neq b} \zeta \left( \frac {w_{j'}}{w_j} \right)}
$$
and when we take the residue at $z_a = \frac sq$ and $w_b = s$ in \eqref{eqn:def r restricted}, we obtain:
\begin{equation}
\label{eqn:spec r 2}
\text{Res}^{z_a = \frac sq}_{w_b=s} (R|_\Delta) = \frac {-\frac {(1-q_1)(1-q_2)}{1-q} \left( \frac sq \right)^{d_a} s^{d'_b} \prod_{i > a} \frac {\zeta \left( s/(z_iq) \right)}{\zeta \left( z_i q/s \right)} \prod_{j < b} \frac {\zeta \left( w_j/s \right)}{\zeta \left( s/w_j \right)}}{\left(1 - \frac {z_{a+1}q^2}s \right)\left(1 - \frac {s}{z_{a-1}} \right) \left(1 - \frac {w_{b+1}q}s \right) \left(1 - \frac {sq}{w_{b-1}} \right)}
\end{equation}
$$
\frac{\prod_{i \neq a} z_i^{d_i} \prod_{j \neq b} w_j^{d'_j}  \prod^{i \neq a}_{j \neq b} \zeta \left( \frac {z_i}{w_j} \right) \Psi(...,\CT_c + (\sum_{i \neq a} z_i - \sum_{j \neq b} w_j+sq-s)l_c,...) \frac {\prod_{i\neq a} \wedge^\bullet \left( z_iq / \CU \right)}{\prod_{j \neq b} \wedge^\bullet \left( w_j / \CU \right)}}{\prod_{i,i+1 \neq a} \left(1 - \frac {z_{i+1} q}{z_i} \right) \prod_{j,j+1 \neq b} \left(1 - \frac {w_{j+1} q}{w_j} \right) \prod_{i < i'}^{i,i' \neq a} \zeta \left( \frac {z_{i'}}{z_i} \right) \prod_{j < j'}^{j,j' \neq b} \zeta \left( \frac {w_{j'}}{w_j} \right)}
$$
(note that we have used the identity $\zeta(x/q) = \zeta(1/x)$ on the first line of formula \eqref{eqn:spec r 2}). The denominator $(1-q_1)(1-q_2)$ in \eqref{eqn:two comp 2} simply means that one should remove the initial factor of $(1-q_1)(1-q_2)$ from formulas \eqref{eqn:spec r 1} and \eqref{eqn:spec r 2}. Meanwhile, the denominator $1-q$ is acceptable because of assumption \eqref{eqn:assumption w}. Now recall that \eqref{eqn:spec r 1} and \eqref{eqn:spec r 2} must be integrated over the contours \eqref{eqn:two comp 2}. \\

\begin{itemize}[leftmargin=*]

\item In the case of \eqref{eqn:spec r 2}, we can move the variable $s$ to the left of the chain of variables $... \prec w_{b+1} \prec ... \prec z_{a-1} \prec s \prec z_{a+1} \prec ... \prec w_{b-1} \prec ...$ and the only poles we would encounter are $z_{a-1} = s$ (with sign $+$) and $w_{b+1} = \frac sq$ (with sign $-$), because $\Psi$ has no poles in $s$ other than $0$ and $\infty$. \\

\item In the case of \eqref{eqn:spec r 1}, we can move the variable $s$ to the right of the chain of variables $... \prec w_{b+1} \prec ... \prec z_{a-1} \prec s \prec z_{a+1} \prec ... \prec w_{b-1} \prec ...$ and the only poles we would encounter are $z_{a+1} = \frac sq$ (with sign $-$) and $w_{b-1} = sq$ (with sign $+$), together with $0$ and $\infty$. \\

\end{itemize}

\noindent In the second bullet, note that the residues at $0$ and $\infty$ in $s$ can be computed by expanding \eqref{eqn:spec r 1} in powers of $s$, which yields some Laurent polynomial in the variables $\{z_i\}_{i\neq a}$, $\{w_j\}_{j \neq b}$ and some series coefficient of the ratio: 
$$
\frac {\wedge^\bullet \left( \frac {sq}{\CU} \right)}{\wedge^\bullet \left( \frac s{\CU} \right)}
$$ 
expanded in either positive or negative powers of $s$. The corresponding expression will be of the form in the right-hand side of \eqref{eqn:comm is nice 2}, which we will henceforth refer to as a \textbf{nice summand}. Putting all the residues together, we conclude that the RHS of \eqref{eqn:two comp 2} equals:
\begin{align*}
&-\sum_{a=1}^{n} \sum_{b=1}^{-n'-1} \int_{\infty - 0}^{ ... \prec w_{b+2} \prec s \prec z_1 \prec ... \prec z_{a-1} \prec z_{a+1} \prec ... \prec z_n \prec w_{b-1} \prec ... } \Delta_* \left[ \frac { \text{Res}^{z_{a} = \frac sq}_{w_b = s, w_{b+1} = \frac sq} ( R|_\Delta )}{(1-q_1)(1-q_2)} \right] + \\
&+ \sum_{a=2}^{n} \sum_{b=1}^{-n'} \int_{\infty - 0}^{ ... \prec w_{b+1} \prec z_1 \prec ... \prec z_{a-2} \prec s \prec z_{a+1} \prec ... \prec z_n \prec w_{b-1} \prec ... } \Delta_* \left[ \frac { \text{Res}^{z_{a-1}=s, z_{a} = \frac sq}_{w_b=s} ( R|_\Delta )}{(1-q_1)(1-q_2)} \right] - \\ 
&- \sum_{a=1}^{n-1} \sum_{b=1}^{-n'} \int_{\infty - 0}^{ ... \prec w_{b+1} \prec z_1 \prec ... \prec z_{a-1} \prec s \prec z_{a+2} \prec ... \prec z_n \prec w_{b-1} \prec ... } \Delta_* \left[ \frac { \text{Res}^{z_a=s, z_{a+1} = \frac sq}_{w_b=s} ( R|_\Delta )}{(1-q_1)(1-q_2)} \right] + \\ 
&+ \sum_{a=1}^n \sum_{b=2}^{-n'} \int_{\infty - 0}^{ ... \prec w_{b+1} \prec z_1 \prec ... \prec z_{a-1} \prec z_{a+1} \prec ... \prec z_n \prec s \prec w_{b-2} \prec ... } \Delta_* \left[ \frac { \text{Res}^{z_a=s}_{w_b=s, w_{b-1} = sq} ( R|_\Delta )}{(1-q_1)(1-q_2)} \right] 
\end{align*}
plus nice summands. Note that the second and third rows cancel out, because they give the same residues with opposite signs (upon relabeling $a \mapsto a-1$). The first and fourth rows consist of the same integrand (upon relabeling $b \mapsto b-1$ and $s \mapsto sq$), but the contours of integration have $s$ to the left/right of the chain of $z$ variables in the case of the first/fourth rows. Therefore, we conclude that the formula above is simply the base case (namely $k=1$) of the following result: \\

\begin{claim}
\label{claim:residues}

For any $k\geq 1$, the RHS of \eqref{eqn:two comp 2} equals the sum over all $A,B$ of:
$$
\int_{\infty - 0}^{ ... \prec w_{b+1} \prec \left( z_i \text{ for } i \notin A \right) \prec s  \prec w_{b-k-1} \prec ... } \Delta_* \left[ \frac {\emph{Res}^{z_{a_i+\e} = sq^{-\e-1+\sum_{j \leq i} c_j-a_j}}_{w_b = s,...,w_{b-k} = sq^k} ( R|_\Delta )}{(1-q_1)(1-q_2)} \right] -
$$
\begin{equation}
\label{eqn:residues}
- \int_{\infty - 0}^{ ... \prec w_{b+1} \prec s \prec \left( z_i \text{ for } i \notin A \right) \prec w_{b-k-1} \prec ... } \Delta_* \left[ \frac {\emph{Res}^{z_{a_i+\e} = sq^{-\e-1+\sum_{j \leq i} c_j-a_j}}_{w_b = s,...,w_{b-k} = sq^k} ( R|_\Delta )}{(1-q_1)(1-q_2)} \right]
\end{equation}
plus nice summands, as $A$ ranges over the following $k$-element subsets of $\{1,...,n\}$:
$$
A = \{\underbrace{a_1,...,c_1-1}_{\text{consecutive}},...,\underbrace{a_t,...,c_t-1}_{\text{consecutive}}\} \quad \text{for} \quad a_1 < c_1 \leq a_2 < c_2 \leq ... \leq a_t < c_t 
$$
and $B = \{b-k,...,b\}$ ranges over $(k+1)$-element subsets of $\{1,...,-n'\}$ consisting of consecutive elements. The contours of the variables $z_i$ in the two integrals are ordered in increasing order of $i$. The residues that appear in \eqref{eqn:residues} have denominators $1-q^a$, for various $a \in \BN$, but these are acceptable due to \eqref{eqn:assumption w}. \\

\end{claim}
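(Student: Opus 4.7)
The plan is to prove Claim \ref{claim:residues} by induction on $k$. The base case $k=1$ is precisely the residue computation performed in the preceding display, where we saw that moving the auxiliary variable $s$ either to the extreme left or extreme right of the chain of contours produced exactly two surviving integrals (the first and fourth rows in that display) plus nice summands, while the intermediate residues canceled in pairs. For the inductive step, I would take each term in the level-$k$ formula, indexed by $(A,B)$ as in the claim, and move the $s$-contour in the two integrals through the remaining $z$- and $w$-contours.

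After the substitutions $z_{a_i+\e} = sq^{-\e-1+\sum_{j\le i}c_j-a_j}$ and $w_{b-j}=sq^j$, the integrand $\operatorname{Res}(R|_\Delta)$ becomes a rational function in $s$ whose poles lie at $s=0$, $s=\infty$, and at points where the moving $s$-contour crosses another contour. The crossings of the first kind are at $w_{b+1}=sq^{-1}$ and $w_{b-k-1}=sq^{k+1}$; these produce the terms needed to extend $B$ at its top or bottom into the longer consecutive run required at level $k+1$. The crossings of the second kind occur at $z_{c_i}=sq^{\cdots}$ and $z_{a_i-1}=sq^{\cdots}$ for $i=1,\ldots,t$; these produce the terms needed to extend the run $\{a_i,\ldots,c_i-1\}$ by one step, either up or down. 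Finally, the residues at $s=0$ and $s=\infty$, after the substitutions, factor as a product of three pieces living in the $e$-variables, the $w$-variables, and in the Cauchy kernel $\wedge^\bullet(sq/\CU)/\wedge^\bullet(s/\CU)$ which is precisely the generating function of the $h^\pm_k$'s. Hence these boundary residues furnish nice summands of the form $e_{\tilde d_\bullet} h^\pm_k f_{\tilde d'_\bullet}|_\Delta$ up to coefficients in $\BK_{\mathrm{loc}}$.

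The main obstacle is sign bookkeeping and the verification that all residues not of the above three types cancel in pairs. The mechanism is the same as in the base case: when extending a run $\{a_i,\ldots,c_i-1\}$ upward by adjoining $c_i$ would cause it to merge with the next run (i.e.\ when $c_i = a_{i+1}$), the residue at $z_{c_i}=sq^{c_i-a_i}$ computed from the left-moving contour is equal, with opposite sign, to the residue at $z_{a_{i+1}-1}=sq^{a_{i+1}-1-a_i}$ computed from the right-moving contour. The symmetry $z_{c_i}\leftrightarrow z_{a_{i+1}-1}$ is precisely the collision condition $c_i=a_{i+1}$, and the residue is the same up to sign because both equal a single residue of the combined integrand at a doubled pole. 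The same reasoning handles potential collisions between $A$-runs and the endpoints of $B$.

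Granting the claim, the final step is to take the limit $k\to\min(n,-n')$ (or observe the sum terminates): once $A$ or $B$ is as large as combinatorially possible, moving the $s$-contour produces only boundary residues at $s=0,\infty$, and these are all nice summands by construction. Summing over $(A,B)$ and collecting, we obtain \eqref{eqn:comm is nice 2}. The independence of the coefficients $p_{\tilde d_\bullet,k,\tilde d_\bullet'}(q_1,q_2)$ on the surface $S$ is automatic from the construction, since every residue we compute is a universal rational expression in $q_1,q_2$ multiplied by classes of the form $\CT_c+\sum z_i l_c -\sum w_j l_c$, $\CU$, and powers of $q$. Combined with \eqref{eqn:geom left} and \eqref{eqn:geom right}, this yields relation \eqref{eqn:comm}, completing the proof of Conjectures \ref{conj:shuf acts} and \ref{conj:comm} under Assumption B.
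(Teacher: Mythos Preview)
Your overall strategy---induction on $k$, with the difference of the two integrals in \eqref{eqn:residues} producing residues that enlarge $A$ and $B$, and the boundary residues at $s\in\{0,\infty\}$ yielding nice summands---is the same as the paper's. However, your account of the inductive step contains genuine gaps.

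First, the two phases of the step are conflated. The difference of the two integrals in \eqref{eqn:residues} picks up residues at $z$-poles only, since $s$ sits on the same side of every $w$-variable in both integrals; these residues enlarge $A$ to a ``long string'' of size $k+1$. Only afterward, and separately for each long string, does one move $s$ to the far left (for the \emph{anterior} placements, where the newly adjoined index carries power $sq^{-1}$) or to the far right (for the \emph{posterior} placements, power $sq^{k}$), hitting a single $w$-pole $w_{b+1}=sq^{-1}$ or $w_{b-k-1}=sq^{k+1}$ which enlarges $B$. The $z$- and $w$-extensions do not happen independently or in parallel.

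Second, your enumeration of $z$-poles is incomplete. Besides $z_{a_i-1}$ and $z_{c_i}$ (extending an existing run at either end), there are poles $z_e=sq^{*}$ for $e$ strictly between runs, or before the first run, or after the last; these create a \emph{new} one-element run $\{e\}$. The two ``good'' instances among these (namely $e\geq c_t$ with power $k$, and $e<a_1$ with power $-1$) are essential: without them you cannot reach, at level $k+1$, any $A$ having more runs than every $A$ at level $k$.

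Third, and most seriously, the cancellation mechanism you propose is not the correct one. You locate the cancellation at the event $c_i=a_{i+1}$, but precisely in that case the relevant poles do not exist (the explicit side conditions ``$a_i\neq c_{i-1}$'' and ``$c_i\neq a_{i+1}$'' in the residue formula exclude them). The actual mechanism is this: call a $z$-residue \emph{repeated} if the newly adjoined index receives a power $sq^{*}$ with $*\in\{0,\ldots,k-1\}$, i.e.\ a power already present in the short string. Any repeated long string then arises from exactly two distinct short strings (obtained by deleting either of the two indices carrying that power), and these two contributions cancel. The four surviving types are exactly those where the new power is $k$ or $-1$: extend the last run on its left, extend the first run on its right, or create a new singleton after the last or before the first. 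These split into posterior and anterior, and each long string appears once in each group---which is what makes the match with the two lines of \eqref{eqn:residues} at level $k+1$ work.
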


\noindent To make the subsequent explanation more vivid, we will think of $A$ as a disjoint union of \textbf{strings}, each string consisting of consecutive numbers $a_i,...,c_i-1$. In \eqref{eqn:residues}, the variables $z_j$ for $a_i \leq j < c_i$ are specialized to $sq^*$ where $* \in \{0,...,k-1\}$ increases from one string to the next, but decreases with step 1 within a given string. First of all, let us note that Claim completes the proof of the Conjecture subject to Assumption B, because as soon as $k > \min(n,-n')$, the sum of integrals in \eqref{eqn:residues} is vacuous, and all we are left with are nice summands. \\

\noindent We now prove Claim \ref{claim:residues} by induction on $k$ (the base case $k = 1$ is the fact that \eqref{eqn:two comp 2} equals the four-line formula located immediately before the statement of Claim \ref{claim:residues}). For fixed sets $A$ and $B$, the integrand of the two integrals of \eqref{eqn:residues} is one and the same rational function in $s,\{z_i\}_{i\notin A}, \{w_j\}_{j \notin B}$, namely:
\begin{align*}
&\text{Res}^{z_{a_i+\e} = sq^{-\e-1+\sum_{j \leq i} c_j-a_j}}_{w_b = s,...,w_{b-k} = sq^k} \left( R|_\Delta \right) = \frac {\prod^{1 \leq i \leq t}_{a_i \leq e < c_i} (sq^{c_i-e-1+\sum_{j<i} c_j-a_j})^{d_e} \prod_{j=b}^{b-k} (sq^{b-j})^{d_j'}}{\wedge^\bullet \left(\frac s{\CU} \right)  \prod_{j = 1}^{b-k} \zeta \left(\frac {sq^k}w \right) \prod_{j = b+1}^{n'} \zeta \left(\frac sw \right)} \\ 
&\frac {\prod^{0\leq i \leq t}_{c_i \leq e < a_{i+1}} \zeta \left(\frac z{sq^{\sum_{j\leq i} c_j-a_j}} \right) \Psi(...,\CT_c + (\sum_{i \notin A} z_i - \sum_{j \notin B} w_j-sq^k)l_c,...)}{\left(1 - \frac {w_{b+1}q}s \right)\left(1 - \frac {sq^{k+1}}{w_{b-k-1}}\right) \prod^{1 \leq i \leq t}_{\text{if }a_i \neq c_{i-1}} \left(1 - \frac {sq^{\sum_{j\leq i} c_j-a_j}}{z_{a_i-1}} \right) \prod^{1 \leq i \leq t}_{\text{if }c_i \neq a_{i+1}} \left(1 - \frac {z_{c_i}}{sq^{-1+\sum_{j<i} c_j-a_j}} \right)} \\ 
&\frac{\text{constant} \cdot \prod_{i \notin A} z_i^{d_i} \prod_{j \notin B} w_j^{d'_j}  \prod^{i \notin A}_{j \notin B} \zeta \left( \frac {z_i}{w_j} \right)  \frac {\prod_{i \notin A} \wedge^\bullet \left( z_iq / \CU \right)}{\prod_{j \notin B} \wedge^\bullet \left( w_j / \CU \right)}}{\prod_{i,i+1 \notin A} \left(1 - \frac {z_{i+1} q}{z_i} \right) \prod_{j,j+1 \notin B} \left(1 - \frac {w_{j+1} q}{w_j} \right) \prod_{i < i'}^{i,i' \notin A} \zeta \left( \frac {z_{i'}}{z_i} \right) \prod_{j < j'}^{j,j' \notin B} \zeta \left( \frac {w_{j'}}{w_j} \right)}
\end{align*}
where we set $c_0 = 1$ and $a_{t+1} = n+1$. The term labeled ``constant" is an element of $\BK$ divided by various factors $\{1-q^a\}_{a\in \BN}$. The difference between the integrals in Claim \ref{claim:residues} has to do with the contour of the variable $s$ passing over the $z$ variables, and the only poles this picks up are those of the form $s = z_i \alpha$ for some $i\notin A$ and some $\alpha \in \BK$. Explicitly from the 3-row expression above, these are all of the form:
\begin{align}
&z_{a_i-1} = s q^{\sum_{j \leq i} c_j - a_j} & &\text{if } a_i \neq c_{i-1} \label{eqn:extended 1} \\
&z_{c_i} = s q^{-1 + \sum_{j < i} c_j - a_j} & &\text{if } c_i \neq a_{i+1} \label{eqn:extended 2} \\
&z_e = s q^{\sum_{j \leq i} c_j - a_j} & &\text{for some } c_i \leq e < a_{i+1} \label{eqn:extended 3} \\
&z_e = s q^{- 1 + \sum_{j < i} c_j - a_j} & &\text{for some } c_{i-1} \leq e < a_i \label{eqn:extended 4}
\end{align}
In terms of the interpretation of the set $A$ as a union of strings of consecutive numbers in $\{1,...,n\}$, where the index $i$ corresponds to the variable $z_i$, these four specializations mean the following: the first refers to adding the number $a_i-1$ to the left of the string $\{a_i,...,c_i-1\}$, the second refers to adding the number $c_i$ to the right of the string $\{a_i,...,c_i-1\}$, while the last two specializations correspond to creating the one-element string $\{e\}$. We will refer to either of these four situations as \textbf{extending the string} by the index $a_i - 1$, $c_i$ or $e$, respectively. \\

\noindent Let us now consider the powers $sq^*$ to which the variables $z_i$ are specialized in the extended string (specifically, the powers of $q$ which appear in \eqref{eqn:extended 1}--\eqref{eqn:extended 4}). There are only four types of extended strings where the powers never repeat, namely:
\begin{align}
&\eqref{eqn:extended 1} \text{ for }  i = t \label{eqn:good 1} \\
&\eqref{eqn:extended 2} \text{ for } i = 1 \label{eqn:good 2} \\
&\eqref{eqn:extended 3} \text{ for } i = t \text{ for some } e \geq c_t \label{eqn:good 3} \\
&\eqref{eqn:extended 4} \text{ for } i = 1 \text{ for some } e < a_1 \label{eqn:good 4}
\end{align}
All other extended strings are \textbf{repeated}, which means that the particular power of $q$ in $sq^*$ that appears in the right-hand side of \eqref{eqn:extended 1}--\eqref{eqn:extended 4} will appear again somewhere in the string. However, any repeated string arises twice in this way, since either of the two equal powers $sq^*$ can play the role of the one which was last added to yield the extended string. We leave it as an exercise to the interested reader to show that the corresponding residues of the two occurrences of any repeated string cancel each other out. We conclude that \eqref{eqn:residues} consists of nice summands plus the residues corresponding to the extended strings \eqref{eqn:good 1}--\eqref{eqn:good 4}, which are:
\begin{align*}
&\int_{\infty - 0}^{ ... \prec w_{b+1} \prec \left( z_i \text{ for } i < a_t-1 \right) \prec s \prec  \left(z_i \text{ for } i \geq c_t \right) \prec w_{b-k-1} \prec ... } \Delta_* \left[ \frac { \text{Res}^{(1)} ( R|_\Delta )}{(1-q_1)(1-q_2)} \right] - \\
- &\int_{\infty - 0}^{ ... \prec w_{b+1} \prec \left( z_i \text{ for } i < a_1 \right) \prec s \prec \left( z_i \text{ for } i > c_1 \right) \prec w_{b-k-1} \prec ... } \Delta_* \left[ \frac { \text{Res}^{(2)} ( R|_\Delta )}{(1-q_1)(1-q_2)} \right] + \\ 
+&\sum_{e \geq c_t} \int_{\infty - 0}^{ ... \prec w_{b+1} \prec \left( z_i \text{ for } i < e \right) \prec s \prec  \left(z_i \text{ for } i > e \right) \prec w_{b-k-1} \prec ... } \Delta_* \left[ \frac { \text{Res}^{(3)} ( R|_\Delta )}{(1-q_1)(1-q_2)} \right] - \\ 
-&\sum_{e < a_1} \int_{\infty - 0}^{ ... \prec w_{b+1} \prec \left( z_i \text{ for } i < e \right) \prec s \prec  \left(z_i \text{ for } i > e \right) \prec w_{b-k-1} \prec ... } \Delta_* \left[ \frac { \text{Res}^{(4)} ( R|_\Delta )}{(1-q_1)(1-q_2)} \right]  
\end{align*}
where $\text{Res}^{(1)}$,..., $\text{Res}^{(4)}$ denote the residues at:
\begin{align}
&\left\{ z_{a_i+\e} = sq^{-\e-1+\sum_{j \leq i} c_j-a_j}, z_{a_t-1} = sq^k, w_b = s,...,w_{b-k} = sq^k \right\} \label{eqn:residue list 1} \\ 
&\left\{ z_{a_i+\e} = sq^{-\e-1+\sum_{j \leq i} c_j-a_j}, z_{c_1} = \frac sq, w_b = s,...,w_{b-k} = sq^k \right\} \label{eqn:residue list 2} \\
&\left\{ z_{a_i+\e} = sq^{-\e-1+\sum_{j \leq i} c_j-a_j}, z_{e} = s q^k, w_b = s,...,w_{b-k} = sq^k \right\} \label{eqn:residue list 3} \\
&\left\{ z_{a_i+\e} = sq^{-\e-1+\sum_{j \leq i} c_j-a_j}, z_{e} = \frac sq, w_b = s,...,w_{b-k} = sq^k \right\} \label{eqn:residue list 4}
\end{align}
respectively. In each of the four integrands, the set of indices of the $z$ variables form a string of total size $k+1$ (henceforth called \textbf{long string}), since the string of size $k$ (henceforth called \textbf{short string}) in the statement of Claim \ref{claim:residues} has been extended by an extra index. However, this extra index can be placed either at the end of the short string (as in \eqref{eqn:residue list 1} and \eqref{eqn:residue list 3}, a placement we will call \textbf{posterior}) or at the beginning of the short string (as in \eqref{eqn:residue list 2} and \eqref{eqn:residue list 4}, a placement we will call \textbf{anterior}). So we conclude that every long string appears twice in the sum of four integrals: once in \eqref{eqn:residue list 1} or \eqref{eqn:residue list 3} and once in \eqref{eqn:residue list 2} or \eqref{eqn:residue list 4}. For example, \eqref{eqn:spec r 1}/\eqref{eqn:spec r 2} are the posterior/anterior situations when $k=0$. \\

\begin{itemize}[leftmargin=*]

\item For the integrals corresponding to anterior placement, namely the residues at \eqref{eqn:residue list 2} and \eqref{eqn:residue list 4}, we move the $s$ variable to the very left of the chain of variables, and the only pole we encounter is $w_{b+1} = \frac sq$. The value of the corresponding integral is exactly equal to the first line of \eqref{eqn:residues} for $k$ replaced by $k+1$, $s$ replaced by $\frac sq$, the short string replaced by the long string, and $b$ replaced by $b+1$. \\

\item For the integrals corresponding to posterior placement, namely the residues at \eqref{eqn:residue list 1} and \eqref{eqn:residue list 3}, we move the $s$ variable to the very right, and the only poles we encounter are $w_{b-k-1} = sq^{k+1}$, as well as $s = 0$ and $\infty$. \\

 \begin{itemize}[leftmargin=*]
 	
 \item The value of the corresponding residue at the pole $s = w_{b-k-1} q^{-k-1}$ is exactly equal to the second line of \eqref{eqn:residues} for $k$ replaced by $k+1$ and the short string replaced by the long string. \\
 	
 \item The value of the corresponding residue at $s \in \{0, \infty\}$ is a nice summand. This is proved similarly to the case $k=1$ in the third bullet preceding Claim \ref{claim:residues}. 
 	
 \end{itemize}

\end{itemize} 

\end{proof}

\section{The $\CA_r$-action}
\label{sec:w acts}

\subsection{} The dg scheme $\fZ_2^\bullet$ of Definition \ref{def:fine} has the following very interesting intersection theoretic property, which will later be recognized as the reason why the higher $W$--algebra currents vanish for the moduli space of stable sheaves on $S$. \\

\begin{lemma} 
\label{lem:int th}

Consider the following diagram:
\begin{equation}
\label{eqn:int th}
\xymatrix{ & \fZ_2^\bullet \times_{\fZ_1'} \fZ_2^\bullet \ar[d]^{\e} \\
\fZ_1 \ar@{^{(}->}[r]^-{\delta} & \fZ_1 \underset{\CM \times S}{\times} \fZ_1 }
\end{equation}
with maps given in terms of closed points by:
\begin{equation}
\label{eqn:int th points}
\xymatrix{ & (\CF_1 \subset_x \CF \subset_x \tCF ) \times (\CF_2 \subset_x \CF \subset_x \tCF) \ar[d]^\e \\
\{\CF_1 = \CF_2\} \ar@{^{(}->}[r]^-{\delta} & (\CF_1 \subset_x \CF) \times (\CF_2 \subset_x \CF)}
\end{equation}
Note that $\fZ_1 = \fZ_1'$ are denoted differently in \eqref{eqn:int th} to emphasize the fact that the former has $\CF$ as the bigger sheaf, while the latter has $\CF$ as the smaller sheaf:
$$
\fZ_1 = \{\CF' \subset_x \CF\}  \qquad \text{and} \qquad \fZ_1' = \{\CF \subset_x \tCF\}
$$
Consider the line bundles $\CL, \CL_1, \CL_2, \tCL$ on the spaces in \eqref{eqn:int th} whose fibers over the closed points \eqref{eqn:int th points} are given by the one-dimensional quotients:
$$
\CL_1 = \CF_x/\CF_{1x}, \quad \CL_2 = \CF_x/\CF_{2x}, \quad \tCL = \tCF_x/\CF_x \quad \text{and} \quad \CL = \delta^*(\CL_1) = \delta^*(\CL_2)
$$
Consider also the universal sheaf $\CU$ that parametrizes the sheaves denoted $\CF$ in the diagram \eqref{eqn:int th points}. Then we have the following equality in the $K$--theory of $\fZ_1 \underset{\CM \times S}{\times} \fZ_1$:
\begin{equation}
\label{eqn:sony}
\delta_* [\CL^k] -  \e_*[\tCL^{k-r}] (-1)^r q^{k-r} (\det \CU) = \int_{\infty - 0} \frac {z^k \wedge^\bullet \left( \frac {\CU}z \right)}{\left(1 - \frac z{\CL_1}\right)\left(1 - \frac z{\CL_2}\right)}
\end{equation}
for all $k \in \BZ$. Recall that the notation $\int_{\infty - 0}$ was introduced in \eqref{eqn:residue def}. \\

\end{lemma}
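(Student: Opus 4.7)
My plan is to compute the two pushforwards on the left-hand side explicitly and recognize their sum as a residue-theoretic evaluation of the right-hand side. Both sides are K-theory classes on $\fZ_1 \times_{\CM \times S} \fZ_1$, which I would realize as a dg subscheme of the ambient $\BP(\CV) \times_{\CM \times S} \BP(\CV)$ via the resolution $0 \to \CW \to \CV \to \CU \to 0$ from Proposition \ref{prop:length 1}, so as to unwind $\wedge^\bullet(\CU/z)$ as $\wedge^\bullet(\CV/z)/\wedge^\bullet(\CW/z)$.

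For the $\e_*$ term, I would first identify $\fZ_2^\bullet \times_{\fZ_1'}\fZ_2^\bullet$ with the base change of $\fZ_1' \to \CM \times S$ along the map $\fZ_1 \times_{\CM \times S} \fZ_1 \to \CM \times S$, with $\e$ being the second projection; its fiber over $(\CF_1, \CF_2 \subset_x \CF)$ is the space of extensions $\tCF \supset_x \CF$. Using the description of $\fZ_1'$ from Proposition \ref{prop:desc} as a dg subscheme of $\BP(\CW^\vee \otimes \omega_S)$, the pushforward $\e_*(\tCL^{k-r})$ admits the contour-integral formula of Proposition 4.19 of \cite{Univ} applied in \eqref{eqn:vir}; the translation between $\tCL$ and the tautological $\CO(\pm 1)$ in Definition \ref{def:desc} versus Proposition \ref{prop:desc} produces the normalizing factor $(-1)^r q^{k-r}(\det \CU)$, in the same spirit as the factor $(-1)^r / \det\CU$ in the definition of $f_d$ in \eqref{eqn:def f}.

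For the $\delta_*$ term, I would use the Koszul resolution of the diagonal. Inside $\BP(\CV) \times_{\CM \times S} \BP(\CV)$ the diagonal is cut out by a rank $v-1$ section built out of $\CL_1, \CL_2, \rho^*\CV$, yielding $[\CO_\Delta]$ as a Koszul polynomial; restricting to $\fZ_1 \times_{\CM \times S} \fZ_1$ (which cancels the $\CW$-contributions through the Koszul resolutions of Definition \ref{def:desc}) and applying the projection formula to $\CL^k = \delta^* \CL_1^k$ then gives $\delta_*(\CL^k)$ as an explicit class involving $\wedge^\bullet((\CU - \CL_1)/\CL_2)$ and $1 - \CL_1/\CL_2$.

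Finally, I would match the two contributions via the residue theorem applied to the rational 1-form $\frac{z^{k-1} \wedge^\bullet(\CV/z) \, dz}{\wedge^\bullet(\CW/z)(1-z/\CL_1)(1-z/\CL_2)}$ on $\BP^1$: the residues at $z = \CL_1, \CL_2$ (regularized by the Koszul cancellation between $(1 - \CL_i/z)$ in the denominator and the vanishing of $\wedge^\bullet(\CV/z)$ at $z = \CL_i$) reproduce $\delta_*(\CL^k)$, while the residues at the zeros of $\wedge^\bullet(\CW/z)$ reproduce $-\e_*(\tCL^{k-r})(-1)^r q^{k-r}(\det \CU)$, and $\int_{\infty - 0}$ collects precisely these contributions as the negatives of the residues at $0$ and $\infty$. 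The main obstacle will be the careful bookkeeping of Koszul signs, of the $\omega_S = q$ twists, and of the determinantal factors that arise from the interplay between the descriptions of $\fZ_1$ and $\fZ_1'$ in Definition \ref{def:desc} and Proposition \ref{prop:desc}; the identity then follows term by term.
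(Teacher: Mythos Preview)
There is a genuine gap in your treatment of the $\e_*$ term. You identify $\fZ_2^\bullet \times_{\fZ_1'} \fZ_2^\bullet$ with the base change of $\fZ_1' \to \CM \times S$ along $\fZ_1 \times_{\CM\times S}\fZ_1 \to \CM\times S$, so that the fiber of $\e$ is simply the space of extensions $\tCF \supset_x \CF$. This is false as dg schemes: that base change is $\fZ_2|_\Delta \times_{\fZ_1'} \fZ_2|_\Delta$, not $\fZ_2^\bullet \times_{\fZ_1'} \fZ_2^\bullet$. Recall from Definition~\ref{def:fine} that $\fZ_2^\bullet$ is cut out in $\BP(\CV')\times\BP({\CW'}^\vee\otimes\omega_S)$ by a section of $\CE = \ker(\tilde\CE \twoheadrightarrow \text{line bundle})$, which has rank one less than the bundle $\tilde\CE$ defining $\fZ_2|_\Delta$; so $\fZ_2^\bullet$ has virtual dimension one larger. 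If you push forward using the $\fZ_1'$ contour-integral formula as you propose, you compute the wrong pushforward, and the discrepancy is precisely what makes \eqref{eqn:sony} hold. This is not a matter of sign bookkeeping; it is the content of the lemma.

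Your residue-theoretic matching is also problematic as stated. The ``residues at the zeros of $\wedge^\bullet(\CW/z)$'' are not well-defined $K$-theory classes (the Chern roots of $\CW$ are formal), and on $\fZ_1\times_{\CM\times S}\fZ_1$ one has $[\CV] = [\CN_i]+[\CL_i]$, so $\wedge^\bullet(\CV/z)$ already carries a factor $(1-\CL_i/z)$ that cancels the pole $(1-z/\CL_i)^{-1}$ up to a monomial; there is no residue at $z=\CL_i$ to ``regularize'' into $\delta_*(\CL^k)$.

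The paper avoids both issues by working at the level of explicit complexes rather than $K$-classes. A homological lemma (Proposition~\ref{prop:dg}) expresses the structure sheaf of a derived zero locus $Z$ relative to a larger one $Z'$ as a two-step Koszul complex $\wedge^\bullet(\CE'\to\CE,s)$. Applied to $\delta$, this gives $\delta_*(\CL^k)$ as the complex $\bigoplus_{a,b\ge 0} S^a\CW\otimes\wedge^b\CN_2\otimes\CL_1^{k-1-a-b}\CL_2[-2a-b]$ on $\fZ_1\times\fZ_1$. Applied to the embedding $\eta:\fZ_2^\bullet\times_{\fZ_1'}\fZ_2^\bullet\hookrightarrow\BP_{\fZ_1\times\fZ_1}(\CW^\vee\otimes\omega_S)$, with the defining section of $\fZ_2^\bullet$ from Definition~\ref{def:fine} fed in correctly, it gives $\CO_\eta \cong \wedge^\bullet(\CL_1^{-1}\otimes\CO(-1)\otimes\omega_S \to \CN_2^\vee\otimes\CO(-1)\otimes\omega_S)$; the extra $\CL_1^{-1}$ summand here is exactly the correction to your base-change claim. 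After pushing down the $\BP(\CW^\vee\otimes\omega_S)$ bundle and a Serre-duality relabeling of indices, $\e_*(\tCL^{k-r})\cdot(-1)^r q^{k-r}\det\CU$ becomes the \emph{same} complex as $\delta_*(\CL^k)$, but summed only over $a+b\ge k-1$. The cone is thus the finite sum over $0\le a+b\le k-2$, whose $K$-class is the right-hand side of \eqref{eqn:sony} by $[\CU]=[\CV]-[\CW]=[\CN_2]+[\CL_2]-[\CW]$.
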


\subsection{} Before proving Lemma \ref{lem:int th}, we need to set up certain concepts and notation from homological algebra. Given a space $X$ (which for us will be a dg scheme supported over a Noetherian scheme) with structure sheaf $\CO$, and a locally free sheaf $\CE$ endowed with a co-section $s : \CE \rightarrow \CO$, one may define the Koszul complex:
$$
\wedge^\bullet(\CE,s) = \left[... \xrightarrow{d_s} \wedge^2 \CE \xrightarrow{d_s} \CE \xrightarrow{d_s} \CO \right]
$$
where we define:
\begin{equation}
\label{eqn:maps 1}
\wedge^k \CE \xrightarrow{d_s} \wedge^{k-1}\CE
\end{equation}
$$
e_1 \wedge ... \wedge e_k \leadsto \sum_{i=1}^k (-1)^{i-1} s(e_i) \cdot e_1 \wedge ... \wedge \widehat{e_i} \wedge ... \wedge e_k 
$$
The Koszul complex is a dg algebra, which can be best seen by packaging it as:
\begin{equation}
\label{eqn:dg 1}
\wedge^\bullet(\CE,s) = \left( \bigoplus_{a = 0}^\infty \wedge^a \CE[-a], d_s \right)
\end{equation}
where the differential $d_s$ is the direct sum of the maps \eqref{eqn:maps 1}, and the homological grading is marked by the number inside the square brackets (so an element $v \in \wedge^a \CE$ has degree $|v| = -a$). The multiplication in the dg algebra \eqref{eqn:dg 1} is given by wedge product, and it is commutative (in the dg sense) since: 
$$
v \wedge w = (-1)^{|v||w|} w \wedge v
$$
More generally, if $\iota : \CE' \rightarrow \CE$ is a map of locally free sheaves on the space $X$ such that $s \circ \iota = 0$, then we consider the commutative dg algebra:
\begin{equation}
\label{eqn:dg 2}
\wedge^\bullet \left(\CE' \xrightarrow{\iota} \CE, s \right) =  \left( \bigoplus_{a,b = 0}^\infty S^a \CE'\otimes \wedge^b \CE [-2a-b], d_\iota + d_s \right)
\end{equation}
where the differential is given by summing up the maps:
\begin{equation}
\label{eqn:maps 2}
S^a \CE' \otimes \wedge^b \CE \xrightarrow{d_\iota} S^{a-1} \CE' \otimes \wedge^{b+1} \CE
\end{equation}
$$
e_1'...e_a' \otimes e_1 \wedge ... \wedge e_b \leadsto \sum_{i=1}^a e_1' ... \widehat{e_i'} ... e_a' \otimes \iota(e_i') \wedge e_1 \wedge ... \wedge e_b
$$
with the maps $d_s : S^a \CE' \otimes \wedge^b \CE \rightarrow S^a \CE' \otimes \wedge^{b-1} \CE$ given by the same formula as \eqref{eqn:maps 1}. \\

\begin{proposition}
\label{prop:dg}

Consider a map of locally free sheaves $\iota : \CE' \rightarrow \CE$ and a co-section $s : \CE \rightarrow \CO$ on a space $X$, and let $s' = s \circ \iota : \CE' \rightarrow \CO$. The natural map of dg algebras:
$$
\CO_{Z'} := \wedge^\bullet(\CE',s') \rightarrow \wedge^\bullet(\CE,s) =: \CO_Z 
$$
induced by $\iota$ gives rise to a map of dg schemes $Z \rightarrow Z'$. Moreover, we have:
\begin{equation}
\label{eqn:quasi-iso}
\CO_Z \stackrel{\emph{q.i.s.}}\cong \wedge^\bullet_{Z'} \left(\CE'|_{Z'} \xrightarrow{\iota} \CE|_{Z'}, s \right)
\end{equation}
as dg modules over $\CO_{Z'}$. \\

\end{proposition}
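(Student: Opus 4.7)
The first assertion is a direct computation. The map $\iota : \CE' \to \CE$ extends to a morphism of graded-commutative algebras $\iota_\wedge : \wedge^\bullet\CE' \to \wedge^\bullet\CE$ by $e_1' \wedge \cdots \wedge e_k' \mapsto \iota(e_1') \wedge \cdots \wedge \iota(e_k')$, and compatibility with the Koszul differentials follows from the hypothesis $s \circ \iota = s'$:
\begin{equation*}
d_s \circ \iota_\wedge (e_1' \wedge \cdots \wedge e_k') = \sum_{i=1}^k (-1)^{i-1} s(\iota(e_i')) \, \iota(e_1')\wedge\cdots\widehat{\iota(e_i')}\cdots\wedge \iota(e_k') = \iota_\wedge \circ d_{s'}(e_1' \wedge \cdots \wedge e_k'),
\end{equation*}
which gives the desired map of dg schemes $Z \to Z'$.

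For the quasi-isomorphism \eqref{eqn:quasi-iso}, my plan is first to verify that the double Koszul on the right is well-defined as a dg $\CO_{Z'}$-module. The constraint $s \circ \iota = 0$ that appears in the definition \eqref{eqn:dg 2} fails on $X$ (it equals $s'$), but becomes null-homotopic on $Z'$: viewing $\CE' \subset \wedge^\bullet(\CE',s') = \CO_{Z'}$ in internal degree $-1$, the element $s'(e')$ is precisely the internal differential $d_{Z'}(e')$. Building this homotopy into the construction, I would equip $\CO_{Z'}\otimes_{\CO}\bigoplus_{a,b}S^a\CE' \otimes \wedge^b\CE[-2a-b]$ with total differential $D = d_{Z'} + d_\iota + d_s + C$, where $C$ is the cross term identifying an element of $S^1\CE'$ (internal degree $-2$) with its tautological copy inside $\wedge^1\CE' \subset \CO_{Z'}^{(-1)}$ (internal degree $-1$). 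The identity $D^2 = 0$ then follows from the calculation $(d_\iota + d_s)^2 = s'$ (performed directly from \eqref{eqn:maps 1}--\eqref{eqn:maps 2}) together with the cancellation $d_{Z'}C + Cd_{Z'} = -s'$ up to signs.

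With this setup, I would construct the natural inclusion of dg $\CO_{Z'}$-modules
\begin{equation*}
j : \CO_Z \; \hookrightarrow \; \wedge^\bullet_{Z'}\bigl(\CE'|_{Z'}\xrightarrow{\iota}\CE|_{Z'},\, s\bigr)
\end{equation*}
embedding $\wedge^b\CE \subset \CO_Z$ identically into the $(a,b) = (0,b)$ summand on the right, and check that $j$ is a quasi-isomorphism by filtering the target by the combined $\CE'$-degree (coming from $S^\bullet \CE'$ and from $\wedge^\bullet \CE' \subset \CO_{Z'}$). The associated graded factors as $\CO_Z$ tensored with the auxiliary complex $\wedge^\bullet\CE' \otimes S^\bullet\CE'$, whose induced differential from $C$ is precisely the classical Koszul resolution of the ground ring over $S^\bullet\CE'$; this complex is acyclic in characteristic zero, so the spectral sequence collapses and $j$ induces an isomorphism on cohomology.

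The principal obstacle is the sign-accurate construction of the cross term $C$ and the verification that $D^2 = 0$, which requires interleaving the four pieces of the differential with the Koszul sign conventions on the bigraded object $S^\bullet \CE' \otimes \wedge^\bullet \CE$. A secondary point, the convergence of the filtration spectral sequence, is immediate because the filtration is bounded in each total internal degree (since $\CE'$ and $\CE$ have finite ranks), but the essential homological input—acyclicity of $\wedge^\bullet\CE' \otimes S^\bullet \CE'$—is what genuinely uses the characteristic-zero hypothesis.
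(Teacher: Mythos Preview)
Your proposal is correct and follows essentially the same route as the paper's proof: both unfold the right-hand side of \eqref{eqn:quasi-iso} over $X$ as a triple complex $S^a\CE' \otimes \wedge^b\CE \otimes \wedge^c\CE'$ with a four-part differential (your cross term $C$ is the paper's homotopy $h$), filter by the total $\CE'$-degree $a+c$, and identify the associated graded as $\CO_Z$ tensored with the standard Koszul complex $\bigoplus_{a+c=n} S^a\CE' \otimes \wedge^c\CE'$, which is acyclic for $n>0$.

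One small correction: the acyclicity of that Koszul complex does not require characteristic zero. The Koszul resolution of the residue field over a symmetric algebra is exact over any base, so nothing special about the ground field is used here. Your remark that this is ``what genuinely uses the characteristic-zero hypothesis'' should be dropped.
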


\begin{proof} Note that in order for \eqref{eqn:dg 2} to be complex, one needs the composition of $s$ and $\iota$ to vanish. In the case at hand, this does not hold over $X$ by assumption, but it does hold over the dg scheme $Z'$ on account of the latter being the derived zero locus of the co-section $s' = s \circ \iota$. Explicitly, for all $a \geq 0$, the map:
$$
S^a \CE' \xrightarrow{\gamma} S^{a-1} \CE' 
$$
$$
e'_1...e'_a \leadsto \sum_{i=1}^a s'(e_i') \cdot e_1' ... \widehat{e_i'} ... e_a'
$$
is null-homotopic over the dg scheme $Z'$. To see this, we observe that ``restricting" to the dg scheme $Z'$ is the same thing as tensoring with the Koszul complex $\wedge^\bullet(\CE',s')$, and therefore the induced map:
$$
S^a \CE' \otimes \wedge^\bullet(\CE', s') \xrightarrow{\gamma \otimes \text{Id}_{\wedge^\bullet(\CE',s')}} S^{a-1} \CE' \otimes \wedge^\bullet(\CE', s')
$$
has the property that $\gamma \otimes \text{Id}_{\wedge^\bullet(\CE',s')} = [h,d_{s'}]$, where the homotopy is:
$$
S^a \CE' \otimes \wedge^c \CE' \xrightarrow{h} S^{a-1} \CE' \otimes \wedge^{c+1} \CE'
$$
$$
e_1'...e_a' \otimes e_1 \wedge ... \wedge e_c \leadsto \sum_{i=1}^a e_1'... \widehat{e_i'} ... e_a' \otimes e_i' \wedge e_1 \wedge ... \wedge e_c
$$
Therefore, the right-hand side of \eqref{eqn:quasi-iso} can be presented as a complex over $X$ as:
\begin{equation}
\label{eqn:resolution}
\left(\bigoplus_{a,b,c=0}^\infty S^a \CE'[-2a] \otimes \wedge^b \CE[-b] \otimes \wedge^c \CE'[-c], d_\iota + d_s + d_{s'} - (-1)^c h \right)
\end{equation}
(the fact that this is a chain complex is a consequence of the equality $(d_\iota+d_s)^2 = \gamma \otimes \text{Id}_{\wedge^\bullet(\CE',s')} \otimes \text{Id}_{\wedge^\bullet(\CE,s)} = [h,d_{s'}] \otimes \text{Id}_{\wedge^\bullet(\CE,s)}$, which is straightforward). However, the complex above can be filtered according to $n = a+c$, with associated graded:
$$
\bigoplus_{n=0}^\infty \left(\underline{\bigoplus_{a=0}^n S^a \CE'[-a] \otimes \wedge^{n-a}\CE', h }\right)[-n] \otimes \wedge^\bullet(\CE,s)
$$
It is well-known that the underlined complex is acyclic unless $n=0$, so therefore \eqref{eqn:resolution} is quasi-isomorphic to $\wedge^\bullet(\CE,s) =  \CO_Z$. Hence \eqref{eqn:quasi-iso} holds in the derived category of coherent sheaves on $X$. However, it also holds in the derived category of coherent sheaves on $Z'$ because all complexes involved (including \eqref{eqn:resolution}) are dg algebras which receive a natural map from $\CO_{Z'} = \wedge^\bullet(\CE',s')$. 
	
\end{proof}

\subsection{} We will now use Proposition \ref{prop:dg} to prove Lemma \ref{lem:int th}. To keep the notation easily legible, throughout the proof below we will use the notation $\times$ (with no further subscript) for derived fiber product over the scheme $\CM\times S$, where $\CM$ parametrizes the sheaves denoted by $\CF$ in the statement of Lemma \ref{lem:int th}. Similarly, we will write $\BP(\CE)$ for the projectivization of the locally free sheaf $\CE$ on $\CM\times S$. \\

\begin{proof} \emph{of Lemma \ref{lem:int th}:} We will compute $\delta_*(\CL^k)$ and $\e_*(\tCL^k)$ as complexes in:
$$
D^-\left(\text{Coh} \left(\fZ_1 \times \fZ_1 \right) \right)
$$
and we will show that the complexes are identical, except for finitely many terms at the right of the complex. These terms will be expressed in terms of $\CU$, $\CL_1$, $\CL_2$ and their powers, and the resulting expression will be shown to match the right-hand side of \eqref{eqn:sony}, thus concluding the proof of the Lemma. We recall that $\CU = \CV/\CW$ where $\CW  \hookrightarrow \CV$ are vector bundles on $\CM \times S$, which leads to the presentation \eqref{eqn:desc 1} of the map of dg schemes $\fZ_1 \rightarrow \CM \times S$, $(\CF' \subset_x \CF) \mapsto (\CF,x)$:
\begin{equation}
\label{eqn:temp}
\fZ_1 \hookrightarrow \BP (\CV)
\end{equation}
where the embedding is the derived zero locus of the co-section:
$$
\CW \otimes \CO(-1) \rightarrow \CV \otimes \CO(-1) \rightarrow \CO
$$
(we write $\CO(1)$ for the tautological line bundle on the projectivization in \eqref{eqn:temp}, and abuse notation by writing $\CV,\CW$ for the aforementioned vector bundles on $\CM\times S$, as well as their pull-backs to the projectivization). With this in mind, let us consider the following diagram of derived fiber products:
\begin{equation}
\label{eqn:fiber}
\xymatrix{\fZ_1 \ar@{^{(}->}[r]^-{\delta} \ar[dr]_p & \fZ_1 \times \fZ_1 \ar[d]^q  \\
& \BP (\CV) \times \fZ_1}
\end{equation} 
By definition, the embedding $q$ is cut out by the composition $\CW \otimes \CO_1(-1) \rightarrow \CV \otimes \CO_1(-1) \rightarrow \CO$ (where we use the notation $\CO_1(1)$ and $\CO_2(1)$ to denote the tautological line bundles on the first and second factors in the fiber products \eqref{eqn:fiber}, respectively). Meanwhile, because $p$ is obtained from the diagonal embedding:
$$
\BP (\CV) \hookrightarrow \BP (\CV) \times  \BP (\CV)
$$
by derived base change, then Beilinson's resolution implies that the embedding $p$ is cut out by the composition $\sigma : \CN_2 \otimes \CO_1(-1) \rightarrow \CV \otimes \CO_1(-1) \rightarrow \CO$, where:
$$
\CN_i = \text{Ker} \Big( \CV \twoheadrightarrow \CO_i(1) \Big), \ \forall i \in \{1,2\}
$$
We may then invoke Proposition \ref{prop:dg} to conclude that the embedding $\delta$ is cut out by the induced map of complexes $[\CW \otimes \CO_1(-1) \rightarrow \CN_2 \otimes \CO_1(-1)] \rightarrow \CO$, i.e.:
$$
\CO_{\delta} \qis \wedge^\bullet (\CW \otimes \CL_1^{-1} \rightarrow \CN_2 \otimes \CL_1^{-1}, \sigma)
$$
on $\fZ_1 \times \fZ_1$ (this is because the line bundle $\CL$ on $\fZ_1$ is the restriction of $\CO(1)$ on $\BP (\CV)$, see Definition \ref{def:desc}). Plugging in \eqref{eqn:dg 2} for the Koszul complex, we obtain:
$$
\delta_*(\CO) \cong  \left( \bigoplus_{a,b = 0}^\infty S^a \CW \otimes \wedge^b \CN_2 \otimes \CL_1^{-a-b}[-2a-b], d_\iota + d_\sigma \right)
$$
where $d_\sigma$ and $d_\iota$ are the differentials \eqref{eqn:maps 1} and \eqref{eqn:maps 2} associated to the maps $\sigma : \CN_2 \otimes \CL_1^{-1} \rightarrow \CO$ and $\iota : \CW \rightarrow \CN_2 = \text{Ker} (\CV \twoheadrightarrow \CO_2(1))$. Since $\CL = \delta^*(\CL_1) = \delta^*(\CL_2)$:
\begin{equation}
\label{eqn:res diag applied}
\delta_*(\CL^k) = \left( \bigoplus_{a,b \geq 0} S^a \CW \otimes \wedge^b \CN_2 \otimes \CL_1^{k - 1 - a - b} \CL_2 [-2a-b], d_\iota + d_\sigma  \right)
\end{equation}
Formula \eqref{eqn:res diag applied} gives the first summand in the left-hand side of \eqref{eqn:sony}, and now we must obtain a similar expression for the second summand. We have the diagram:
\begin{equation}
\label{eqn:curved}
\xymatrix{\fZ_2^\bullet \underset{\fZ_1'}{\times} \fZ_2^\bullet \ar@{^{(}->}[r]^-\eta  \ar@/^2pc/[rr]^\e \ar@{^{(}->}[dr]^\nu & \BP_{\fZ_1 \times \fZ_1} \left( {\CW}^\vee \otimes \omega_S \right) \ar@{->>}[r]^-{\pi} \ar@{^{(}->}[d]^\rho & 
\fZ_1 \times \fZ_1  \\ \ & \BP (\CV)  \times \BP (\CV)  \times \BP\left( {\CW}^\vee \otimes \omega_S \right) & }
\end{equation}
By Definition \ref{def:fine}, the closed embedding $\fZ_2^\bullet \hookrightarrow \BP(\CV) \times \BP(\CW^\vee \otimes \omega_S)$ is cut out by: 
$$
\text{Coker } \left( \CO_1(-1) \otimes \CO_2(-1) \otimes \omega_S \rightarrow \begin{matrix} \CW \otimes \CO_1(-1) \\ \oplus \\ \CV^\vee \otimes \CO_2(-1) \otimes \omega_S \end{matrix} \right) \rightarrow \CO
$$
where the arrows are the tautological maps on $\BP(\CV)$ and $\BP(\CW^\vee \otimes \omega_S)$ and their duals. Recall from \eqref{eqn:koszul 2} that $\fZ_1' \hookrightarrow \BP(\CW^\vee \otimes \omega_S)$ is cut out by the composition: 
$$
\CV^\vee \otimes \CO(-1) \otimes \omega_S \rightarrow \CW^\vee \otimes \CO(-1) \otimes \omega_S  \rightarrow \CO
$$
on $\BP(\CW^\vee \otimes \omega_S)$. Hence, the embedding $\nu$ in diagram \eqref{eqn:curved} is cut out by:
\begin{equation}
\label{eqn:big complex}
\text{Coker } \left( \begin{matrix} \CO_1(-1) \otimes \CO(-1) \otimes \omega_S \\ \oplus \\  \CO_2(-1) \otimes \CO(-1) \otimes \omega_S \end{matrix} \rightarrow \begin{matrix} \CW \otimes \CO_1(-1) \\ \oplus \\ \CW\otimes \CO_2(-1) \\ \oplus \\ \CV^\vee \otimes \CO(-1) \otimes \omega_S \end{matrix} \right) \xrightarrow{\sigma'} \CO
\end{equation}
on $\BP (\CV)  \times \BP (\CV)  \times \BP\left( {\CW}^\vee \otimes \omega_S \right)$, where $\CO_1(1), \CO_2(1), \CO(1)$ denote the tautological line bundles on the three projectivizations, and all arrows in \eqref{eqn:big complex} are the tautological maps and their duals. Similarly, according to Proposition \ref{prop:desc}, the closed embedding $\rho$ in diagram \eqref{eqn:curved} is cut out by the co-section:
$$
\begin{matrix} \CW \otimes \CO_1(-1) \\ \oplus \\ \CW\otimes \CO_2(-1) \end{matrix} \longrightarrow \CO
$$
Combining this fact with \eqref{eqn:big complex} allows us to apply Proposition \ref{prop:dg} to the setting of \eqref{eqn:curved}. We thus conclude that the closed embedding $\eta$ has the property that:
$$
\CO_\eta \qis \wedge^\bullet \left( \begin{matrix} \CO_1(-1) \otimes \CO(-1) \otimes \omega_S \\ \oplus \\  \CO_2(-1) \otimes \CO(-1) \otimes \omega_S \end{matrix} \rightarrow \CV^\vee \otimes \CO(-1) \otimes \omega_S, \sigma' \right)
$$
where the Koszul complex is computed on $\BP_{\fZ_1 \times \fZ_1}(\CW^\vee \otimes \omega_S)$. Since the cokernel of $\CO_2(-1) \rightarrow \CV^\vee$ is the vector bundle $\CN_2^\vee$, we may simplify the complex above to:
\begin{multline*} 
\CO_\eta \qis \wedge^\bullet \left( \CO_1(-1) \otimes \CO(-1) \otimes \omega_S \rightarrow \CN_2^\vee \otimes \CO(-1) \otimes \omega_S, \sigma' \right) = \\
=  \left( \bigoplus_{a,b = 0}^\infty \wedge^b \CN_2^\vee \otimes \CO_1(-a) \otimes \CO(-a-b) \otimes \omega_S^{a+b} [-2a-b], d_{\sigma} + d_{\sigma'} \right)
\end{multline*}
where we recall the co-section $\sigma : \CN_2 \rightarrow \CO_1(1)$. Recall that $\CO_i(1)|_{\fZ_1} = \CL_i$, hence:
\begin{multline} 
\e_*(\tCL^k) = \pi_* \circ \eta_*(\CO(-k)) = \pi_*\left(\CO(-k) \otimes \CO_\eta \right) = \\
= \pi_* \left( \bigoplus_{a,b = 0}^\infty \wedge^b \CN_2^\vee \otimes \CO(-a-b-k) \otimes \CL_1^{-a}  \otimes \omega_S^{a+b} [-2a-b], d_{\sigma} + d_{\sigma'} \right) \label{eqn:big push}
\end{multline}
The push-forward $\pi_*$ does not affect $\CN_2$, $\CL_1$ or $\omega_S$, which are pulled back from the base $\fZ_1 \times \fZ_1$, but it has the following well-known effect on powers of $\CO(-1)$:
\begin{equation}
\label{eqn:cases proj}
\pi_*(\CO(l)) = \begin{cases} S^l \CW^\vee \otimes \omega_S^l & \text{if } l \geq 0 \\
S^{-l-w} \CW \otimes \det \CW \otimes \omega_S^l[w-1] & \text{if } l \leq - w  \\
0 & \text{otherwise} \end{cases}
\end{equation}
where $w = \text{rank }\CW$. For simplicity, let us assume we are dealing with $k > 0$ in \eqref{eqn:big push}, so that we uniformly plug in the second formula in \eqref{eqn:cases proj}:
$$
\e_*(\tCL^k) = \left( \bigoplus_{a,b = 0}^\infty \wedge^b \CN_2^\vee \otimes S^{a+b+k-w} \CW \otimes \CL_1^{-a} \otimes \omega_S^{-k} \otimes \det \CW [-2a-b+w-1] \right)
$$
(we will not write down the differential explicitly anymore, but note that it matches the one in the complex \eqref{eqn:res diag applied}, because applying $\pi_*$ to the tautological map $\sigma' : \CN_2^\vee \otimes \omega_S \rightarrow \CO(1)$ yields precisely the map $\iota$). We may use the natural isomorphism $\wedge^b \CN_2^\vee \cong (\det \CN_2)^\vee \otimes \wedge^{v-1-b} \CN_2$, where $v = \text{rank } \CV = w+r$, to obtain the formula:
$$
\e_*(\tCL^k) = \left( \bigoplus_{a,b = 0}^\infty \wedge^{v-1-b} \CN_2 \otimes S^{a+b+k-w} \CW \otimes \CL_1^{-a} \omega_S^{-k} \frac {\det \CW}{\det \CN_2} [-2a-b+w-1] \right)
$$
Let us relabel indices $a' = a+b+k-w$ and $b' = v-1-b$, and thus:
$$
\e_*(\tCL^k) = \left( \bigoplus_{0 \leq b' \leq v-1}^{a'+b' \geq k+r-1} S^{a'} \CW  \otimes \wedge^{b'}\CN_2 \otimes \CL_1^{k+r-1-a'-b'} \CL_2 \omega_S^{-k} \frac {\det \CW}{\det \CV} [-2a'-b'+2k+r-2] \right)
$$
Once again relabeling $k \leadsto k-r$, $a' \leadsto a$, $b' \leadsto b$ gives us:
$$
\e_*(\tCL^{k-r}) = \left( \bigoplus_{0 \leq b \leq v-1}^{a+b \geq k-1} S^{a} \CW  \otimes \wedge^{b}\CN_2 \otimes \frac {\CL_1^{k-1-a-b} \CL_2 \omega_S^{-k+r}}{\det \CU} [-2a-b+2k-r-2] \right) \Rightarrow
$$
\begin{multline*}
\Rightarrow \e_*(\tCL^{k-r}) \otimes (\det \CU)\otimes \omega_S^{k-r}[-2k+r+2] = \\ = \left( \bigoplus_{0 \leq b \leq v-1}^{a+b \geq k-1} S^{a} \CW  \otimes \wedge^{b}\CN_2 \otimes \CL_1^{k-1-a-b} \CL_2  [-2a-b] \right)
\end{multline*}
Comparing the right-hand side of the relation above with \eqref{eqn:res diag applied} implies that there is a map in $D^-(\text{Coh}(\fZ_1 \times \fZ_1))$:
$$
\delta_*(\CL^k) \longrightarrow \e_*(\tCL^{k-r}) \otimes (\det \CU)\otimes \omega_S^{k-r}[-2k+r+2]
$$
whose cone is the finite complex: 
$$
\left( \bigoplus_{0 \leq b \leq v-1}^{0 \leq a+b \leq k-2} S^{a} \CW  \otimes \wedge^{b}\CN_2 \otimes \CL_1^{k-1-a-b} \CL_2  [-2a-b], d_\iota + d_\sigma \right)
$$
The class of this complex in $K$--theory is precisely the right-hand side of \eqref{eqn:sony}, as a consequence of $[\CN_2] = [\CV] - [\CL_2]$, $[\CU] = [\CV] - [\CW]$ and the additivity of $\wedge^\bullet$. 

\end{proof}

\subsection{}

Armed with Lemma \ref{lem:int th}, it is now time to prove Theorem \ref{thm:w acts}. We assume Conjecture \ref{conj:shuf acts}, namely the existence of an ``action" (in the sense of Definition \ref{def:shuf acts}):
\begin{equation}
\label{eqn:conj acts}
\CA \curvearrowright \km \qquad \text{i.e. an assignment} \qquad \CA \rightarrow \text{Hom}(\km, \kms)
\end{equation}
which sends:
$$
E_{d_\bullet} \in \CA^\leftarrow \subset \CA \qquad \text{to the operator} \qquad e_{d_\bullet} \text{ of \eqref{eqn:def e fine}}
$$
$$
F_{d_\bullet} \in \CA^\rightarrow \subset \CA \qquad \text{to the operator} \qquad f_{d_\bullet} \text{ of \eqref{eqn:def f fine}}
$$
for all $d_\bullet = (d_1,...,d_k)$. Moreover, we recall that $\km$ is a good module of $\CA$, since the grading on $\km$ by second Chern class is bounded below in virtue of the Bogomolov inequality. This implies that $f_{(d_1,...,d_k)}$ annihilates any given element of $\km$ if $k$ is large enough, hence the conjectural action \eqref{eqn:conj acts} induces an action of:
$$
\wCA^\uparrow \curvearrowright \km \qquad \text{and hence an action} \qquad \CA_\infty \curvearrowright \km
$$
(by the definition of $\CA_\infty$ in \eqref{eqn:def a infty}, there exists a homomorphism $\CA_\infty \rightarrow \wCA^\uparrow$). The extended algebra $\CA_\infty^\ext$ of \eqref{eqn:def a ext infty} also acts on $\km$ by the same reasons. \\

\subsection{} It would be rather difficult to prove that relations \eqref{eqn:w vanish} and \eqref{eqn:w exp} hold in $\km$ based on the definition \eqref{eqn:def w} of the generating currents of the deformed $W$--algebra, so we appeal to a different formula that was worked out in (3.20) of \cite{W}: 
\begin{equation}
\label{eqn:formula}
W_{d,k} = \sum^{d_2 - d_1 = d}_{k_0 + k_1 + k_2 = k} T_{d_1,k_1}^\leftarrow E_{0,k_0} T_{d_2, k_2}^\rightarrow \cdot q^{(k-1)d_2}
\end{equation}
where $k_0,k_1,k_2, d_1, d_2$ go over the non-negative integers. The elements $E_{0,k_0}$ in \eqref{eqn:formula} are simply the standard generators of $\CA^{\text{diag}} \subset \CA$ (see Proposition \ref{prop:hecke}), while:
$$
T_{n,k}^\leftarrow \in \CA^\leftarrow \cong \CS \quad \text{and} \quad T_{n,k}^\rightarrow \in \CA^\rightarrow \cong \CS^\op
$$
correspond to the following elements of the shuffle algebra, for all $n,k \in \BN$:
\begin{equation}
\label{eqn:def t}
T_{n,k} (z_1,...,z_n) = \sym \left[ \frac {(-1)^{k-1} z_n^k}{\prod_{i=1}^{n-1} \left(1 - \frac {z_{i+1}q}{z_i} \right)} \prod_{1\leq i < j \leq n} \zeta \left( \frac {z_i}{z_j} \right) \right]
\end{equation}
(we also let $T_{n,0} = T_{0,n} = \delta_n^0$). Formula \eqref{eqn:formula} was derived in \cite[Proposition 2.16]{W} by showing that $T_{n,k}^\leftarrow$ and $T_{n,k}^\rightarrow$ are equal to the sums of $E_{n_1,k_1}...E_{n_t,k_t}$ as in \eqref{eqn:def w}, but the sums only run over $n_i < 0$ and $n_i > 0$, respectively. By Conjecture \ref{conj:shuf acts} (and more specifically, the prescription of \eqref{eqn:generators act 1}--\eqref{eqn:generators act 2}), the shuffle elements:
\begin{align}
&T_{n,k}^\leftarrow \quad \text{act on }\km \text{ as} \quad (-1)^{k-1}(p_+ \times p_S)_*\left( \CL_n^k \cdot p_-^* \right) \label{eqn:t plus} \\ 
&T_{n,k}^\rightarrow \quad \text{act on }\km \text{ as} \quad (-1)^{k-1} \frac {(\det \CU)^{\otimes n}}{(-1)^{nr} q^{n(r-1)}} \cdot (p_- \times p_S)_* \left( \frac {\CL_n^k}{\CQ^r} \cdot p_+^* \right) \label{eqn:t minus}
\end{align}
where $\CQ = \CL_1...\CL_n$ and the maps $p_+,p_-,p_S$ are as in diagram \eqref{eqn:diag fine}. \\ 


\subsection{} Finally, recall that $E_{0,k}$ acts on $\km$ as multiplication by the coefficient of $z^{-k}$ in $\wedge^\bullet \left( \frac {\CU}z \right)$. Therefore, \eqref{eqn:formula} implies that $W_{d,k} : \km \rightarrow \kms$ is given by:
\begin{equation}
\label{eqn:explicit w 0}
\sum^{d_2 - d_1 = d}_{k_0 + k_1 + k_2 = k}  q^{(k-1)d_2} (\pi_1 \times \pi_S)_* \left[ (-1)^{k_1-1} \CL_1^{k_1} \cdot \wedge^{k_0} \tCU \cdot \frac {(-1)^{k_2-1} ( \det \tCU)^{d_2} \CL_2^{k_2}}{(-1)^{rd_2} q^{d_2(r-1)} \CQ^{r} } \pi_2^* \right]
\end{equation}
where $k_1,k_2 \in \BN$, $k_0,d_1,d_2 \in \BN \sqcup \{0\}$, and the maps are given by:
\begin{equation}
\label{eqn:big diag}
\xymatrix{
& \fV_{d_1,d_2} := \{\CF_1 \subset_x ... \subset_x \CF \subset_x \tCF \supset_x \CF' \supset_x ... \supset_x \CF_2\} \ar[ld]_{\pi_1} \ar[d]^{\pi_S} \ar[rd]^{\pi_{2}} & \\
\{\CF_1\} & \{x\} & \{\CF_2\}} 
\end{equation}
Here, $\CL_1, \CL_2$ parametrize the length 1 sheaves $\tCF_x/\CF_x$ and $\tCF_x/\CF_x'$, respectively, and $\CQ$ is the determinant of the rank $d_2$ vector bundle $\tCF_x/\CF_{2,x}$. The number of inclusion signs $\subset_x$, $\supset_x$ in \eqref{eqn:big diag} is $d_1$, $d_2$, respectively, and the dg scheme structure is:
$$
\fV_{d_1,d_2} = \underbrace{\fZ_2^\bullet \times_{\fZ_1} ... \times_{\fZ_1} \times \fZ_2^\bullet}_{d_1 - 1} \underset{\tCM \times S}{\times} \underbrace{\fZ_2^\bullet \times_{\fZ_1} ... \times_{\fZ_1} \times \fZ_2^\bullet}_{d_2 - 1}
$$
where $\tCM$ is the moduli space that parametrizes the sheaves denoted by $\tCF$ in \eqref{eqn:big diag}. Note that we may rewrite \eqref{eqn:explicit w 0} as a residue, namely:
\begin{equation}
\label{eqn:explicit w 1}
W_{d,k} = (-1)^k \sum_{d_2 - d_1 = d} q^{(k-r)d_2} (\pi_1 \times \pi_S)_* \left[ \underset{z=\infty}{\text{Res}}  \frac {z^k \wedge^{\bullet}\left(\frac {\tCU}z\right)\frac {( \det \tCU)^{d_2}}{(-1)^{rd_2} \CQ^r}dz}{\left(1 - \frac z{\CL_1}\right)\left(1 - \frac z{\CL_2}\right)z} \cdot \pi_2^* \right] 
\end{equation}
Theorem \ref{thm:w acts} reduces to the following, which we prove regardless of Conjecture \ref{conj:shuf acts}. \\

\begin{theorem}
\label{thm:vanish}

The RHS of \eqref{eqn:explicit w 1} vanishes if $k>r$, while for $k=r$ it equals:
\begin{equation}
\label{eqn:explicit w 2}
\sum_{d_1 - d_2 = d} (\pi_1 \times \pi_S)_* \left[\frac { ( \det \tCU)^{d_2+1}}{(-1)^{rd_2} \CQ^r} \cdot \pi_2^* \right] 
\end{equation}

\end{theorem}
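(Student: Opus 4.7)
The plan is to apply Lemma \ref{lem:int th} to decompose the residue at $z=\infty$ in \eqref{eqn:explicit w 1} into three contributions, and then to show that two of them cancel via a telescoping argument over the summation index $(d_1,d_2)$. Write the integrand of \eqref{eqn:explicit w 1} as $G(z)\,dz/z$, where
\[
G(z) \;=\; \frac{z^k\,\wedge^\bullet(\tCU/z)\,C(d_2)}{(1-z/\CL_1)(1-z/\CL_2)}, \qquad C(d_2) := \frac{(-1)^{rd_2}(\det\tCU)^{d_2}}{\CQ^r}.
\]
Since $\text{Res}_{z=\infty}G(z)\,dz/z = \int_{\infty-0}G(z) + \text{Res}_{z=0}G(z)\,dz/z$, pulling Lemma \ref{lem:int th} back to $\fV_{d_1,d_2}$ via the forgetful map to $\fZ_1\times_{\CM\times S}\fZ_1$ (remembering only $\tCF,\CF,\CF'$) gives
\[
\int_{\infty-0} G(z) \;=\; C(d_2)\cdot\bigl[\,\delta_*(\CL^k) \;-\; \e_*(\tCL^{k-r})(-1)^r q^{k-r}\det\tCU\,\bigr].
\]

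Using $\wedge^\bullet(\tCU/z) = z^{-r}\prod_i(z+u_i) \sim z^{-r}\det(\tCU)$ as $z\to 0$, the integrand near the origin behaves like $z^{k-r-1}\det(\tCU)C(d_2)(1+O(z))\,dz$. For $k>r$ there is no pole, so the residue at zero vanishes; for $k=r$ it equals $\det(\tCU)C(d_2) = (-1)^{rd_2}(\det\tCU)^{d_2+1}/\CQ^r$, which after multiplying by $(-1)^k$ and summing over $d_2-d_1=d$ reproduces \eqref{eqn:explicit w 2}.

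The key structural observation driving the telescoping is that the $\e_*$ push-forward on $\fV_{d_1,d_2}$ arises from the auxiliary space obtained by adjoining a length-$1$ extension $\tCF\subset_x\tCF'$. Reinterpreting $\tCF'$ as the new top of the flag, this auxiliary space is naturally identified with the diagonal locus inside $\fV_{d_1+1,d_2+1}$ where both immediate subsheaves of $\tCF'$ coincide with $\tCF$, and $\tCL=\tCF'_x/\tCF_x$ matches the diagonal line bundle $\CL$ on $\fV_{d_1+1,d_2+1}$. Three facts make the prefactors match under this identification: $\CQ'=\tCL\cdot\CQ$ (the new length-$1$ factor in the quotient); $\det(\tCU')=\det(\tCU)$ on the auxiliary space, because $[\tCU']-[\tCU]=[\Gamma_*\tCL]$ is the class of a torsion sheaf of codimension $2$ (with $\Gamma$ the graph of $\pi_S$) whose first Chern class vanishes; and a direct sign/power check showing that the prefactor $(-1)^k q^{(k-r)d_2}(-1)^{1+r}q^{k-r}C(d_2)\det(\tCU)\tCL^{k-r}$ on the $\e_*$ side equals $-(-1)^k q^{(k-r)(d_2+1)}C(d_2+1)\tCL^k$ on the $\delta_*$ side at shifted indices. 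Since $\sum_{d_2-d_1=d}$ is invariant under $(d_1,d_2)\mapsto(d_1+1,d_2+1)$, the $\delta_*$ and $\e_*$ contributions cancel termwise.

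Combining these steps: for $k>r$ all three pieces vanish or cancel and $W_{d,k}=0$; for $k=r$ only the residue at $z=0$ survives, yielding \eqref{eqn:explicit w 2}. The main obstacle will be the careful bookkeeping in the third step, in particular verifying $\det\Gamma_*(\tCL)\cong\CO$ in the appropriate $K$-theoretic sense, and handling the boundary cases $d_1=0$ or $d_2=0$ (where the integrand is vacuous because $\CL_1^{k_1}\CL_2^{k_2}$ vanishes in \eqref{eqn:explicit w 0}), ensuring they leave no unmatched $\delta_*$-term at the boundary of the telescoping sum.
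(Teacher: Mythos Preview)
Your proposal is correct and follows essentially the same route as the paper: split the residue at infinity as $\int_{\infty-0}$ plus the residue at $0$, apply Lemma~\ref{lem:int th} (base-changed to $\fV_{d_1,d_2}$) to express the $\int_{\infty-0}$ piece as $\delta_*[\CL^k]-\e_*[\tCL^{k-r}](\dots)$, and then telescope by identifying the $\e_*$-contribution at $(d_1,d_2)$ with the $\delta_*$-contribution at $(d_1+1,d_2+1)$ via the auxiliary space $\fW_{d_1,d_2}$. The paper carries out exactly this argument, with the same supporting identities $\CQ_{d_2-1}\tCL=\CQ_{d_2}$ and $\det\CU=\det\tCU$; your justification of the latter via $c_1(\Gamma_*\tCL)=0$ is equivalent to the paper's remark that the determinants of the universal sheaves on $\fZ_1$ are canonically isomorphic.
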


\text{ }

\noindent We observe that the operator \eqref{eqn:explicit w 2} precisely matches the action on $\km$ of:
$$
u \sum_{d_2 - d_1 = d}^{d_1,d_2 \geq 0} h_{-d_1} h_{d_2} \in \CA
$$
where $u = \det \tCU$, and $h_{\pm k}$ are to the elements $p_{\pm k}$ of \eqref{eqn:gardel} as complete symmetric polynomials are to power-sum functions. Indeed, as shown in \cite[equations (2.9) and (2.28)]{W}, the elements: 
$$
h_{-k} = H_{-k,0} \in \CA^\leftarrow \qquad \text{and} \qquad h_k = q^{k(r-1)} H_{k,0} \in \CA^\rightarrow
$$
correspond to setting $d_1=...=d_k =0$ in \eqref{eqn:def e fine} and \eqref{eqn:def f fine}, respectively (the fact that this statement also holds in the module $\km$ requires assuming Conjecture \ref{conj:shuf acts}). \\

\begin{proof} \emph{of Theorem \ref{thm:vanish}:} We claim that both cases $k>r$ and $k=r$ boil down to:
\begin{equation}
\label{eqn:explicit w 3}
0 = \sum_{d_2 - d_1 = d} q^{(k-r)d_2} (\pi_1 \times \pi_S)_* \left[ \frac { ( \det \tCU)^{d_2}}{(-1)^{rd_2}\CQ^r} \i  \frac {z^k \wedge^{\bullet}\left(\frac {\tCU}z\right)}{\left(1 - \frac z{\CL_1}\right)\left(1 - \frac z{\CL_2}\right)} \cdot \pi_2^* \right]
\end{equation}
for all $k \geq r$. This is a consequence of the fact that the integrand in the right-hand side of \eqref{eqn:explicit w 3} is $z^{k-r}$ times a rational function which is regular at $0$. The value of this rational function at $z=0$ is equal to the right-hand side of \eqref{eqn:explicit w 1}. \\

\noindent Therefore, the task has become to prove \eqref{eqn:explicit w 3}. Let us consider the space $\fW_{d_1,d_2}$ which parametrizes flags:
\begin{equation}
\label{eqn:big diag 2}
\xymatrix{& {\tCF} & \\
& \CF_1' \subset_x ... \subset_x \CF_1 \subset_x \CF \supset_x \CF_2 \supset_x ... \supset_x \CF_2' \ar@{^{(}->}[u]_x &}
\end{equation}
where the total number of inclusions in the horizontal row is $d_1$ and $d_2$, respectively. 
As a dg scheme, $\fW_{d_1,d_2}$ is defined as the derived fiber product:

$$
\xymatrix{\fW_{d_1,d_2} \ar[d] \ar[r]^-{a^{(d_1,d_2)}} & \fV_{d_1,d_2} \ar[d] \\
\fZ_2 \underset{\fZ_1'}{\times} \fZ_2 \ar[r]^-\e & \fZ_1 \underset{\CM \times S}{\times} \fZ_1 }
$$
where the vertical maps remember $\{\CF_1,\CF_2 \subset_x \CF \subset_x \tCF \}$ and $\{\CF_1, \CF_2 \subset_x \CF\}$, respectively, and the horizontal maps forget $\tCF$. We also have a derived fiber square:
$$
\xymatrix{\fW_{d_1-1,d_2-1} \ar[d] \ar@{^{(}->}[r]^-{b^{(d_1,d_2)}} & \fV_{d_1,d_2} \ar[d] \\
\fZ_1 \ar@{^{(}->}[r]^-{\delta} & \fZ_1 \underset{\CM \times S}{\times} \fZ_1 }
$$
where the vertical projection maps remember $\{\CF \subset_x \tCF\}$ and $\{\CF \subset_x \tCF \supset_x \CF'\}$, respectively. Because both squares are Cartesian, we may invoke Lemma \ref{lem:int th}:
$$
b^{(d_1,d_2)}_*[\CL^k] - a^{(d_1,d_2)}_* [\tCL^{k-r}] (-1)^r q^{k-r} (\det \CU) = \i \frac {z^k \wedge^\bullet \left( \frac {\CU}z \right)}{\left(1 - \frac z{\CL_1} \right)\left(1 - \frac z{\CL_2} \right)}
$$
where $\tCL$ parametrizes the one-dimensional vector space $\tCF_x/\CF_x$. As we sum the formula above over all non-negative integers $d_1,d_2$ such that $d_2 - d_1 = d$, we obtain:
\begin{multline}
\sum_{d_2 - d_1 = d} (\pi_1 \times \pi_S)_*  \left[ \frac { q^{(k-r)d_2} ( \det \tCU)^{d_2}}{(-1)^{rd_2}\CQ^r} b^{(d_1,d_2)}_*[\CL^k]   \right] \cdot \pi_2^* -  \\
- \sum_{d_2 - d_1 = d} (\pi_1 \times \pi_S)_*  \left[ \frac { q^{(k-r)(d_2+1)} ( \det \tCU)^{d_2+1}}{(-1)^{r(d_2+1)}\CQ^r} a^{(d_1,d_2)}_* [\tCL^k] \right] \cdot \pi_2^* = \\
= \sum_{d_2 - d_1 = d}  (\pi_1 \times \pi_S)_* \left[q^{(k-r)d_2} \frac { ( \det \tCU)^{d_2}}{(-1)^{rd_2} \CQ^r} \i  \frac {z^k \wedge^{\bullet}\left(\frac {\tCU}z\right)}{\left(1 - \frac z{\CL_1}\right)\left(1 - \frac z{\CL_2}\right)} \cdot \pi_2^* \right] \label{eqn:final}
\end{multline}
The equality above uses the identity $\CQ_{d_2-1} \tCL = \CQ_{d_2}$, where we write $\CQ_{d_2}$ for the line bundle parametrizing $\det \CF_x/\CF_{2,x}'$ in the diagram \eqref{eqn:big diag 2} with $d_2$ symbols $\supset_x$. \\

\noindent Moreover, in \eqref{eqn:final} we used the equality of line bundles $\det \CU = \det \tCU$, which holds because the determinants of all possible universal bundles on the correspondences $\fZ_1$ and $\fZ_2^\bullet$ are canonically isomorphic. The LHS of \eqref{eqn:final} vanishes because the first term for a pair $(d_1,d_2)$ precisely cancels out the second term for $(d_1-1,d_2-1)$. Therefore, the RHS of \eqref{eqn:final} also vanishes, thus establishing \eqref{eqn:explicit w 3}.
\end{proof}


\begin{thebibliography}{XXX}


\bibitem{AKOS} Awata H., Kubo H., Odake S. and Shiraishi J., {\em Quantum $W_N$ algebras and Macdonald polynomials} \textbf{Comm. Math. Phys. } 179 (1996), no.2, 401--416

\bibitem{Ba} Baranovsky V., {\em Moduli of sheaves on surfaces and action of the oscillator algebra}, \textbf{J. Diff. Geom.} 55 (2000), no. 2, 

\bibitem{BS} Burban I., Schiffmann O., {\em On the Hall algebra of an elliptic curve I}, \textbf{Duke Math. J.} 161 (2012), no. 7, 1171--1231

\bibitem{C} Caldararu A., {\em Derived categories of twisted sheaves on Calabi-Yau manifolds},
\textbf{Ph.D. Thesis}, Cornell University (2000)

\bibitem{CO} Carlsson E., Okounkov A., {\em Exts and vertex operators}, \textbf{Duke Math. J.} 161 (2012), no. 9, 1797-1815

\bibitem{CG} Chriss N., Ginzburg V., {\em Representation theory and complex geometry}, Birkhauser 1997

\bibitem{DGP} Dedushenko M., Gukov S., Putrov P. {\em Vertex algebras and 4-manifold invariants}, ar$\chi$iv:1705.01645 

\bibitem{DI} Ding J., Iohara K., {\em Generalization of Drinfeld quantum affine algebras}, \textbf{Lett. Math. Phys.} 41 (1997), no. 2, 181-–193

\bibitem{FF} Feigin B., Frenkel E., {\em Quantum $W$--algebras and elliptic algebras}, \textbf{Comm. Math. Phys.}, 178 (1996), no. 3, 653--678

\bibitem{FHHSY} Feigin B., Hashizume K., Hoshino A., Shiraishi J., Yanagida S., {\em A commutative algebra on degenerate $\BC \BP^1$ and MacDonald polynomials},  \textbf{J. Math. Phys.} 50 (2009), no. 9

\bibitem{FJMM} Feigin B., Jimbo M., Miwa T., Mukhin E., {\em Quantum toroidal} $\fgl_1$ {\em and the Bethe ansatz}, \textbf{J. Phys. A} 48 (2015)

\bibitem{FO} Feigin B., Odesskii A., {\em Vector bundles on elliptic curve and Sklyanin algebras}, Topics in Quantum Groups and Finite-Type Invariants, \textbf{Amer. Math. Soc. Transl. Ser. 2} 185 (1998), Amer. Math. Soc., 65-–84

\bibitem{FT} Feigin B., Tsymbaliuk A., {\em Heisenberg action in the equivariant $K-$theory of Hilbert schemes via Shuffle Algebra}, \textbf{Kyoto J. Math.} 51 (2011), no. 4

\bibitem{GT} Gholampour A., Thomas R., {\em Degeneracy loci, virtual cycles and nested Hilbert schemes}, ar$\chi$iv:1709.06105

\bibitem{G} Grojnowski I., {\em Instantons and affine algebras I. The Hilbert scheme and vertex operators}, \textbf{Math. Res. Lett.} 3 (1996), no. 2

\bibitem{HL} Huybrechts D., Lehn M., {\em The geometry of moduli spaces of sheaves}, 2nd edition, Cambridge University Press 2010, ISBN 978-0-521-13420-0

\bibitem{Markman} Markman E., {\em Integral generators for the cohomology ring of moduli spaces of sheaves over Poisson surfaces},  \textbf{Adv. Math.} 208 (2007), no 2, Pages 622--646

\bibitem{MO} Maulik D., Okounkov A., {\em Quantum Groups and Quantum Cohomology}, ar$\chi$iv:1211.1287

\bibitem{M} Miki K., {\em A $(q, \gamma)$ analog of the $W_{1+\infty}$ algebra}, \textbf{J. Math. Phys.}, 48 (2007), no. 12

\bibitem{Min} Minets A., {\em Cohomological Hall algebras for Higgs torsion sheaves, moduli of triples and sheaves on surfaces}, ar$\chi$iv:1801.01429

\bibitem{Nak} Nakajima H., {\em Heisenberg algebra and Hilbert schemes of points on projective surfaces}, \textbf{Ann. Math.} 145, No. 2 (Mar 1997), 379--388

\bibitem{Nak2} Nakajima H., {\em Lectures on Hilbert schemes of points on surfaces}, University Lecture Series vol 18, 1999, 132 pp

\bibitem{AGT} Negu\cb t A., {\em AGT relations for sheaves on surfaces}, ar$\chi$iv:1711.00390

\bibitem{Univ} Negu\cb t A., {\em Shuffle algebras associated to surfaces},  \textbf{Sel. Math. New Ser.} 25, 36 (2019)

\bibitem{W} Negu\cb t A., {\em The $q$-AGT-W relations via shuffle algebras}, \textbf{Comm. Math. Phys.}, February 2018, Volume 358, Issue 1, 101--170


\bibitem{Mod} Negu\cb t A., {\em Moduli of flags of sheaves and their $K$--theory}, \textbf{Alg. Geom.} 2 (2015), 19--43

\bibitem{Shuf} Negu\cb t A., {\em The shuffle algebra revisited},  \textbf{Int. Math. Res. Not.} 22 (2014), 6242--6275

\bibitem{O} Odake S., {\em Comments on the Deformed $W_N$ Algebra}, \textbf{Int. J. Mod. Phys. B}, 16 (2002)


\bibitem{SV} Schiffmann O., Vasserot E., {\em Hall algebras of curves, commuting varieties and Langlands duality}, \textbf{Math. Ann.}, 353(2012), no. 4, 1399-–1451

\bibitem{SV2} Schiffmann O., Vasserot E., {\em The elliptic Hall algebra and the equivariant $K-$theory of the Hilbert scheme of ${\mathbb{A}}^2$}, \textbf{Duke Math. J.} 162 (2013), no. 2, 279--366

\bibitem{SV3} Schiffmann O., Vasserot E., {\em Cherednik algebras, $W$--algebras and the equivariant cohomology of the moduli space of instantons on $\BA^2$}, \textbf{Publ. Math. IHES}, vol. 118, 213--342

\end{thebibliography}
\end{document}